\newtheorem{lemma}{Lemma}[section]
\newtheorem{proposition}[lemma]{Proposition}
\newtheorem{theorem}[lemma]{Theorem}
\newtheorem{corollary}[lemma]{Corollary}
\newtheorem{definition}[lemma]{Definition}
\newtheorem{remark}[lemma]{Remark}
\newtheorem{example}[lemma]{Example}
\newcommand{\CC}{\mathbb C}
\newcommand{\NN}{\mathbb N}
\newcommand{\PP}{\mathbb P}
\newcommand{\QQ}{\mathbb Q}
\newcommand{\RR}{\mathbb R}
\newcommand{\VV}{\mathbb V}
\newcommand{\WW}{\mathbb W}
\newcommand{\ZZ}{\mathbb Z}
\newcommand{\cC}{\mathcal C}
\renewcommand{\cD}{\mathcal D}
\newcommand{\cF}{\mathcal F}
\renewcommand{\cH}{\mathcal H}
\renewcommand{\cL}{\mathcal L}
\newcommand{\cM}{\mathcal M}
\newcommand{\cN}{\mathcal N}
\newcommand{\cO}{\mathcal O}
\newcommand{\cS}{\mathcal S}
\newcommand{\cV}{\mathcal V}
\newcommand{\cX}{\mathcal X}
\newcommand{\cZ}{\mathcal Z}
\newcommand{\isoto}{\stackrel{\sim}{\to}}
\newcommand{\To}{\longrightarrow}
\newcommand{\Mapsto}{\mapstochar\longrightarrow}
\newcommand{\tensor}{\otimes}
\newcommand{\emb}{\hookrightarrow}
\renewcommand{\Tilde}{\widetilde}
\renewcommand{\Hat}{\widehat}
\renewcommand{\Bar}{\overline}
\newcommand{\cross}{\times}
\newcommand{\imic}{\cong}
\newcommand{\ratmap}{\dasharrow}
\newcommand{\GL}{\mathop{\mathrm {GL}}\nolimits}
\newcommand{\PGL}{\mathop{\mathrm {PGL}}\nolimits}
\newcommand{\SL}{\mathop{\mathrm {SL}}\nolimits}
\newcommand{\SO}{\mathop{\mathrm {SO}}\nolimits}
\newcommand{\Sp}{\mathop{\mathrm {Sp}}\nolimits}
\newcommand{\SU}{\mathop{\mathrm {SU}}\nolimits}
\newcommand{\Orth}{\mathop{\null\mathrm {O}}\nolimits}
\newcommand{\Ref}{\mathop{\null\mathrm {Ref}}\nolimits}
\newcommand{\Proj}{\mathop{\null\mathrm {Proj}}\nolimits}
\newcommand{\re}{\mathop{\mathrm {Re}}\nolimits}
\newcommand{\im}{\mathop{\mathrm {Im}}\nolimits}
\renewcommand{\Im}{\mathop{\mathrm {Im}}\nolimits}
\newcommand{\rk}{\mathop{\mathrm {rk}}\nolimits}
\newcommand{\rank}{\mathop{\mathrm {rank}}\nolimits}
\newcommand{\pr}{\mathop{\mathrm {pr}}\nolimits}
\newcommand{\hcf}{\mathop{\mathrm {hcf}}\nolimits}
\newcommand{\reg}{\mathop{\mathrm {reg}}\nolimits}
\newcommand{\sing}{\mathop{\mathrm {sing}}\nolimits}
\newcommand{\Hom}{\mathop{\mathrm {Hom}}\nolimits}
\newcommand{\Aut}{\mathop{\mathrm {Aut}}\nolimits}
\newcommand{\id}{\mathop{\mathrm {id}}\nolimits}
\newcommand{\divv}{\mathop{\null\mathrm {div}}\nolimits}
\newcommand{\sm}{\mathop{\null\mathrm {sm}}\nolimits}
\newcommand{\Hilb}{\mathop{\null\mathrm {Hilb}}\nolimits}
\newcommand{\Km}{\mathop{\null\mathrm {Km}}\nolimits}
\newcommand{\NS}{\mathop{\null\mathrm {NS}}\nolimits}
\newcommand{\Pic}{\mathop{\null\mathrm {Pic}}\nolimits}
\newcommand{\Mon}{\mathop{\null\mathrm {Mon}}\nolimits}
\newcommand{\Def}{\mathop{\null\mathrm {Def}}\nolimits}
\newcommand{\VSP}{\mathop{\null\mathrm {VSP}}\nolimits}
\newcommand{\Gr}{\mathop{\null\mathrm {Gr}}\nolimits}
\newcommand{\supp}{\mathop{\null\mathrm {Supp}}\nolimits}
\newcommand{\Bdiv}{\mathop{\null\mathrm {Bdiv}}\nolimits}
\newcommand{\trdeg}{\mathop{\null\mathrm {tr.deg}}\nolimits}
\newcommand{\bdT}{\mathbf T}
\newcommand{\bdv}{\mathbf v}
\newcommand{\bdw}{\mathbf w}
\newcommand{\bdN}{\mathbf N}
\newcommand{\latt}[1]{{\langle{#1}\rangle}}
\renewcommand{\div}{\mathop{\mathrm {div}}\nolimits}
\newcommand{\Kthree}{\mathop{\mathrm {K3}}\nolimits}
\newcommand{\qedsymbol}{\mbox{$\Box$}}
\newcommand{\qed}{\unskip\nobreak\hfil\penalty50\hskip1em\hbox{}\nobreak
\hfill\qedsymbol\parfillskip=0pt\finalhyphendemerits=0}
\newenvironment{proof}{\begin{ProofwCaption}{Proof}}{\end{ProofwCaption}}
\newenvironment{ProofwCaption}[1]
 {\addvspace\theorempreskipamount \noindent{\it #1.}\rm}
 {\qed \par \addvspace\theorempostskipamount}
\begin{document}

\title{Moduli of $\Kthree$ Surfaces and Irreducible Symplectic Manifolds}
\author{V.~Gritsenko, K.~Hulek and G.K.~Sankaran}
\date{}
\maketitle
\begin{abstract}
\noindent
The name ``K3 surfaces" was coined by A. Weil in 1957 when he formulated
a research programme for these surfaces and their moduli.
Since then, irreducible holomorphic symplectic manifolds have been
introduced as a higher dimensional analogue of $\Kthree$ surfaces.
In this paper we present a review of this theory starting from the
definition of $\Kthree$ surfaces and going as far as the global Torelli theorem
for irreducible holomorphic symplectic manifolds as recently proved by
M. Verbitsky.

For many years the last open question of Weil's programme was that of
the geometric type of the moduli spaces of polarised $\Kthree$ surfaces.
We explain how this problem has been solved.
Our method uses algebraic geometry, modular forms and Borcherds
automorphic products. We collect and discuss the relevant facts
from the theory of  modular forms with respect to the orthogonal group
$\Orth(2,n)$. We also give a detailed description of quasi pull-back
of automorphic Borcherds products. This part contains previously
unpublished  results. 
\end{abstract}

\tableofcontents

\section{Introduction}\label{sect:introduction}

The history of $\Kthree$ surfaces goes back to classical algebraic
geometry. In modern complex algebraic geometry a \emph{K3 surface} is
a compact complex surface $S$ whose canonical bundle is trivial,
i.e.\ $K_S\cong\cO_S$, and whose irregularity $q(S)=h^1(S,\cO_S)=0$.
These two facts immediately imply that
$H^0(S,\Omega^1_S)=H^0(S,T_S)=H^2(S,T_S)=0$.

The easiest examples of algebraic $\Kthree$ surfaces are smooth quartic
surfaces in $\PP^3$. Further examples are complete intersections of
type $(2,3)$ in $\PP^4$ and of type $(2,2,2)$ in $\PP^5$. Another
classical example is the \emph{Kummer surfaces}, i.e.\ the
(desingularised) quotient of a $2$-dimensional torus $A$ by the
involution $\iota\colon  x \mapsto -x$.

The modern name ``$\Kthree$ surface'' was coined by A.~Weil in his
famous ``Final report on research contract AF 18(603)-57'' \cite[Vol
  II, pp 390--395]{Wei}. In his comments on this report \cite[Vol II,
  p 546]{Wei} Weil writes: ``Dans la seconde partie de mon rapport, il
s'agit des vari\'et\'es k\"ahl\'eriennes dites K3, ainsi nomm\'ees en
l'honneur de Kummer, Kodaira, K\"ahler et de la belle montagne K2 au
Cachemire.''

In that report the following conjectures, due to Andreotti and Weil,
were stated:
\begin{itemize}
\item[{\rm (i)}] $\Kthree$ surfaces form one family;
\item[{\rm (ii)}] all $\Kthree$ surfaces are K\"ahler;
\item[{\rm (iii)}] the period map is surjective;
\item[{\rm (iv)}] a form of global Torelli theorem holds.
\end{itemize}
Weil also remarked that the structure of the moduli space of
(polarised) $\Kthree$ surfaces must be closely related to the theory
of automorphic forms. 

By now all of these questions have been answered positively and much
progress has been made in understanding the moduli spaces of $\Kthree$
surfaces. Conjecture (i) was proved by Kodaira \cite[Part I, theorem
  19]{Kod}.  Conjecture (ii) was first shown by Siu
\cite[Section 14]{Siu}.  Nowadays it can be derived from the more
general theorem, due to Buchdahl and Lamari, that a compact complex
surface is K\"ahler if and only if its first Betti number is even
(\cite[Theorem IV.3.1]{BHPV}, \cite{Bu},\cite{La}).  Surjectivity of
the period map (conjecture (iii)) was proved for special $\Kthree$
surfaces in various papers by Shah \cite{Sha1}, \cite{Sha2},
\cite{Sha3} and by Horikawa \cite{Hor}.  Kulikov \cite{Kul} gave a
proof for projective $\Kthree$ surfaces, with some points clarified by
Persson and Pinkham \cite{PP}. The general result for K\"ahler
$\Kthree$ surfaces was proved by Todorov \cite{Tod} and Looijenga
\cite{Lo}.  The strong Torelli theorem (i.e.\ conjecture (iv)) for
algebraic $\Kthree$ surfaces was first proved by Piatetskii-Shapiro
and Shafarevich~\cite{P-SS} with amendments by Rapoport and Shioda. It
was proved for K\"ahler $\Kthree$ surfaces (and hence for all) by
Burns and Rapoport \cite{BR}.  A detailed and supplemented account of
the original proof was written by Looijenga and Peters \cite{LP}.
Finally, Friedman \cite{Fr} gave a proof using degenerations.

It should be noted, though, that Weil's definition of $\Kthree$
surface was different from the standard definition used nowadays. For
him a surface was $\Kthree$ if its underlying differentiable structure
was that of a quartic surface in $\PP^3$. Using results from
Seiberg-Witten theory one can indeed show that any compact complex
surface diffeomorphic to a quartic is a $\Kthree$ surface.

The crucial ingredient is the fact that the plurigenera of a compact
complex surface are invariant under orientation preserving
diffeomorphism: see \cite[Theorem IX.9.4 and Theorem
  IX.9.5]{BHPV}. This was formulated as a question in
\cite[p. 244]{OV} and became known as the Van de Ven conjecture.  A
first (partial) proof of this was published by Brussee \cite{Br} (see
also \cite{FM}), and an elegant argument using Seiberg-Witten
invariants can be found in D\"urr's thesis \cite{Du}. It follows that
the only surfaces which admit an orientation preserving diffeomorphism
to a quartic are $\Kthree$ surfaces or possibly tori, but the latter
are excluded because they are not simply connected. If a surface is
diffeomorphic to a quartic surface, but with an orientation changing
diffeomorphism, then its signature is $-16$ and its Euler number
equals $c_2(S)=24$. By the Thom-Hirzebruch index theorem
$c_1^2(S)=96$, but that contradicts the Miyaoka-Yau inequality, so
such a surface cannot exist.

Moduli spaces of polarised $\Kthree$ surfaces are a historically old
subject, studied by the classical Italian geometers (starting with the
spaces of double sextics and plane quartics in $\PP^3$).  Mukai
extended the classical constructions and unirationality results for
the moduli spaces $\cF_{2d}$ parametrising polarised $\Kthree$
surfaces of degree $2d$ to many more cases, going as high as
$d=19$. See \cite{Mu1}, \cite{Mu2}, \cite{Mu3}, \cite{Mu4} and
\cite{Mu5} for details.  Kondo \cite{Ko1} proved that the moduli
spaces $\cF_{2p^2}$ are of general type for sufficiently large prime
numbers~$p$, but without giving an effective bound.  Finally, it was
shown in \cite{GHSK3} that the moduli spaces $\cF_{2d}$ are of general
type for $d > 61$ and $d= 46,50,54,57,58,60$, and later it was noticed
by A.~Peterson \cite{PeSa} that this proof also works for $d=52$: see
Theorem~\ref{thm:K3gt} below. For an account of these results see also
Voisin's Bourbaki expos\'e \cite{Vo1}.

In higher dimension $\Kthree$ surfaces can be generalised in two ways,
namely to Calabi-Yau varieties or to irreducible symplectic manifolds
(or hyperk\"ahler manifolds). In fact, together with tori, these three
classes of manifolds are the building blocks of all compact K\"ahler
manifolds with trivial first Chern class (over $\RR$), also called
Ricci flat manifolds: every such manifold admits a finite \'etale cover which
is a product of tori, Calabi-Yau manifolds and irreducible symplectic
manifolds (see \cite{Be}, \cite{Bog2}).  The first examples of
irreducible symplectic manifolds, by now regarded as classical, were
studied by Beauville in \cite{Be}, namely the Hilbert schemes of
points on $\Kthree$ surfaces and generalised Kummer varieties (and
their deformations). Two further classes of examples were later found
by O'Grady \cite{OG1}, \cite{OG2}.  The theory of irreducible
symplectic manifolds, which started from work of Bogomolov,
Beauville and Fujiki, was significantly advanced by Huybrechts \cite{Huy1} who,
among other results, proved surjectivity of the period map. It was
noticed early on by Debarre \cite{Deb} and Namikawa \cite{Nam} that
the obvious generalisations of the Torelli theorem are
false. Nevertheless, one can use the period map to exhibit moduli
spaces of polarised irreducible symplectic manifolds as dominant
finite to one covers of quotients of type~IV domains by arithmetic
groups. This was used in \cite{GHSsymp} and \cite{GHSdim21} to obtain
general type results for many of these moduli spaces
(Theorem~\ref{thm:splitgt} and Theorem~\ref{thm:21gt} below).

Very recently Verbitsky \cite{Ver} has announced a Torelli theorem for
irreducible symplectic manifolds.  The consequences of Verbitsky's
result for the moduli problem of polarised irreducible symplectic
manifolds were worked out in detail by Markman \cite{Mar4}. We also refer
the reader to Huybrecht's forthcoming Bourbaki talk \cite{Huy2}.

The theory of $\Kthree$ surfaces and irreducible symplectic manifolds
is a fascinating and vast subject.  We have started this introduction
by giving the definition of $\Kthree$ surfaces in complex geometry
(which also allows for non-algebraic surfaces).  The notion of
$\Kthree$ surface also makes perfect sense in algebraic geometry over
arbitrary fields: a $\Kthree$ surface is an irreducible smooth
algebraic surface $S$ defined over a field $k$ with trivial canonical
bundle and irregularity $q=h^1(S,\cO_S)=0$.  In this article we shall,
however, concentrate on the theory over the complex numbers, and
especially on moduli problems.  We are fully aware that we thus
exclude a large area of interesting questions.  Concerning $\Kthree$
surfaces in positive characteristic we refer the reader to the papers
by Artin \cite{Ar}, Ogus \cite{Og} and Nygaard \cite{Ny} for a first
introduction.  Another aspect, which we will not touch upon, is the
arithmetic of $\Kthree$ surfaces.  An introduction to this field can
be found in Sch\"utt's survey paper \cite{Sch}.

In this article we mainly survey known
results. Theorem~\ref{thm:qpbcusp} is new, however (special cases
occur in the literature) and so is Proposition~\ref{prop:nonsplitgt}.

\section{Periods of $\Kthree$ surfaces and the Torelli
  theorem}\label{sect:K3periods}
In this section we will discuss the period domain and the period map
for $\Kthree$ surfaces, and the local and global Torelli theorems. The
case of $\Kthree$ surfaces presents several special features and the
existence of a straightforward global Torelli theorem is one of them.

\subsection{Lattices}\label{subsect:lattices}
Let $S$ be a $\Kthree$ surface. By Noether's formula $b_2(S)=22$, and
since $S$ is simply connected (being diffeomorphic to a quartic
hypersurface), $H^2(S,\ZZ)$ is a free $\ZZ$-module of rank $22$.  The
intersection form defines a non-degenerate symmetric bilinear form on
$H^2(S,\ZZ)$, giving it the structure of a lattice, which has
signature $(3,19)$ by the Hodge index theorem. As the intersection
form is even and unimodular, this lattice is, independently of the
surface $S$, the \emph{K3 lattice}
\begin{equation}\label{K3lattice}
L_{\Kthree}:=3U \oplus 2E_8(-1)
\end{equation}
where $U$ is the hyperbolic plane (the unique even unimodular lattice
of signature $(1,1)$) and $E_8$ is the unique even unimodular positive
definite rank $8$ lattice.  For any lattice $L$ and $m\in\ZZ$, the
notation $L(m)$ indicates that the quadratic form is multiplied by
$m$, so $E_8(-1)$ is negative definite. 

It was proven by Siu~\cite{Siu} that every $\Kthree$ surface is
K\"ahler. Today, it is known that a compact complex surface is
K\"ahler if and only if its first Betti number is even \cite[Theorem
  IV.3.1]{BHPV}, which immediately implies Siu's result. Hence
$H^2(S,\CC)$ has a Hodge decomposition
\begin{equation*}
H^2(S,\CC)=H^{2,0}(S) \oplus H^{1,1}(S) \oplus H^{0,2}(S).
\end{equation*}
Since $K_S\imic\cO_S$ it follows that $H^{2,0}(S)=H^0(S,K_S)$ is
$1$-dimensional and thus $H^{1,1}(S)$ has dimension $20$.

Since the second cohomology has no torsion we can, via the universal
coefficient theorem, consider $H^2(S,\ZZ)$ as a lattice in
$H^2(S,\CC)$.  The \emph{N\'eron-Severi group} is the intersection
$\NS(S)=H^2(S,\ZZ) \cap H^{1,1}(S)$, which, in this case, coincides
with the Picard group $\Pic(S)$. The \emph{transcendental lattice} is
defined as the smallest lattice whose complexification contains a
generator $\omega$ of $H^0(S,K_S)$.  If the intersection pairing on
$\NS(S)$ is nondegenerate, e.g.\ if $S$ is projective, then $T(S)$ is
the orthogonal complement in $H^2(S,\ZZ)$ of the N\'eron-Severi group.

Note that for a general $\Kthree$ surface $S$ we have no algebraic
classes, i.e.\ $T(S)=H^2(S,\ZZ)$.  The \emph{Picard number} $\rho(S)$
of $S$ is the rank of the N\'eron-Severi group $\NS(S)$.

For future use we also need the \emph{K\"ahler cone} of a $\Kthree$
surface, which is the cone of classes of K\"ahler $(1,1)$-forms. This
lives in $H^{1,1}(S,\RR)=H^{1,1}(S) \cap H^2(S,\RR)$.  The restriction
of the intersection product to $H^{1,1}(S,\RR)$ has signature
$(1,19)$.  Let
\begin{equation*}
\cC_S\subset \{ x \in H^{1,1}(S,\RR) \mid (x,x) >0 \}
\end{equation*}
be the connected component that contains one (and hence
all) K\"ahler classes. This is called the \emph{positive cone} of $S$.

A class in $H^2(S,\ZZ)$ is called \emph{effective} if it is represented
by an effective divisor.  By a \emph{nodal} class we mean the class
$\delta$ of an effective divisor $D$ of self-intersection $D^2=-2$.
We denote by $\Delta$ the set of all nodal classes.  Every nodal class
$\delta \in \Delta$ defines a reflection
\begin{equation*}
s_\delta\colon  H^2(S,\ZZ) \to H^2(S,\ZZ)
\end{equation*}
given by $s_\delta(x)=x+(x,\delta)\delta$ and called the
\emph{Picard-Lefschetz reflection} defined by $\delta$. We shall
denote the $\RR$- and $\CC$-linear extensions of $s_\delta$ by the
same symbol. Clearly $s_\delta$ is the reflection in the hyperplane
$H_\delta$ orthogonal to $\delta$.  The set of effective classes on
$S$ is the semi-group generated by the integral points in the closure
of $\cC_S$ and the nodal classes.  The connected components of the set
$\cC_S \setminus \bigcup_{\delta \in \Delta} H_\delta$ are called the
\emph{chambers} of $\cC_S$. The chamber
\begin{equation}\label{kaehlercone}
\cC_S^+= \{ x \in \cC_S \mid (x,\delta)>0 \mbox{ for all effective } \delta\in\Delta \}
\end{equation}
is equal to the K\"ahler cone of $S$ according to \cite[Corollary VIII.3.9]{BHPV}.

\subsection{Markings and the period map}\label{subsect:markings}

A \emph{marking} of a $\Kthree$ surface is an isometry $\phi\colon
H^2(S, \ZZ) \to L_{\Kthree}$ and we refer to a pair $(S,\phi)$ as a
\emph{marked} $\Kthree$ surface. An isomorphism between marked
$\Kthree$ surfaces is an isomorphism between the surfaces that
commutes with the markings. If $\omega$ is a non-zero $2$-form on
$S$ then $\CC \phi(\omega)= \phi(H^{2,0}(S))$ is a line in the complex
vector space $L_{\Kthree}\otimes \CC$.

For any indefinite lattice $L$ we define
\begin{equation}\label{generalperioddomain}
\Omega_L=\{ [x] \in \PP(L_{\Kthree} \otimes \CC) \mid (x,x)=0, \,
(x,\bar{x}) >0 \}.
\end{equation}
In the case of the $\Kthree$ lattice, $\Omega=\Omega_{L_{\Kthree}}$ is a
connected complex manifold of dimension $20$, called the \emph{period
  domain of K3 surfaces}.  Since $(\omega,\omega)=0$ and
$(\omega,\bar{\omega})>0$ it follows that $[\phi(\omega)] \in
\Omega$. This is the \emph{period point} of the marked $\Kthree$
surface $(S,\phi)$.

Let $p\colon \cS \to U$ be a flat family of $\Kthree$ surfaces over
some sufficiently small contractible open set $U$.  If $\phi_0 \colon
H^2(S_0,\ZZ) \to L_{\Kthree}$ is a marking then this can be extended
to a marking $\phi_U \colon R^2p_{*} \ZZ_U \to (L_{\Kthree})_U$ where
$(L_{\Kthree})_U$ is the constant sheaf with fibre $L_{\Kthree}$ on
$U$. This defines a holomorphic map $\pi_U\colon U \to \Omega$, called
the \emph{period map} defined by the family $p\colon \cS \to U$.

\subsection{The Torelli theorem}\label{subsect:K3torelli}

The Torelli problem asks how much information about an algebraic
variety can be reconstructed from its Hodge structure.  In the case of
$\Kthree$ surfaces this means whether one can recover a $\Kthree$
surface $S$ from a period point.  This question can be made precise in
different ways. As we shall see, one can prove a very strong form of
the Torelli theorem in the case of $\Kthree$ surfaces.

We start by discussing the local Torelli theorem. For this let
$p\colon \cS \to U$ be a representative of the Kuranishi family (or
versal deformation) of $S$. Since $H^0(S,T_S)=H^2(S,T_S)=0$ the base
space of the Kuranshi family is smooth of dimension
$h^1(S,T_S)=h^{1,1}(S)=20$. Choosing any marking of the central fibre
of the Kuranishi family defines a marking for the entire family (we
shall choose $U$ sufficiently small and contractible) and hence a
period map $\pi\colon U \to \Omega$.

\begin{theorem}\label{thm:localtorelliK3}\emph{(Local Torelli)}
The base space of the Kuranishi family is smooth of dimension $20$. It
is universal for all points in a sufficiently small neighbourhood of
the origin. The differential of the period map is an isomorphism and
thus the period map is a local isomorphism.
\end{theorem}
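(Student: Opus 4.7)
The plan is to establish the three assertions in turn by combining standard Kodaira--Spencer deformation theory with Griffiths' description of the differential of the period map. For the smoothness statement and the dimension count, the key input is the vanishing of the obstruction space $H^2(S,T_S)$. Since $K_S \cong \cO_S$, contraction with a nowhere vanishing $2$-form $\omega$ gives an isomorphism $T_S \cong \Omega^1_S$, so Serre duality yields $H^2(S,T_S) \cong H^0(S,\Omega^1_S \otimes K_S)^* = H^0(S,\Omega^1_S)^*$, which vanishes because $h^{0,1}(S) = h^{1,0}(S) = q(S) = 0$ on a K3 surface. Deformations are therefore unobstructed, and the base of the Kuranishi family is smooth of dimension $\dim H^1(S,T_S) = \dim H^1(S,\Omega^1_S) = h^{1,1}(S) = 20$.

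For universality in a neighbourhood of the origin, I would invoke the parallel vanishing $H^0(S,T_S) \cong H^0(S,\Omega^1_S) = 0$: there are no infinitesimal automorphisms of $S$. Since any sufficiently close deformation is again a K3 surface and $h^0(-,T)$ is upper semicontinuous, the same vanishing holds at nearby fibres, and a standard result then upgrades the Kuranishi family from versal to universal on a small enough open neighbourhood of the origin.

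The heart of the proof is the assertion about $d\pi$. At the period point $[\phi(\omega)]$, the tangent space to $\Omega \subset \PP(L_{\Kthree} \otimes \CC)$ is canonically identified with $\Hom(\CC\omega,\, \omega^\perp/\CC\omega)$, and under the marking $\phi$ this becomes $\Hom(H^{2,0}(S),\, H^{1,1}(S))$, of dimension $20$. By Griffiths' general formula for the derivative of a period map, $d\pi$ sends a Kodaira--Spencer class $\theta \in H^1(S,T_S)$ to the contraction $\omega \mapsto \theta \lrcorner \omega \in H^1(S,\Omega^1_S) = H^{1,1}(S)$. The decisive observation is that contraction with the nowhere vanishing $\omega$ is itself an isomorphism $T_S \isoto \Omega^1_S$ of sheaves, inducing an isomorphism $H^1(S,T_S) \isoto H^1(S,\Omega^1_S)$; since $H^{2,0}(S) = \CC\omega$ is one-dimensional, this forces $d\pi$ to be an isomorphism between two $20$-dimensional vector spaces, and the final assertion follows from the inverse function theorem. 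The main obstacle is not algebraic but bookkeeping: one must carefully identify the tangent space of $\Omega$ as above and verify the Griffiths formula for $d\pi$, which I would do by differentiating a relative holomorphic $2$-form along the Kuranishi family and projecting onto the $(1,1)$-component of the Hodge decomposition.
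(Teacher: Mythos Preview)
Your argument is correct and is precisely the standard proof of local Torelli for $\Kthree$ surfaces: the vanishing $H^0(S,T_S)=H^2(S,T_S)=0$ via the isomorphism $T_S\cong\Omega^1_S$ induced by $\omega$, together with Griffiths' identification of $d\pi$ with cup-and-contract, yields all three assertions. The paper does not supply its own proof but simply refers to \cite[Theorem~VIII.7.3]{BHPV}, where essentially the same argument you have written appears.
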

\begin{proof}
See \cite[Theorem VIII.7.3]{BHPV}.
\end{proof}

In order to discuss the global Torelli theorem we need the notion of
Hodge isometry.  Let $S$ and $S'$ be $\Kthree$ surfaces. An
isomorphism of $\ZZ$-modules $\Phi\colon H^2(S,\ZZ) \to H^2(S',\ZZ)$
is called a \emph{Hodge isometry} if it is an isometry and if its
$\CC$-linear extension $\Phi_{\CC}\colon H^2(S,\CC) \to H^2(S',\CC)$
preserves the Hodge decomposition. It is moreover called
\emph{effective} if it maps $\cC_S$ to $\cC_S'$ and maps effective
classes to effective classes.

\begin{proposition}\label{prop:effectiveHodgeisometry}
Let $S$ and $S'$ be $\Kthree$ surfaces. Then the following are
equivalent for a Hodge isometry $\Phi\colon H^2(S,\ZZ) \to
H^2(S',\ZZ)$:
\begin{enumerate}
\item[\rm(i)] $\Phi$ is effective,
\item[\rm(ii)] $\Phi(\cC^+_S) \subset \cC^+_{S'}$, i.e.\ $\Phi$ maps
  the K\"ahler cone of $S$ into that of $S'$.
\item[\rm(iii)] $\Phi$ maps one element of the K\"ahler cone of $S$
  into the K\"ahler cone of the surface $S'$.
\end{enumerate}
\end{proposition}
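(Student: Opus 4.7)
The implication (ii)$\Rightarrow$(iii) is immediate. For the other two implications the key inputs are two classical facts about a $\Kthree$ surface $X$: (a) by Riemann--Roch and Serre duality, $h^0(\cO_X(D))+h^0(\cO_X(-D))\geq\chi(\cO_X(D))=2+D^2/2$, so any class $D$ with $D^2=-2$ has either $D$ or $-D$ effective; and (b) a K\"ahler class on $X$ pairs strictly positively with every non-zero effective class, since it integrates positively over every irreducible curve.

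For (i)$\Rightarrow$(ii), my main step would be to show that $\Phi$ restricts to a bijection $\Delta(S)\to\Delta(S')$. Forward: if $\delta\in\Delta(S)$ then $\Phi(\delta)$ has square $-2$ and is effective by hypothesis, hence nodal. Backward: given $\delta'\in\Delta(S')$, fact~(a) applied to $\Phi^{-1}(\delta')$ says that either $\Phi^{-1}(\delta')$ or its negative is effective on $S$; the second alternative is ruled out, for otherwise $\Phi$ would carry the effective class $-\Phi^{-1}(\delta')$ to $-\delta'$, impossible by fact~(b) applied to a K\"ahler class on $S'$ together with the effective class $\delta'$. Now for any $x\in\cC_S^+$, the image $\Phi(x)$ lies in $\cC_{S'}$ since $\Phi$ sends $\cC_S$ to $\cC_{S'}$ by definition of effectiveness, and for every $\delta'\in\Delta(S')$ we have $(\Phi(x),\delta')=(x,\Phi^{-1}(\delta'))>0$ by the characterisation~(\ref{kaehlercone}); hence $\Phi(x)\in\cC_{S'}^+$.

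For (iii)$\Rightarrow$(i), let $\omega\in\cC_S^+$ with $\Phi(\omega)\in\cC_{S'}^+$. Since $\Phi$ is a Hodge isometry, it maps $H^{1,1}(S,\RR)$ isometrically onto $H^{1,1}(S',\RR)$ and hence carries the positive cone (two components) to the positive cone; the hypothesis $\Phi(\omega)\in\cC_{S'}$ pins down the component, so $\Phi(\cC_S)\subset\cC_{S'}$. For any $\delta\in\Delta(S)$ we have $(\Phi(\omega),\Phi(\delta))=(\omega,\delta)>0$ by fact~(b) applied on $S$; combined with fact~(a) this forces $\Phi(\delta)$ to be effective, so $\Phi(\Delta(S))\subset\Delta(S')$. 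Using the description of the effective semigroup of $S$ as generated by integral points of $\bar{\cC}_S$ and by $\Delta(S)$, together with the fact that integral points in $\bar{\cC}_{S'}$ are automatically effective on $S'$ (Riemann--Roch once more), effectiveness of $\Phi$ follows.

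The one genuinely delicate point is the backward direction of the nodal bijection in (i)$\Rightarrow$(ii): $\Phi$ is assumed to preserve effectiveness but $\Phi^{-1}$ is not, so one cannot simply transport effective classes backwards and must instead reconstruct preimages of nodal classes from Riemann--Roch and K\"ahler positivity. Everything else is bookkeeping with the positive cone and the effective semigroup.
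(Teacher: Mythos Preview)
Your argument is correct. The paper itself does not give a proof of this proposition but simply refers to \cite[Proposition VIII.3.11]{BHPV}, so there is no in-paper approach to compare against; your reconstruction via Riemann--Roch on $(-2)$-classes, K\"ahler positivity, and the description of the effective semigroup is the standard one and all steps---including the ``delicate'' backward inclusion $\Phi^{-1}(\Delta(S'))\subset\Delta(S)$ that you rightly flag---go through as written.
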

\begin{proof}
See \cite[Proposition VIII.3.11]{BHPV}.
\end{proof}

The crucial result for the theory of moduli of $\Kthree$ surfaces is
the following.

\begin{theorem}\label{thm:strongtorelli}\emph{(Strong Torelli)}
Let $S$ and $S'$ be two $\Kthree$ surfaces and suppose $\Phi\colon
H^2(S',\ZZ) \to H^2(S,\ZZ)$ is an effective Hodge isometry.  Then
there is a unique isomorphism $f\colon S \to S'$ that induces $\Phi$,
i.e.\ such that $\Phi=f^*$.
\end{theorem}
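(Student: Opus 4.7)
The plan is to prove uniqueness and existence separately, following the classical Piatetsky-Shapiro--Shafarevich strategy of reducing to the case of Kummer surfaces and then extending by a density argument. Uniqueness is comparatively standard: if $f_1, f_2\colon S\to S'$ both induce $\Phi$, then $g = f_1^{-1}\circ f_2 \in \Aut(S)$ acts trivially on $H^2(S,\ZZ)$, hence fixes every K\"ahler class and the holomorphic $2$-form. Thus $g$ preserves a polarisation and lies in the finite group of polarised automorphisms; being symplectic with trivial action on $H^2$, an application of the holomorphic Lefschetz fixed-point formula then forces $g=\id$.

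For existence I would first handle the Kummer case. When $S=\Km(T)$ and $S'=\Km(T')$, the orthogonal complement in $H^2(\Km(T),\ZZ)$ of the Kummer lattice (the primitive sublattice of rank $16$ generated by the classes of the exceptional $(-2)$-curves together with appropriate half-sums) is Hodge-isometric to $H^2(T,\ZZ)(2)$. The Kummer lattice is characterised intrinsically by Nikulin's criterion, and an effective Hodge isometry $\Phi$ must send nodal classes to nodal classes, hence respect this decomposition. The induced Hodge isometry between the second cohomologies of the tori comes, by the classical Torelli theorem for complex $2$-tori, from an isomorphism $T\to T'$ (unique up to $\iota$ and translations), which descends to the desired $f\colon \Km(T)\to\Km(T')$; effectiveness pins down the correct representative among the finitely many candidates.

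For the general case, one uses that periods of Kummer surfaces are dense in $\Omega_{L_{\Kthree}}$. Given $S$ and $S'$, approximate their periods by Kummer periods $(S_t,\phi_t)$, $(S'_t,\phi'_t)$; by local Torelli (Theorem~\ref{thm:localtorelliK3}) and surjectivity of the period map, these sit as nearby fibres in smooth marked analytic families $\cS\to U$, $\cS'\to U$ with central fibres $S$ and $S'$, and the Kummer case supplies isomorphisms $f_t\colon S_t\to S'_t$ for $t$ in a dense subset of $U$. The graphs $\Gamma_{f_t}\subset S_t\times S'_t$ have bounded volume and, via a relative Douady space argument, admit a limit cycle $\Gamma\subset S\times S'$.

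The main obstacle is to show that $\Gamma$ is the graph of a genuine isomorphism $f\colon S\to S'$ rather than a degenerate correspondence. Here the effectiveness of $\Phi$ is essential: any limit correspondence with positive-dimensional exceptional components would contract some $(-2)$-curve $C$, and the induced map on $H^2$ would then equal $\Phi$ composed with a nontrivial product of Picard-Lefschetz reflections $s_{[C]}$. But such a composition cannot send the K\"ahler cone of $S$ into that of $S'$, contradicting condition (ii) of Proposition~\ref{prop:effectiveHodgeisometry}.
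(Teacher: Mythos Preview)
Your proposal follows essentially the same strategy as the paper, which does not give a self-contained proof but sketches the Piatetskii-Shapiro--Shafarevich / Burns--Rapoport approach (as in \cite[VIII.7--VIII.11]{BHPV}): Torelli for Kummer surfaces, density of Kummer periods in $\Omega$, and passage to the limit in the Barlet topology. Your outline of each of these three steps is accurate, and your use of effectiveness to rule out degenerate limit cycles (via Picard--Lefschetz reflections contradicting Proposition~\ref{prop:effectiveHodgeisometry}) is the right mechanism.

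One small point: in the approximation step you invoke surjectivity of the period map (Theorem~\ref{thm:surjectivitytorelli}) to place $S$ and $S'$ in marked families. This is unnecessary and, in the paper's logical order, not yet available: surjectivity is stated and proved \emph{after} strong Torelli. All you need is the Kuranishi family of $S$ (and of $S'$) together with local Torelli (Theorem~\ref{thm:localtorelliK3}), which identifies the two bases via $\Phi$ and lets you find the dense set of Kummer points there. Otherwise your argument matches the paper's.
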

A proof of this theorem can be found in \cite[Sections
  VIII.7--VIII.11]{BHPV}. Very roughly speaking the idea is to prove
the Torelli theorem for projective Kummer surfaces first. The second
step then consists of showing that the period points of marked Kummer
surfaces are dense (in the complex topology) in the period domain
$\Omega$. The final step is then to prove the Torelli theorem for all
$\Kthree$ surfaces by approximating them by Kummer surfaces and taking
suitable limits in the Barlet topology.

The following weaker form of the Torelli theorem is still useful.

\begin{theorem}\label{thm:weaktorelli}\emph{(Weak Torelli theorem)}
Two $\Kthree$ surfaces $S$ and $S'$ are isomorphic if and only if
there is a Hodge isometry $H^2(S',\ZZ) \to H^2(S,\ZZ)$.
\end{theorem}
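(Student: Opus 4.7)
\emph{Proof proposal.} The forward direction is essentially tautological: if $f\colon S \isoto S'$ is an isomorphism, then $f^*\colon H^2(S',\ZZ) \to H^2(S,\ZZ)$ is an isometry that preserves the Hodge decomposition, hence is a Hodge isometry. The substance of the theorem is the converse: given a Hodge isometry $\Phi\colon H^2(S',\ZZ) \to H^2(S,\ZZ)$, we must produce an isomorphism $S \to S'$. The overall plan is to modify $\Phi$ by composing with lattice automorphisms that are themselves Hodge isometries (namely $\pm \id$ and Picard-Lefschetz reflections) in order to replace it by an \emph{effective} Hodge isometry, and then invoke the Strong Torelli theorem (Theorem~\ref{thm:strongtorelli}) to obtain an isomorphism.

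The first adjustment concerns the positive cone. Since $\Phi$ preserves the Hodge decomposition, its $\RR$-linear extension restricts to an isometry $H^{1,1}(S',\RR) \to H^{1,1}(S,\RR)$ for the signature $(1,19)$ form. Therefore $\Phi$ sends the cone $\{x \in H^{1,1}(S',\RR) \mid (x,x)>0\}$, which has two connected components, onto the corresponding cone in $H^{1,1}(S,\RR)$. Thus $\Phi(\cC_{S'})$ is either $\cC_S$ or $-\cC_S$. Since $-\id_{L_{\Kthree}}$ is manifestly a Hodge isometry, by replacing $\Phi$ with $-\Phi$ if necessary, we may arrange $\Phi(\cC_{S'}) = \cC_S$.

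The second, and subtler, step is to move the K\"ahler cone of $S'$ into that of $S$. Because $\Phi$ is a Hodge isometry it carries $\NS(S')$ to $\NS(S)$ and preserves the quadratic form, hence it maps $(-2)$-classes to $(-2)$-classes. On a $\Kthree$ surface every $(-2)$-class or its negative is nodal (by Riemann-Roch), and $H_\delta = H_{-\delta}$, so $\Phi$ sends the hyperplane arrangement $\bigcup_{\delta\in\Delta_{S'}} H_\delta$ onto $\bigcup_{\delta\in\Delta_S} H_\delta$. Consequently $\Phi(\cC^+_{S'})$ is one chamber of $\cC_S$ in the sense of \S\ref{subsect:lattices}. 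The Weyl group $W_S$ generated by the Picard-Lefschetz reflections $s_\delta$ for nodal $\delta$ acts (simply) transitively on the set of chambers, so there exists $w \in W_S$ with $w(\Phi(\cC^+_{S'})) = \cC^+_S$. Each reflection $s_\delta$ is a Hodge isometry of $H^2(S,\ZZ)$ (since $\delta \in \NS(S) \subset H^{1,1}(S)$), hence so is $w$, and therefore $w\circ\Phi$ is a Hodge isometry sending the K\"ahler cone of $S'$ into the K\"ahler cone of $S$. By the equivalence (iii)$\Rightarrow$(i) of Proposition~\ref{prop:effectiveHodgeisometry}, $w\circ\Phi$ is effective.

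Finally, we apply the Strong Torelli theorem (Theorem~\ref{thm:strongtorelli}) to the effective Hodge isometry $w\circ\Phi\colon H^2(S',\ZZ) \to H^2(S,\ZZ)$ to obtain an isomorphism $f\colon S \to S'$ with $f^* = w\circ\Phi$. In particular $S$ and $S'$ are isomorphic, completing the proof. The main obstacle in this argument is the second step: one must know both that $\Phi$ carries the hyperplane arrangement on $S'$ to the hyperplane arrangement on $S$ and that the Weyl group of nodal reflections on $S$ acts transitively on the chambers of $\cC_S$; the rest is a routine matter of invoking the already-stated results.
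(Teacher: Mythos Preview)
Your proof is correct and follows essentially the same route as the paper's: adjust $\Phi$ by a sign and an element of the Weyl group $W_S$ generated by Picard--Lefschetz reflections so that it becomes effective (using that the closure of the K\"ahler cone is a fundamental domain for the $W_S$-action on $\cC_S$, equivalently that $W_S$ acts transitively on chambers), then invoke the Strong Torelli theorem. The only cosmetic difference is that you fix the sign first and then apply $w$, whereas the paper phrases it as choosing $w$ and a sign simultaneously; since $-\id$ is central this is immaterial.
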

\begin{proof}
Assume that $\Phi\colon H^2(S',\ZZ) \to H^2(S,\ZZ)$ is a Hodge
isometry. Let $W_S$ be the group of isometries of $H^2(S,\ZZ)$
generated by the Picard-Lefschetz reflections. This group acts on the
positive cone $\cC_S$ properly discontinuously. The closure of the
K\"ahler cone in the positive cone is a fundamental domain for this
action (cf. \cite[Proposition VIII.3.10]{BHPV}).  Hence we can choose
an element $w \in W_S$ such that for a suitable choice of sign $\pm w
\circ \Phi$ is an effective Hodge isometry by
Proposition~\ref{prop:effectiveHodgeisometry} and thus is induced by
an isomorphism $f\colon S \to S'$ by Theorem~\ref{thm:strongtorelli}.
\end{proof}

\subsection{The universal family of marked $\Kthree$ surfaces}\label{subsect:kuranishi}

For each $\Kthree$ surface $S$ we choose a representative $p\colon \cS
\to U$ of the Kuranishi family with $U$ contractible and sufficiently
small such that the following hold:
\begin{enumerate}
\item[\rm(i)] $p\colon  \cS \to U$  is the Kuranishi family for each
  point $s \in U$.
\item[\rm(ii)] If $\phi\colon  R^2 p_*\ZZ_S \to (L_{\Kthree})_U$ is a
  marking, then the associated period map $\pi\colon  U \to \Omega$ is
  injective.
\end{enumerate}

We consider all marked Kuranishi families, i.e.\ pairs $(p\colon \cS
\to U,\phi)$, having the above properties.  We can glue the various
copies of $U$ by identifying those points where the marked $\Kthree$
surfaces are isomorphic.  This defines a space $M_1$ all of whose
points have neighbourhoods isomorphic to some $U$. Hence $M_1$ is a
$20$-dimensional analytic manifold, but possibly not Hausdorff. It is
also possible to show (cf. \cite[Theorem VIII.10.6]{BHPV}) that one
can glue the Kuranishi families to obtain a global family of $\Kthree$
surfaces over $M_1$.

It turns out that the space $M_1$ is indeed not Hausdorff. This
follows from an example due to Atiyah, also referred to as Atiyah's
flop and nowadays crucial in higher-dimensional birational geometry.
Consider, for example, the following $1$-parameter family $S_t$ of
quartic (and hence $\Kthree$) surfaces in $\PP^3$, which is given in
affine coordinates by
\begin{equation*}
x^2(x^2-2) + y^2(y^2-2) + z^2(z^2-2) = 2t^2.
\end{equation*}
Let the parameter $t$ vary in a small neighbourhood $B$ of the
origin.  For $t \neq 0$ these surfaces are smooth, whereas the surface
$S_0$ has an ordinary double point at $(0,0,0)$. This is also the only
singularity, again an ordinary double point, of the total space $\cS$.
Blowing up this node one obtains a smooth $3$-fold $\Tilde{\cS}$
which contains a quadric $\PP^1 \times \PP^1$ as exceptional divisor
$E$.  The proper transform $\hat{S_0}$ of $S_0$ is a smooth $\Kthree$
surface intersecting the exceptional divisor $E$ in a rational curve
of bidegree $(1,1)$.  This is a nodal curve on $\hat{S_0}$. The two
rulings on $E$ can each be contracted giving rise to smooth
$3$-dimensional spaces $p_1\colon \cS_1 \to B$ and $p_2\colon \cS_2
\to B$.  These families are by construction identical over $B
\setminus 0$. They are, however, not identical over all of $B$, since
the identity on $\cS \setminus S_0$ would, otherwise, have to extend
to an automorphism of the total space acting non-trivially on the
tangent cone of the double point, which is clearly impossible. Now
choose a marking for $p_1$. This also defines a marking for $p_2$.
Since the families coincide outside the origin, the period maps
$\pi_1$ and $\pi_2$ also coincide away from $0$.  However, the
markings differ for the central fibre (namely by the Picard-Lefschetz
reflection defined by the nodal curve on $\hat{S_0})$. This shows that
$M_1$ cannot be Hausdorff.  In fact all non-Hausdorff behaviour comes
from the existence of different resolutions of double points in
families: see \cite[Theorem 1${}^\prime$ and Section 7]{BR}.

There is another formulation of the Torelli theorem, which we will now
describe.  For this we consider
\begin{equation*}
K\Omega=\{ (\kappa,[\omega]) \in (L_{\Kthree} \otimes\RR) \times
\Omega \mid (\kappa,\re(\omega))=(\kappa,\im(\omega))=0, \,
(\kappa,\kappa) > 0 \}
\end{equation*}
and define $E(\kappa,\omega)$ as the oriented $3$-dimensional space
spanned by the ordered basis $\{\kappa, \re(\omega),\im(\omega)\}$.
By mapping each point $(\kappa,[\omega])$ to the space
$E(\kappa,\omega)$ we obtain a fibration
\begin{equation*}
\Pi\colon  K\Omega \to \Gr^+(3,L_{\Kthree} \otimes\RR)
\end{equation*}
over the Grassmannian $\Gr^+(3,L_{\Kthree} \otimes\RR)$ of oriented
$3$-planes in $L_{\Kthree} \otimes\RR $ on which the form
$(\quad,\quad)$ is positive definite. This is an $\SO(3,\RR)$-fibre
bundle and the projection $\Pi$ is equivariant with respect to the
action of the orthogonal group $\Aut(L_{\Kthree} \otimes\RR )\cong
\Orth(3,19)$. We define
\begin{equation*}
(K\Omega)^0:=\{(\kappa,[\omega]) \in K\Omega \mid (\kappa,d) \neq 0 \mbox{ for } d \in L_{\Kthree},
(d,d)=-2, (\omega,d)=0 \}.
\end{equation*}
This is an open subset of $K\Omega$.

For every point $\omega \in \Omega$ we consider the cone
\begin{equation*}
C_{\omega} =\{x \in L_{\Kthree} \otimes\RR \mid (x,\omega)=0, \, (x,x)>0 \}.
\end{equation*}
This has two connected components and varies differentiably with
$\omega$. Since $\Omega$ is connected and simply connected, we can
globally choose one of these components, say $C_{\omega}^+$. Let
$(S,\phi)$ be a marked $\Kthree$ surface and let $\kappa \in
H^{1,1}(S,\RR)$ be a K\"ahler class. Then we say that $(S,\kappa)$, or
more precisely $((S,\phi),\kappa)$, is a \emph{marked pair} if
$\phi_{\CC}(\kappa) \in C_{\omega}^+$, where $\omega$ is the period
point defined by $(S,\phi)$.

Let $M_2'$ be the real analytic vector bundle with fibre
$H^{1,1}(S_t)$ over the base $M_1$ of the universal family of marked
$\Kthree$ surfaces and let $M_2 \subset M_2'$ be the subset of
K\"ahler classes. This is open by \cite[Theorem 15]{KS}. In
particular, $M_2$ is real analytic of dimension $60=40+20$, where $40$
is the real dimension of the base $M_1$ and $20$ is the dimension of
the fibre.  We can now define a real-analytic map
\begin{equation*}
\pi_2\colon  M_2 \to (K\Omega)^0
\end{equation*}
by mapping $\kappa \in H^{1,1}(S_t), t \in M_1$ to $\pi_2(\kappa)=
(\phi_{\CC}(\kappa), \pi(t))$.  This is called the \emph{refined period
  map}.  In this way we obtain the obvious commutative diagram
\begin{equation*}
\begin{CD}
M_2  @> \pi_2 >>
{(K\Omega)}^0  \\
@VV{}V   @VV{}V\\\
M_1 @> \pi_1 >>  \Omega.
\end{CD}
\end{equation*}

The Torelli theorem can now be reformulated as follows.
\begin{theorem}\label{thm:refinedtorelli}
The map $\pi_2$ is injective (and thus $M_2$ is Hausdorff).
\end{theorem}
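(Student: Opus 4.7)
The plan is to reduce the statement directly to the Strong Torelli Theorem (Theorem~\ref{thm:strongtorelli}) via Proposition~\ref{prop:effectiveHodgeisometry}. Suppose two marked pairs $((S,\phi),\kappa)$ and $((S',\phi'),\kappa')$ have the same image under $\pi_2$. Unpacking the definition, this means that their period points agree, $\pi(S,\phi)=\pi(S',\phi')=[\omega]$, and that their marked K\"ahler classes agree in the lattice: $\phi_{\CC}(\kappa)=\phi'_{\CC}(\kappa')$. I would then form the isometry
\[
\Phi=(\phi')^{-1}\circ\phi\colon H^2(S,\ZZ)\To H^2(S',\ZZ).
\]
The equality of period points forces $\Phi_\CC$ to send $H^{2,0}(S)$ onto $H^{2,0}(S')$ and hence to preserve the Hodge decomposition, so $\Phi$ is a Hodge isometry; and the equality of marked K\"ahler classes gives $\Phi_\CC(\kappa)=\kappa'$.

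By the very definition of a marked pair, both $\kappa$ and $\kappa'$ lie in the K\"ahler cones of $S$ and $S'$ respectively. Hence $\Phi$ sends one element of $\cC_S^+$ into $\cC_{S'}^+$, which is criterion (iii) of Proposition~\ref{prop:effectiveHodgeisometry}, so $\Phi$ is an effective Hodge isometry; the same is then true of $\Phi^{-1}\colon H^2(S',\ZZ)\to H^2(S,\ZZ)$. Applying the Strong Torelli Theorem~\ref{thm:strongtorelli} to $\Phi^{-1}$ produces a unique isomorphism $f\colon S\to S'$ with $f^*=\Phi^{-1}=\phi^{-1}\circ\phi'$, i.e.\ $\phi'=\phi\circ f^{*}$. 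This says precisely that $(S,\phi)$ and $(S',\phi')$ are isomorphic as marked $\Kthree$ surfaces, so they represent the same point of $M_1$. Moreover $f^*(\kappa')=\Phi^{-1}(\kappa')=\kappa$, and therefore the two marked pairs also define the same point of $M_2$. This establishes injectivity.

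For the parenthetical statement, Hausdorffness of $M_2$ is then automatic: $(K\Omega)^0$ is Hausdorff as an open subset of a smooth real-analytic manifold, and $\pi_2$ is real-analytic and now injective, so for distinct points in $M_2$ we can separate their images in $(K\Omega)^0$ and pull the separating neighbourhoods back. The proof is in essence a bookkeeping exercise on top of Strong Torelli; there is no serious analytic obstacle. The one point that requires care is the direction of the Hodge isometry when invoking Theorem~\ref{thm:strongtorelli}: that theorem is stated for isometries $H^2(S',\ZZ)\to H^2(S,\ZZ)$, so one must apply it to $\Phi^{-1}$, not to $\Phi$, in order to produce a morphism in the direction $S\to S'$ that identifies the two marked pairs.
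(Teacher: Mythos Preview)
Your argument is correct and is precisely the intended one: the paper does not spell out a proof here but presents the statement as a reformulation of the Strong Torelli theorem, and your reduction via Proposition~\ref{prop:effectiveHodgeisometry}(iii) followed by Theorem~\ref{thm:strongtorelli} is exactly how that reformulation is justified. Your care with the direction of $\Phi$ versus $\Phi^{-1}$ and the Hausdorffness remark are both fine.
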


Using this one can finally prove that the period map is surjective.
\begin{theorem}\label{thm:surjectivitytorelli}\emph{(Surjectivity of the period map)}
The refined period map $\pi_2$ is surjective. In particular, every
point of $\Omega$ appears as the period point of some marked $\Kthree$
surface.
\end{theorem}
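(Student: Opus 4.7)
My strategy is to prove that $\pi_2(M_2)$ is a non-empty clopen subset of the connected space $(K\Omega)^0$. Openness will follow almost formally from the earlier results, while the main obstacle will be closedness.

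For openness, I would first check that $\pi_2$ is a local diffeomorphism between the real-analytic manifolds $M_2$ and $(K\Omega)^0$, both of real dimension $60$. By Theorem~\ref{thm:localtorelliK3}, $\pi_1 \colon M_1 \to \Omega$ is a local isomorphism; fibrewise over $t \in M_1$, the refined map is the restriction of the $\RR$-linear isomorphism $\phi_\CC \colon H^{1,1}(S_t,\RR) \isoto \{x \in L_{\Kthree}\otimes\RR : (x,\omega)=0\}$ to the open K\"ahler cone. Combined with the injectivity statement Theorem~\ref{thm:refinedtorelli}, this makes $\pi_2$ an open embedding. Connectedness of $(K\Omega)^0$ is easy to verify: $\Omega$ is connected (as noted after~\eqref{generalperioddomain}), the fibres of $K\Omega \to \Omega$ are the connected cones $C_\omega^+$, and $(K\Omega)^0$ is obtained from $K\Omega$ by removing countably many real-analytic subvarieties of positive codimension, which preserves connectedness.

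The hard part will be closedness of $\pi_2(M_2)$ in $(K\Omega)^0$. Given a sequence $p_n = (\kappa_n,[\omega_n]) = \pi_2((S_n,\phi_n),\kappa_n')$ converging to $p_\infty = (\kappa_\infty,[\omega_\infty]) \in (K\Omega)^0$, I would produce a marked K\"ahler pair realising $p_\infty$ as follows. First, I would appeal to density in $\Omega$ of periods of marked Kummer surfaces (the same density underlying the proof sketch after Theorem~\ref{thm:strongtorelli}); combined with the already-established openness, this lets me reduce to the case in which each $S_n = \Km(A_n)$ for complex tori $A_n$ whose periods converge in the moduli of marked $2$-tori. Then, passing to a Kulikov--Persson--Pinkham semistable reduction of the resulting degenerating family (or, equivalently, taking a Barlet limit of cycles) produces a K3 limit $S_\infty$ together with a marking $\phi_\infty$ of period $[\omega_\infty]$. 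The defining condition of $(K\Omega)^0$ is decisive at this step: excluding the walls $(\kappa,d)=0$ for $(-2)$-vectors $d$ orthogonal to $\omega$ is precisely what guarantees that $\phi_{\infty,\CC}^{-1}(\kappa_\infty)$ falls in the interior of a single K\"ahler chamber of $S_\infty$ rather than on a Picard--Lefschetz wall (cf.~\eqref{kaehlercone}). After composing $\phi_\infty$ with at most one element of the Weyl group $W_{S_\infty}$ so as to land in the distinguished chamber~\eqref{kaehlercone}, one obtains the desired marked K\"ahler pair. The ``in particular'' clause then follows by projecting $(K\Omega)^0 \to \Omega$ and noting that every $[\omega]\in\Omega$ admits some $\kappa$ satisfying the open conditions cutting out $(K\Omega)^0$.
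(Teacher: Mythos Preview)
The paper does not prove this theorem directly but refers to \cite[Section~VIII.14]{BHPV}, where the argument given is Looijenga's. That proof is differential-geometric: one uses Yau's solution of the Calabi conjecture to produce Ricci-flat metrics on K\"ahler $\Kthree$ surfaces and then exploits the associated twistor $S^2$-families, together with compactness properties of Einstein metrics, to show that the image of $\pi_2$ is closed. Your approach is instead in the spirit of Todorov's original argument, via degenerations of Kummer surfaces. Both routes are valid in principle, but yours needs more care than you give it.

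A minor point first: the fibres of $K\Omega\to\Omega$ are the full positive cones $C_\omega$, which have \emph{two} connected components, not one; since $\Omega$ is simply connected these assemble into two global components of $K\Omega$ and hence of $(K\Omega)^0$, so your connectedness claim is false as stated. This is easily patched (replacing a marking $\phi$ by $-\phi$ swaps the components, so it suffices to treat one), but it should be said.

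The real gap is in closedness. You assert that openness plus density of Kummer periods lets you reduce to Kummer surfaces $\Km(A_n)$ ``whose periods converge in the moduli of marked $2$-tori''. But this is exactly the point at issue: the map from torus periods to Kummer $\Kthree$ periods is not proper, so a sequence of Kummer periods converging in $\Omega$ can very well come from tori $A_n$ running off to the boundary of the torus period domain. If the $A_n$ did converge there would be nothing to prove and no need for Kulikov--Persson--Pinkham: the limit would simply be $\Km(A_\infty)$. The interesting (and unavoidable) case is precisely when they do not, and here you must (a)~pass from a sequence to a genuine one-parameter family over a disc, which is what the Kulikov machinery actually requires, and (b)~argue that the monodromy on $H^2$ is finite, so that after base change the central fibre is of Type~I (a smooth $\Kthree$) rather than Type~II or~III. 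Your sketch does not address either step, and these are the substantive content of the degeneration approach. The parenthetical ``or, equivalently, taking a Barlet limit of cycles'' is not a substitute: Barlet's cycle space is a different tool (used in \cite{BHPV} for the Torelli theorem itself), and invoking it here would require its own justification.
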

\begin{proof}
This is proven in \cite[Section VIII.14]{BHPV}.
\end{proof}
The surjectivity of the period map was one of the major questions in
the theory of $\Kthree$ surfaces.  A.~Todorov \cite{Tod} was the first
to give a proof; the argument given in \cite{BHPV} is due to Looijenga.

In view of Theorem~\ref{thm:weaktorelli} and
Theorem~\ref{thm:surjectivitytorelli} we now have
\begin{theorem}\label{thm:moduliunpolarised}
The set $\Orth(L_{\Kthree}) \backslash \Omega$ is in $1:1$
correspondence with the set of isomorphism classes of $\Kthree$
surfaces.
\end{theorem}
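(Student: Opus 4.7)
The plan is to produce a map $\Psi$ from isomorphism classes of $\Kthree$ surfaces to $\Orth(L_{\Kthree}) \backslash \Omega$, show it is well defined, and then invoke Theorem~\ref{thm:weaktorelli} and Theorem~\ref{thm:surjectivitytorelli} to give injectivity and surjectivity respectively.

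First I would construct $\Psi$ as follows. Given a $\Kthree$ surface $S$, pick any marking $\phi\colon H^2(S,\ZZ)\to L_{\Kthree}$ and let $\omega_S$ span $H^{2,0}(S)$; set $\Psi([S])=[\phi(\omega_S)]$ modulo $\Orth(L_{\Kthree})$. Any other marking has the form $g\circ\phi$ for $g\in\Orth(L_{\Kthree})$, so the class in the quotient is independent of the marking. Moreover if $f\colon S\isoto S'$ is an isomorphism and $\phi'$ is a marking of $S'$, then $\phi'\circ f^*$ is a marking of $S$ producing the same period point as $(S',\phi')$, so $\Psi$ descends to isomorphism classes.

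Surjectivity of $\Psi$ is immediate from Theorem~\ref{thm:surjectivitytorelli}: the commutative diagram with $\pi_2$ shows that the period map $\pi_1\colon M_1\to\Omega$ is surjective, so every $[\omega]\in\Omega$ is realised as $[\phi(\omega_S)]$ for some marked $\Kthree$ surface $(S,\phi)$, and this point maps to the $\Orth(L_{\Kthree})$-orbit of $[\omega]$.

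For injectivity, suppose $(S,\phi)$ and $(S',\phi')$ satisfy $\Psi([S])=\Psi([S'])$. Then there exists $g\in\Orth(L_{\Kthree})$ with $g\cdot[\phi(\omega_S)]=[\phi'(\omega_{S'})]$, and after replacing $\phi$ by $g\circ\phi$ we may assume the period points coincide in $\Omega$. Define
\[
\Phi=(\phi')^{-1}\circ\phi\colon H^2(S,\ZZ)\to H^2(S',\ZZ).
\]
This is an isometry of lattices sending $\CC\omega_S$ onto $\CC\omega_{S'}$, and therefore also sending $\CC\bar\omega_S$ onto $\CC\bar\omega_{S'}$; the remaining summand $H^{1,1}$ is cut out as the orthogonal complement of $H^{2,0}\oplus H^{0,2}$ under the intersection form, so $\Phi$ preserves the full Hodge decomposition and is a Hodge isometry. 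By Theorem~\ref{thm:weaktorelli}, $S\cong S'$.

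The main obstacle is not in this argument itself but in the two ingredients it uses: the deep surjectivity theorem of Todorov--Looijenga (Theorem~\ref{thm:surjectivitytorelli}) and the Torelli theorem (Theorem~\ref{thm:strongtorelli}, via its weak form). Once these are granted, the theorem is essentially a bookkeeping statement about markings modulo the action of the orthogonal group of $L_{\Kthree}$.
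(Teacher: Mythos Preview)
Your argument is correct and matches the paper's approach exactly: the paper simply notes that the theorem follows ``in view of'' Theorem~\ref{thm:weaktorelli} and Theorem~\ref{thm:surjectivitytorelli}, and you have spelled out precisely this deduction. One minor notational slip: for an isomorphism $f\colon S\to S'$ the pullback $f^*$ goes $H^2(S',\ZZ)\to H^2(S,\ZZ)$, so the marking of $S$ you want is $\phi'\circ (f^*)^{-1}$ rather than $\phi'\circ f^*$, but the intent is clear and the argument is unaffected.
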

We can thus think of $\Orth(L_{\Kthree}) \backslash \Omega$ as the
``moduli space of $\Kthree$ surfaces". It must be pointed out,
however, that the action of the group $\Orth(L_{\Kthree})$ is not well
behaved. In particular, it is not properly discontinuous. We shall now
turn to the case of polarised $\Kthree$ surfaces where we shall see
that the situation is much better.

\subsection{Moduli spaces of polarised $\Kthree$
  surfaces}\label{subsect:modulipolK3}

A \emph{polarisation} on a $\Kthree$ surface is an ample line bundle
$\cL$. Since the irregularity of $\Kthree$ surfaces is $0$ and the
Picard group has no torsion we can identify a line bundle $\cL$ with
its first Chern class $h=c_1(\cL) \in H^2(S,\ZZ)$.  An ample line
bundle $\cL$ is nef and big, and conversely a nef and big line bundle
on a $\Kthree$ surface is ample if there are no $(-2)$-curves $C$ with
$h.C=0$. This can be seen from the description of the K\"ahler cone
\eqref{kaehlercone} and Nakai's criterion, or from Reider's theorem
\cite[Theorem IV.11.4]{BHPV} or from \cite{S-D}.  Throughout, we shall
only consider primitive polarisations, i.e.\ we shall assume that the
class $h$ is non-divisible in the $\Kthree$ lattice.  The
\emph{degree} of a polarisation is $\deg(\cL)=h^2=2d$. Note that the
degree is always even.

We denote by $\latt{-2d}$ the rank $1$ lattice whose generator has
square $-2d$.

\begin{lemma}\label{lem:L2d}
Suppose $h\in L_{\Kthree}$ is a primitive vector with $h^2=2d>0$. Then
the orthogonal complement $L_h=h^{\perp}_{L_{\Kthree}}$ of $h$ is
isometric to~$L_{2d}$, where $L_{2d}$, the lattice $L_{2d}$ associated
with $\Kthree$ surfaces with a polarisation of degree $2d$, is defined
by
\begin{equation}\label{defineL2d}
L_{2d} = 2U \oplus 2E_8(-1) \oplus \latt{-2d}.
\end{equation}
\end{lemma}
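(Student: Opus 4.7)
The plan is to combine an explicit construction with a transitivity statement for the orthogonal group on primitive vectors of fixed square.

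First I would exhibit one primitive $h_0 \in L_{\Kthree}$ with $h_0^2=2d$ whose orthogonal complement is manifestly isometric to $L_{2d}$. Pick one of the three hyperbolic summands $U \subset L_{\Kthree}$ and let $e,f$ be the standard basis with $e^2=f^2=0$, $(e,f)=1$. Set $h_0=e+df$. Then $h_0^2=2d$, and $h_0$ is primitive since $e,f$ are part of a $\ZZ$-basis of $L_{\Kthree}$. Inside this summand $U$, the orthogonal complement of $h_0$ is the rank-one lattice spanned by $e-df$, which has square $-2d$, hence equals $\latt{-2d}$. Consequently
\begin{equation*}
h_0^{\perp}_{L_{\Kthree}} \cong 2U \oplus 2E_8(-1) \oplus \latt{-2d}=L_{2d}.
\end{equation*}

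Second, I would reduce the general $h$ to this special $h_0$ by showing that $\Orth(L_{\Kthree})$ acts transitively on primitive vectors of square $2d$. The natural tool is Eichler's criterion: since $L_{\Kthree}$ contains $2U$ as an orthogonal summand, any primitive vector is determined, up to the action of $\Orth(L_{\Kthree})$, by its square and by the class of $h/\mathop{\mathrm{div}}(h)$ in the discriminant group, and for a primitive $h$ the divisor is $1$ so only the square matters. Alternatively, one invokes Nikulin's theorem on primitive embeddings of $\latt{2d}$ into an even unimodular lattice of signature $(3,19)$: the rank and signature of the complement are large enough and the discriminant form is simple enough to force uniqueness of the embedding up to $\Orth(L_{\Kthree})$. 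Either way, there exists $g\in\Orth(L_{\Kthree})$ with $g(h_0)=h$, and $g$ sends $h_0^{\perp}$ isometrically onto $h^{\perp}=L_h$. Combined with the first step this gives $L_h \cong L_{2d}$.

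The main obstacle is the transitivity claim in the second step; the explicit calculation in the first step is routine. The transitivity is a standard fact in the arithmetic theory of even lattices, but it does rest on machinery (Eichler's lemma or Nikulin's uniqueness theorem for primitive embeddings) that has to be quoted from the lattice-theoretic literature rather than derived by hand.
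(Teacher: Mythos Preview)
Your proposal is correct and follows essentially the same route as the paper: invoke Eichler's criterion (using that $L_{\Kthree}$ is unimodular, so the discriminant group is trivial and only the square matters) to reduce to the explicit vector $h_0=e+df$ in a hyperbolic summand, and then read off the complement as $2U\oplus 2E_8(-1)\oplus\latt{-2d}$. The only difference is order of presentation; the paper states transitivity first and leaves the explicit computation as ``the structure of $L_h$ is clear in this case''.
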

\begin{proof}
It follows from Eichler's criterion (see Lemma~\ref{lem:eichler}
and Example~\ref{ex:K3} below) that there is a unique
$\Orth(L_{\Kthree})$-orbit of primitive vectors $h$ of given degree in
the $\Kthree$ lattice $ L_{\Kthree}=3U \oplus 2E_8(-1)$. Hence we can
assume that $h$ is in one of the copies of the hyperbolic plane and
that $h=e+df$ where $e,\ f$ are a basis of a copy of $U$ with
$e^2=f^2=0$ and $(e,f)=1$. The structure of $L_h$ is clear in this case.
\end{proof}

The lattice $L_{2d}$ is an even lattice of signature $(2,19)$. The
period domain $\Omega_{2d}=\Omega_{L_{2d}}$ has two connected
components, $\cD_{2d}$ and $\cD'_{2d}$, interchanged by complex
conjugation. The domain $\cD_{2d}$ is a $19$-dimensional symmetric
homogeneous domain of type~IV: see~\cite[Appendix~6]{Sat}. One can also
describe $\Omega_{2d}$ as the intersection of the domain $\Omega$ with
the hyperplane $h^{\perp}_{L_{\Kthree}}$.

We shall fix $h\in L_{\Kthree}$ once and for all.  For each polarised
$\Kthree$ surface $(S,\cL)$ of degree $2d$ we can consider polarised
markings, i.e.\ markings $\phi\colon H^2(S,\ZZ) \to L_{\Kthree}$ with
$\phi(c_1(\cL))=h$.  Any two such markings differ by an element in the
group
\begin{equation}\label{defineOLh}
\Orth(L_{\Kthree},h) = \{g \in \Orth(L_{\Kthree}) \mid g(h)=h \}.
\end{equation}
This group leaves the orthogonal complement $L_{2d}$ of $h$ invariant
and hence is a subgroup of $\Orth(L_{2d})$.

For any lattice $L$ we denote by $L^\vee=\Hom (L, \ZZ)$ its dual
lattice. The \emph{discriminant group} $D(L)=L^\vee/L$ is a finite
abelian group of order $|\det L|$ and carries a discriminant quadratic
form $q_L$ (if $L$ is even) and a discriminant bilinear form $b_L$,
with values in $\QQ/2\ZZ$ and $\QQ/\ZZ$ respectively (see \cite
[Section~1.3]{Nik2}). The \emph{stable orthogonal group} of an even
lattice $L$ is defined as the kernel
\begin{equation}\label{definestablegroup}
\Tilde{\Orth}(L) = \ker (\Orth(L) \to \Orth(D(L)).
\end{equation}
If $L$ is indefinite, the subgroup $\Orth^+(L)$ is defined to be the
group of elements of real spinor norm $1$ (see \cite{GHScomm} for a
definition of the real spinor norm). We define
\begin{equation}\label{defineOtilde+}
\Tilde{\Orth}^+(L)=\Tilde{\Orth}(L) \cap\Orth^+(L)
\end{equation}
and generally, for any subgroup $G<\Orth(L)$, we use $G^+$ to denote
the kernel of the real spinor norm and $\Tilde G$ to denote the stable
subgroup, the kernel of $G\to \Orth(D(L))$.

For $h\in L_{\Kthree}$ with $h^2=2d$, it follows from Nikulin's theory
\cite[Corollary 1.5.2]{Nik2} that
\begin{equation*}
\Orth(L_{\Kthree},h) = \Tilde{\Orth}(L_{2d})
\end{equation*}
considered as subgroups of $\Orth(L_{2d})$.
The two connected components $\cD_{2d}$ and $\cD_{2d}'$ are
interchanged by the group $\Orth(L_{2d})$. The index $2$ subgroup that
fixes the components is $\Orth^+(L_{2d})$. Finally we define
\begin{equation}\label{F2d}
\cF_{2d} = \Tilde{\Orth}(L_{2d}) \backslash \Omega_{2d} =
\Tilde{\Orth}^+(L_{2d}) \backslash \cD_{2d}.
\end{equation}

It is important to note that the situation is much better here than in
the non-polarised case.  The reason lies in the change of signature
which is now $(2,19)$ rather than $(3,19)$. We are thus dealing with
locally symmetric hermitian domains and, as a result, the action of
the group $\Orth(L_{2d})$ on $\Omega_{2d}$ is properly
discontinuous. Hence the quotient space of $\Omega_{2d}$ by any
subgroup of $\Orth(L_{2d})$ of finite index (i.e.\ an \emph{arithmetic
  subgroup}) is a complex space with only finite quotient
singularities (also sometimes called a $V$-manifold). By a famous
result of Baily and Borel \cite{BB} these quotients are
quasi-projective and thus carry a natural structure of an algebraic
variety. We shall discuss various compactifications of these
quasi-projective varieties below in Section~\ref{sect:compactifications}.

In order to describe the moduli space of polarised $\Kthree$ surfaces
we need one more ingredient.  For $h\in L_{\Kthree}$ we define
\begin{equation*}
\Delta_h= \{ \delta\in L_{\Kthree} \mid \delta^2=-2, \,  (\delta,h)=0 \}.
\end{equation*}
For each $\delta \in \Delta_h$ we define the hyperplane
$H_\delta=\delta^{\perp}_{L_{\Kthree}}$, i.e.\ the hyperplane fixed by the
Picard-Lefschetz reflection defined by $\delta$. We set
\begin{equation*}
\Omega_{2d}^0 = \Omega_{2d} \setminus \bigcup_{\delta \in \Delta_h}
\big(H_\delta \cap \Omega_{2d}\big).
\end{equation*}
There are only finitely many $\Tilde{\Orth}(L_{2d})$-orbits in
$\Delta_h$. This follows immediately from Lemma~\ref{lem:eichler},
below: in fact there are at most two orbits by \cite[Proposition
  2.4(ii)]{GHSprop}.  Since the group acts properly discontinuously on
$\Omega$, the hyperplanes $H_\delta$ for $\delta\in \Delta_h$ form a
locally finite collection.  Clearly, the action of the group
$\Tilde{\Orth}(L_{2d})$ on $\Omega_{2d}$ restricts to an action on
$\Omega_{2d}^0$.  We define
\begin{equation*}
\cF_{2d}^0 = \Tilde{\Orth}(L_{2d}) \backslash \Omega_{2d}^0.
\end{equation*}
Note that this is again a quasi-projective variety (it arises from
$\cF_{2d}$ by removing finitely many divisors) with only finite
quotient singularities.

\begin{theorem}\label{thm:modulikthree}
The variety $\cF_{2d}^0$ is the moduli space of polarised $\Kthree$
surfaces of degree $2d$, i.e.\ its points are in $1:1$ correspondence
with polarised $\Kthree$ surfaces of degree $2d$.
\end{theorem}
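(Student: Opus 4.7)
The plan is to construct a bijection between isomorphism classes of polarised $\Kthree$ surfaces $(S,\cL)$ of degree $2d$ and points of $\cF_{2d}^0$, by sending $(S,\cL)$ to the class of its period point under a polarised marking. I would then use the strong Torelli theorem and surjectivity of the period map to establish both directions.

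\textbf{Well-definedness of the forward map.} Given $(S,\cL)$ of degree $2d$, a polarised marking $\phi\colon H^2(S,\ZZ)\to L_{\Kthree}$ with $\phi(c_1(\cL))=h$ exists by Eichler's criterion (cf.\ Lemma~\ref{lem:L2d}). The period $[\phi(\omega)]$ lies in $\Omega\cap h^\perp=\Omega_{2d}$. I would verify that $[\phi(\omega)]\in\Omega_{2d}^0$: if it lay on some wall $H_\delta$ with $\delta\in\Delta_h$, then $\phi^{-1}(\delta)$ would be a $(-2)$-class in $\NS(S)$ orthogonal to $c_1(\cL)$; by Riemann-Roch either $\phi^{-1}(\delta)$ or $-\phi^{-1}(\delta)$ is effective, contradicting ampleness of $\cL$. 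Two polarised markings differ by an element of $\Orth(L_{\Kthree},h)=\Tilde{\Orth}(L_{2d})$, and isomorphisms of polarised K3 surfaces induce polarised Hodge isometries, so the point of $\cF_{2d}^0$ is a well-defined invariant of the isomorphism class.

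\textbf{Injectivity.} Suppose $(S,\cL)$ and $(S',\cL')$ yield the same class. Composing the chosen polarised markings with an element of $\Orth(L_{\Kthree},h)$ produces a Hodge isometry $\Phi\colon H^2(S',\ZZ)\to H^2(S,\ZZ)$ sending $c_1(\cL')$ to $c_1(\cL)$. Since an ample class on a K3 surface is represented by a Kähler form, $c_1(\cL')\in\cC_{S'}^+$ and $c_1(\cL)\in\cC_S^+$, so condition (iii) of Proposition~\ref{prop:effectiveHodgeisometry} is met and $\Phi$ is effective. By the strong Torelli theorem (Theorem~\ref{thm:strongtorelli}) there is a unique isomorphism $f\colon S\to S'$ with $f^*=\Phi$, and hence $f^*\cL'\cong\cL$.

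\textbf{Surjectivity.} For $[\tau]\in\Omega_{2d}^0\subset\Omega$, Theorem~\ref{thm:surjectivitytorelli} provides a marked K3 surface $(S,\phi)$ with period $[\phi(\omega)]=[\tau]$. The class $\tilde h=\phi^{-1}(h)\in\NS(S)$ has square $2d>0$; after possibly passing to the other component of $\Omega_{2d}$ (absorbed by $\Tilde{\Orth}(L_{2d})\setminus\Tilde{\Orth}^+(L_{2d})$) I may assume $\tilde h\in\cC_S$. Since the closure of the Kähler cone is a fundamental domain for the Weyl group $W_S$ of Picard-Lefschetz reflections acting on $\cC_S$, there is $w\in W_S$ with $w(\tilde h)\in\overline{\cC_S^+}$. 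The hypothesis $\tau\in\Omega_{2d}^0$ forbids any $(-2)$-class in $\NS(S)$ orthogonal to $\tilde h$, hence none is orthogonal to $w(\tilde h)$ either, so $w(\tilde h)$ is in the open Kähler cone and is ample. Taking $\cL$ to be the corresponding line bundle and $\phi'=\phi\circ w^{-1}$ gives a polarised marking of $(S,\cL)$; because Picard-Lefschetz reflections act trivially on $H^{2,0}(S)$, the period of $(S,\cL,\phi')$ is again $[\tau]$. The main obstacle is this surjectivity step: one must translate the wall-avoidance condition in $\Omega_{2d}^0$ into the precise statement that no $(-2)$-class in $\NS(S)$ is perpendicular to $\tilde h$, so that a single Weyl-group move yields an actually ample—not merely nef and big—class, while keeping careful track of the identification $\Orth(L_{\Kthree},h)=\Tilde{\Orth}(L_{2d})$ and the component choice in $\Omega_{2d}$.
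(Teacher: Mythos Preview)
Your proposal is correct and follows essentially the same route as the paper: construct the period map via polarised markings, use the strong Torelli theorem for injectivity, and deduce surjectivity from the surjectivity of the period map. You supply considerably more detail than the paper does---in particular you make explicit the appeal to Proposition~\ref{prop:effectiveHodgeisometry} to verify effectiveness before invoking strong Torelli, and you spell out the Weyl-group argument for surjectivity that the paper leaves implicit; the one small imprecision is that the sign of $\tilde h$ is fixed not by moving to the other component of $\Omega_{2d}$ but simply by replacing the marking $\phi$ with $-\phi$, which preserves the projective period point and sends $\tilde h$ to $-\tilde h$.
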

\begin{proof}
Let $(S,\cL)$ be a polarised $\Kthree$ surface with $\deg(\cL)=2d$. We
consider polarised markings $\phi\colon H^2(S,\ZZ) \to L_{\Kthree}$
with $\phi(c_1(\cL))=h$. Since $(\omega_S,c_1(\cL))=0$ we find that
$\phi(\omega_S) \in \Omega_{2d}$. In fact, since $\cL$ is ample it has
positive degree on all $(-2)$-curves and hence $\phi(\omega_S)$ lies
in $\Omega_{2d}^0$. Any two polarised markings differ by an element in
$\Orth(L_{\Kthree},h)= \Tilde{\Orth}(L_{2d})$ and hence we obtain a
well-defined map which associates to an isomorphism class $(S,\cL)$ a
point in $\cF_{2d}^0$. This map is injective: assume that a point in
$\cF_{2d}^0$ arises from two polarised surfaces $(S,\cL)$ and
$(S',\cL')$. Then there exists a Hodge iso\-me\-try $H^2(S',\ZZ) \to
H^2(S,\ZZ)$ mapping the ample class $c_1(\cL')$ to $c_1(\cL)$. It then
follows from the strong Torelli Theorem (Theorem~\ref{thm:strongtorelli})
that this map is induced by an isomorphism $(S,\cL) \cong
(S',\cL')$. Thus we get an injective map from the set of isomorphism
classes of degree $2d$ polarised $\Kthree$ surfaces into
$\cF_{2d}^0$. Finally, the surjectivity of this map follows from the
surjectivity of the period map, Theorem~\ref{thm:surjectivitytorelli}.
\end{proof}

In the literature one often finds references to $\cF_{2d}$ as the
moduli space of polarised $\Kthree$ surfaces.  One can interpret the
points in the complement of $\cF_{2d}^0$ as weakly- or semi-polarised
$\Kthree$ surfaces, i.e.\ $\cL$ has positive degree and is nef, but
not ample, as it has degree $0$ on some nodal class(es).
Alternatively, one can consider ample line bundles on
$\Kthree$ surfaces with rational double points.  There is still a
version of the strong Torelli theorem for weakly polarised $\Kthree$
surfaces due to D.~Morrison \cite{Mo}, but the precise formulation is
subtle. For purposes of the birational geometry of these spaces, it
obviously does not matter whether one works with $\cF_{2d}$ or its
open part $\cF_{2d}^0$.

The notion of polarised $\Kthree$ surfaces was generalised by
Nikulin~\cite{Nik1}
to that of \emph{lattice-polarised}
$\Kthree$ surfaces: see also Dolgachev's paper \cite{Dol} for a
concise account, in particular in connection with mirror symmetry.  To
describe this, we fix a lattice $M$ of signature $(1,t)$, which we
assume can be embedded primitively into the $\Kthree$-lattice
$L_{\Kthree}$. The cone $V_M= \{ x \in M_{\RR} \mid (x,x)>0 \}$ has
two connected components: we fix one and denote it by $\cC_M$. Let
\begin{equation*}
\Delta_M=\{ d\in M \mid (d,d)=-2 \}
\end{equation*}
and choose a decomposition $\Delta_M = \Delta_M^+ \cup
(-\Delta_M^+)$. Moreover let
\begin{equation*}
\cC_M^+= \{h \in V(M)^+ \cap M \mid (h,d) > 0 \mbox{ for all } d\in
\Delta_M^+ \}.
\end{equation*}
An \emph{$M$-polarised} $\Kthree$ surface is then a pair $(S,j)$ where
$S$ is a $\Kthree$ surface and $j\colon M \emb \Pic(S)$ is a primitive
embedding.  An isomorphism between $M$-polarised $\Kthree$ surfaces is
an isomorphism between the surfaces that commutes with the primitive
embeddings. If $\omega$ is a non-zero $2$-form on We call $(S,j)$
ample (or pseudo-ample), if $j(\cC_M^+)$ contains an ample (or
pseudo-ample) class. The classical case of polarised $\Kthree$
surfaces is the case where $t=0$ and $M=\latt{2d}$.

The theory of moduli of polarised $\Kthree$ surfaces carries over
naturally to lattice polarised $\Kthree$ surfaces.  For this, one has
to consider the domain $\Omega_M= \{ \omega \in \Omega \mid (w,M)=0
\}$ and the group $\Orth(L_{\Kthree},M)$ of orthogonal transformations
of the $\Kthree$ lattice which fixes $M$.  The role of the variety
$\cF_{2d}$ is then taken by the quotient
\begin{equation}\label{latticepol}
\cF_M= \Orth(L_{\Kthree},M) \backslash \Omega_M.
\end{equation}

Lattice polarised $\Kthree$ surfaces play a role in mirror
symmetry. For this we consider \emph{admissible} lattices,
i.e.\ lattices $M$ which admit an embedding $j\colon M \emb L_{\Kthree}
$ such that
\begin{equation*}
M^{\perp}_{L_{\Kthree}} \cong U(m) \oplus \Tilde{M}.
\end{equation*}
The choice of such a splitting determines a primitive embedding
$\tilde{\jmath}\colon \Tilde{M} \emb L_{\Kthree}$.  The lattice
$\Tilde{M}$ is hyperbolic: more precisely, its signature is $(1,
18-t)$.  The variety $\cF_{\Tilde{M}}$ is then a mirror family to
$\cF_M$. There are more ingredients to the concept of mirror symmetry
which we will not describe here, such as the Yukawa coupling and the
mirror map.  For details we refer to \cite{Dol}: see also \cite{GN4}, \cite{GN5}.

Finally, we want to comment on the relationship between the
construction of moduli spaces of $\Kthree$ surfaces as quotients of
homogeneous domains and GIT constructions.  By Viehweg's results
\cite{Vi} moduli spaces of polarised varieties with trivial canonical
bundle exist and can be constructed as GIT quotients. For this we
first fix a Hilbert polynomial $P(m)$ of a line bundle on a $\Kthree$
surface.  This is of the form $P(m)=m^2d +2$ where the degree of the
line bundle is $2d$.  Let $\cM_{2d}$ be the GIT moduli space of degree
$2d$ polarised $\Kthree$ surfaces. We want to relate this to
$\cF_{2d}$.

We first note that for any ample line bundle $\cL$ on a $\Kthree$
surface $S$ its third power $\cL^{\otimes 3}$ is very ample. For this,
see \cite{S-D}, but in general one can use Matsusaka's big theorem
(\cite{Mat}, \cite{LM}) to show that there is a positive integer $m_0$
such that $\cL^{\otimes m_0}$ is very ample for all polarised
varieties $(X,\cL)$ with fixed Hilbert polynomial.  Now let $m_0 \ge
3$ be sufficiently big.  Then we have embeddings $f_{|\cL^{\otimes
    m_0}|}\colon S \to \PP^{N-1}$ where $N=h^0(S,\cL^{\otimes
  m_0})=P(m_0)$.  Such an embedding depends on the choice of a basis
of $H^0(S,\cL^{\otimes m_0})$. Let $H$ be an irreducible component of
the Hilbert scheme $\Hilb_P(\PP^{N-1})$ containing at least one point
corresponding to a smooth $\Kthree$ surface $S$.  Let $H_{\sm}$ be the
open part of $H$ parametrising smooth surfaces. Then it is easy to
prove that $H_{\sm}$ is smooth and that every point in $H_{\sm}$
parametrises a $\Kthree$ surface.  There exists a universal family
$\cS_{\sm} \to H_{\sm}$.  The group $\SL(N,\CC)$ acts on $H_{\sm}$ and
every irreducible component of the GIT moduli space of degree $2d$
polarised $\Kthree$ surfaces is of the form $\SL(N,\CC) \backslash
H_{\sm}$. Let $\cM_{2d}'$ be such a component.  Choosing local
polarised markings for the universal family one can construct a map
$H_{\sm} \to \cF^0_{2d}$, which clearly factors through the action of
$\SL(N,\CC) $, i.e.\ gives rise to a map $\pi\colon \cM'_{2d} \to
\cF^0_{2d}$. By construction this is a holomorphic map.  On the other
hand both $\cM'_{2d}$ and $\cF_{2d}$ are quasi-projective varieties.
It then follows from a theorem of Borel \cite{Bl2} that $\pi$ is a
morphism of quasi-projective varieties.

We claim that $\pi$ is an isomorphism and that $\cM_{2d}$ has only one
component.  First of all we note that one can, as in the proof of
\cite[Theorem~1.5]{GHSsymp}, take a finite \'etale cover $H'_{\sm} \to
H_{\sm}$ such that the action of $\SL(N+1,\CC)$ lifts to a free action
on $H'_{\sm}$ as well as on the pullback $\cS'_{\sm} \to H'_{\sm}$ of
the universal family.  This gives a quotient family over
$Z_{\sm}=\SL(N+1,\ZZ) \backslash H'_{\sm}$ which is smooth and maps
finite-to-one to $\cM_{2d}$.  By the local Torelli theorem the natural
map $Z_{\sm} \to \cF^0_{2d}$ has discrete fibres and hence the same
is true for $\pi$. But then $\pi$ must be dominant. Now we can use
Theorem~\ref{thm:modulikthree} to conclude that $\cM_{2d}$ is
irreducible and that $\pi$ is a bijection. Since $\cF^0_{2d}$ is a
normal variety, it also follows that $\pi$ is an isomorphism.  We can
thus summarise our discussion as follows.
\begin{theorem}\label{thm:gitperiods}
There is an isomorphism $\cM_{2d} \cong \cF^0_{2d}$, i.e.\ the GIT
moduli space $\cM_{2d}$ is isomorphic to the modular variety
$\cF^0_{2d}$.
\end{theorem}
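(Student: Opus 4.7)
The plan is to construct a natural holomorphic map from each irreducible component of $\cM_{2d}$ to $\cF^0_{2d}$ via the period construction, promote it to a morphism using Borel's theorem, and then combine local Torelli with Theorem~\ref{thm:modulikthree} to conclude that it is bijective and hence an isomorphism.

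First I would fix $m_0\geq 3$ large enough that $\cL^{\otimes m_0}$ is very ample for every polarised K3 surface $(S,\cL)$ of degree $2d$; the existence of such a uniform $m_0$ is guaranteed by Matsusaka's big theorem. With $N=P(m_0)=m_0^2d+2$, every such $(S,\cL)$ embeds in $\PP^{N-1}$ as a subscheme with Hilbert polynomial $P$. Let $H$ be an irreducible component of $\Hilb_P(\PP^{N-1})$ whose smooth locus $H_{\sm}$ parametrises smooth K3 surfaces; $H_{\sm}$ is itself smooth by standard deformation theory (using $H^2(S,T_S)=0$). Any component $\cM'_{2d}$ of $\cM_{2d}$ is then of the form $\SL(N,\CC)\backslash H_{\sm}$. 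Choosing local polarised markings of the universal family $\cS_{\sm}\to H_{\sm}$ over small contractible analytic opens and gluing them (any two such markings differ by an element of $\Orth(L_{\Kthree},h)=\Tilde{\Orth}(L_{2d})$) produces a holomorphic period map $H_{\sm}\to\cF^0_{2d}$. This map is manifestly $\SL(N,\CC)$-invariant, since a change of basis in $H^0(S,\cL^{\otimes m_0})$ does not alter the K3 surface, so it descends to a holomorphic map $\pi\colon \cM'_{2d}\to\cF^0_{2d}$.

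Because $\cM'_{2d}$ and $\cF^0_{2d}$ are both quasi-projective, Borel's extension theorem~\cite{Bl2} implies that $\pi$ is actually a morphism of algebraic varieties. To control the fibres of $\pi$, I would, following \cite[Theorem~1.5]{GHSsymp}, pass to a finite étale cover $H'_{\sm}\to H_{\sm}$ to which the $\SL(N,\CC)$-action and the universal family lift, and on which the action becomes free. The smooth geometric quotient $Z_{\sm}=\SL(N,\CC)\backslash H'_{\sm}$ then carries a genuine universal K3 family and maps finite-to-one to $\cM'_{2d}$. The local Torelli theorem (Theorem~\ref{thm:localtorelliK3}) applied to this universal family shows that the induced map $Z_{\sm}\to\cF^0_{2d}$ has discrete fibres, so $\pi$ has discrete fibres; since $\dim \cM'_{2d}=19=\dim \cF^0_{2d}$, the map $\pi$ is dominant.

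Finally, Theorem~\ref{thm:modulikthree} identifies the closed points of $\cF^0_{2d}$ bijectively with isomorphism classes of polarised K3 surfaces of degree $2d$. This forces $\pi$ to be bijective on closed points and forces $\cM_{2d}$ to have just one irreducible component. Since $\cF^0_{2d}$ is a normal quasi-projective variety, a bijective birational morphism into it is an isomorphism by Zariski's main theorem, which completes the argument. The main delicate step is the construction of the étale cover $H'_{\sm}$ that makes the $\SL(N,\CC)$-action free and produces a universal family on the quotient; without this intermediary one cannot cleanly transport the local Torelli statement, valid for \emph{marked} families, into a fibre-dimension statement for $\pi$.
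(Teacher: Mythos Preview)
Your proposal is correct and follows essentially the same approach as the paper's own argument: construct the period map on $H_{\sm}$ via local markings, descend to $\cM'_{2d}$, use Borel's theorem to get algebraicity, pass to the \'etale cover of \cite[Theorem~1.5]{GHSsymp} to get a genuine family so that local Torelli gives discrete fibres, and then conclude bijectivity from Theorem~\ref{thm:modulikthree} and isomorphy from normality. The only minor addition is your explicit invocation of Zariski's main theorem at the end, which the paper leaves implicit.
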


\section{Irreducible symplectic manifolds}\label{sect:irrsymplectic}

In this section we recall the main properties of irreducible
symplectic manifolds, discuss the Torelli theorem and give basic facts
about moduli spaces of polarised symplectic manifolds.

\subsection{Basic theory of irreducible symplectic manifolds}\label{subsect:irrsymplecticbasic}

The theory of irreducible symplectic manifolds is less
developed than that of $\Kthree$ surfaces. Nevertheless, several
results have been proved over the last $30$ years.
\begin{definition}\label{def:irrsymplectic}
  A complex manifold $X$ is called an \emph{irreducible symplectic
    manifold} or \emph{hyperk\"ahler manifold} if the following
  conditions are fulfilled:
\begin{itemize}
\item[{\rm (i)}] $X$ is a compact K\"ahler manifold;
\item[{\rm (ii)}] $X$ is simply-connected;
\item[{\rm (iii)}] $H^0(X,\Omega^2_X) \cong \CC \omega$ where $\omega$
  is an everywhere nondegenerate holomorphic $2$-form.
\end{itemize}
\end{definition}

According to the Bogomolov decomposition theorem \cite{Bog2},
irreducible symplectic manifolds are one of the building blocks for
compact K\"ahler manifolds with trivial canonical bundle: see also
\cite[Th\'eor\`eme 2]{Be}. The others are abelian varieties and
Calabi-Yau varieties (here we mean Calabi-Yau in its strictest sense,
i.e.\ a compact K\"ahler manifold $X$ such that $\pi_1(X)=1$ and
$H^0(X, \Omega^i_X)=0$ for $0<i<\dim X$).

In dimension two the only irreducible symplectic manifolds are
$\Kthree$ surfaces. Although irreducible symplectic manifolds have now
been studied for nearly $30$ years, only four classes of such
manifolds have so far been discovered and it is a wide open problem
whether other types exist or not.  The known examples are:

\rm{(i)} The length $n$ Hilbert scheme $S^{[n]}=\Hilb^n(S)$ for a
$\Kthree$ surface $S$, and its deformations.  Note that the
deformation space of such a variety has dimension $21$ if $n\ge 2$ and that, since
$\Kthree$ surfaces only depend on $20$ parameters, a general
deformation will not itself be of the form $S^{[n]}$. We shall refer
to these varieties as irreducible symplectic manifolds of
\emph{deformation $\Kthree^{[n]}$ type} or \emph{deformation
  $\Kthree^{[n]}$ manifolds}.

\rm{(ii)} Let $A$ be a $2$-dimensional complex torus and consider the
length-$(n+1)$ Hilbert scheme $A^{[n+1]}=\Hilb^{n+1}(A)$ together with
the morphism $p\colon A^{[n+1]} \to A$ given by addition. Then
$X=p^{-1}(0)$ is an irreducible symplectic manifold, called a
\emph{generalised Kummer variety} (even though it is not necessarily
algebraic).  The deformation space of these manifolds has dimension
$5$ if $n\ge 2$, again one more than for $2$-dimensional complex tori.

\rm{(iii)} O'Grady's irreducible symplectic manifolds of dimension
$6$, described in~\cite{OG2}. These are deformations of
(desingularised) moduli spaces of sheaves on an abelian surface and
depend on $6$ parameters.

\rm{(iv)} O'Grady's irreducible symplectic manifolds of dimension
$10$, described in~\cite{OG1}. These are deformations of
(desingularised) moduli spaces of sheaves on a $\Kthree$ surface and
depend on $22$ parameters.

Other moduli spaces of sheaves, apart from those considered in
\cite{OG1} and \cite{OG2}, cannot be desingularised symplectically:
see~\cite{KLS} and also~\cite{Zo}.

In many ways irreducible symplectic manifolds behave like
$\Kthree$ surfaces, but there are also important differences, as we
shall see later. We first notice that it follows immediately from the
definition that $X$ must have even dimension $2n$ over $\CC$ and that
its canonical bundle $\omega_X$ is trivial (an $n$-fold exterior power
of a generator $\omega$ of $H^0(X,\Omega^2_X)$ will define a
trivialisation of the canonical bundle).  Clearly
$h^{2,0}(X)=h^{0,2}(X)=1$ and $h^{1,0}(X)=h^{0,1}(X)=0$. By a result
of Bogomolov \cite{Bog}, the deformation space of $X$ is
unobstructed. This result was generalised to Ricci-flat manifolds by
Tian~\cite{Ti} and Todorov~\cite{Tod}, and algebraic proofs were given
by Kawamata~\cite{Kaw} and Ran~\cite{Ran} (see also~\cite{Fuj}).
Since
\begin{equation*}
T_{[0]} \Def(X) \cong H^1(X,T_X) \cong H^1(X, \Omega^1_X)
\end{equation*}
the dimension of the deformation space is $b_2(X)-2$.

As in the $\Kthree$ case we have a Hodge decomposition $H^2(X,\CC) =
H^{2,0} \oplus H^{1,1} \oplus H^{0,2}$ with $H^{2,0}$ and $H^{0,2}$
both $1$-dimensional. Unlike the $\Kthree$ case the intersection form
does not immediately provide $H^2(S,\ZZ)$ with the structure of a
lattice. It was, however, shown by Beauville \cite{Be} that
$H^2(X,\ZZ)$ does carry a natural structure as a lattice.  To define
this, let $\omega \in H^{2,0}(X)$ be such that
$\int_X(\omega\Bar{\omega})^n=1$ and define
\begin{equation*}
q'_X(\alpha)=\frac{n}{2}\int_X\alpha^2(\omega\Bar{\omega})^{n-1} +
(1-n)\left(\int_X\alpha\omega^{n-1}\Bar{\omega}^{n}\right)
\left(\int_X\Bar{\alpha}\omega^{n}\Bar{\omega}^{n-1}\right).
\end{equation*}
After multiplication by a positive constant $\gamma$ the quadratic
form $q_X=\gamma q'_X$ defines an indivisible integral symmetric
bilinear form $(\quad ,\quad )_X$ on $H^2(X,\ZZ)$: this is the
Beauville form. Clearly $(\omega,\omega)_X=0$ and
$(\omega,\overline{\omega})_X >0$.

There is another way to introduce the Beauville form. For this let
$v(\alpha)=\alpha^{2n}$ be given by the cup product. Then, by a result
of Fujiki~\cite[Theorem 4.7]{Fuj}, there is a positive rational number
$c$, the \emph{Fujiki invariant}, such that
\begin{equation*}
v(\alpha)=cq_X(\alpha)^n
\end{equation*}
for all $\alpha \in H^2(X,\ZZ)$. In this sense the Beauville form can
be derived from the cup product of the cohomology.

\begin{proposition}\label{prop:Beauvillelattices}
The Beauville lattices and Fujiki invariants for the known examples of
irreducible symplectic manifolds are as follows:

{\rm(i)} The Beauville lattice of a deformation $\Kthree^{[n]}$
manifold is $L_{\Kthree,2n-2}=3U \oplus 2E_8(-1) \oplus
\latt{-2(n-1)}$. It has rank $23$, one more than the $\Kthree$
lattice, to which it is closely related.  If $X=S^{[n]}$ for a
$\Kthree$ surface $S$, then $3U \oplus 2E_8(-1)$ comes from
$H^2(S,\ZZ)$ and the summand $\latt{-2(n-1)}$ is generated (over
$\QQ$) by the exceptional divisor $E$, which is the blow-up of the
diagonal in the symmetric product $S^{(n)}$.  As an element in the
Picard group the divisor $E$ is $2$-divisible. The Beauville lattice
remains constant under deformations. The Fujiki invariant is
$c=(2n)!/(n!2^n)$.

{\rm(ii)} The Beauville lattice of a generalised Kummer variety (or
deformation thereof) is $3U \oplus \latt{-2(n+1)}$ and the Fujiki
invariant is $c=(n+1)(2n)!/(n!2^n)$.

{\rm(iii)} The Beauville lattice of the $6$-dimensional example of
O'Grady is $3U \oplus \latt{-2} \oplus \latt{-2}$ and the Fujiki
invariant is $c=60$.

{\rm(iv)} The Beauville lattice of the $10$-dimensional example of
O'Grady is $3U \oplus 2E_8(-1) \oplus A_2(-1)$ and the Fujiki
invariant is $c=945$.
\end{proposition}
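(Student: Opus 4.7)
The plan is to treat the four cases in turn, exploiting the fact that both the Beauville lattice (as a polarised Hodge‑free invariant) and the Fujiki invariant $c$ (being a topological constant defined by the cup product) are preserved under smooth deformations. Hence for each deformation class it suffices to compute the two invariants on one distinguished model: $S^{[n]}$ for a $\Kthree$ surface $S$ in case (i), a generalised Kummer $K_n(A)$ for a $2$‑torus $A$ in case (ii), and O'Grady's symplectic resolutions of sheaf moduli spaces on an abelian or $\Kthree$ surface in cases (iii) and (iv).

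For case (i) I would follow Beauville's original computation. The Hilbert--Chow morphism $S^{[n]}\to S^{(n)}$ is a crepant resolution whose exceptional divisor $E$ is $2$‑divisible in $\Pic(S^{[n]})$, so $E=2\delta$ for a primitive class $\delta$. Pulling back via the quotient $S^n\to S^{(n)}$ together with the symmetrisation $H^2(S,\ZZ)\to H^2(S^{(n)},\ZZ)$ identifies $H^2(S,\ZZ)$ with a primitive sublattice of $H^2(S^{[n]},\ZZ)$, and I would show (via Beauville's splitting) that $H^2(S^{[n]},\ZZ)=H^2(S,\ZZ)\oplus\ZZ\delta$. The formula defining $q'_X$ restricted to classes pulled back from $S$ reduces to the $\Kthree$ intersection form, while a direct computation with the normal bundle of the exceptional divisor gives $\delta^2=-2(n-1)$; this fixes the rescaling constant $\gamma=1$ and hence the stated lattice $3U\oplus 2E_8(-1)\oplus\latt{-2(n-1)}$. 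The Fujiki constant is then read off from $\int_{S^{[n]}}\alpha^{2n}$ for $\alpha$ pulled back from $S$, using the description of $H^{*}(S^{[n]})$ in terms of symmetric tensors on $H^{*}(S)$; the combinatorial count $(2n)!/(n!\,2^n)$ appears as the number of pairings.

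For case (ii) I would run the parallel argument on the fibre $K_n(A)=p^{-1}(0)$ of the sum morphism $p\colon A^{[n+1]}\to A$. Again the exceptional divisor of the Hilbert--Chow map restricts to give a $2$‑divisible class on $K_n(A)$ whose half $\delta$ is primitive, and $H^2(K_n(A),\ZZ)=H^2(A,\ZZ)\oplus\ZZ\delta$ with $H^2(A,\ZZ)\cong 3U$. The extra factor of $n+1$ in $\delta^2=-2(n+1)$ and in the Fujiki constant $c=(n+1)(2n)!/(n!\,2^n)$ reflects the multiplicity of the translation $A\to A$ by an $(n+1)$‑torsion point in the splitting $A^{[n+1]}\sim K_n(A)\times A$ up to an isogeny of degree $(n+1)^4$; I would deduce the correction through the computation of the self‑intersections of $\delta$ and of $\int\alpha^{2n}$ exactly as in case (i).

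Cases (iii) and (iv) are the main obstacle, because no such clean model as a Hilbert scheme is available: the spaces are symplectic resolutions of singular moduli spaces of semistable sheaves, and several additional classes (coming from the exceptional locus of the desingularisation) enter the second cohomology. Here I would invoke the explicit computations: for the ten‑dimensional case, the lattice $3U\oplus 2E_8(-1)\oplus A_2(-1)$ and the Fujiki constant $c=945$ are Rapagnetta's refinement of O'Grady's original analysis in \cite{OG1}, and for the six‑dimensional case the lattice $3U\oplus\latt{-2}\oplus\latt{-2}$ together with $c=60$ was established by Rapagnetta. I would not redo these computations, as they rest on a careful description of the exceptional divisors of O'Grady's resolutions and a Mukai‑type decomposition of $H^2$ that is substantially more delicate than the Hilbert scheme case; the role of this proof proposal is to point at the relevant literature for these two cases while giving the self‑contained argument in cases (i) and (ii).
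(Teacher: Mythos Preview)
Your proposal is correct and aligns with the paper's own treatment: the paper does not give an independent proof but simply refers to \cite{Be}, \cite{Rap1} and \cite{Rap2}. You have gone further by sketching Beauville's argument for cases (i) and (ii) and correctly identifying Rapagnetta's computations as the source for cases (iii) and (iv), which is exactly the content behind those citations.
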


For proofs, see \cite{Be}, \cite{Rap1} and \cite{Rap2}. Note that all
these lattices are even. It is, however, not known whether this is a
general fact for irreducible symplectic manifolds.

Let $L$ be an abstract lattice which is isomorphic to
$(H^2(X,\ZZ),q_X)$ for some irreducible symplectic manifold $X$.  A
\emph{marking} is an isomorphism of lattices $\phi\colon H^2(X,\ZZ)
\isoto L$.  Let $p\colon \cX \to U$ be a representative of the
Kuranishi family of deformations of $X$ with sufficiently small and
contractible base.  Note that by unobstructedness the base space of
the Kuranishi family is smooth and of dimension $b_2(X)-2$.  The
marking $\phi$ for $X$ defines a marking for $\cX$ and we obtain a
period map $\pi_U\colon U \to \Omega_L$ to the period domain defined
by \eqref{generalperioddomain}. As in the $\Kthree$ case we have a local Torelli theorem.
\begin{theorem}\emph{(Beauville)}\label{thm:symplecticlocaltorelli}
The differential of the period map defined by the Kuranishi family is
an isomorphism and thus the period map is a local isomorphism.
\end{theorem}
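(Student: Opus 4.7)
The plan is to adapt the standard Griffiths-type proof of local Torelli to the symplectic setting. By the Bogomolov--Tian--Todorov unobstructedness theorem cited in the preceding discussion, the base $U$ of the Kuranishi family is smooth of dimension $h^1(X, T_X)$. Since $\omega \in H^0(X, \Omega^2_X)$ is nondegenerate at every point, contraction with $\omega$ yields an isomorphism of sheaves $T_X \isoto \Omega^1_X$; taking cohomology gives $h^1(X, T_X) = h^{1,1}(X) = b_2(X) - 2$ (using $h^{2,0} = h^{0,2} = 1$). On the other hand, $\Omega_L$ is cut out of $\PP(L \otimes \CC)$ by the single nondegenerate quadratic equation $(x,x) = 0$ together with an open positivity condition, and is therefore a complex manifold of dimension $\rk(L) - 2 = b_2(X) - 2$. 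Both sides of the period map have the same dimension, so it suffices to prove that $d\pi_0$ is injective.

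Next, I would invoke Griffiths' general formula for the differential of a period map: under the Kodaira--Spencer identification $T_0 U \cong H^1(X, T_X)$, the map $d\pi_0$ sends a class $v$ to the class of the contraction $v \lrcorner \omega$ in $T_{[\omega]}\Omega_L \subset (L \otimes \CC)/\CC\omega$. The tangent space $T_{[\omega]}\Omega_L = \{y \in L \otimes \CC \mid (y, \omega) = 0\}/\CC\omega$ is identified with $H^{1,1}(X)$ using orthogonality of the Hodge decomposition with respect to the Beauville form: $H^{1,1}(X)$ is perpendicular to $\CC\omega$, while the complementary summand $H^{0,2}(X) = \CC\bar{\omega}$ is excluded from the tangent space because $(\omega, \bar{\omega}) > 0$.

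The crucial point is now that, via the sheaf isomorphism $T_X \cong \Omega^1_X$ provided by $\omega$, the contraction map $H^1(X, T_X) \to H^1(X, \Omega^1_X) = H^{1,1}(X)$ is itself this very isomorphism. Hence $d\pi_0$ is an isomorphism onto $T_{[\omega]}\Omega_L$, and the inverse function theorem yields the asserted local isomorphism. The only substantive input beyond the general Hodge-theoretic formalism is the fibrewise nondegeneracy of $\omega$, which distinguishes the irreducible symplectic case from a general compact K\"ahler manifold and makes the local Torelli theorem essentially automatic. The only mild obstacle is the bookkeeping needed to identify $T_{[\omega]}\Omega_L$ with $H^{1,1}(X)$ and $d\pi_0$ with the contraction, both of which are entirely standard once the Beauville form and its compatibility with the Hodge decomposition are in hand.
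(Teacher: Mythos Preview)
Your argument is correct and is essentially the standard proof found in the cited references (Beauville \cite{Be}); the paper itself does not give a proof but simply refers the reader to \cite{Bog2} and \cite{Be}. Your write-up thus fills in precisely the details the paper omits.
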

\begin{proof}
See \cite{Bog2}, \cite{Be}.
\end{proof}

As in the $\Kthree$ case one can define a moduli space of marked
irreducible symplectic manifolds (of a given type).  Again, this will
not be Hausdorff. Another result which carries over from the $\Kthree$
case is surjectivity of the period map.
\begin{theorem}\emph{(Huybrechts)}\label{thm:surjectivityirreduciblesymplectic}
Let $L$ be a lattice of an irreducible symplectic manifold and let
$\Omega_L$ be the associated period domain.  If $\cM'_L$ is a
non-empty component of the moduli space $\cM_L$ of irreducible
symplectic manifolds with Beauville lattice $L$, then the period map
$\pi\colon \cM'_L \to \Omega_L$ is surjective.
\end{theorem}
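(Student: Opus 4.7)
The plan is to follow Huybrechts' argument, combining the local Torelli theorem (Theorem~\ref{thm:symplecticlocaltorelli}) with the twistor construction furnished by Yau's theorem on the Calabi conjecture.

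First, by local Torelli the period map $\pi$ is a local isomorphism and hence an open map, so $\pi(\cM'_L)$ is a non-empty open subset of $\Omega_L$. Since the Beauville form has signature $(3,b_2-3)$, the period domain $\Omega_L$ is a connected complex manifold (as in the $\Kthree$ case), and therefore $\pi(\cM'_L)$ is open and dense. It remains to prove that $\pi(\cM'_L)$ is also closed, equivalently that every $q\in\Omega_L$ already lies in the image.

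The main tool is the \emph{twistor construction}. Given any marked $(X,\phi)\in\cM'_L$ and a K\"ahler class $\kappa\in H^{1,1}(X,\RR)$, Yau's theorem provides a Ricci-flat K\"ahler metric in $\kappa$; since the reduced holonomy lies in $\Sp(n)$, the underlying Riemannian manifold carries a compatible $\PP^1$-family of complex structures. This produces a smooth family $\cX_\kappa\to\PP^1$ of irreducible symplectic deformations of $X$ with identical Beauville lattice. The marking extends to the family, and because $\PP^1$ is connected we obtain a morphism $\PP^1\to\cM'_L$ landing in the same component. Its image in $\Omega_L$ is a smooth rational curve, the \emph{twistor line} $T_\kappa=\PP(W\otimes\CC)\cap\Omega_L$, where $W=\langle\Re x,\Im x,\phi(\kappa)\rangle\subset L\otimes\RR$ is the positive-definite real $3$-plane spanned by a representative $x$ of the period point and the marked K\"ahler class.

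Given $q\in\Omega_L$ one aims to exhibit a twistor line through $q$ that meets the dense open subset $\pi(\cM'_L)$: the previous paragraph then shows that the whole line lifts to $\cM'_L$ and hence that $q\in\pi(\cM'_L)$. Let $P_q\subset L\otimes\RR$ be the positive $2$-plane spanned by the real and imaginary parts of a representative of $q$. The positive $3$-planes $W\supset P_q$ form a positive-dimensional real family, parametrising rational curves through $q$ in $\Omega_L$; a dimension count shows that generically such a curve meets the dense open set $\pi(\cM'_L)$.

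The principal obstacle is to ensure that such a candidate $3$-plane $W$ is actually realised as a twistor family, i.e.\ that one can find $(X',\phi')\in\cM'_L$ with period close to $q$ and with a K\"ahler class on $X'$ whose image under $\phi'\otimes\RR$ approximates the third direction of $W$. This uses density of $\pi(\cM'_L)$ together with openness of the K\"ahler cone under small deformations, and is the nontrivial input where the geometry of irreducible symplectic manifolds really enters. Once produced, a limiting argument on $\PP^1$-families of twistor lines (which are compact in $\cM'_L$ by construction) yields a twistor line passing through $q$ itself and lifting to $\cM'_L$, completing the proof.
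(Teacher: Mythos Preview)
The paper does not give its own proof: it simply refers the reader to \cite[Section~8]{Huy1}. Your sketch is essentially an outline of Huybrechts' actual argument there, based on twistor lines produced from Ricci-flat (hyperk\"ahler) metrics via Yau's theorem, together with the local Torelli theorem to get openness of the image. So you have not diverged from the paper; you have supplied what the paper omits. The outline is correct in spirit, though of course the delicate step you flag---arranging that a limiting twistor line through an arbitrary boundary point $q$ genuinely lifts to $\cM'_L$, in a setting where the moduli space is non-Hausdorff---is precisely where the work in \cite{Huy1} lies and would need to be filled in for a self-contained proof.
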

\begin{proof}
A proof can be found in \cite[Section 8]{Huy1}.
\end{proof}

\subsection{Hodge theoretic Torelli theorem}\label{subsect:hodgetorelli}

So far, many results from $\Kthree$ surfaces have carried over to
other irreducible symplectic manifolds.  The situation changes when it
comes to the global Torelli theorem. The first counterexample to this
is due to Debarre \cite{Deb}. He showed the following: let $S$ be a
$\Kthree$ surface containing only one curve, which is a smooth
rational curve $C$, and consider the Hilbert scheme $X=S^{[n]}$ with
$n \ge 2$.  Then $X$ contains $S^n(C)\imic\PP^n$.  One can perform an
elementary transformation on $X$ by first blowing up $S^n(C)$ and then
contracting the exceptional divisor in another direction. This gives
another compact complex manifold $X'$ which is bimeromorphic but not
isomorphic to $X$. In general $X'$ need not be K\"ahler, but Debarre
produced an example where $X'$ does have a K\"ahler structure and thus
is an irreducible symplectic manifold.

Since the natural bimeromorphic transformation $f\colon X' \ratmap X$
defines a Hodge isometry $f^*\colon H^2(X,\ZZ) \to H^2(X',\ZZ)$ this
gives a counterexample to the global Torelli theorem.  It should be
noted, though, that neither the surface $S$ nor the varieties $X$ and
$X'$ are projective.  Moreover, the existence of $(-2)$-curves on a
$\Kthree$ surface $S$ is exactly the cause for the failure of the
Hausdorff property for the base of the universal family.

Debarre's counterexample would still allow for a version of the
Torelli theorem where the existence of a Hodge isometry only implies
birational equivalence, not an isomorphism (for $\Kthree$ surfaces the
two notions coincide). However, this is also ruled out by the
following counterexample which is due to Y.~Namikawa \cite{Nam}.  For
this one starts with a generic abelian surface $A$ with a polarisation
of type $(1,3)$.  Then the dual abelian surface $\Hat{A}$ also carries
a $(1,3)$-polarisation. Let $X=\Km^{[2]}A$ and $\Hat{X}=\Km^{[2]}(\Hat
A)$ be the associated generalised Kummer varieties of dimension
$4$. Then $X$ and $\Hat{X}$ are birationally equivalent if and only
if $A$ and $\Hat{A}$ are isomorphic abelian surfaces.  The reason for
this is the following: every birational isomomorphism must send the
exceptional divisor $E$ on $X$ to the exceptional divisor $\Hat{E}$ on
$\Hat{X}$. Since the Albanese of $E$ and $\Hat{E}$ are $A$ and
$\Hat{A}$ respectively, this implies that $A$ and $\Hat{A}$ are
isomorphic. This is not the case for general $A$.  This shows that
Namikawa's example gives a counterexample even to the birational
global Torelli theorem.  Moreover, one can easily make this into a
counterexample to the polarised Torelli theorem.  This can be done by
choosing polarisations of the form $mL- \delta$ and $m\Hat{L} -
\Hat{\delta}$, where $m$ is sufficiently large, $L$ and $\Hat{L}$ are
induced from the polarisation on $A$ and $\Hat{A}$ respectively,
$\delta=2E$ and $\Hat{\delta}=2\Hat{E}$ (the exceptional divisors are
$2$-divisible in the Picard group). The Hodge isomorphism respects
these polarisations.

At first, these counterexamples seem to indicate that there is no
chance of proving a version of the global Torelli theorem for
irreducible symplectic manifolds. However, the above example is not as
surprising as it seems at a first glance. It is well known, and also
well understood in terms of period domains and arithmetic groups, that
$A$ and $\Hat{A}$ are not isomorphic as polarised abelian surfaces
(their period points in the Siegel space are inequivalent under the
paramodular group), but that the associated Kummer surfaces $\Km(A)$
and $\Km(\Hat{A})$ are isomorphic (and their period points in the
corresponding type~IV domain are equivalent under the orthogonal
group). Details can be found in \cite{GH2}.  An analysis of this
situation suggests that a version of the Torelli theorem could hold if
one considers Hodge isometries with extra conditions.

Verbitsky \cite{Ver} has announced a global Torelli theorem for
irreducible symplectic manifolds (see also Huybrecht's Bourbaki talk
\cite{Huy2}).  His results were further elucidated by Markman
\cite{Mar4}.  The crucial idea here is to use monodromy operators and
parallel transport operators.  Markman first noticed the importance of
these operators for the study of irreducible symplectic ma\-ni\-folds,
developing his ideas in a series of papers \cite{Mar1}, \cite{Mar2},
\cite{Mar3}.  To define them, let $X_1, X_2$ be irreducible symplectic
manifolds.  We say that $f\colon H^*(X_1,\ZZ) \to H^*(X_2,\ZZ)$ is a
\emph{parallel transport operator} if there exists a smooth, proper
flat family $\pi\colon \cX \to B$ of irreducible symplectic manifolds
together with points $b_1,\ b_2\in B$ such that there are isomorphisms
$\alpha_i\colon X_i \isoto \cX_{b_i}$ and a continuous path $\gamma
\colon [0,1] \to B$ with $\gamma(0)=b_1$ and $\gamma(1)=b_2$, such
that the parallel transport in the local system $R\pi_*\ZZ$ along
$\gamma$ induces the isomorphism $(\alpha_2^{-1})^* \circ f \circ
\alpha_1^* \colon H^*(\cX_{b_1},\ZZ)\to H^*(\cX_{b_2},\ZZ)$.

For a single irreducible symplectic manifold $X$, we call an
automorphism $f\colon H^*(X,\ZZ) \to H^*(X,\ZZ)$ a \emph{monodromy
  operator} if it is a parallel transport operator (with $X_1=X_2=X$).
The \emph{monodromy group} $\Mon(X)$ is defined as the subgroup of
$\GL(H^*(X,\ZZ))$ generated by monodromy operators. Restricting the
group action to the second cohomology group we obtain a subgroup
$\Mon^2(X)$ of $\GL(H^2(X,\ZZ))$.  Since monodromy operators preserve
the Beauville form we obtain a subgroup $\Mon^2(X) \subset
\Orth(H^2(X,\ZZ))$.

Based on Verbitsky's results \cite{Ver}, Markman \cite{Mar4} has
formulated the following Hodge theoretic global Torelli theorem.
\begin{theorem}
\label{thm:Hodgetorelli}\emph{(Hodge theoretic Torelli)}
Suppose that $X$ and $Y$ are irreducible symplectic manifolds.
\begin{enumerate}
\item[\rm(i)] If $f \colon H^2(Y,\ZZ) \to H^2(X,\ZZ)$ is an
  isomorphism of integral Hodge structures which is a parallel
  transport operator, then $X$ and $Y$ are bimeromorphic.
\item[\rm(ii)] If, moreover, $f$ maps a K\"ahler class of $Y$ to a
  K\"ahler class of $X$, then $X$ and $Y$ are isomorphic.
\end{enumerate}
\end{theorem}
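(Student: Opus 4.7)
The plan is to reduce both statements to Verbitsky's global Torelli theorem applied on connected components of the marked moduli space $\cM_L$, where $L$ is the Beauville lattice of $X$ (equivalently of $Y$). First, I would use the parallel transport hypothesis to place $X$ and $Y$ in a common connected component of $\cM_L$. Concretely, given a family $\pi\colon \cX \to B$ and a path $\gamma$ from $b_Y$ to $b_X$ witnessing that $f$ is a parallel transport operator, choose a marking $\phi_X$ of $X$ and transport the resulting trivialisation of the local system $R^2\pi_*\ZZ$ back to $Y$ along $\gamma^{-1}$. This produces a marking $\phi_Y$ of $Y$ with $\phi_Y = \phi_X \circ f$, so the two marked pairs $(X,\phi_X)$ and $(Y,\phi_Y)$ lie in the same connected component $\cM_L^0$. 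Since $f$ is a Hodge isometry, the lines $\phi_X(H^{2,0}(X))$ and $\phi_Y(H^{2,0}(Y))$ coincide in $L\otimes\CC$, so the two marked pairs also map to the same period point in $\Omega_L$.

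Next I would invoke Verbitsky's global Torelli theorem, as formalised by Markman: the period map $\cM_L^0 \to \Omega_L$ has the property that two pairs in a single fibre are inseparable in the Hausdorff sense, and a theorem of Huybrechts identifies such inseparable pairs with pairs of bimeromorphic irreducible symplectic manifolds. This is the higher-dimensional analogue of the non-Hausdorff phenomenon for marked $\Kthree$ surfaces discussed in Section~\ref{subsect:kuranishi}, with Mukai-type flops replacing the $\Kthree$ double-point construction. Applied to $(X,\phi_X)$ and $(Y,\phi_Y)$ this yields a bimeromorphism $g\colon X \ratmap Y$, proving (i).

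For (ii) I would upgrade $g$ to a biholomorphism using the extra K\"ahler hypothesis. Markman's chamber-theoretic description of the (closure of the) birational K\"ahler cone of $X$ says that the K\"ahler cones of all irreducible symplectic manifolds bimeromorphic to $X$ fit together as a locally finite decomposition into chambers, each chamber corresponding to exactly one bimeromorphic model. The induced Hodge isometry $g^*\colon H^2(Y,\ZZ)\to H^2(X,\ZZ)$ sends the K\"ahler cone of $Y$ onto the chamber indexed by $(Y,g)$, and $g^*$ differs from $f$ only by a monodromy element preserving the Hodge structure. The hypothesis that $f$ carries a K\"ahler class of $Y$ into the K\"ahler cone of $X$ therefore forces that chamber to be the K\"ahler cone of $X$ itself. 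Hence $g$ performs no flop, is an isomorphism in codimension one respecting K\"ahler classes, and therefore extends across its indeterminacy locus to a biholomorphism $X \isoto Y$ inducing $f$.

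The main obstacle is that very little of the above is elementary: the substantial content is hidden inside Verbitsky's global Torelli theorem and Markman's amplification of it, specifically the implication from equality of periods to inseparability in $\cM_L^0$, the identification of inseparable pairs with bimeromorphic pairs, and the wall-and-chamber description of the K\"ahler cones across all bimeromorphic hyperk\"ahler models. Once these deep structural results are granted, the deduction above is essentially formal; the genuine difficulty lies in establishing them, not in assembling them.
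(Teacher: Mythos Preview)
Your proposal is correct and matches the paper's approach: the paper's proof is deliberately terse, simply recording that part~(i) follows from Verbitsky's results and that part~(ii) additionally requires the K\"ahler cone results, with a pointer to \cite[Theorem~1.3 and Section~3.2]{Mar4}. Your write-up is precisely the unpacking of those citations---placing the marked pairs in one component of $\cM_L$, invoking Verbitsky's injectivity up to inseparability together with Huybrechts' identification of inseparable pairs as bimeromorphic for~(i), and Markman's chamber description of the K\"ahler cones of bimeromorphic models for~(ii)---so there is no real divergence in strategy.
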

\begin{proof}
The first part of the theorem follows easily from Verbitsky's
results. The second part uses in addition results on the K\"ahler cone
of irreducible symplectic manifolds.  For a more detailed discussion
see \cite[Theorem~1.3]{Mar4} and \cite[Section~3.2]{Mar4}.
\end{proof}

\subsection{Moduli spaces of polarised irreducible symplectic manifolds}\label{subsect:modulipolsymp}

We shall now turn to the case of polarised irreducible symplectic
manifolds. In the $\Kthree$ case we saw that the degree is the only
invariant of a polarisation, or, equivalently, that there is only one
$\Orth(L_{\Kthree})$-orbit of primitive vectors of given length.  This
is no longer true in general, as can already be seen in the case of
$S^{[2]}$. Recall that the Beauville lattice in this case is
isomorphic to $L_{\Kthree,2} =3U \oplus 2E_8(-1) \oplus \latt{-2}$.
If $h$ is a primitive vector the number $\div (h)$ (the divisor
of $h$: see Equation~\eqref{def:divv} below) is the positive generator
of the ideal $(h,L_{\Kthree,2})$, which is the biggest positive
integer by which one can divide $h$ as a vector in the dual lattice
$L_{\Kthree,2}^{\vee}$. Since $L_{\Kthree,2}$ is not unimodular, but
has determinant $2$, the divisor $\div(h)$ can be $1$ or $2$. Indeed,
both of these happen when $d \equiv -1 \mod 4$ and accordingly we have
one $\Orth(L_{\Kthree,2})$-orbit if $d \not\equiv -1 \mod 4$ and two
if $d \equiv -1 \mod 4$.  Details of this can be found in
\cite{GHSsymp}.  These two cases are referred to as the \emph{split}
case ($\div(h)=1$) and the \emph{non-split} case ($\div(h)=2$). The
reason for this terminology is the behaviour of the orthogonal
lattice: if $h^2=2d$ the possibilities for
$L_h=h^\perp_{L_{\Kthree,2}}$ are (see Example~\ref{ex:K3n} below)
\begin{equation}\label{splitlattice}
L_h=2U \oplus 2E_8(-1) \oplus \latt{-2} \oplus \latt{-2d}
\mbox{ for } \div(h)=1
\end{equation}
and
\begin{equation}\label{nonsplitlattice}
L_h= 2U \oplus 2E_8(-1) \oplus \begin{pmatrix}
-2 & 1\\
1 &-\frac{d+1}{2}
\end{pmatrix} \mbox{ for } \div(h)=2.
\end{equation}
For the higher dimensional case of $S^{[n]}$ the situation becomes
more involved as the possibilities for the divisor of $h$ increase in
number.  Whenever the primitive vectors of length $d$ form more than
one orbit, the moduli space of polarised irreducible symplectic
manifolds of degree $d$ will not be connected: see \cite{GHSsymp}.

In order to discuss moduli spaces of polarised irreducible symplectic
mani\-folds we first fix some discrete data: the dimension $2n$, the
Beauville lattice (considered as an abstract lattice $L$), and the
Fujiki invariant $c$. Together $L$ and $c$ define the \emph{numerical
  type} of the irreducible symplectic manifold, denoted by $\bdN$.
Next we choose a \emph{polarisation type}, i.e.\ an $\Orth(L)$-orbit
of a primitive vector $h \in L$. Viehweg's theory gives us the
existence of a moduli space $\cM_{n,\bdN,h}$ parametrising polarised
irreducible symplectic manifolds $(X,\cL)$ of dimension $2n$ with the
chosen Beauville lattice, Fujiki invariant and polarisation type.
This is a quasi-projective variety and can be constructed as a GIT
quotient as in the $\Kthree$ case, the only difference being that we
must here invoke Matsusaka's big theorem~\cite{Mat} and a result of
Koll\'ar and Matsusaka \cite{KM} to be guaranteed a uniform bound
$N_0$ such that $\cL^{\otimes N_0}$ is very ample for all pairs
$(X,\cL)$.

Although there is not a Torelli theorem as in the $\Kthree$ case,
these moduli spaces are still related to quotients of homogeneous
domains of type~IV.  Let $\Omega_L$ be the period domain defined by
the lattice $L$ and let $L_h=h^{\perp}_L$. This is a lattice of
signature $(2,\rk(L)-3)$ and defines a homogeneous domain
$\Omega_{L_h}$ of type IV. Let $\Orth(L,h)$ be the stabiliser of $h$
in $\Orth(L)$. This can be considered as a subgroup of
$\Orth(L_h)$. The domain $\Omega_{L_h}$ has two connected components,
of which we choose one, which we denote by $\cD_{L_h}$. Again
$\Orth^+(L,h)$, the subgroup of $\Orth(L,h)$ of real spinor norm
$1$, is the subgroup fixing the components of $\Omega_{L_h}$.

\begin{theorem}\label{thm:modulisymplectic}
For every component $\cM'_{n,\bdN,h}$ of the moduli space
$\cM_{n,\bdN,h}$ there exists a finite to one dominant morphism
\begin{equation*}
\psi\colon  \cM'_{n,\bdN,h} \to \Orth^+(L,h) \backslash \cD_{L_h}.
\end{equation*}
\end{theorem}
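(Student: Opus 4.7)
The plan is to construct $\psi$ via a period map, descending from a polarised markings picture on an \'etale cover of $\cM'_{n,\bdN,h}$, and then to verify the three properties (morphism, dominance, finite fibres) separately, using the preceding results in the excerpt.

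First I would take a point $[(X,\cL)]\in \cM'_{n,\bdN,h}$ and imitate the construction in the $\Kthree$ case from Section~\ref{subsect:modulipolK3}. By Viehweg's construction $\cM'_{n,\bdN,h}=\SL(N,\CC)\backslash H'_{\sm}$ for a smooth quasi-projective Hilbert scheme $H'_{\sm}$ carrying a universal polarised family $\cX\to H'_{\sm}$; as in the proof of Theorem~\ref{thm:gitperiods}, by passing to a finite \'etale cover one can assume the $\SL(N,\CC)$-action is free. Choose a polarised marking $\phi\colon H^2(X,\ZZ)\isoto L$ with $\phi(c_1(\cL))=h$; this exists since the class $h$ has been fixed in its $\Orth(L)$-orbit (i.e.\ its polarisation type is the chosen one). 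Then $\phi_{\CC}(H^{2,0}(X))\subset L_h\otimes\CC$, since $(\omega,c_1(\cL))=0$, and the resulting point lies in $\Omega_{L_h}$. After possibly composing with an element of $\Orth(L,h)\setminus\Orth^+(L,h)$ one may assume this period point lies in $\cD_{L_h}$. Two choices of polarised marking differ by an element of $\Orth^+(L,h)$, so a well-defined map $\psi\colon \cM'_{n,\bdN,h}\to \Orth^+(L,h)\backslash\cD_{L_h}$ arises.

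Holomorphicity is local: on the \'etale cover, after choosing a continuous family of markings on a contractible open, the period map lifts to a holomorphic map into $\cD_{L_h}$ by Beauville's local Torelli theorem (Theorem~\ref{thm:symplecticlocaltorelli}), and its differential is an isomorphism onto the horizontal subspace cut out by the polarisation condition. Since both $\cM'_{n,\bdN,h}$ and $\Orth^+(L,h)\backslash\cD_{L_h}$ are quasi-projective varieties (the latter by Baily--Borel), Borel's extension theorem \cite{Bl2} promotes this holomorphic map to a morphism, exactly as in the argument preceding Theorem~\ref{thm:gitperiods}.

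Dominance follows from Huybrechts' surjectivity result (Theorem~\ref{thm:surjectivityirreduciblesymplectic}): every point of $\Omega_L$, and hence every point of $\Omega_{L_h}\subset\Omega_L$, is the period of some marked irreducible symplectic manifold with Beauville lattice $L$, and for a generic period point in $\cD_{L_h}$ the class $h$ pulled back to the manifold lies in its N\'eron--Severi group and (by openness of the ample cone) is ample. These generic manifolds, equipped with this polarisation, are parametrised by $\cM'_{n,\bdN,h}$ by unobstructedness and by the fact that the component $\cM'_{n,\bdN,h}$ was fixed. A comparison of dimensions ($\dim\cM'_{n,\bdN,h}=b_2-3=\dim\cD_{L_h}$, using that a polarisation kills one Hodge-theoretic parameter) then forces the image to be dense.

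The main obstacle, and the last step, is the finiteness of the fibres. Two polarised manifolds $(X,\cL)$, $(Y,\cM)$ mapped to the same point of $\Orth^+(L,h)\backslash\cD_{L_h}$ give an isomorphism of integral Hodge structures $H^2(Y,\ZZ)\to H^2(X,\ZZ)$ sending $c_1(\cM)$ to $c_1(\cL)$. Because we have restricted to a single component $\cM'_{n,\bdN,h}$ (equivalently, worked with a fixed connected component of the domain and with $\Orth^+$ rather than $\Orth$), this isometry is in $\Mon^2$, hence is a parallel transport operator. Markman's Hodge theoretic Torelli theorem (Theorem~\ref{thm:Hodgetorelli}(i)) then implies that $X$ and $Y$ are bimeromorphic; a classical finiteness result of Huybrechts says that an irreducible symplectic manifold $X$ admits only finitely many bimeromorphic (hence birational, in the polarised setting) models up to isomorphism, and among these only finitely many carry a polarisation pulling back to $c_1(\cL)$. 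This bounds the fibre and completes the proof.
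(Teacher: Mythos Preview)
Your construction of $\psi$ and the argument that it is a morphism are correct and match the paper's intended approach, which is explicitly stated to be ``analogous to the proof of Theorem~\ref{thm:gitperiods}''.

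However, your finiteness argument has a genuine gap. When $(X,\cL)$ and $(Y,\cM)$ map to the same point of $\Orth^+(L,h)\backslash\cD_{L_h}$, the resulting Hodge isometry $H^2(Y,\ZZ)\to H^2(X,\ZZ)$ comes from an element $g\in\Orth^+(L,h)$, and there is no reason this $g$ should lie in the monodromy group $\Mon^2$. Restricting to a single component of the moduli space tells you that $X$ and $Y$ are deformation equivalent, so \emph{some} parallel transport operator exists between them, but not that \emph{this particular} isometry is one. Indeed, this is exactly the point of Theorem~\ref{thm:moduliimmersion}: only after replacing $\Orth^+(L,h)$ by the smaller group $\Gamma=\phi(\Mon^2(X,H))$ does the map become injective. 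Namikawa's counterexample shows that Hodge-isometric polarised irreducible symplectic manifolds need not be birational, so your appeal to Theorem~\ref{thm:Hodgetorelli}(i) fails.

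The paper's route, following Theorem~\ref{thm:gitperiods}, is both simpler and avoids this issue entirely: the \emph{local} Torelli theorem (Theorem~\ref{thm:symplecticlocaltorelli}) shows that the period map has discrete fibres on the \'etale cover $Z_{\sm}$, hence on $\cM'_{n,\bdN,h}$. Fibres of a morphism of quasi-projective varieties are closed, and a discrete closed subvariety of a Noetherian scheme is finite, so the map is finite-to-one. Dominance then follows immediately from the equality of dimensions $\dim\cM'_{n,\bdN,h}=b_2-3=\dim\cD_{L_h}$, with no need for Huybrechts' surjectivity or any genericity argument (which in your version also leaves unaddressed why the manifolds produced land in the chosen component $\cM'_{n,\bdN,h}$ rather than another one). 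No global Torelli statement is required.
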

The proof of this is analogous to the proof of
Theorem~\ref{thm:gitperiods}. There are, however, differences compared
to the $\Kthree$ case.  In general, $\psi$ will not be injective (see
the discussion below).  There is also a difference concerning the
image of~$\psi$.  In the $\Kthree$ case we know that a big and nef
line bundle is ample if and only if it has positive degree on the
nodal curves.  Hence it is necessary and sufficient to remove the
hyperplanes orthogonal to the nodal classes. So far, no complete
analogue is known for the higher dimensional case, but Hassett and
Tschinkel have proved partial results for $n=2$ in~\cite{HT1}, and
more precise results in a special case in~\cite{HT2}.

Nevertheless, Theorem~\ref{thm:modulisymplectic} is enough to prove
results on the Kodaira dimension of moduli spaces of polarised
irreducible symplectic manifolds. This was done in \cite{GHSsymp},
\cite{GHSdim21}: see Theorem~\ref{thm:splitgt} and Theorem~\ref{thm:21gt}
below.

Very recent work of Verbitsky \cite{Ver} and Markman \cite{Mar4}
improves Theorem~\ref{thm:modulisymplectic}, using a polarised
analogue of the monodromy group $\Mon^2(X) \subset \Orth(H^2(X),\ZZ)$.
Let $H$ be an ample divisor on $X$.  We call an element in $f \in
\Mon(X)$ a \emph{polarised parallel transport operator of the pair
  $(X,H)$} if it is a parallel transport operator for a family
$\pi\colon \cX \to B$ with base point $b_0 \in B$ and isomorphism
$\alpha\colon X \to \cX_{b_0}$ which fixes $c_1(H)$, such that there
exists a flat section $h$ of $R^2\pi_* \ZZ$ with
$h(b_0)=\alpha_*(c_1(H))$ and $h(b)$ an ample class in
$H^2(\cX_b,\ZZ)$ for all $b \in B$. These operators define a subgroup
$\Mon^2(X,H) \subset \Orth(H^2(X),\ZZ)$. It was shown by Markman
\cite[Proposition~1.9]{Mar4} that $\Mon^2(X,H)$ is the stabiliser of
$c_1(H)$ in $\Mon^2(X)$. Given a marking $\phi\colon H^2(X,\ZZ) \to L$
this defines a subgroup
\begin{equation}\label{monodromyimage}
\Gamma =\phi(\Mon^2(X,H))\subset \Orth(L,h),
\end{equation}
which can be shown to be independent of the marking $\phi$: see
\cite[Section~7.1]{Mar4}.

Let $\cM'_{n,\bdN,h}$ be a component of the moduli space of polarised
irreducible symplectic manifolds with fixed numerical type and
polarisation type. Given an element $(X,H)$ in this component one thus
obtains a group $\Gamma \subset \Orth(L,h)$ as above, which is
also independent of the chosen pair $(X,H)$ by the results of
\cite[Section~7.1]{Mar4}. In fact $\Gamma \subset \Orth^+(L,h)$, as
monodromy operators are obviously orientation-preserving: see \cite[Section~1.2]{Mar4}.
Thus $\Gamma$ acts on the homogeneous domain $\cD_{L_h}$.

\begin{theorem}\label{thm:moduliimmersion}
The map $\psi$ from Theorem~\ref{thm:modulisymplectic} lifts to an
open immersion
\begin{equation*}
\Tilde\psi\colon  \cM'_{n,\bdN,h} \to \Gamma \backslash \cD_{L_h},
\end{equation*}
where $\Gamma$ is as in Equation~\eqref{monodromyimage}.
\end{theorem}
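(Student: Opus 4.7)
The plan is to construct the lift $\Tilde\psi$ as a refined period map, then check in turn that it is well-defined modulo $\Gamma$, holomorphic, injective, and locally an isomorphism; openness of the immersion will follow from these.

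First I would fix a point $[(X,H)]\in \cM'_{n,\bdN,h}$ and define $\Tilde\psi([(X,H)])$ as follows. Choose a marking $\phi\colon H^2(X,\ZZ)\isoto L$ with $\phi(c_1(H))=h$, and let $[\phi_\CC(\omega_X)]\in\cD_{L_h}$ denote the resulting period point (after fixing the connected component compatibly with the orientation data). Send $[(X,H)]$ to the $\Gamma$-orbit of this point. Two polarised markings of the same $(X,H)$ differ by an element of $\Orth(L,h)$; the key point, taken from \cite[Proposition~1.9]{Mar4} and \cite[Section~7.1]{Mar4}, is that markings arising from paths in the base of a polarised family (i.e.\ from polarised parallel transport) differ precisely by elements of $\phi(\Mon^2(X,H))=\Gamma$, and that $\Gamma$ is independent of the chosen pair $(X,H)$ in the component $\cM'_{n,\bdN,h}$. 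If $(X_1,H_1)\cong(X_2,H_2)$, one can use the trivial family to see that the induced isomorphism on second cohomology is a polarised parallel transport operator, hence lies in $\Mon^2(X,H)$; this gives well-definedness of $\Tilde\psi$. By construction, $\Tilde\psi$ covers the map $\psi$ of Theorem~\ref{thm:modulisymplectic}.

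Holomorphicity of $\Tilde\psi$ is local, so it follows from Theorem~\ref{thm:symplecticlocaltorelli}: over a Kuranishi neighbourhood we can pick a consistent marking and the refined period map is simply the composition of the local period map with the projection to $\Gamma\backslash\cD_{L_h}$. Local Torelli also shows that the differential of $\Tilde\psi$ is an isomorphism at every point, since the Kuranishi deformations of $(X,H)$ (unobstructed by Bogomolov) have dimension $\rk(L)-3=\dim\cD_{L_h}$, matching the dimension of $\cD_{L_h}$.

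For injectivity I would argue as follows. Suppose $\Tilde\psi([(X_1,H_1)])=\Tilde\psi([(X_2,H_2)])$. Choosing polarised markings $\phi_i\colon H^2(X_i,\ZZ)\to L$ with $\phi_i(c_1(H_i))=h$, there exists $g\in\Gamma$ such that $g\cdot[\phi_{1,\CC}(\omega_{X_1})]=[\phi_{2,\CC}(\omega_{X_2})]$. Then
\begin{equation*}
f:=\phi_2^{-1}\circ g\circ\phi_1\colon H^2(X_1,\ZZ)\To H^2(X_2,\ZZ)
\end{equation*}
is an isometry of integral Hodge structures sending $c_1(H_1)$ to $c_1(H_2)$. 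Because $g\in\Gamma$ and the $\phi_i$ are themselves polarised markings from the universal family, $f$ is a polarised parallel transport operator; in particular it maps the K\"ahler class $c_1(H_1)$ to the K\"ahler class $c_1(H_2)$. The Hodge theoretic Torelli theorem (Theorem~\ref{thm:Hodgetorelli}(ii)) then produces an isomorphism $X_1\isoto X_2$ that carries $H_1$ to $H_2$, so the two points of $\cM'_{n,\bdN,h}$ coincide. Finally, an injective holomorphic map between complex varieties of the same dimension whose differential is everywhere an isomorphism is an open immersion, so $\Tilde\psi$ is as claimed.

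The main obstacle is the well-definedness step: one must cleanly invoke Markman's identification $\Gamma=\phi(\Mon^2(X,H))$ and its independence of the choice of $(X,H)$ inside the fixed component, ensuring that the marking ambiguity coming from a possibly disconnected GIT moduli component is fully controlled by the polarised monodromy group rather than all of $\Orth^+(L,h)$. Once this is in place, injectivity is the point where Verbitsky's Torelli theorem (in Markman's polarised form, Theorem~\ref{thm:Hodgetorelli}) is doing the essential work, and the rest is formal.
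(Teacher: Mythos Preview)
Your proposal is correct and follows essentially the same approach as the paper. The paper's own proof is a two-sentence sketch that defers the construction of the lift to \cite[Theorem~2.3]{GHSsymp} and the injectivity to Markman's account \cite{Mar4} of Verbitsky's Torelli theorem; you have filled in precisely these two steps, in the same order and with the same external inputs (Markman's identification of $\Gamma$ for well-definedness, the Hodge theoretic Torelli Theorem~\ref{thm:Hodgetorelli} for injectivity).
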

\begin{proof}
It is easy to see that the map $\psi\colon \cM'_{n,\bdN,h} \to
\Orth^+(L,h) \backslash \cD_{L_h}$ lifts to a map $\Tilde\psi\colon
\cM'_{n,\bdN,h} \to \Gamma \backslash \cD_{L_h}$ (see the
beginning of the proof of \cite[Theorem 2.3]{GHSsymp}). The hard part
is the injectivity of $\Tilde\psi$ and this is where the Torelli
theorem for irreducible symplectic manifolds is used. For details we
refer the reader to \cite{Mar4}.
\end{proof}
\begin{remark}\label{rem:improvement}
We note that in general the projective group $\Gamma/\pm 1$ is a
proper subgroup of $\Orth^+(L,h)/\pm 1$ and thus
Theorem~\ref{thm:moduliimmersion} is a substantial improvement of
Theorem~\ref{thm:modulisymplectic}.
\end{remark}

In the case of irreducible symplectic manifolds of $\Kthree^{[n]}$
type this can be made explicit.  For an even lattice $L$ we define
$\Ref(L)$ to be the subgroup generated by $-2$-reflections and the
negative of $+2$-reflections. This is a subgroup of $\Orth^+(L)$.  If
$X$ is an irreducible symplectic manifold of $\Kthree^{[n]}$ type,
then we define $\Ref(X)$ accordingly.

\begin{theorem}\label{thm:monodromygroup}\emph{(Markman)}
If $X$ is a deformation $\Kthree^{[n]}$ manifold then
\begin{equation*}
\Mon^2(X) = \Ref(X).
\end{equation*}
\end{theorem}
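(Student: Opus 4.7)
The plan is to sandwich $\Mon^2(X)$ between $\Ref(X)$ and a group defined in terms of the discriminant form of $L=H^2(X,\ZZ)$, and then to show the sandwich collapses. Throughout, write $L=3U\oplus 2E_8(-1)\oplus\latt{-2(n-1)}$ as in Proposition~\ref{prop:Beauvillelattices}(i); the discriminant group is $D(L)\cong\ZZ/(2n-2)\ZZ$. Let
\begin{equation*}
W(L)=\{g\in\Orth^+(L)\mid g|_{D(L)}=\pm\id\}.
\end{equation*}
I will show $\Ref(X)\subseteq\Mon^2(X)\subseteq W(L)\subseteq\Ref(X)$.

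First, I would establish $\Ref(X)\subseteq\Mon^2(X)$. The inclusion for $-2$-reflections is standard: a primitive $-2$-class $\delta\in L$ can, by Eichler's criterion (Lemma~\ref{lem:eichler}) and the surjectivity of the period map (Theorem~\ref{thm:surjectivityirreduciblesymplectic}), be represented as the vanishing cycle of a one-parameter degeneration of deformation $\Kthree^{[n]}$ manifolds whose total space acquires a single ordinary double point; the classical Picard-Lefschetz formula then realises $s_\delta$ as a monodromy operator. For the negatives of $+2$-reflections, one must produce a genuinely $2n$-dimensional family: here one uses the identification of a specific $\Kthree^{[n]}$ manifold with a moduli space of sheaves on a $\Kthree$ surface (or its deformations), and the reflection in the ``half'' of the exceptional divisor $E/2\in L$ (whose square is $-2(n-1)$) appears via a relative Mukai-type involution acting on such families. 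More generally, for any $+2$-vector $v$, one reduces modulo $\Orth^+(L)$ to a standard class by Eichler, and exhibits $-s_v$ as monodromy for that standard class via an explicit family (for $n=2$ this is the Beauville involution on a family of EPW sextics, and the general case is deduced by deformation since $\Mon^2$ is a deformation invariant).

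Second, for $\Mon^2(X)\subseteq W(L)$, I would argue as follows. A parallel-transport operator is orientation-preserving on the positive cone (this is essentially a local continuity argument), so $\Mon^2(X)\subseteq\Orth^+(L)$. The action on $D(L)$ can be computed by identifying $D(L)$ with a quotient of the full integral cohomology lattice that is topologically canonical up to sign: Markman shows that the Mukai vector of the structure sheaf of a point, considered modulo $L$, determines a class in $D(L)$ that is preserved up to sign by any parallel transport operator. Consequently every $g\in\Mon^2(X)$ acts as $\pm\id$ on $D(L)$, giving $\Mon^2(X)\subseteq W(L)$.

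Finally, and this is the main technical step, I would prove $W(L)\subseteq\Ref(X)$. Since $L$ contains two copies of the hyperbolic plane $U$, the stable orthogonal group $\Tilde\Orth^+(L)$ (which is precisely the subgroup acting trivially on $D(L)$, so has index $2$ in $W(L)$) is generated by $-2$-reflections; this is a reflection-generation result in the spirit of Wall's theorem, and can be proved using Eichler transvections together with Nikulin's classification of primitive embeddings. One then promotes this to $W(L)$ by exhibiting a single element $-s_w$ with $w^2=2$ acting as $-\id$ on $D(L)$: such a $w$ exists because the generator of $D(L)$ lifts to a vector $w\in L^\vee$ with $w^2\in\QQ/2\ZZ$ matching that of the generator, and concretely one may take $w$ to be the sum of a standard generator of the $\latt{-2(n-1)}$ summand and a suitable isotropic vector in $U$. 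Combining the generation of $\Tilde\Orth^+(L)$ by $-2$-reflections with this single element yields $W(L)\subseteq\Ref(X)$.

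The hard part will be the generation statement $W(L)=\Ref(X)$: the reflection-generation for $\Tilde\Orth^+(L)$ needs care with the divisibility of the reflecting vectors (one cannot freely reflect in arbitrary primitive $-2$-vectors without checking that the resulting reflection preserves $L$, equivalently that the divisibility of the vector divides $2$), and one must verify that every coset of $\Tilde\Orth^+(L)$ in $W(L)$ is represented by a $\pm$-reflection of the required type. Both points reduce to finite case-analyses on $D(L)\cong\ZZ/(2n-2)$ and should go through using Eichler's criterion uniformly in $n$.
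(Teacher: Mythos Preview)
The paper does not prove this theorem: its proof is a one-line citation of \cite[Theorem~1.2]{Mar2}. Your outline is in fact the shape of Markman's argument (sandwich $\Mon^2$ between $\Ref$ and $\Hat\Orth^+$, then identify the outer two), and the paper itself records the step $W(L)=\Hat\Orth^+(L)=\Ref(L)$ separately as a consequence of Kneser~\cite{Kn} (see Equation~\eqref{reflectiongroup}). So the algebraic step you label ``hard'' is already available from the literature and is not what makes the theorem deep.

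There is, however, a genuine gap in your argument for $\Ref(X)\subseteq\Mon^2(X)$. For a $(-2)$-class $\delta$, you appeal to a one-parameter degeneration of deformation $\Kthree^{[n]}$ manifolds acquiring an ordinary double point and invoke Picard--Lefschetz. But in complex dimension $2n$ the vanishing cycle of an ordinary double point lies in $H^{2n}$, not in $H^2$, so the classical Picard--Lefschetz formula does not produce a reflection on $H^2(X,\ZZ)$. What Markman actually does is work with families of moduli spaces of sheaves on $\Kthree$ surfaces and transport monodromy of the underlying $\Kthree$ family (where Picard--Lefschetz is legitimate) through the Mukai lattice into $H^2(X,\ZZ)$; the identification of the image, including the appearance of $-s_v$ for $v^2=+2$, requires his analysis of how monodromy acts on the Mukai vector and on universal classes. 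Your sketch of this part (``relative Mukai-type involution'', Beauville involution on EPW sextics) points in plausible directions but does not supply the mechanism that links $\Kthree$ monodromy to $\Mon^2(X)$; this is the substantive content of \cite{Mar1}, \cite{Mar2}, and it cannot be replaced by a direct nodal-degeneration argument on $X$ itself.

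A second, smaller issue: your candidate $+2$-vector $w$ (``generator of $\latt{-2(n-1)}$ plus an isotropic vector in $U$'') does not have square $+2$ in general. One should instead take $w=e+f$ in a hyperbolic plane $U\subset L$; then $w^2=2$, $\divv(w)=1$, and $-\sigma_w$ acts as $-\id$ on $D(L)$, which gives the missing coset representative.
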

\begin{proof}
This is proved in \cite[Theorem 1.2]{Mar2}.
\end{proof}

For a lattice $L$ we let
\begin{equation*}
\Hat\Orth(L) = \{ g \in \Orth(L) \mid g|_{L^{\vee}/L}= \pm \id_{L^{\vee}/L} \}
\end{equation*}
and given an element $h \in L$ we set $\Hat\Orth(L,h) = \{ g \in
\Hat\Orth(L) \mid g(h)=h \}$. Recall the convention of
Equation~\eqref{defineOtilde+} and that by
Proposition~\ref{prop:Beauvillelattices}(i) the Beauville lattice is
$L_{\Kthree,2n-2}$.

It then follows from Theorem~\ref{thm:monodromygroup} in conjunction with
Kneser's result \cite[Satz 4]{Kn} that
\begin{equation}\label{reflectiongroup}
\Ref(L_{\Kthree,2n-2}) = \Hat\Orth^+(L_{\Kthree,2n-2}).
\end{equation}

Combining this with Theorem~\ref{thm:moduliimmersion} we thus obtain

\begin{theorem}\label{thm:cover}
  Let $\cM'_{h}$ be an irreducible component of the moduli space of
  polarised deformation $\Kthree^{[n]}$ manifolds. Then the map $\psi$
  of Theorem~\ref{thm:modulisymplectic} factors through the finite
  cover $\Hat\Orth^+(L_{\Kthree,2n-2},h)\backslash\cD_{L_h}\to
  \Orth^+(L_{\Kthree,2n-2},h)\backslash\cD_{L_h}$ that is, there is a
  commutative diagram
\begin{equation*}
\xymatrix{
{\cM'_{h}} \ar[r]^(.3){\Tilde\psi} \ar[dr]^{\psi}
& {\Hat\Orth^+(L_{\Kthree,2n-2},h) \backslash \cD_{L_h}}
\ar[d]\\
& {\Orth^+(L_{\Kthree,2n-2},h) \backslash \cD_{L_h}.}
}
\end{equation*}
Moreover the map $\Tilde\psi$ is an open immersion.
\end{theorem}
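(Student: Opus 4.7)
The plan is to combine Theorem~\ref{thm:moduliimmersion} with Markman's description of $\Mon^2(X)$ from Theorem~\ref{thm:monodromygroup}, using Equation~\eqref{reflectiongroup} to translate the monodromy group into the intrinsic lattice-theoretic group $\Hat\Orth^+(L_{\Kthree,2n-2})$. In effect, Theorem~\ref{thm:cover} is a specialisation of Theorem~\ref{thm:moduliimmersion} to the $\Kthree^{[n]}$-case, where one can identify the abstract group $\Gamma$ of Equation~\eqref{monodromyimage} with $\Hat\Orth^+(L_{\Kthree,2n-2},h)$.

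First, I would start from Theorem~\ref{thm:moduliimmersion}, which produces an open immersion $\Tilde\psi\colon\cM'_h\to\Gamma\backslash\cD_{L_h}$, where $\Gamma=\phi(\Mon^2(X,H))$ for a marking $\phi$ of any pair $(X,H)$ representing a point of~$\cM'_h$. The next step is to identify $\Gamma$ explicitly. Since $\Mon^2(X,H)$ is the stabiliser of $c_1(H)$ in $\Mon^2(X)$ by \cite[Proposition~1.9]{Mar4}, and since $\phi(c_1(H))=h$ by the polarised-marking convention, we have $\Gamma = \phi(\Mon^2(X))\cap\Orth(L_{\Kthree,2n-2},h)$. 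Theorem~\ref{thm:monodromygroup} gives $\Mon^2(X)=\Ref(X)$, and under the marking $\phi$ this pushes forward to $\Ref(L_{\Kthree,2n-2})$. By Equation~\eqref{reflectiongroup} this is precisely $\Hat\Orth^+(L_{\Kthree,2n-2})$, so intersecting with the stabiliser of $h$ gives $\Gamma=\Hat\Orth^+(L_{\Kthree,2n-2},h)$.

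Having made this identification, the diagram is immediate. Since $\Hat\Orth^+(L_{\Kthree,2n-2},h)$ is contained in $\Orth^+(L_{\Kthree,2n-2},h)$, there is a natural quotient map
\begin{equation*}
\Hat\Orth^+(L_{\Kthree,2n-2},h)\backslash\cD_{L_h}\to\Orth^+(L_{\Kthree,2n-2},h)\backslash\cD_{L_h}.
\end{equation*}
This map is finite because $\Hat\Orth^+(L_{\Kthree,2n-2},h)$ has finite index in $\Orth^+(L_{\Kthree,2n-2},h)$: indeed, $\Hat\Orth^+$ is by definition the kernel of the homomorphism $\Orth^+\to\Orth(L^\vee/L)/\{\pm 1\}$, whose target is a finite group. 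Composing $\Tilde\psi$ with this quotient map gives back $\psi$ from Theorem~\ref{thm:modulisymplectic}, proving commutativity of the diagram. The fact that $\Tilde\psi$ is an open immersion is inherited directly from Theorem~\ref{thm:moduliimmersion}.

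The one step that requires some care is the correct interpretation of Markman's group $\Mon^2(X,H)$ as the subgroup of $\Mon^2(X)$ fixing $c_1(H)$ (rather than as some a priori smaller subgroup coming from polarised deformations only); this is exactly \cite[Proposition~1.9]{Mar4}, which I would cite rather than re-prove. Given that, the argument is essentially a bookkeeping exercise consisting of transferring Markman's Hodge-theoretic Torelli theorem for the $\Kthree^{[n]}$ case into the lattice-theoretic language used in Theorem~\ref{thm:cover}.
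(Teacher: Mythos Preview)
Your proposal is correct and follows exactly the same approach as the paper. The paper does not give a separate proof block for this theorem; it simply states Equation~\eqref{reflectiongroup} (derived from Theorem~\ref{thm:monodromygroup} and Kneser's result) and then writes ``Combining this with Theorem~\ref{thm:moduliimmersion} we thus obtain'' Theorem~\ref{thm:cover}, which is precisely the argument you have spelled out in detail.
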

\begin{remark}\label{rem:correction}
  In \cite[Proposition 2.3]{GHSsymp} we stated that the map $\psi$
  lifts to the quotient ${\Tilde\Orth^+(L_{\Kthree,2n-2},h) \backslash
    \cD_{L_h}}$. This is not correct since, contrary to what was said
  in the proof, the projective groups
  $\Tilde\Orth^+(L_{\Kthree,2n-2},h)/\pm 1$ and
  $\Hat\Orth^+(L_{\Kthree,2n-2},h)/\pm 1$ are not identical if $n>2$.
  In that case, in fact, $\Tilde\Orth^+(L_{\Kthree,2n-2},h)/\pm 1$ is
  an index $2$ subgroup of $\Hat\Orth^+(L_{\Kthree,2n-2},h)/\pm 1$.
  If, however, $n=2$, then the two groups coincide since
  $\Tilde\Orth^+(L_{\Kthree,2})= \Hat\Orth^+(L_{\Kthree,2})$ and thus
  the results of \cite{GHSsymp} are not affected by this error.
\end{remark}
\begin{remark}\label{rem:answersquestion}
Theorem~\ref{thm:cover} gives an affirmative answer to
\cite[Question~2.6]{GHSsymp} (with the correct group).
\end{remark}
\begin{remark}\label{rem:connectedness}
The results discussed so far do not give an answer to the question
whether moduli spaces of polarised irreducible symplectic manifolds of
given deformation type and given type of polarisation are always
connected. Apostolov \cite{Ap} has obtained some results on this. For
example in the $\Hilb^{[n]}$ case these moduli spaces are always
connected for $n=2$ (both in the split and the non-split case), but
in general there can be more than one component.
\end{remark}

\section{Projective models}\label{sect:projectivemodels}

Besides the abstract theory of moduli spaces there is a vast
literature which deals with concrete geometric descriptions of
$\Kthree$ surfaces, and to a much lesser degree, also of irreducible
symplectic manifolds of higher dimension. The easiest example is
degree $4$ surfaces in $\PP^3$. Any smooth quartic surface is a
$\Kthree$ surface and counting parameters one obtains a family of
dimension $34 - 15= 19$, because $34$ is the number of quartics and $15$
is the dimension of $\PGL(4,\CC)$. This argument shows that the moduli
space $\cF_4$ of polarised $\Kthree$ surfaces of degree $4$ is
unirational. The same approach yields unirationality for degrees
$2d=2$, $6$ and $8$, as these correspond to double covers of the projective
plane branched along a sextic curve, complete intersections of type
$(2,3)$ in $\PP^4$, and complete intersections of type $(2,2,2)$ in
$\PP^5$ respectively.

In general it can be very hard to decide whether a moduli space of
polarised $\Kthree$ surfaces of low degree is unirational or
not. Mukai (\cite{Mu1}, \cite{Mu2}, \cite{Mu3}, \cite{Mu4}, \cite{Mu5})
has contributed most significantly to this problem.  So far there are
three approaches to proving unirationality.
\begin{itemize}
\item[\rm{(1)}] Describing the $\Kthree$ surfaces as complete
  intersections in homogeneous spaces (this can be used for $1 \leq d
  \leq 9$, $d=11,\ 12,\ 17,\ 19$).
\item[\rm{(2)}] Using non-abelian Brill-Noether theory of vector
  bundles over algebraic curves (here one obtains results for
  $d=6,\ 8,\ 10,\ 16$).
\item[\rm{(3)}] Using specific geometric constructions for certain
  degrees ($d=11$, $12$, $15$, $19$). An example is Mukai's most recent
  work (\cite{Mu5}) for $d=15$ where he describes $\Kthree$ surfaces
  of degree $30$ as complete intersections in a certain rank $10$
  vector bundle on the Ellingsrud-Piene-Str{\o}mme moduli space of
  twisted cubics.
\end{itemize}

There are only a few results about rationality for these
cases. Shepherd-Barron proves rationality for the cases $d=3$ in
\cite{S-B2} and $d=9$ in \cite{S-B1}. 

For a discussion of low degree cases we also refer the reader to
Voisin's Bourbaki expos\'e~\cite{Vo2}. One can summarise the results
as follows.
\begin{theorem}\label{thm:lowdegree}
The moduli spaces $\cF_{2d}$ of polarised $\Kthree$ surfaces of degree
$2d$ are unirational for $1 \leq d \leq 12$ and $d=15,\ 16,\ 17,\ 19$.
\end{theorem}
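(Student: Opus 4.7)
Since the theorem is a compendium of results of several authors, the plan is to verify unirationality degree by degree, grouping the cases according to the three methods listed before the statement. Throughout, to prove that $\cF_{2d}$ is unirational it suffices to exhibit a dominant rational map from a rational variety $Y$ (typically a Zariski-open subset of an affine or projective space, or a vector bundle over a rational base) to $\cF_{2d}$; by Theorem~\ref{thm:modulikthree} and the ampleness criterion recalled in Section~\ref{subsect:modulipolK3}, it is enough to construct a generically surjective family of polarised $\Kthree$ surfaces of degree $2d$ over $Y$.

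For $d=1,2,3,4$ I would carry out the classical parameter count: every general polarised $\Kthree$ of degree $2d$ is, respectively, a double plane branched along a smooth sextic, a smooth quartic in $\PP^3$, a smooth $(2,3)$-complete intersection in $\PP^4$, and a smooth $(2,2,2)$-complete intersection in $\PP^5$. In each case the relevant linear system is an open subset of a projective space and the automorphism group of the ambient variety acts with generically finite stabilisers, producing a dominant rational map from an open part of a Grassmannian (or projective space) to $\cF_{2d}$ whose source is rational. For $d=5,\ldots,9$, $d=11,12,17,19$, one invokes Mukai's realisation of the generic $\Kthree$ of degree $2d$ as a (transverse) complete intersection of the right codimension in a suitable homogeneous variety $G/P$ (a Grassmannian, a symplectic or orthogonal Grassmannian, or a more exotic homogeneous space in the higher cases); the space of such complete intersections is an open subset of a projective bundle over a point, which is rational, and again the ambient group acts with generically finite stabilisers, yielding unirationality. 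These are contained in \cite{Mu1,Mu2,Mu3,Mu4}.

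For the cases $d=6,8,10,16$ the argument proceeds differently, via non-abelian Brill--Noether theory: one represents a general polarised $\Kthree$ $(S,h)$ by a vector bundle $E$ on a smooth curve $C\in|h|$, whose moduli lives on a rational parameter space by the classical theory of such Brill--Noether loci; running the construction in reverse recovers $S$ from the data of $(C,E)$. For $d=11,12,15,19$ one uses the ad hoc geometric constructions of \cite{Mu5} and related papers; the most elaborate of these is the case $d=15$, where Mukai realises a generic $\Kthree$ of degree $30$ as a section of an explicit rank $10$ vector bundle on the Ellingsrud--Piene--Str{\o}mme compactification of the moduli space of twisted cubics in $\PP^3$, and one uses that this compactification is rational together with an explicit parameter count to produce the required dominant rational map from a rational variety.

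The main obstacle, and the reason the theorem is hard, is the absence of any uniform construction: each value of $d$ requires its own projective model and its own verification that a generic polarised $\Kthree$ of degree $2d$ actually arises that way (transversality of the complete intersection, smoothness, and correctness of the induced polarisation class). Once the model is produced, unirationality is essentially automatic from parameter counting, but finding the model, and in particular proving that a general $(S,h)$ lies in the locus described by the model, is the substantive step and is what occupies \cite{Mu1,Mu2,Mu3,Mu4,Mu5}. With all these inputs assembled, the union of degrees covered is exactly $\{1,\ldots,12,15,16,17,19\}$, giving the theorem.
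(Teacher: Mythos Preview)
Your proposal is correct and matches the paper's approach: the paper gives no proof of this theorem at all, presenting it instead as a summary of results from the literature, organised by the same three methods you list (complete intersections in homogeneous spaces, non-abelian Brill--Noether theory, and ad hoc constructions), with the same attribution to Mukai's papers \cite{Mu1,Mu2,Mu3,Mu4,Mu5}. Your outline is simply a more detailed version of the paper's discussion preceding the theorem statement.
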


Some other special moduli spaces related to $\Kthree$ surfaces have
also been studied. Perhaps the most notable of these is the moduli
space of Enriques surfaces, which is a special case of the lattice
polarised $\Kthree$ moduli spaces given in~\eqref{latticepol}. This
approach was first seen in Horikawa's announcement \cite{HorEnr} of a
Torelli theorem for Enriques surfaces. It was shown by Kondo
\cite{KoEnr} that the moduli space of Enriques surfaces is rational.

It is also natural in this context to consider moduli of $\Kthree$
surfaces with automorphisms. There is an extensive literature on such
surfaces, much of it touching on moduli problems: for example \cite{GaS}
and the recent work of Ma and Yoshikawa \cite{MY}. 

Many of these moduli spaces turn out to be unirational or even
rational, typically because the symmetry exhibits the $\Kthree$
surfaces as covers of $\PP^2$ and the family of possible ramification curves can
be parametrised.

Some moduli of $\Kthree$ surfaces with extra structure can be
described as ball quotients: see for example \cite{DGK}.  Again there
is an extensive literature on this subject. For a guide, we refer the
reader to the survey \cite{DK} by Dolgachev and Kondo. In the
introduction to \cite{DK} it is conjectured that all Deligne-Mostow
arithmetic complex ball quotients are moduli spaces of $\Kthree$
surfaces.

Much less is known in the case of irreducible symplectic manifolds,
but some cases have been studied.

\begin{example}\label{ex:voisin}
A classical case is the Fano variety of lines contained in a cubic
fourfold, which was studied in detail by Voisin \cite{Vo1}. These are
varieties of $\Kthree^{[2]}$ type. In our terminology this corresponds
to the degree~$6$ non-split case and the lattice $L_h$ orthogonal to
the polarisation vector is isomorphic to $2U \oplus 2E_8(-1) \oplus
A_2(-1)$.
\end{example}
\begin{example}\label{ex:ogradyEPW}
O'Grady studied double covers of
Eisenbud-Popescu-\-Wal\-ter sextics in \cite{OG4}. This is the case of split
polarisation of minimal degree (degree~$2$) for the
$\Kthree^{[2]}$-type. The lattice $L_h$ is $2U \oplus 2E_8(-1) \oplus
\latt{-2} \oplus \latt{-2}$.
\end{example}
\begin{example}\label{ex:ilievranestad}
Iliev and Ranestad (\cite {IR1}, \cite{IR2}) have shown that the
variety of sums of powers $\VSP(F,10)$ of presentations of a general
cubic form $F$ in $6$~variables as a sum of $10$~cubes is an
irreducible symplectic $4$-fold. These are deformations of length~$2$
Hilbert schemes of $\Kthree$ surfaces with a degree~$14$
polarisation. The precise nature of the polarisation of the irreducible
symplectic manifold is unknown.
\end{example}
\begin{example}\label{ex:debarrevoisin}
Debarre and Voisin (\cite{DV}) have constructed examples in the
Grassmannian $\Gr(6,V)$ where $V$ is a $10$-dimensional complex vector
space.Starting with a sufficiently general form $\sigma\colon \wedge^3
V \to \CC$ they show that the subspace of $\Gr(6,V)$ consisting of
$6$-planes $L$ such that $\sigma|_{\wedge^3 L} =0$ is an irreducible
symplectic fourfold of $\Kthree^{[2]}$-type. This defines a
$20$-dimensional family with polarisation of non-split Beauville
degree $2d=22$: the lattice $L_h$ is $2U \oplus 2E_8(-1)
\oplus \begin{pmatrix} -2 & 1\\ 1 &-6 \end{pmatrix}$.
\end{example}

Many authors have asked about the construction of geometrically
meaningful compactifications of moduli spaces of polarised $\Kthree$
surfaces. There are few general results known about this. For small
degree, some results can be found in \cite{Sha2}, \cite{Sha3},
\cite{St} and \cite{Sc}. A partial compactification is discussed in
\cite{Fr} and there is an approach via log geometry in \cite{Ol}.

\section{Compactifications}\label{sect:compactifications}

The spaces $\cF_{2d}$ (defined by~\eqref{F2d} above) and the other
quotients of period domains described in
Section~\ref{sect:projectivemodels} are complex analytic spaces by
construction. We observed above that they are in fact quasi-projective
varieties by the results of Baily and Borel, and the GIT moduli spaces
of polarised $\Kthree$ surfaces and of irreducible symplectic
manifolds are quasi-projective by the general results of Viehweg.
Nevertheless, we require projective models and preferably smooth, or
nearly smooth, models also.

In this section we describe the most commonly used compactifications
and we give some results about the singularities that arise. We begin
by describing the class of spaces we wish to compactify.

\subsection {Modular varieties of orthogonal
  type}\label{subsect:orthogonaltype}

As usual we let $L$ be an integral lattice of signature $(2,n)$,
$n\ge 3$, and consider the symmetric space
\begin{equation}\label{symmdomain}
\cD_L=\{ x\in \PP(L\tensor \CC)\mid (x,x)=0,\ (x,\bar x)>0\}^+
\end{equation}
where the superscript ${}^+$ denotes a choice of one of the two
connected components of $\Omega_L$. We let $\Gamma$ be a subgroup of
finite index in $\Orth^+(L)$. Any such $\Gamma$ acts properly
discontinuously on $\cD_L$, but in general there are elements of
finite order in $\Gamma$ and they have fixed points in $\cD_L$.

The quotient
\begin{equation}\label{definemodularvariety}
\cF_L(\Gamma)=\Gamma\backslash \cD(L)
\end{equation}
is called a \emph{modular variety of orthogonal type} or
\emph{orthogonal modular variety}. In particular
it is a locally symmetric variety, i.e.\ a variety that is the
quotient of a symmetric space by a discrete group of automorphisms. It
is not compact and is by its construction a complex analytic space: if
$\Gamma$ is torsion-free it is a complex manifold. In fact it is a
quasi-projective variety by~\cite{BB}.

Some particularly important examples of orthogonal modular varieties are:
\begin{itemize}
\item[\rm{(a)}] the moduli spaces of polarised $\Kthree$ surfaces (the
  signature is $(2,19)$);
\item[\rm{(b)}] the moduli spaces of lattice-polarised $\Kthree$
  surfaces (signature $(2,n)$, with $n<19$);
\item[\rm{(c)}] the moduli spaces of polarised abelian or Kummer
  surfaces (signature $(2,3)$);
\item[\rm{(d)}] the moduli space of Enriques surfaces (signature
  $(2,10)$);
\item[\rm{(e)}] quotients of the period domains of polarised
  irreducible symplectic varieties (signature $(2,4)$,  $(2,5)$,
  $(2,20)$ and $(2,21)$ in the known cases).
\end{itemize}

For nearly all the orthogonal modular varieties $\cF_L(\Gamma)$ that
occur in this article, $\Gamma$ is not torsion-free. The fixed points
can lead to singularities of $\cF_L(\Gamma)$. Since the stabiliser of
any point of $\cD_L$ is finite, the singularities are finite quotient
singularities: that is, locally analytically they are isomorphic to a
quotient of $\CC^n$ by a finite subgroup $G$ of $\GL(n,\CC)$. They are
not arbitrary finite quotient singularities, though, and we give some
details about them in Section~\ref{subsect:singinterior}.

The quotient $\CC^n/G$ may in fact be smooth, however. This happens, by
a result of Chevalley~\cite{Che}, if
and only if $G$ is generated by quasi-reflections (see
Definition~\ref{def:qref}). More importantly for us, the ramification
divisors of $\cD_L\to\cF_L(\Gamma)$ are precisely the fixed loci of
elements of $\Gamma$ acting as quasi-reflections on the tangent
space. In fact the only quasi-reflections that occur for orthogonal
modular varieties are reflections. (Let us emphasise that here we are
discussing the action of an element of $G$, not of $\Gamma$: being a
reflection on the tangent space to $\cD_L$ at a fixed point is not the
same as being a reflection as an element of $\Orth(L)$.)

The existence of elements acting as reflections, and thus of
ramification divisors, is a significant feature of orthogonal modular
varieties. For Siegel modular varieties (the symplectic group rather
than the orthogonal group) there are no ramification divisors, except
in the case of Siegel modular $3$-folds. Siegel modular $3$-folds,
however, may also be regarded as orthogonal modular varieties because
of the isogeny between $\Sp_2$ and $\SO(2,3)$.

Differential forms on $\cF_L(\Gamma)$ may be interpreted as modular
forms for $\Gamma$: see Section~\ref{subsect:orthmodfms} for more
details. Therefore arithmetic information (modular forms) may be used
to obtain geometric information about $\cF_L(\Gamma)$. In particular
we can use modular forms to decide whether $\cF_L(\Gamma)$ is of
general type, or more generally to try to determine its Kodaira
dimension. If $Y$ is a connected smooth projective variety of
dimension $n$, the \emph{Kodaira dimension} $\kappa(Y)$ of $Y$ is
defined by
\begin{equation*}
\kappa(Y)=\trdeg\Big(\bigoplus_{k\ge 0}H^0(Y,kK_Y)\Big)-1,
\end{equation*}
or $-\infty$ if $H^0(Y,kK_Y)=0$ for all $k> 0$. Thus $h^0(Y,kK_Y)\sim
k^{\kappa(Y)}$ for $k$ sufficiently divisible. The possible values of
$\kappa(Y)$ are $-\infty,0,1,\ldots,n=\dim Y$, and $Y$ is said to be of
\emph{general type} if $\kappa(Y)$. The Kodaira dimension
is a bimeromorphic invariant so it makes sense to extend the
definition to arbitrary irreducible quasi-projective varieties $X$ by
putting $\kappa(X)=\kappa(\Tilde X)$ for $\Tilde X$ a
desingularisation of a compactification of $X$.

With this in mind, we now turn to describing some algebraic
compactifications of $\cF_L(\Gamma)$, and the singularities that can
occur. Much further information about compactifications of
locally symmetric varieties (not all algebraic) may be found in the
book~\cite{BJ}, especially in \cite[Part III]{BJ}, and the references
there. However, the emphasis there is on the geometry and topology of
symmetric and locally symmetric spaces as real manifolds.

We are interested in two kinds of compactification: the
Baily-Borel compactification $\cF_L(\Gamma)^*$ and the toroidal
compactifications $\Bar{\cF_L(\Gamma)}$. We shall describe the
construction and some of the properties of each. 

\begin{remark}\label{rem:arithmeticgroup}
The constructions can also be made if $\Gamma$ is an arithmetic
subgroup of $\Orth^+(L\otimes\QQ)$; that is, if $\Gamma< \Orth(L\otimes
\QQ)$ and $\Gamma\cap \Orth^+(L)$ is of finite index in both $\Gamma$
and $\Orth^+(L)$.
\end{remark}

In some important ways the generalisation to arithmetic subgroups does
not change things much. If we are willing to change the lattice, by
Proposition~\ref{prop:rationalcoeffs} we can always assume that
$\Gamma$ is contained in $\Orth(L)$, so that we do not need rational
entries in the matrices. In particular the results of
Sections~\ref{subsect:modvarsings}--\ref{subsect:singcusps} still
hold.

\begin{proposition}\label{prop:rationalcoeffs}
If $\Gamma< \Orth(L\otimes \QQ)$ is arithmetic then there exists
a lattice $M\subset L\otimes \QQ$ such that $\Gamma< \Orth(M)$.
\end{proposition}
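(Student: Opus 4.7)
The plan is to construct $M$ as a $\Gamma$-invariant refinement of $L$ obtained from the finite orbit of $L$ under $\Gamma$. By Remark~\ref{rem:arithmeticgroup}, the subgroup $\Gamma_0 = \Gamma\cap\Orth^+(L)$ has finite index in $\Gamma$, and every element of $\Gamma_0$ preserves $L$ setwise. Hence the collection $\{\gamma L \mid \gamma\in\Gamma\}$ of $\ZZ$-submodules of $L\otimes\QQ$ is finite; enumerate it as $L_1,\dots,L_k$, with $L_1 = L$.

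I would then set
\begin{equation*}
M = L_1\cap\cdots\cap L_k \subset L\otimes\QQ.
\end{equation*}
Each $L_i = \gamma_i L$ is a $\ZZ$-lattice of rank $\rk L$, and any two full-rank $\ZZ$-lattices in a common finite-dimensional $\QQ$-vector space are commensurable. Hence $M$ contains some positive multiple of $L$, so $M$ itself is a full-rank $\ZZ$-submodule of $L\otimes\QQ$. Because $M\subset L$, the bilinear form on $L$ restricts to a non-degenerate integer-valued symmetric form on $M$ (which is also even if $L$ is even). Thus $M$ is a lattice in the sense used throughout the paper.

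To verify $\Gamma<\Orth(M)$, I would fix $\gamma\in\Gamma$ and observe that the set $\{L_1,\dots,L_k\}$ is by construction $\Gamma$-stable, so there is a permutation $\sigma$ of $\{1,\dots,k\}$ with $\gamma L_i = L_{\sigma(i)}$. Consequently
\begin{equation*}
\gamma M = \bigcap_{i=1}^k \gamma L_i = \bigcap_{i=1}^k L_{\sigma(i)} = M,
\end{equation*}
and since $\gamma$ already preserves the bilinear form on $L\otimes\QQ$, this gives $\gamma\in\Orth(M)$. The only substantive step is the finiteness of the orbit $\Gamma\cdot L$, which follows directly from the arithmeticity hypothesis; the remainder is bookkeeping. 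One could equally well use the sum $L_1+\cdots+L_k$ in place of the intersection, but then a rescaling would be needed to make the restricted form integer-valued, whereas the intersection sits inside $L$ automatically.
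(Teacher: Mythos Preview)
Your proof is correct and follows essentially the same approach as the paper: both exploit the finiteness of the $\Gamma$-orbit of $L$ (coming from the finite index of $\Gamma\cap\Orth(L)$ in $\Gamma$) to build a $\Gamma$-invariant lattice. The paper takes the sum $\sum_{g\in\Gamma} g(L)$ and then rescales the form by an integer to make it integral, whereas you take the intersection $\bigcap_{g\in\Gamma} g(L)$, which already sits inside $L$ and so inherits an integral (even) form without rescaling---exactly the trade-off you point out in your closing remark.
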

\begin{proof}
Let us consider $g(L)$ for all $g\in \Gamma$.
The index $[\Gamma: \Gamma\cap \Orth(L)]$ is finite, therefore
the number of different copies $g(L)$ of $L$ is finite.
Therefore the $\ZZ$-module generated by the union of all $g(L)$
($g\in \Gamma$) is finitely generated. Denote this lattice by $M$.
Then $\Gamma$ is a subgroup of $\Orth(M)$ by the definition of $M$, and
the quadratic form on $M$ is induced by the quadratic form on $L$.
If  the quadratic form is not even integral, then we can make it integral
taking a renormalisation by an integral constant $c$, i.e.\ we set
$(u,v)_M= c(u,v)_L$. Doing so does not change the orthogonal group.
It follows that $\Gamma$ can be considered as a subgroup of $\Orth(M)$
for some even integral lattice $M$.
\end{proof}
Notice, though, that the renormalisation by $c$ does change the stable orthogonal group.

\subsection{The Baily-Borel compactification}\label{subsect:bailyborel}

The Baily-Borel compactification, which in this context is often also
referred to as the Satake compactification, can be defined very
quickly as $\Proj\bigoplus M_k(\Gamma, 1)$, where $M_k(\Gamma,1)$
denotes the space of weight~$k$ modular forms with trivial character:
see Definition~\ref{def:modfm}.  A priori, however, it is not clear
that the ring of modular forms is finitely generated, nor that the
modular forms separate points of $\cF_L(\Gamma)$. Nor does that
description immediately give a picture of the boundary
$\cF_L(\Gamma)^*\setminus \cF_L(\Gamma)$.  Instead the approach of
Baily and Borel is to synthesise $\cF_L(\Gamma)$ by topological and
analytic methods, adding boundary components, and to show that the
resulting space is a projective variety. Full details are given
in~\cite{BB}, and a more detailed sketch than we give here may also be
found in~\cite{BJ}.

By writing $\cD_L$ in the form~\eqref{symmdomain} we have exhibited it
as a Hermitian domain of type~IV. As a Riemannian domain, $\cD_L\imic
\SO_0(2,n)/\SO(2)\times \SO(n)$, where $\SO_0(2,n)$ is the identity
component. The Baily-Borel compactification is defined for any
Hermitian symmetric space $\cD=G/K$ (instead of $\cD_L$) and for any
quotient $X=\Gamma\backslash \cD$ of $\cD$ by an arithmetic
group~$\Gamma$. We describe the construction in general but we keep in
mind the case $\cD=\cD_L$ as above.

An irreducible symmetric space $G/K$ is Hermitian if and only if the
centre of the maximal compact subgroup $K$ has positive dimension:
this explains why we consider only lattices of signature $(2,n)$,
because $\SO(m)\times\SO(n)$ has discrete centre unless $n=2$ or
$m=2$. Any Hermitian symmetric space of noncompact type can be
embedded as a bounded symmetric domain in the holomorphic tangent
space $T_K\cD$ (the Harish-Chandra embedding). For these facts see
\cite[Prop.~I.5.9]{BJ} or~\cite{Hel}.

The Baily-Borel compactification $\cD^{BB}$ of $\cD$ is simply the closure
of $\cD$ in the Harish-Chandra embedding, which in this case is the
closure $\bar\cD_L$ of $\cD_L$ in $\PP(L\otimes \CC)$; this in turn is
contained in the compact dual, which in this case is the quadric
\begin{equation*}
\check\cD=\{x\in \PP(L\tensor\CC)\mid (x,x)=0\}.
\end{equation*}
The tangent space to $G/K$ at $K$ is identified, as a complex manifold,
with an open subset of $\check\cD$.

A subset of $\cD^{BB}$ is called a \emph{boundary component} if it is
an analytic arc component: that is, an equivalence class under the
relation $x\sim y$ if there exist finitely many holomorphic maps
$f_i\colon \Delta=\{z\in\CC\mid |z|<1\}\to \cD^{BB}$ such that $x\in
f_1(\Delta)$, $y\in f_k(\Delta)$ and $f_i(\Delta)\cap
f_{i+i}(\Delta)\neq \emptyset$ for $1\le i<k$.  The Baily-Borel
compactification $\cD^{BB}$ decomposes as a disjoint union of boundary
components $F_P$, which are themselves symmetric spaces associated
with certain parabolic subgroups $p$ of $G$:
\begin{equation}\label{BBupper}
\cD^{BB}=\cD\amalg\coprod_P F_P.
\end{equation}
Not all parabolic subgroups occur, but only those associated with
certain collections of strongly orthogonal roots. See
\cite[Section~I.5]{BJ} for precise details.

By construction, $G$ acts on $\cD^{BB}$. The normaliser of $F_P$,
\begin{equation}\label{normaliser}
\cN(F_P)=\{g \in G\mid g(F_P)=F_P\}
\end{equation}
is a maximal parabolic subgroup of $G$ (the parabolic subgroup $P$ is
in general not maximal). We shall later also need to consider the
centraliser
\begin{equation}\label{centraliser}
\cZ(F_P)=\{g \in G\mid g|_{F_P}=\id\}.
\end{equation}
To construct the Baily-Borel compactification of $X$ -- in our case,
of $\cF_L(\Gamma)=\Gamma\backslash \cD_L$ -- one must first
restrict to rational boundary components.
\begin{definition}\label{def:ratbdycompt}
A boundary component $F$ is called a \emph{rational boundary
  component} if
\begin{itemize}
\item[(i)] the normaliser $\cN(F)$ of $F$ in $G$ is a parabolic
  subgroup defined over~$\QQ$, and
\item[(ii)] the centraliser $\cZ(F)$ contains a cocompact subgroup,
  normal in $\cN(F)$, which is an algebraic subgroup defined
  over~$\QQ$.
\end{itemize}
\end{definition}
A boundary component satisfying (i) is called a weakly rational
boundary component. It is shown in \cite[Theorem~3.7]{BB} that for the
Baily-Borel compactification (ii) follows from (i), so that weakly
rational boundary components are automatically rational.

$G$ acts on $\cD^{BB}$ but of course does not preserve rational boundary
components. However $\Gamma$, being an arithmetic subgroup, does take
rational boundary components to rational boundary components, so it
acts on $\cD^*:=\cD\amalg\coprod_{F_P\text{ rational}}F_P$. The effect of
condition~(ii) is that $\Gamma_P=\Gamma\cap \cN(F_P)$ is again a
discrete group. Moreover, $F_P$ is again a Hermitian symmetric space
and $\Gamma_P$ is an arithmetic group acting on~$F_P$.

We obtain the Baily-Borel compactification $(\Gamma\backslash\cD)^*$
by taking the quotient of $\cD^*$ by the action of $\Gamma$. Each
boundary component $\Gamma_P\backslash F_P$ has the structure of a
complex analytic space and it is shown in~\cite{BB} that these
structures can be glued together to give an analytic structure on
$(\Gamma\backslash \cD)^*$ extending the analytic structure on
$\Gamma\backslash \cD$.

It is at this stage that modular forms enter the picture. Each
boundary component is an analytic space by construction but to show
that their union is also an analytic space one must exhibit local
analytic functions, and to show that the resulting space is projective
we need analytic functions that separate points. Baily and Borel do
this by using the Siegel domain realisation of $\cD$ over a boundary,
which we describe below (Section~\ref{subsect:tube}) for the cases we
need here. For a more general description, see~\cite{Sat}. In these
coordinates one may write down suitable series (Poincar\'e-Eisenstein
series) that define modular forms having the required properties.

\begin{theorem}\label{thm:BBdescription}
The Baily-Borel compactification $(\Gamma\backslash \cD)^*$ is an
irreducible normal projective variety over $\CC$. It contains
$\Gamma\backslash \cD$ (which in our case is $\cF_L(\Gamma)$) as a
Zariski-open subset, and may be decomposed as
\begin{equation}\label{BBboundarydecomposition}
(\Gamma\backslash \cD)^*=\Gamma\backslash \cD\amalg\coprod_P
  \Gamma_P\backslash X_P,
\end{equation}
where $P$ runs through representatives of $\Gamma$-equivalence classes
of parabolic subgroups determining rational boundary components.
\end{theorem}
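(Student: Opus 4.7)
The plan is to follow the original strategy of Baily--Borel, which is to first endow the set $\cD^*:=\cD\amalg\coprod_{F_P\text{ rational}}F_P$ with a suitable (Satake) topology, then pass to the quotient by $\Gamma$ to obtain a compact Hausdorff analytic space, and finally construct a projective embedding via Poincar\'e--Eisenstein series. The decomposition~\eqref{BBboundarydecomposition} is essentially built in: once one knows that $\Gamma$ permutes the rational boundary components $F_P$ (which follows from the fact that $\Gamma$ is arithmetic and thus takes parabolic $\QQ$-subgroups to parabolic $\QQ$-subgroups), one just has to check that two rational boundary components are $\Gamma$-equivalent if and only if their normalisers are $\Gamma$-conjugate. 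Taking a set of representatives of the finitely many $\Gamma$-conjugacy classes of rational parabolic subgroups gives the disjoint union, and by construction $\Gamma\backslash\cD$ is the stratum indexed by the improper parabolic $P=G$.

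The key topological step is to define a basis of neighbourhoods of a boundary point $x\in F_P$ using the Siegel domain realisation of $\cD$ over $F_P$ (cf.\ Section~\ref{subsect:tube}). One shows that these neighbourhoods descend under $\Gamma$ to a Hausdorff topology on $(\Gamma\backslash\cD)^*$ in which $\Gamma\backslash \cD$ is open and dense, and that the resulting space is compact because each cusp has a fundamental set (a Siegel set) that is relatively compact modulo the action of $\Gamma_P=\Gamma\cap\cN(F_P)$. At the same time, one equips each stratum $\Gamma_P\backslash F_P$ with its own analytic structure (inductively, since $F_P$ is again a Hermitian symmetric domain of lower rank and $\Gamma_P$ is arithmetic on it), and uses Cauchy-type integral formulas in the Siegel coordinates to extend holomorphic functions from a punctured neighbourhood across the boundary stratum. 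This gives $(\Gamma\backslash\cD)^*$ the structure of a normal analytic space in which $\Gamma\backslash\cD$ is Zariski-open. Irreducibility is automatic because $\cD$ is connected and each boundary stratum has strictly smaller dimension.

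The projectivity is the main obstacle, and this is where modular forms appear. For $k$ sufficiently large and divisible one constructs Poincar\'e--Eisenstein series attached to each rational boundary component; written in the Siegel domain coordinates over a cusp these are absolutely convergent and extend continuously to the boundary, giving global sections of the line bundle of weight-$k$ modular forms that do not all vanish at any given boundary point. One then has to verify two non-trivial facts: that the ring $\bigoplus_{k\ge 0}M_k(\Gamma,1)$ is finitely generated (a finiteness statement of Cartan--Serre type, combined with the theorem that it separates points), and that the resulting map $(\Gamma\backslash\cD)^*\to \PP(\bigoplus M_k(\Gamma,1)^\vee)$ is injective. Separation of boundary strata from each other and from the interior reduces, after a careful analysis in each Siegel domain, to showing that one can choose Eisenstein series vanishing to prescribed orders along chosen boundary strata while being non-zero at a given point; this in turn uses the precise combinatorial structure of the parabolic $P$ and the unipotent radical acting on $\cD$.

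Once a projective embedding is in hand, one applies Chow's theorem (or GAGA) to conclude that $(\Gamma\backslash\cD)^*$ is an algebraic variety and that $\Gamma\backslash\cD$, being the complement of the proper analytic subset $\coprod_P\Gamma_P\backslash F_P$, is Zariski-open in it. Normality follows from the local extension statements used to define the analytic structure, and irreducibility has already been noted. The hardest technical step in the entire argument is the construction and convergence analysis of the Eisenstein series together with the verification that they separate the cusps; everything else is a careful bookkeeping exercise built on top of the reduction theory of the arithmetic group $\Gamma$ acting on $\cD_L$.
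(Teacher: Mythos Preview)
Your proposal is correct and follows exactly the approach the paper itself sketches: the paper does not give a self-contained proof of this theorem but rather outlines, in the paragraphs preceding the statement, the same Baily--Borel strategy you describe (Satake topology on $\cD^*$, quotient by $\Gamma$, gluing of analytic structures, and Poincar\'e--Eisenstein series in Siegel domain coordinates to produce the projective embedding), citing~\cite{BB} for the details. Your write-up is in fact more detailed than the paper's own account; there is nothing substantive to add or correct.
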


Although we defined rational boundary components in terms of
$\cD^{BB}$, they determin, according to condition~(i) above, rational
maximal parabolic subgroups of $G$. If $G$ is simple, which will
always be the case for us, the boundary components of $\cD^{BB}$
correspond precisely to the maximal real parabolic subgroups: see
\cite [{\S}3.2, Proposition~2]{AMRT}.

At least for the classical groups, the rational maximal parabolic
subgroups can be described in combinatorial terms. In the case of
$\Orth(L)$ with $L$ of signature $(2,n)$ they are the stabilisers of
isotropic subspaces of $L\otimes \QQ$. Because of the signature, such
spaces have dimension $2$ or $1$ (or $0$, corresponding to the
``boundary'' component $\Gamma\backslash \cD_L$).Therefore we obtain
the following description of $\cF_L(\Gamma)^*$.
\begin{theorem}
$\cF_L(\Gamma)^*$ decomposes into boundary components as
\begin{equation}\label{boundarycomponents_orth}
\cF_L(\Gamma)^*=\cF_L(\Gamma)\amalg\coprod_\Pi X_\Pi \amalg
\coprod_\ell Q_\ell,
\end{equation}
where $\ell$ and $\Pi$ run through representatives of the finitely
many $\Gamma$-orbits of isotropic lines and isotropic planes in
$L\otimes \QQ$ respectively. Each $X_\Pi$ is a modular curve, each
$Q_\ell$ is a point, and $Q_\ell$ is contained in the closure of
$X_\Pi$ if and only if the representatives may be chosen so that
$\ell\subset \Pi$.
\end{theorem}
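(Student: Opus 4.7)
The plan is to specialise Theorem~\ref{thm:BBdescription} to the group $G = \Orth^+(L\otimes\RR)$ acting on $\cD_L$. By the standard parametrisation of rational maximal parabolic subgroups for orthogonal groups (\cite[{\S}3.2, Proposition~2]{AMRT}), these are precisely the stabilisers $P_F$ of the non-zero rational isotropic subspaces $F \subset L\otimes\QQ$; and because $L$ has signature $(2,n)$, Witt's theorem forces $\dim F \in \{1,2\}$. Hence the boundary components in~\eqref{BBboundarydecomposition} are indexed by $\Gamma$-orbits of isotropic lines $\ell$ and isotropic planes $\Pi$ in $L\otimes\QQ$. Finiteness of the orbit count is the classical finiteness of cusps for arithmetic groups (reduction theory, \cite[Part III]{BJ}).

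Next I would identify the boundary symmetric space $F_P$ in each case via the Langlands decomposition of $P_F$ into Hermitian, linear and compact parts. For $\dim F = 1$, say $\ell = \QQ e$, the Levi of $P_\ell$ is $\GL(\ell) \times \Orth(\ell^\perp/\ell)$; the first factor is abelian and the second, of signature $(1,n-1)$, has real-hyperbolic but non-Hermitian symmetric space. Therefore the Hermitian part is trivial, $F_\ell$ is a point, and so is $Q_\ell = \Gamma_\ell\backslash F_\ell$. For $\dim F = 2$, the Levi of $P_\Pi$ is $\GL(\Pi) \times \Orth(\Pi^\perp/\Pi)$; the second factor is compact of signature $(0,n-2)$, while the first has Hermitian part $\SL(2,\RR)$, whose symmetric space is the upper half-plane $\HH$. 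Thus $F_\Pi \cong \HH$. The image of $\Gamma_\Pi$ in $\SL(2,\RR)$ is arithmetic and commensurable with $\SL(2,\ZZ)$, whence $X_\Pi = \Gamma_\Pi\backslash F_\Pi$ is a modular curve in the usual sense.

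For the incidence, I would work first in the unquotiented compactification $\cD_L^{BB}\subset\PP(L\otimes\CC)$. If $\Pi=\QQ e + \QQ f$ is totally isotropic, then every $x \in \Pi\otimes\CC$ satisfies $(x,x) = (x,\bar x) = 0$, so $\PP(\Pi\otimes\CC) \subset \partial\cD_L$. The boundary component $F_\Pi$ is one of the two connected components of $\PP(\Pi\otimes\CC)\setminus\PP(\Pi\otimes\RR) \cong \HH\amalg\bar\HH$, and its closure in $\PP(L\otimes\CC)$ picks up exactly the real points of $\PP(\Pi\otimes\CC)$: these are precisely the points $F_\ell = \{[v]\}$ for the isotropic lines $\ell=\QQ v\subset \Pi$. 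Passing to the $\Gamma$-quotient yields the stated incidence: $Q_\ell \subset \overline{X_\Pi}$ iff representatives can be chosen with $\ell\subset\Pi$.

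The main obstacle is the identification $F_\Pi\cong \HH$: although classical, it relies on the explicit Langlands decomposition of $P_\Pi$, or equivalently on the Siegel domain realisation of $\cD_L$ over the cusp $\Pi$, which is sketched in~\cite{Sat} and will be described in Section~\ref{subsect:tube} of the present paper. Once this is in place, the rest of the theorem -- finiteness, the dichotomy between points and modular curves, and the flag-theoretic incidence -- is a direct assembly of Theorem~\ref{thm:BBdescription} with the combinatorics of isotropic subspaces of $L\otimes\QQ$.
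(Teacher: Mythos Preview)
Your proposal is correct and follows essentially the same route as the paper: specialising Theorem~\ref{thm:BBdescription} to $\Orth(2,n)$ by identifying rational maximal parabolics with stabilisers of isotropic subspaces, then using the signature to force $\dim F\in\{1,2\}$. In fact you supply considerably more detail than the paper does---the paper simply states the theorem as an immediate consequence of the preceding paragraph, without spelling out the Levi/Langlands identification of $F_\Pi\cong\HH$ or the projective-closure argument for the incidence relation that you give.
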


$X_\Pi$ and $Q_\ell$ are usually referred to as $1$-dimensional and
$0$-dimensional boundary components, or corank~$1$ and corank-$2$
boundary components. The boundary components of the Baily-Borel
compactification are also known as the \emph{cusps}.

\subsection{Toroidal compactifications}\label{subsect:toroidal}

Toroidal compactifications in general are described in the
book~\cite{AMRT}. They are made by adding a divisor at each cusp.
Locally in the analytic topology near a cusp, the toroidal
compactification is a quotient of an open part of a toric variety over
the cusp: this variety is determined by a choice of admissible fan in
a suitable cone, and the choices must be made so as to be compatible
with inclusions among the closures of the Baily-Borel boundary
components. A summary may be found in~\cite[Chapter~III, \S5]{AMRT}.

The case we are concerned with,
of $\Orth(2,n)$, is simpler than the general case because only the
$0$-dimensional cusps need any attention. However, we shall begin by
describing the general theory, starting with $\cD=G/K$ and an action
of an arithmetic group $\Gamma$.

Let $F$ be a boundary component: we may as well assume immediately
that it is a rational boundary component. In general one has a
description of $\cD$ as a Siegel domain, an analytic open subset inside
\begin{equation}\label{DFnonhol}
\cD(F):=F\times V(F)\times U(F)_\CC.
\end{equation}
In this decomposition, $U(F)$ is the centre of the unipotent radical
$W(F)$ of $\cN(F)$, the normaliser of $F$ in $G$, and $V(F)\imic
W(F)/U(F)$ is an abelian Lie group.  This is not a holomorphic
decomposition ($V(F)$ does not have a natural complex structure) but
$U(F)_\CC$ acts holomorphically on $\cD(F)$ and in the diagram
\begin{equation*}
\xymatrix{
\cD(F) \ar[dd]_{\pi_F} \ar[dr]^{\pi'_F}\\
&\cD(F)' \ar[dl]_{p_F}&=&\cD(F)/U(F)_{\CC}\\
F
}
\end{equation*}
all the maps are holomorphic.

Now $\cD$ is given by a tube domain condition: there is a cone
$C(F)\subset U(F)$ such that
\begin{equation}\label{Dtube}
\cD=\{x \in \cD(F)\mid \Im(\pr_U(x))\in C(F)\}.
\end{equation}
where $\pr_U\colon \cD(F)\to U(F)_\CC$ is the projection map from the
decomposition~\eqref{DFnonhol}: this is a holomorphic map, even
though~\eqref{DFnonhol} is not a holomorphic product decomposition.

In fact, there is a holomorphic product decomposition of $\cD(F)$
which is (perhaps confusingly) similar:
\begin{equation}\label{Dfhol}
\cD(F)\imic U(F)_\CC\times \CC^k\times F,
\end{equation}
where, of course, $k={\frac{1}{2}}\dim_{\RR}V(F)$ but $\CC^k$ is not
naturally identified with $V(F)$.

Denoting the map $x\in \cD(F)\mapsto \Im(\pr_U(x))\in U(F)$ by
$\phi_F$, as in~\cite{AMRT}, we have the diagram
\begin{equation*}
\xymatrix{
C(F)&\subset&U(F)\\
\cD \ar[u]^{\phi_F} \ar[drr] \ar[dd]^{\pi_F} &\subset
&\cD(F) \ar[u]_{\phi_F} \ar[d]^{\pi'_F} &\subset &\check\cD\\
&& \cD(F)' \ar[dll]_{p_F}\\
F
}
\end{equation*}
in which $\pi'_F\colon \cD(F)\to \cD(F)'$ and $p_F\colon \cD(F)'\to F$
are principal homogeneous spaces for $U(F)_\CC$ and $V(F)$
respectively.

When $\Gamma$ acts, the group that acts on $\cD(F)$ and on $\cD$ is
$\cN(F)_\ZZ=\Gamma\cap\cN(F)$, which is a discrete group because $F$ is a
rational boundary component. So, looking at the action of
$U(F)_\ZZ=\Gamma\cap U(F)$, we get a principal fibre bundle
\begin{equation}\label{bdyfibration}
\cD(F)/U(F)_\ZZ \To \cD(F)'
\end{equation}
whose fibre is $T(F)=U(F)_\CC/U(F)_\ZZ$, an algebraic torus over
$\CC$.

Toroidal compactification proceeds by replacing this torus with a
toric variety $X_{\Sigma(F)}$ and taking the closure of $\cD/U(F)_\ZZ$ in
the $X_{\Sigma(F)}$-bundle over $\cD(F)'$ that results. Doing this for each
cusp $F$ separately one can then take the quotients of each such
$X_{\Sigma(F)}$ by $\cN(F)_\ZZ$ and (under suitable conditions) glue the
resulting pieces together by identifying the copies of $\cD/\Gamma$
contained in each one.

In this process $\Sigma(F)$ is in general very far from unique. It is
a fan in $C(F)$ (more precisely, of the convex hull of the rational
points of the closure of $C(F)$), i.e.\ a decomposition of $C(F)$ into
rational polyhedral cones (the integral structure is given by the
lattice $U(F)_\ZZ\subset U(F)$), which is required to be
$\cN(F)_\ZZ$-equivariant and locally finite, but is not itself finite
except in trivial cases: thus $X_{\Sigma(F)}$ is locally Noetherian,
but not Noetherian.

In general, in order for the gluing procedure to work, the fans must
satisfy a compatibility condition between fans for different cusps
that arises when one cusp is in the closure of another, but in the
case of $\Orth(2,n)$ the condition is automatically satisfied. The
reason is that the $1$-dimensional cusps have $\dim_\RR U(F)=1$ and
$C(F)=\RR_+$, and the cone decomposition is therefore unique, and
trivial. At the $0$-dimensional cusps, in contrast, one has $\dim_\RR
U(F)$, so $\cD$ is actually a tube domain in $\cD(F)$: we describe
this situation explicitly in Section~\ref{subsect:tube}.

At the $0$-dimensional cusps, therefore, many different choices of
compactification are possible. Below we shall choose one that suits
our purpose.

In the end we need to take the quotient by $\cN(F)_\ZZ$, not just
$U(F)_\ZZ$. This has two consequences. First, this is why it is
necessary to choose $\Sigma(F)$ and hence $X_{\Sigma(F)}$ in such a
way that $N(F)_\ZZ$ still acts; secondly, even if $X_{\Sigma(F)}$ is chosen
to be smooth, the action of $N(F)_\ZZ$ may reintroduce quotient
singularities into the finished toroidal compactification. It is easy
to choose $X_{\Sigma(F)}$ to be smooth, by the usual method of
subdivision to resolve toric singularities.

\begin{theorem}
A suitable choice of fans $\{\Sigma(F)\}$ for rational boundary
components $F$ determines a toroidal compactification
$\Bar{\cD/\Gamma}$ of $\cD/\Gamma$. This compactification may be
chosen to be projective, and to have at worst finite quotient
singularities.
\end{theorem}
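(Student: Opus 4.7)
The plan is to apply the general construction of toroidal compactifications of \cite{AMRT}, using the simplifications afforded by the $\Orth(2,n)$ case as laid out in Section~\ref{subsect:toroidal}. The rational boundary components are of two types: the $1$-dimensional cusps $F_\Pi$ (rational isotropic planes $\Pi$) and the $0$-dimensional cusps $F_\ell$ (rational isotropic lines $\ell$). At a $1$-dimensional cusp one has $\dim_\RR U(F)=1$ and $C(F)=\RR_+$, so there is no freedom: $\Sigma(F)$ must be the trivial fan $\{\{0\},\overline{C(F)}\}$.

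First, I would fix representatives $F_\ell$ of the finitely many $\Gamma$-orbits of $0$-dimensional cusps. At each such $F=F_\ell$ I need a $\Gamma_F$-admissible rational polyhedral decomposition $\Sigma(F)$ of the convex hull of the rational points of $\overline{C(F)}$, where $\Gamma_F=\cN(F)_\ZZ/U(F)_\ZZ$ acts on $U(F)$ preserving the integral structure $U(F)_\ZZ$. The existence of such an admissible fan with only finitely many $\Gamma_F$-orbits of cones is the content of Ash's reduction-theoretic lemma (\cite[Chapter~II]{AMRT}). Next, I would $\Gamma_F$-equivariantly subdivide, using the standard toric procedure, to make every cone smooth (generated by part of a $\ZZ$-basis of $U(F)_\ZZ$) and moreover to make the fan projective, i.e.\ the domains of linearity of a strictly convex $\Gamma_F$-invariant piecewise linear function; both refinements can be carried out simultaneously by iterating barycentric subdivision on orbit representatives and extending equivariantly.

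Second, I would verify the compatibility conditions between fans at different cusps. In the $\Orth(2,n)$ situation, the closure relation between cusps only involves $0$-dimensional cusps in the closure of $1$-dimensional cusps, and since the fan at a $1$-dimensional cusp is trivial the only compatibility to check is that the chosen $\Sigma(F_\ell)$ contains the relevant isotropic-plane rays as faces. This can be arranged at the outset by including them among the $1$-dimensional cones of $\Sigma(F_\ell)$ before performing the smooth-projective refinement, which preserves their presence. With this, the AMRT gluing procedure yields a proper algebraic variety $\Bar{\cD_L/\Gamma}$ containing $\cD_L/\Gamma$ as a Zariski-open subset, equipped with a proper morphism to the Baily-Borel compactification $\cF_L(\Gamma)^*$.

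Third, projectivity follows from the projectivity of the local fans together with the projectivity of $\cF_L(\Gamma)^*$ from Theorem~\ref{thm:BBdescription}: the pullback of an ample line bundle from $\cF_L(\Gamma)^*$ together with the relatively ample line bundles furnished at each cusp by the strictly convex support functions combine into an ample line bundle on $\Bar{\cD_L/\Gamma}$ (this is the argument of \cite[Chapter~IV]{AMRT}). For the singularity statement, each local toric model $X_{\Sigma(F)}$ is smooth by construction, so the only singularities of $\Bar{\cD_L/\Gamma}$ come either from interior points (finite quotient singularities by the stabilisers in $\Gamma$, as discussed in Section~\ref{subsect:orthogonaltype}) or from the action of the finite stabilisers in $\Gamma_F$ on $X_{\Sigma(F)}$, which again produces only finite quotient singularities since $\Gamma_F$ acts properly discontinuously. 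The main obstacle is the simultaneous realisation of smoothness, projectivity, and $\Gamma_F$-equivariance of each $\Sigma(F)$; each condition is easy in isolation but their combination is the technical heart of the toroidal construction, handled by the equivariant refinement argument referenced above.
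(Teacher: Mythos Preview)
Your approach is essentially correct and follows the same strategy as the paper: both defer to \cite{AMRT} for the construction, exploit the triviality of the fans at $1$-dimensional cusps, and obtain finite quotient singularities from the smoothness of $X_{\Sigma(F)}$ together with the action of finite stabilisers in $G(F)=\cN(F)_\ZZ/U(F)_\ZZ$.

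The one point where the paper is more careful is projectivity. You cite \cite[Chapter~IV]{AMRT} for the ample line bundle argument, but that argument is carried out there under the assumption that $\Gamma$ is neat. The paper's proof handles this by passing to a neat normal subgroup $\Gamma'\lhd\Gamma$ of finite index, invoking \cite{AMRT} to obtain a smooth projective toroidal compactification $\overline{\cD/\Gamma'}$, and then observing that the finite group $\Gamma/\Gamma'$ acts on it; the quotient is $\overline{\cD/\Gamma}$, which is therefore projective and has at worst finite quotient singularities. This trick dispatches both the projectivity and the singularity statements simultaneously. Your direct argument is not wrong in spirit, but the issue you flag at the end---simultaneously achieving smoothness, projectivity, and $\Gamma_F$-equivariance, and then controlling what happens under the torsion of $\Gamma_F$---is precisely what the neat subgroup device circumvents.

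A minor point: the compatibility condition you arrange by hand (inclusion of the isotropic-plane rays in $\Sigma(F_\ell)$) is in fact automatic, as the paper remarks just before the theorem. These rays are rational extreme rays of the rational closure of $C(F_\ell)$, hence are $1$-dimensional faces of any admissible rational polyhedral decomposition.
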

\begin{proof}
The only part not described above is the assertion that the
compactification may be chosen to be projective. This is shown
in~\cite[Ch.IV, \S2]{AMRT} with the extra assumption that $\Gamma$ is
neat, which is harmless because one may work with a neat normal
subgroup $\Gamma'<\Gamma$ of finite index and then use the
$\Gamma/\Gamma'$-action. See also~\cite[V.5]{FC} for more details in
the Siegel (symplectic group) case, in a more arithmetic framework.
\end{proof}

\subsection{Canonical singularities}\label{subsect:canonicalsings}

In this part, we give sufficient conditions for the moduli space or a
suitable toroidal compactification of it to have canonical
singularities. We outline the proof, from \cite[Section~2]{GHSK3},
that orthogonal modular varieties of dimension $n\ge 9$ satisfy these
conditions.

\begin{definition}\label{def:cansings}
A normal complex variety $X$ is said to have \emph{canonical singularities}
if it is $\QQ$-Gorenstein and for some (hence any) resolution of
singularities $f\colon \Tilde X\to X$ the discrepancy
$\Delta=K_{\Tilde X}-f^* K_X$
is an effective Weil $\QQ$-divisor.
\end{definition}
Recall that $X$ being $\QQ$-Gorenstein means that for some $r\in\NN$,
if $K_X$ is a canonical (Weil) divisor on $X$ then $rK_X$ is
Cartier. Therefore $f^*K_X$ makes sense: by definition it is the
$\QQ$-Cartier divisor $\frac{1}{r}f^*(rK_X)$.

$\Delta=\sum \alpha_i E_i$ is supported on the irreducible exceptional
divisors $E_i$ for $f$, so $X$ has canonical singularities if and only
if the rational numbers $\alpha_i$ are all non-negative. Equivalently,
$X$ has canonical singularities if and only if on any open set
$U\subset X$, any pluricanonical form (i.e.\ section of $rK_x$ for
some $r$) on the smooth part of $U$ extends holomorphically to the
whole of $\Tilde U$. For more detail on canonical singularities,
see~\cite{YPG}.

A point $P\in X$ is said to be a canonical singularity if some
neighbourhood $X_0$ of $P$ has canonical singularities.

As we saw in Section~\ref{subsect:orthogonaltype}, the singularities
of $\cF_L(\Gamma)$ are finite quotient singularities, arising at the images of
points of $\cD$ whose stabiliser in $\Gamma$ is a non-trivial finite
group. Any such action can be linearised locally~\cite{Ca}: we
therefore consider the action of a finite subgroup $G<\GL(n,\CC)$ on
$\CC^n$ and the singularities of the quotient $X=\CC^n/G$. Any element
$g\in G$ can be diagonalised since it is of finite order, and the
eigenvalues of $g$ are roots of unity.

\begin{definition}\label{def:qref}
An element $g$ is a \emph{quasi-reflection} if exactly one of the
eigenvalues is different from~$1$. It is a \emph{reflection} if that
eigenvalue is~$-1$.
\end{definition}

For a cyclic subgroup $\latt{g}\subset \GL(n,\CC)$ of finite order
$m>1$, we choose a primitive $m$th root of unity $\zeta$ (without loss
of generality, $\zeta=e^{2\pi i/m}$) and we define the Reid-Tai sum
\begin{equation}\label{RTsum}
\Sigma(g)=\sum\left\{\frac{a_i}{m}\right\}
\end{equation}
where the eigenvalues of $g$ are $\zeta^{a_i}$ and $\{\,\}$ denotes
the fractional part, $0\le \{q\}<1$. For convenience we set
$\Sigma(1)=1$. The usual form of the
Reid-Tai criterion is the following.

\begin{proposition}\label{RSToriginal}
Suppose that $G$ is a finite subgroup of $\GL(n,\CC)$ containing no
quasi-reflections. Then $\CC^n/G$ has canonical singularities if and
only if $\Sigma(g)\ge 1$ for all $g\in G$.
\end{proposition}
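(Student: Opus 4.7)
My plan is to combine a reduction to the cyclic case with an explicit toric-geometric computation. The no-quasi-reflection hypothesis ensures that $\pi\colon \CC^n\to \CC^n/G$ is étale in codimension~$1$; by purity of the branch locus $K_{\CC^n}=\pi^*K_{\CC^n/G}$, which allows a direct comparison of discrepancies on $\CC^n/G$ with those on intermediate cyclic quotients. Canonical singularities can be tested point by point and, after linearisation, the local ring of $\CC^n/G$ at a point is a module-finite extension of that of each cyclic quotient $\CC^n/\langle g\rangle$ for $g$ in the stabiliser. Because none of the intermediate maps is ramified in codimension~$1$, discrepancies transfer cleanly, and one reduces to checking that $\CC^n/\langle g\rangle$ has canonical singularities for each $g\in G$. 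The Reid--Tai sum is manifestly stable under restriction to cyclic subgroups, so it suffices to treat cyclic $G$.

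For cyclic $G=\langle g\rangle$ of order $m$, I would diagonalise $g=\diag(\zeta^{a_1},\ldots,\zeta^{a_n})$ with $\zeta=e^{2\pi i/m}$. Then $\CC^n/G$ is the affine toric variety $U_\sigma$ attached to the cone $\sigma=\RR_{\ge 0}^n$ inside the overlattice
\[
N=\ZZ^n+\ZZ\cdot\tfrac{1}{m}(a_1,\ldots,a_n).
\]
A toric resolution is a simplicial refinement of $\sigma$ by primitive lattice vectors in $N\cap\sigma$, and the standard toric formula gives discrepancy $a(D_v)=\sum_i v_i-1$ for the exceptional divisor corresponding to a primitive $v\in N\cap\sigma$ that is not one of the rays $e_i$ of $\sigma$, because the piecewise linear support function of $K_{U_\sigma}$ takes value~$1$ on each $e_i$. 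The non-zero cosets in $N/\ZZ^n$ have unique representatives $\tfrac{1}{m}(\{ka_1\},\ldots,\{ka_n\})$ in the half-open cube $[0,1)^n$ for $1\le k\le m-1$, with coordinate sums equal to the Reid--Tai sums $\Sigma(g^k)$. Any other primitive lattice point of $\sigma$ not on a ray is obtained from one of these box representatives by adding a non-negative integer combination of the $e_i$, which only increases the coordinate sum. Hence the minimum of $a(D_v)+1$ over all relevant $v$ equals $\min_{1\le k\le m-1}\Sigma(g^k)$, and canonicity of $\CC^n/G$ is equivalent to the Reid--Tai inequality $\Sigma(g^k)\ge 1$ for every non-identity $g^k$.

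The main obstacle is the reduction step. The Reid--Tai condition is trivially inherited by cyclic subgroups, but the converse implication---that canonicity of $\CC^n/G$ follows from that of every $\CC^n/\langle g\rangle$---depends crucially on the no-quasi-reflection hypothesis. A quasi-reflection in $G$ would contribute a codimension-one ramification divisor for $\pi$ whose effect is invisible on any single cyclic subquotient; its presence could produce a resolution divisor with discrepancy strictly smaller than any arising in the cyclic analysis, and the entire argument would collapse. Once the reduction is settled, the toric computation above is essentially routine.
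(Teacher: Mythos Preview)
The paper does not actually prove this proposition: it is stated as the classical Reid--Tai criterion and used as a black box. So there is no proof in the paper to compare against. That said, your outline is the standard one and is essentially correct; a few comments.

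Your reduction step is the heart of the matter, and you rightly flag it as the main obstacle. One direction is easy: if $\CC^n/G$ is canonical then so is each $\CC^n/\langle g\rangle$, because $\CC^n/\langle g\rangle\to\CC^n/G$ is \'etale in codimension~$1$ (no quasi-reflections) and canonical singularities pull back along such covers. The converse --- that canonicity of every cyclic quotient forces canonicity of $\CC^n/G$ --- is exactly what the paper isolates as Lemma~\ref{lem:cyclic} and proves by quoting Tai's Proposition~3.1 on extension of $G$-invariant forms. Your phrase ``discrepancies transfer cleanly'' is gesturing at a valuation-theoretic proof of the same fact (lift a divisorial valuation on $\CC(X)^G$ to $\CC(X)$; its inertia group is cyclic; compare discrepancies on $\CC^n/\langle g\rangle$), which is fine but needs to be spelled out rather than asserted. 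Note also a slip of direction: the local ring of $\CC^n/G$ is a \emph{subring} of that of $\CC^n/\langle g\rangle$, not an extension; the finite map goes $\CC^n/\langle g\rangle\to\CC^n/G$.

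Your toric treatment of the cyclic case is correct. One small point worth making explicit: a box representative $b_k=\tfrac{1}{m}(\{ka_1\},\ldots,\{ka_n\})$ need not be primitive in $N$, but if $\Sigma(g^k)<1$ then the primitive vector on the ray through $b_k$ has strictly smaller coordinate sum and all coordinates in $[0,1)$, so it is still an honest exceptional ray (not one of the $e_i$) with negative discrepancy. Conversely, if every $\Sigma(g^k)\ge 1$, then any lattice point of $\sigma$ in a nontrivial coset of $\ZZ^n$ has coordinate sum at least that of its box representative, hence $\ge 1$, while lattice points in $\ZZ^n\cap\sigma$ have positive integer coordinate sum. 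So the equivalence holds as you claim.
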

This is sufficient if one wants to classify singularities, since any
quotient singularity is isomorphic to a quotient singularity where
there are no quasi-reflections. In our situation, the isotropy groups
sometimes do contain quasi-reflections, so we want a version of the
criterion that can be applied directly in that case. First, we state a
lemma that will allow us to consider the elements of $G$ one at a
time.

\begin{lemma}\label{lem:cyclic}
Suppose $G\subset \GL(n,\CC)$ is a finite group. If $\CC^n/\latt{g}$
has canonical singularities for every $g\in G$, then $\CC^n/G$ has
canonical singularities.
\end{lemma}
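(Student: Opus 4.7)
The plan is to apply the Reid-Tai criterion (Proposition~\ref{RSToriginal}) after stripping out quasi-reflections using the Chevalley-Shephard-Todd theorem at each stage. Since the property of having canonical singularities is local, and any finite linear group action can be linearised near a fixed point~\cite{Ca}, I may assume $G\subset\GL(n,\CC)$ acts linearly on $\CC^n$ fixing the origin and prove canonicity at the origin. Let $H\triangleleft G$ be the normal subgroup generated by the quasi-reflections in $G$; by Chevalley's theorem, $\CC^n/H$ is smooth and may be identified with a new copy of $\CC^n$, and the induced linear action of $G/H$ on it contains no quasi-reflections. Writing $\CC^n/G\cong(\CC^n/H)/(G/H)$, Proposition~\ref{RSToriginal} reduces the problem to verifying $\Sigma(\bar g)\geq 1$ for every non-identity $\bar g\in G/H$, with the Reid-Tai sum computed with respect to the action on $\CC^n/H$.

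For each non-identity $\bar g\in G/H$, I choose a lift $g\in G$; note that $g$ is itself not a quasi-reflection, since otherwise $g\in H$ and $\bar g=1$. By hypothesis, $\CC^n/\langle g\rangle$ has canonical singularities. Let $H_g\subseteq\langle g\rangle$ be its quasi-reflection subgroup; since quasi-reflections in $\langle g\rangle$ are \emph{a fortiori} quasi-reflections in $G$, we have $H_g\subseteq H$. Applying the same Chevalley-Shephard-Todd reduction to $\langle g\rangle$ and then Proposition~\ref{RSToriginal} to the cyclic group $\langle g\rangle/H_g$, which acts on $\CC^n/H_g\cong\CC^n$ without quasi-reflections, yields the inequality $\Sigma_{\!H_g}(\bar g)\geq 1$ for the class of $g$ in $\langle g\rangle/H_g$.

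It remains to translate this inequality from $\CC^n/H_g$ to $\CC^n/H$. Since $H_g\subseteq H$, the space $\CC^n/H$ is a further quotient of $\CC^n/H_g$ by the additional quasi-reflections of $H$. Working in a basis that simultaneously diagonalises $g$ and the relevant quasi-reflections, this further passage replaces each coordinate $y_i$ on $\CC^n/H_g$ by a power $y_i^{e_i}$, and correspondingly raises each eigenvalue of $\bar g$ to the power $e_i$. A direct calculation on fractional parts shows that this substitution preserves the Reid-Tai condition $\Sigma\geq 1$ for the residual action of $\bar g$, giving $\Sigma(\bar g)\geq 1$ on $\CC^n/H$ and completing the proof.

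The main obstacle is the final eigenvalue comparison step: the Reid-Tai sum is not generally preserved under quotient by quasi-reflections coordinate by coordinate, and one must exploit the fact that all the quasi-reflections in $H\setminus H_g$ appear together with $g$ inside $G$ to ensure the fractional parts rearrange so that their total stays at least~$1$. In the orthogonal-group setting governing the rest of the paper, the quasi-reflections are genuine reflections (order $2$, eigenvalue $-1$), which simplifies the bookkeeping considerably.
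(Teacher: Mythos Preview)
Your approach is genuinely different from the paper's, and as written it has a real gap in the final step---which you yourself flag as ``the main obstacle'' without actually resolving.

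The paper does not attempt to verify the Reid--Tai inequality for $G/H$ at all. Instead it uses the characterisation of canonical singularities via extension of pluricanonical forms: given a form $\eta$ on $(\CC^n/G)_{\reg}$, pull it back to a $G$-invariant form on $\CC^n$ (using Hartogs across the codimension~$\ge 2$ preimage of the singular locus), and then invoke Tai's result \cite[Proposition~3.1]{Ta}, which says precisely that a $G$-invariant form on $\CC^n$ extends to a resolution of $\CC^n/G$ if and only if it extends to a resolution of $\CC^n/\latt{g}$ for every $g\in G$. The hypothesis gives the cyclic extensions, so the $G$-extension follows. No Reid--Tai sums, no Chevalley--Shephard--Todd, no eigenvalue bookkeeping.

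Your route, by contrast, tries to push the Reid--Tai inequality through two successive quasi-reflection quotients, and the comparison between $\CC^n/H_g$ and $\CC^n/H$ is where it breaks down. The assertion that one may work ``in a basis that simultaneously diagonalises $g$ and the relevant quasi-reflections'' is not justified: the quasi-reflections generating $H$ need not commute with $g$, so there is no common eigenbasis, and the passage $\CC^n/H_g\to\CC^n/H$ is not described by coordinate-wise powers $y_i\mapsto y_i^{e_i}$ in general. Even when it is, the fractional-part inequality you need (that raising each eigenvalue to a power $e_i$ keeps $\Sigma\ge 1$) is false without further constraints---and you acknowledge as much in your final paragraph without supplying the missing argument. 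The remark that ``one must exploit the fact that all the quasi-reflections in $H\setminus H_g$ appear together with $g$ inside $G$'' is a hope, not a proof.

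In short: the paper's proof is a clean two-line reduction to a known result of Tai; your strategy, even if it could be completed, would require substantially more work than what you have written, and the key step is currently missing.
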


\begin{proof}
Let $\eta$ be a form on $(\CC^n/G)_{\reg}$ and let $\pi\colon \CC^n
\to \CC^n/G$ be the quotient map. Then $\pi^*(\eta)$ is a
$G$-invariant regular form on $\CC^n \setminus
\pi^{-1}(\CC^n/G)_{\sing}$.  Since $\pi^{-1}(\CC^n/G)_{\sing}$ has
codimension at least~$2$, the form $\pi^*(\eta)$ extends by Hartog's
theorem to a $G$-invariant regular form on all of~$\CC^n$. Now the
claim follows from \cite[Proposition~3.1]{Ta}, which says that a
$G$-invariant form on $\CC^n$ extends to a desingularisation of
$\CC^n/G$ if and only if it extends to a desingularisation of
$\CC^n/\latt{g}$ for every $g \in G$.
\end{proof}

The converse of Lemma~\ref{lem:cyclic} is false.

Suppose that $g\in GL(n,\CC)$ is of order $m=sk$, where $k$ is the
smallest positive integer such that $g^k$ is either a quasi-reflection
or the identity. Order the eigenvectors so that the first $n-1$
eigenvalues of $g^k$ are equal to $1$, so that the last eigenvalue is
a primitive $s$th root of unity. We define a modified Reid-Tai sum
\begin{equation}\label{modRTsum}
\Sigma'(g)=\left\{\frac{sa_n}{m}\right\}+\sum\left\{\frac{a_i}{m}\right\}
\end{equation}
where again the eigenvalues of $g$ are $\zeta^{a_i}$, and put
$\Sigma'(1)=1$. The idea (originating in an observation of Katharina
Ludwig) is that this enables us to handle the quasi-reflections correctly.

\begin{proposition}\label{RSTmodified}
Suppose that $G$ is a finite subgroup of $\GL(n,\CC)$. Then $\CC^n/G$
has canonical singularities if $\Sigma'(g)\ge 1$ for all $g\in G$.
\end{proposition}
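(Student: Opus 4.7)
The plan is to reduce Proposition~\ref{RSTmodified} to the original Reid--Tai criterion (Proposition~\ref{RSToriginal}) by peeling off quasi-reflections one cyclic subgroup at a time, using the Chevalley--Shephard--Todd theorem to keep the intermediate quotient smooth. By Lemma~\ref{lem:cyclic}, it suffices to prove the result when $G=\langle g\rangle$ is cyclic of order $m$, and I would argue by induction on $m$; the case $m=1$ is trivial.

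For the inductive step, let $k$ and $s=m/k$ be as in the setup. If $g^k=1$, so $k=m$ and $s=1$, then $\langle g\rangle$ contains no non-trivial quasi-reflection, the modified and original Reid--Tai sums coincide ($\Sigma'(g^j)=\Sigma(g^j)$ for every~$j$), and Proposition~\ref{RSToriginal} applies directly. Otherwise $g^k$ is a quasi-reflection of order~$s$ and $H=\langle g^k\rangle$ is generated by a quasi-reflection, so by Chevalley--Shephard--Todd the quotient $\CC^n/H$ is smooth: in the eigenbasis of $g$ ordered as in the proposition its coordinate ring is the polynomial algebra $\CC[x_1,\ldots,x_{n-1},x_n^s]$, so $\CC^n/H\cong\CC^n$. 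The cyclic group $\langle\bar g\rangle=G/H$ of order $k<m$ then acts on this copy of $\CC^n$ with $\bar g$ having eigenvalues $\zeta^{a_1},\ldots,\zeta^{a_{n-1}},\zeta^{sa_n}$, all of which are $k$-th roots of unity because $s\mid a_i$ for $i<n$. A direct computation gives
\[
\Sigma(\bar g^j)=\sum_{i<n}\Bigl\{\tfrac{ja_i}{m}\Bigr\}+\Bigl\{\tfrac{jsa_n}{m}\Bigr\},
\]
so in particular $\Sigma(\bar g)=\Sigma'(g)\ge 1$. Since $\CC^n/G=(\CC^n/H)/\langle\bar g\rangle$ and $\CC^n/H$ is smooth, it suffices to show that $\CC^n/\langle\bar g\rangle$ has canonical singularities, which the inductive hypothesis will supply.

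The main obstacle is to verify that the hypothesis of the proposition is inherited by $\langle\bar g\rangle$ acting on $\CC^n/H$. For each non-trivial $\bar g^j$ that is not a quasi-reflection on $\CC^n/H$, one must produce a representative $g^{j'}\in\langle g\rangle$ of the coset $g^jH$ for which $\Sigma'(\bar g^j)\le \Sigma'(g^{j'})$; the hypothesis then yields $\Sigma'(\bar g^j)\ge 1$. Equivalently, one checks the identity
\[
\Sigma(\bar g^j)=\min_{l=0,\ldots,s-1}\Sigma'(g^{j+lk}),
\]
which reduces to a careful comparison of $\{jsa_n/m\}$ with the terms $\{s_{j+lk}(j+lk)a_n/m\}$ across all representatives, since the quasi-reflection exponents $s_{j+lk}$ governing the modified sum depend on $l$ while the truncated sum $\sum_{i<n}\{(j+lk)a_i/m\}$ does not. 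This arithmetic is routine but is where the real work lies; once it is done the induction closes and the proposition follows.
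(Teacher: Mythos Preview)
Your approach and the paper's are essentially the same in outline: reduce to a cyclic group via Lemma~\ref{lem:cyclic}, peel off the quasi-reflection subgroup $H=\langle g^k\rangle$ using Chevalley--Shephard--Todd, and study the induced action of $\bar g$ on $\CC^n/H\cong\CC^n$ with eigenvalues $\zeta^{a_1},\ldots,\zeta^{a_{n-1}},\zeta^{sa_n}$.

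The difference is that your induction is unnecessary and creates a gap. The paper observes directly that
\[
\Sigma(\bar g^{\,l})=\sum_{i<n}\Bigl\{\tfrac{la_i}{m}\Bigr\}+\Bigl\{\tfrac{sla_n}{m}\Bigr\}=\Sigma'(g^l)\ge 1
\]
for \emph{every} $l$ (this is equation~\eqref{reduce_to_RTsum}), not just for $l=1$. Once $\Sigma(\bar g^{\,l})\ge 1$ for all nontrivial $\bar g^{\,l}$, the group $\langle\bar g\rangle$ automatically contains no quasi-reflections (a quasi-reflection has Reid--Tai sum strictly less than~$1$), so Proposition~\ref{RSToriginal} applies immediately. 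No second pass, no induction, no minimisation over coset representatives.

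Your route, by contrast, requires verifying the \emph{modified} hypothesis $\Sigma'(\bar g^{\,j})\ge 1$ for the quotient action in order to invoke the inductive hypothesis. You only treat those $\bar g^{\,j}$ that are not quasi-reflections on $\CC^n/H$; the case where $\bar g^{\,j}$ \emph{is} a quasi-reflection is left open, and for such an element $\Sigma'(\bar g^{\,j})<1$, so the inductive hypothesis would fail unless you first rule this case out. The identity $\Sigma(\bar g^{\,j})=\min_{l}\Sigma'(g^{j+lk})$ you propose is also stronger than what is needed and you do not prove it. All of this ``routine arithmetic'' evaporates once you notice that the single identity $\Sigma(\bar g^{\,l})=\Sigma'(g^l)$ already delivers the ordinary Reid--Tai bound on the quotient, finishing the proof in one step.
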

\begin{proof}
If no power of $g$ is a quasi-reflection then $s=1$ and the usual
Reid-Tai criterion (Proposition~\ref{RSToriginal}) shows that
$\CC^n/\latt g$ has canonical singularities.

Otherwise, consider $g$ with $g^k=h$ a quasi-reflection as above. The
eigenvalues of $g$ are $\zeta^{a_1},\ldots,\zeta^{a_n}$, where $\zeta$
is a primitive $m$th root of unity, $\hcf(s,a_n)=1$ and $s|a_i$ for
$i<n$. The group $\latt{h}$ is generated by quasi-reflections so
$\CC^n/\latt{h}\imic \CC^n$, and we need to look at the action of the
group $\latt{g} /\latt{h}$ on $\CC^n/\latt{h}$. The eigenvalues of the
differential of $g^l\latt{h}$ on $\CC^n/\latt{h}$ are
$\zeta^{la_1},\ldots,\zeta^{la_{n-1}},\zeta^{sla_n}$, so
\begin{equation}\label{reduce_to_RTsum}
  \Sigma(g^l\latt{h})=\Sigma'(g^l)\ge 1.
\end{equation}
Thus $(\CC^n/\latt{h}/\latt{g\latt{h}}\imic \CC^n/\latt{g}$ has
canonical singularities and the result follows by Lemma~\ref{lem:cyclic}.
\end{proof}

\subsection{Singularities of modular varieties}\label{subsect:modvarsings}

We are interested primarily in the singularities of $\cF_{2d}$ and
the other spaces mentioned in Section~\ref{sect:projectivemodels}, but we may
more generally consider the singularities of compactified locally
symmetric varieties associated with the orthogonal group of a lattice
of signature~$(2,n)$. Unless $n$ is small, it turns out that
the compactification may be chosen to have canonical singularities.

\begin{theorem}\label{thm:csmain}
  Let $L$ be a lattice of signature $(2,n)$ with $n\ge 9$, and let
  $\Gamma<\Orth^+(L)$ be a subgroup of finite index. Then there exists
  a projective toroidal compactification $\Bar\cF_L(\Gamma)$ of
  $\cF_L(\Gamma)=\Gamma\backslash\cD_L$ such that $\Bar\cF_L(\Gamma)$
  has canonical singularities and there are no branch divisors in the
  boundary. The branch divisors in $\cF_L(\Gamma)$ arise from the
  fixed divisors of $\pm$reflections.
\end{theorem}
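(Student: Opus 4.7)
The plan is to decompose the problem into three parts: singularities in the interior $\cF_L(\Gamma)$, identification of the branch divisors, and singularities (and branch loci) in the boundary of a suitable toroidal compactification.

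For the interior, I would use the modified Reid-Tai criterion (Proposition~\ref{RSTmodified}) applied pointwise. Fix a point $[\omega]\in \cD_L$ with nontrivial stabiliser $\Gamma_{[\omega]}<\Gamma$. Since $\Gamma_{[\omega]}$ is finite and its action on $T_{[\omega]}\cD_L\cong \Hom(\CC\omega,\omega^\perp/\CC\omega)$ can be linearised, the local analytic structure is that of $\CC^n/\Gamma_{[\omega]}$. For a cyclic element $g\in \Gamma_{[\omega]}$ of order $m$, I would write down its eigenvalues on $L\otimes\CC$: one is the eigenvalue $\zeta$ on $\omega$, one is $\bar\zeta$ on $\bar\omega$, and the remaining $n$ are roots of unity $\mu_1,\dots,\mu_n$ on the negative-definite complement. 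The eigenvalues on the tangent space are then $\bar\zeta^2$ and $\bar\zeta\mu_i$. The central computation is to verify $\Sigma'(g)\ge 1$ in all cases, making use of the fact that on the rank-$(n+2)$ $\QQ$-vector space $L\otimes\QQ$ the element $g$ decomposes into $\QQ$-irreducible factors corresponding to cyclotomic polynomials $\Phi_d$ of degrees $\varphi(d)$ summing to $n+2\ge 11$. This forces enough ``generic'' eigenvalues to make the Reid-Tai sum large; only finitely many $m$ need individual checking, and one handles them by a case analysis of which $\Phi_d$ can occur given the signature $(2,n)$. This calculation is essentially the content of \cite{GHSK3}, Section~2, and is the main technical obstacle.

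For the branch divisors in $\cF_L(\Gamma)$, I would observe that by Chevalley's theorem a point of $\cF_L(\Gamma)$ lies on a branch divisor precisely when some $g\in \Gamma_{[\omega]}$ acts on $T_{[\omega]}\cD_L$ as a quasi-reflection. Examining the eigenvalues $\bar\zeta^2,\,\bar\zeta\mu_1,\dots,\bar\zeta\mu_n$, a quasi-reflection on the tangent space forces $n-1$ of the $\mu_i$ and also $\bar\zeta^2$ to equal $1$, i.e.\ $\zeta=\pm 1$; tracing this back to $g\in \Orth(L)$ shows $g$ is either a reflection in a $-2$-vector or the negative of a reflection in a $+2$-vector, i.e.\ a $\pm$reflection. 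Conversely these elements do produce branch divisors. This justifies the last sentence of the theorem.

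For the boundary, I would invoke the construction of toroidal compactifications from Section~\ref{subsect:toroidal}: the only flexibility is in the choice of fan $\Sigma(F)$ at each $0$-dimensional cusp $F$ (the $1$-dimensional cusps have $\dim_\RR U(F)=1$ and no freedom). Using equivariant toric resolution, I can subdivide each $\Sigma(F)$ into a $\cN(F)_\ZZ$-equivariant smooth fan fine enough that no fixed divisor of any $\cN(F)_\ZZ$-element lies on the boundary (this uses that fans can be refined so that every cone is regular with respect to the full stabiliser), and further subdivide to make it projective. The resulting finite-quotient singularities in the boundary are then controlled by the Reid-Tai criterion applied to the action of $\cN(F)_\ZZ$ on the toric charts; the hypothesis $n\ge 9$ ensures there is enough room transverse to $U(F)_\CC$ (which contributes a trivial cyclic summand to eigenvalues) for the same type of estimate as in the interior to go through. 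The absence of branch divisors in the boundary follows from the same eigenvalue analysis: after subdivision, no $\cN(F)_\ZZ$-element acts as a quasi-reflection on a toric chart along a boundary stratum.

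The principal obstacle is the interior Reid-Tai computation: one must control all possible orders $m$ of elements of $\Orth^+(L)$, and for each such $m$ rule out the configuration of eigenvalues that would make $\Sigma'(g)<1$. The bound $n\ge 9$ is used precisely here, because the cyclotomic rank constraint $\sum \varphi(d)\le n+2$ eliminates the remaining problematic cases.
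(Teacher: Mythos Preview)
Your overall decomposition---interior via the modified Reid--Tai criterion, identification of branch divisors via the quasi-reflection analysis, then boundary---is the same as the paper's. The interior part and the branch-divisor identification are essentially correct in outline (though note a bookkeeping slip: $T_{[\omega]}\cD_L\cong\Hom(\CC\omega,\omega^\perp/\CC\omega)$ has dimension $n$, not $n+1$; $\bar\omega\notin\omega^\perp$, so there is no separate $\bar\zeta^2$ eigenvalue). Two points, however, are genuine gaps.

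First, your boundary argument does not work as stated. At a $0$-dimensional cusp the relevant quotient is $X_{\Sigma(F)}/G(F)$ with $G(F)$ a finite group of torus automorphisms, and you \emph{cannot} subdivide $\Sigma(F)$ so as to remove fixed divisors of $G(F)$: a torus automorphism that acts as a reflection on the cocharacter lattice has a torus-invariant fixed divisor in every equivariant refinement. What the paper actually uses is a separate toric result (Theorem~\ref{thm:cstoric}): for \emph{any} smooth toric variety with an action of a finite group of torus automorphisms, the quotient has canonical singularities, and the same analysis shows there are no branch divisors supported on the boundary. This is not a Reid--Tai estimate in the sense you describe but a direct toric computation, and it requires no hypothesis on~$n$. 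Second, you conflate the $0$- and $1$-dimensional cusps. The $1$-dimensional cusps are \emph{not} handled by the toric argument; they require writing the Siegel domain realisation over the cusp in explicit coordinates (following Kondo and Scattone) and analysing the $N(F)_\ZZ$-action there.

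Finally, your claim that the bound $n\ge 9$ is used ``precisely'' in the interior Reid--Tai step is not correct: the paper's interior analysis gives canonical singularities already for $n\ge 7$ (Corollary~\ref{cor:csinterior}, with $n\ge 6$ sufficing for the non-quasi-reflection case). The sharper bound $n\ge 9$ enters in the treatment of the $1$-dimensional cusps, which you have not addressed.
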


There are three parts to the proof of this theorem. One must first
consider the singularities of the open part, which means working out
some details of the action of $\Gamma$ on $\cD_L$. Then there are the
possible singularities over the $0$-dimensional cusps: these lead to
toric questions, and here one must choose the toroidal
compactification appropriately. Finally, in order to deal with the
$1$-dimensional cusps we need a full description of the geometry there.

\subsection{Singularities in the interior}\label{subsect:singinterior}

Here we are interested in the singularities that arise at fixed points
of the action of $\Gamma$ on $\cD_L$. Let $\bdw\in L_\CC$ and let
$G\subset \Gamma$ be the stabiliser of $[\bdw]\in \cD_L$. For
$[\bdw]\in \cD_L$ we define $\WW=\CC\bdw$. Then $G$ acts on $\WW$, so
$g(\bdw)=\alpha(g)\bdw$ for some character $\alpha\colon G \to \CC^*$,
and we put $G_0=\ker \alpha$.  We also put
$S=(\WW\oplus\Bar\WW)^\perp\cap L$ (possibly $S=\{0\}$) and
$T=S^\perp\subset L$. In the case of polarised $\Kthree$ surfaces, $S$
is the primitive part of the Picard lattice and $T$ is the
transcendental lattice of the surface corresponding to the period
point~$\bdw$.

It is easy to check that $S_\CC\cap T_\CC=\{0\}$ and that $G$ acts on
$S$ and on $T$: moreover $G_0$ acts trivially on $T_\QQ$.

Since $G/G_0\subset\Aut\WW\imic\CC^*$ it is a cyclic group: we denote
its order by $r_\bdw$. So by the above, $\mu_{r_\bdw}\imic G/G_0$ acts
on $T_\QQ$.  (By $\mu_r$ we mean the group of $r$th roots of unity in
$\CC$.)

For any $r\in \NN$ there is a unique faithful irreducible
representation of $\mu_r$ over $\QQ$, which we call $\cV_r$. The
dimension of $\cV_r$ is $\varphi(r)$, where $\varphi$ is the Euler
$\varphi$ function and, by convention, $\varphi(1)=\varphi(2)=1$. The
eigenvalues of a generator of $\mu_r$ in this representation are
precisely the primitive $r$th roots of unity: $\cV_1$ is the
{$1$-dimensional} trivial representation. Note that $-\cV_d=\cV_d$ if
$d$ is even and $-\cV_d=\cV_{2d}$ if $d$ is odd.

Using the fact that $S_\CC\cap T_\CC=0$, we may check that $T_\QQ$
splits as a direct sum of irreducible representations $\cV_{r_\bdw}$
(in particular, $\varphi(r_\bdw)|\dim T_\QQ$) and that if $g\in G$ and
$\alpha(g)$ is of order $r$ (so $r|r_\bdw$), then $T_\QQ$ splits as a
$g$-module into a direct sum of irreducible representations $\cV_r$ of
dimension~$\varphi(r)$.

We are interested in the action of $G$ on the tangent space to
$\cD_L$. We have a natural isomorphism
\begin{equation*}
T_{[\bdw]}\cD_L\imic \Hom(\WW,\WW^\perp/\WW)=:V.
\end{equation*}
Suppose $g\in G$ is of order $m$ and $\alpha(g)$ is of order $r$: as
usual we take $\zeta=e^{2\pi i/m}$, and henceforth we think of $g$ as
an element of $\GL(V)$, with eigenvalues $\zeta^{a_1},\ldots,\zeta^{a_n}$.

If $\varphi(r)$ is not very small, the copy of $\cV_r$ containing
$\bdw$ already contributes at least $1$ to $\Sigma(g)$. The cases
$r=1$ and $r=2$ are also simple.

\begin{proposition}\label{prop:RSTsimplest}
Assume that $g\in G$ does not act as a quasi-reflection on $V$ and
that $\varphi(r)>4$. Then $\Sigma(g)\ge 1$.
\end{proposition}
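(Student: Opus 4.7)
The plan is to isolate a $g$-stable summand of the tangent space $V=\Hom(\WW,\WW^\perp/\WW)$ coming from the transcendental lattice $T$ and to show that this summand alone already forces $\Sigma(g)\ge 1$, realising the heuristic preceding the statement that the copy of $\cV_r$ containing $\bdw$ suffices. First I decompose $V$: since the form on $T=S^\perp$ is non-degenerate, I have the $g$-stable orthogonal decomposition $L_\CC=T_\CC\oplus S_\CC$ with $T_\CC=(S_\CC)^\perp$. By the definition of $S=(\WW\oplus\bar\WW)^\perp\cap L$, the vectors $\bdw,\bar\bdw$ are orthogonal to $S_\CC$, so $\bdw\in T_\CC$ and $\WW\subset T_\CC$. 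Setting $\WW^{\perp_T}:=\WW^\perp\cap T_\CC$ gives $\WW^\perp=\WW^{\perp_T}\oplus S_\CC$ and hence a $g$-equivariant splitting
\begin{equation*}
V\;\imic\;\WW^*\tens(\WW^{\perp_T}/\WW)\;\oplus\;\WW^*\tens S_\CC.
\end{equation*}
Writing $\Sigma(g)=\sigma_T(g)+\sigma_S(g)$ accordingly, $\sigma_S(g)\ge 0$ termwise, so it suffices to prove $\sigma_T(g)\ge 1$.

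Next I pin down the eigenvalues of $g$ on the $T$-summand. By the hypothesis on $T_\QQ$, setting $\alpha=\alpha(g)$ and $E_j=\ker(g-\alpha^j|_{T_\CC})$, each $E_j$ (for $j\in(\ZZ/r)^*$) has dimension $k=\dim_\QQ T_\QQ/\varphi(r)\ge 1$. The form on $T_\CC$ satisfies $(E_j,E_k)=0$ unless $j+k\equiv 0\pmod r$ and induces a perfect pairing $E_j\times E_{-j}\to\CC$. Since $\WW=\CC\bdw\subset E_1$, the functional $v\mapsto(v,\bdw)$ on $E_{-1}$ is non-zero (a concrete witness is $\bar\bdw\in E_{-1}$ with $(\bdw,\bar\bdw)>0$), so $E_{-1}\cap\WW^{\perp_T}$ has codimension $1$ in $E_{-1}$, while $E_j\subset\WW^{\perp_T}$ for every $j\ne -1$. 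Consequently the $\alpha^j$-eigenspace of $g$ on $\WW^{\perp_T}/\WW$ has dimension $k$ for $j\in(\ZZ/r)^*\setminus\{\pm 1\}$ and dimension $k-1$ for $j=\pm 1$; tensoring with $\WW^*$, on which $g$ acts by $\alpha^{-1}$, yields the eigenvalues $\alpha^{j-1}$ on the $T$-summand of $V$ with the same multiplicities.

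Finally the estimate. Because $r$ divides the order $m$ of $g$, an eigenvalue $\alpha^{j-1}$ contributes $\{(j-1)/r\}$ to the Reid--Tai sum; dropping the non-negative terms of multiplicity $k-1$ and using $k\ge 1$ gives
\begin{equation*}
\sigma_T(g)\;\ge\;\sum_{\substack{j\in(\ZZ/r)^*\\ j\ne\pm 1}}\frac{j-1}{r}\;=\;\frac{(\varphi(r)-2)(r-2)}{2r},
\end{equation*}
the closed form following from $\sum_{j\in(\ZZ/r)^*}j=r\varphi(r)/2$ for $r>2$. Since $\varphi(r)>4$ forces $\varphi(r)\ge 6$ (as $5$ is never a value of $\varphi$), one has $r\ge 7$ and so $(\varphi(r)-2)(r-2)\ge 4(r-2)\ge 2r$, proving $\sigma_T(g)\ge 1$. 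The main obstacle is the eigenspace bookkeeping in the middle paragraph, where one must combine the isotropy of $\bdw$ with the perfect pairing $E_1\times E_{-1}\to\CC$ to identify exactly the two indices at which $\WW^{\perp_T}/\WW$ loses a dimension; the remaining arithmetic is elementary, and the quasi-reflection hypothesis is in fact automatic under $\varphi(r)>4$ since the $T$-summand already carries at least $\varphi(r)-2\ge 4$ non-trivial eigenvalues.
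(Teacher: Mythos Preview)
Your decomposition of $V$ and the eigenvalue bookkeeping on the $T$-summand are correct and amount to the same strategy as the paper's: the paper also isolates the contribution of the irreducible $\QQ[g]$-submodule $\cV_r^\bdw\subset T_\QQ$ containing $\bdw$, so once you drop to $k\ge 1$ the two arguments coincide. Your remark that the quasi-reflection hypothesis is automatic under $\varphi(r)>4$ is a nice observation.

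There is, however, a genuine gap in your final estimate. When you assert that ``an eigenvalue $\alpha^{j-1}$ contributes $\{(j-1)/r\}$ to the Reid--Tai sum'', you are tacitly assuming $\alpha=e^{2\pi i/r}$. But $\alpha=\alpha(g)$ is determined by $g$ and is an \emph{arbitrary} primitive $r$th root of unity; if $\alpha=e^{2\pi i b/r}$ with $\gcd(b,r)=1$, then with the fixed choice $\zeta=e^{2\pi i/m}$ one has $\alpha^{j-1}=\zeta^{(m/r)\,b(j-1)}$, contributing $\{b(j-1)/r\}$ rather than $\{(j-1)/r\}$. Your closed form $\tfrac{(\varphi(r)-2)(r-2)}{2r}$ and the identity $\sum_{j\in(\ZZ/r)^*}j=r\varphi(r)/2$ you invoke are therefore only valid in the special case $b=1$. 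The paper handles exactly this issue: it writes the lower bound as $\sum_{i\ge 3}\{(k_1+k_i)/r\}$, where $k_1$ records which primitive root $\alpha^{-1}$ happens to be, and then asserts the inequality $\ge 1$ for \emph{every} admissible $k_1$ as an ``elementary verification''. To complete your argument you must either carry out that verification for general $b$, or prove that $b=1$ minimises $\sum_{j\ne\pm 1}\{b(j-1)/r\}$; neither step is present.
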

\begin{proof} As $\xi$ runs through the $m$th roots of unity, $\xi^{m/r}$ runs
  through the $r$th roots of unity.  We denote by
  $k_1,\ldots,k_{\varphi(r)}$ the integers such that $0<k_i<r$ and
  $(k_i,r)=1$, in no preferred order. Without loss of generality, we
  assume $\alpha(g)=\zeta^{mk_2/r}$ and
  $\Bar{\alpha(g)}=\alpha(g)^{-1}=\zeta^{mk_1/r}$, with $k_1\equiv
  -k_2 \bmod r$.

  One of the $\QQ$-irreducible subrepresentations of $g$ on $L_\CC$
  contains the eigenvector $\bdw$: we call this $\VV_r^\bdw$ (it is
  the smallest $g$-invariant complex subspace of $L_\CC$ that is
  defined over $\QQ$ and contains $\bdw$). It is a copy of
  $\cV_r\tensor\CC$: to distinguish it from other irreducible
  subrepresentations of the same type we write
  $\VV_r^\bdw=\cV_r^\bdw\tensor\CC$.

  If $\bdv$ is an eigenvector for $g$ with eigenvalue
  $\zeta^{mk_i/r}$, $i\neq 1$ (in particular $\bdv\not\in\Bar\WW$),
  then $\bdv\in \WW^\perp$ since
  $(\bdv,\bdw)=(g(\bdv),g(\bdw))=\zeta^{mk_i/r}\alpha(g)(\bdv, \bdw)$.
  Therefore the eigenvalues of $g$ on $\VV_r^\bdw\cap \WW^\perp /\WW$
  include $\zeta^{mk_i/r}$ for $i\ge 3$, so the eigenvalues on
  $\Hom(\WW, \VV_r^\bdw\cap\WW^\perp/\WW)\subset V$ include
  $\zeta^{mk_1/r}\zeta^{mk_i/r}$ for $i\ge 3$. So
\begin{eqnarray}\label{W-contribution}
  \Sigma(g)&\ge&
  \sum_{i=3}^{\varphi(r)}\left\{\frac{k_1}{r}+\frac{k_i}{r}\right\}\ge
  1,
\end{eqnarray}
where the last inequality is an elementary verification.
\end{proof}

The cases $r=1$ and $r=2$ are simple to deal with because one always
finds two conjugate eigenvalues, which between them contribute $1$ to
$\Sigma(g)$.

So far we have needed no hypothesis on the dimension, but the
remaining cases ($r=3,\ 4,\ 5,\ 6,\ 8,\ 10$ or $12$) do require such a
condition because the contributions to $\Sigma(g)$ from each $\cV_r$
are small and we need to have enough of them. We refer to~\cite{GHSK3}
for details. In the end we find
\begin{theorem}\label{thm:RSTnonqref}
  Assume that $g\in G$ does not act as a quasi-reflection on $V$ and
  that $n\ge 6$. Then $\Sigma(g)\ge 1$.
\end{theorem}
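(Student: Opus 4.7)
The plan is to argue by case analysis on $r$, the order of $\alpha(g)\in\CC^*$. Proposition~\ref{prop:RSTsimplest} already disposes of $\varphi(r)>4$, and the identity is given by the convention $\Sigma(1)=1$, so it remains to treat non-identity $g$ with $r\in\{1,2,3,4,5,6,8,10,12\}$.

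For $r\in\{1,2\}$ I would use conjugate-pair symmetry. Because $g$ is an integral isometry of $L$, its eigenvalues on $L_\CC$ come in complex-conjugate pairs. The subspace $U=(\WW\oplus\Bar\WW)^\perp\subset L_\CC$ is the complexification of a real $g$-stable subspace of $L_\RR$, and $\WW^\perp/\WW\imic U$ as $g$-modules, so $V\imic\Hom(\WW,U)$ and the eigenvalues of $g$ on $V$ are those on $U$ multiplied by $\alpha(g)^{-1}\in\{\pm 1\}$, a process which preserves conjugate pairs. A single non-real conjugate pair on $V$ therefore contributes exactly $1$ to $\Sigma(g)$. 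Otherwise every eigenvalue on $V$ lies in $\{\pm 1\}$; the non-quasi-reflection hypothesis then forces either trivial action on $V$ (in which case $g$ is the identity in $\GL(V)$ and $\Sigma(g)=1$ by convention) or at least two $-1$-eigenvalues, giving $\Sigma(g)\ge 1$.

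For $r\in\{3,4,5,6,8,10,12\}$ I would decompose $T_\QQ$ as a direct sum of $k=\dim T_\QQ/\varphi(r)\ge 1$ irreducible summands $\cV_r$. Each summand splits over $\CC$ into $\varphi(r)$ one-dimensional $g$-eigenspaces corresponding to the primitive $r$-th roots of unity. For the distinguished summand $\cV_r^\bdw$, two eigenspaces are $\WW$ and $\Bar\WW$ and the remaining $\varphi(r)-2$ lines lie in $U$ (the eigenspaces with eigenvalue different from $\Bar{\alpha(g)}$ are orthogonal to $\WW$, by the same isometry argument used in the proof of Proposition~\ref{prop:RSTsimplest}); each of the $k-1$ non-distinguished summands contributes all $\varphi(r)$ eigenspaces to $U$. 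Multiplying by $\alpha(g)^{-1}$ and summing fractional parts produces an explicit Reid--Tai contribution $\rho(r)$ per non-distinguished summand: direct computation yields $\rho(3)=1/3$, $\rho(4)=1/2$, $\rho(6)=2/3$, $\rho(5)=6/5$, and analogous values for $r=8,10,12$, together with a smaller contribution $\sigma(r)$ from $\cV_r^\bdw$. The dimension bound $n\ge 6$ forces $k$ large enough that, at least when $S=0$, one has $\sigma(r)+(k-1)\rho(r)\ge 1$ in each case.

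The main obstacle is the borderline cases with $k=1$, especially $r=3$ where $\sigma(r)=0$. In those cases $S_\CC$ carries most of the dimension and the needed contribution must come from the $g$-action on $S$. Since $g$ preserves the integer lattice $S$, its eigenvalues on $S_\CC$ again come in complex conjugate pairs, and after multiplying by $\alpha(g)^{-1}$ one obtains either a non-real conjugate pair on $V$ (contribution $1$) or enough $\pm 1$-eigenvalues to invoke the non-quasi-reflection hypothesis exactly as in the $r=1,2$ cases. A case-by-case tabulation over the seven values of $r$, combined with $n\ge 6$, then delivers $\Sigma(g)\ge 1$.
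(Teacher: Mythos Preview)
Your overall strategy matches the paper's (and the detailed argument in \cite{GHSK3}): dispose of $\varphi(r)>4$ via Proposition~\ref{prop:RSTsimplest}, treat $r\in\{1,2\}$ by conjugate-pair symmetry, and handle $r\in\{3,4,5,6,8,10,12\}$ by decomposing $T_\QQ$ into copies of $\cV_r$ and summing Reid--Tai contributions, using $n\ge 6$ to force enough summands. Your computations of $\rho(r)$ are correct, and the argument for $r\in\{1,2\}$ is fine.

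The gap is in your treatment of the $S$-contribution in the borderline cases. You write that after multiplying the eigenvalues of $g$ on $S_\CC$ by $\alpha(g)^{-1}$ one obtains ``either a non-real conjugate pair on $V$ (contribution $1$) or enough $\pm 1$-eigenvalues to invoke the non-quasi-reflection hypothesis exactly as in the $r=1,2$ cases.'' This is false for $r\ge 3$: the factor $\alpha(g)^{-1}$ is then non-real, so a conjugate pair $\{\lambda,\bar\lambda\}$ on $S_\CC$ is sent to $\{\lambda\alpha(g)^{-1},\bar\lambda\alpha(g)^{-1}\}$, which is \emph{not} a conjugate pair on $V$ and need not contribute $1$; and a real eigenvalue $\pm 1$ on $S$ is sent to $\pm\alpha(g)^{-1}\notin\{\pm 1\}$. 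For example, with $r=3$ an eigenvalue $-1$ on $S$ becomes a primitive sixth root of unity on $V$ and contributes only $\tfrac{1}{6}$, not $\tfrac{1}{2}$; neither branch of your dichotomy applies.

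What one must actually do (and what \cite{GHSK3} does) is decompose $S_\QQ$ as a $\latt{g}$-module into irreducible pieces $\cV_d$ and compute, for each $d$ dividing $m$, the exact contribution of such a summand to $\Sigma(g)$ after twisting by $\alpha(g)^{-1}$. One then tabulates, for each of the seven values of $r$, the minimum possible total over all decompositions of $S_\QQ$ consistent with $\dim S_\QQ + k\varphi(r)=n+2$ and the non-quasi-reflection constraint. It is this tabulation that pins down $n\ge 6$ as the sharp bound (the case $r=3$, $k=1$, $g|_S$ having all eigenvalues of small contribution, is the critical one). Your outline is on the right track, but the reduction to ``conjugate pairs or $\pm 1$'' does not survive the twist by a non-real character, and without the honest tabulation the argument is incomplete.
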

One must then carry out a similar analysis for quasi-reflections. One
more dimension is needed to guarantee $\Sigma'(g)\ge 1$, because the
$a_n$ term does not help us. The analysis also has the corollary that
the quasi-reflections in the tangent space $V$ that arise are in this
case always reflections, and moreover that the elements of $\Gamma$
that they come from are themselves (up to sign) reflections, considered
as elements of $\Orth(L)$.
\begin{corollary}\label{cor:csinterior}
  If $n\ge 7$ then $\cF_L(\Gamma)$ has canonical singularities.
\end{corollary}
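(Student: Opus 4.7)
The strategy is to apply Proposition~\ref{RSTmodified} locally. Let $[\bdw]\in\cD_L$ be a point with non-trivial finite stabilizer $G=\Stab_\Gamma([\bdw])$; by Cartan's linearization theorem, a neighbourhood of its image in $\cF_L(\Gamma)$ is analytically equivalent to $V/G$, where $V=T_{[\bdw]}\cD_L\imic\Hom(\WW,\WW^\perp/\WW)$ carries the induced linear $G$-action. By Lemma~\ref{lem:cyclic} combined with Proposition~\ref{RSTmodified}, it suffices to verify that $\Sigma'(g)\ge 1$ for every $g\in G$, considered as an element of $\GL(V)$.

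Fix such a $g$ of order $m$, and let $k\ge 1$ be minimal so that $h=g^k$ is either the identity or a quasi-reflection on $V$, so that $m=sk$. When $s=1$, i.e.\ when no non-trivial power of $g$ acts as a quasi-reflection, the definition~\eqref{modRTsum} gives $\Sigma'(g)=\Sigma(g)$, and Theorem~\ref{thm:RSTnonqref} --- valid already for $n\ge 6$ --- produces $\Sigma(g)\ge 1$. The only remaining cases are those in which some non-trivial power of $g$ acts on $V$ as a quasi-reflection.

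For those cases one mirrors the argument behind Theorem~\ref{thm:RSTnonqref}: decompose $L_\QQ$ into $\QQ$-irreducible $\mu_m$-subrepresentations $\cV_r$, identify the summand $\cV_r^\bdw$ containing $\bdw$, and track the eigenvalues of $g$ on $V=\Hom(\WW,\WW^\perp/\WW)$ exactly as in the calculation leading to~\eqref{W-contribution}. The only change is that the eigenvalue $\zeta^{a_n}$ along the quasi-reflection direction now contributes $\{sa_n/m\}$ to $\Sigma'(g)$ rather than $\{a_n/m\}$ to $\Sigma(g)$, and this quantity may be arbitrarily small; effectively one coordinate is lost. We therefore need one additional eigenvalue in $V$ to absorb the deficit, which is precisely why the dimension hypothesis must be strengthened from $n\ge 6$ to $n\ge 7$.

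The main obstacle is the explicit verification for the small values $r=\ord(\alpha(g))\in\{3,4,5,6,8,10,12\}$, where $\varphi(r)\le 4$ and the uniform bound of Proposition~\ref{prop:RSTsimplest} is not directly applicable. In each such case one must count, using $\dim V=n\ge 7$, how many copies of the irreducible representation $\cV_r$ are forced to occur in the decomposition of $V$, and then check by direct arithmetic that the resulting sum of fractional parts satisfies $\Sigma'(g)\ge 1$. The cases $r=1,2$ are immediate, since the eigenvalues of $g$ on $V$ then occur in complex conjugate pairs summing to integers. Assembling all cases yields $\Sigma'(g)\ge 1$ for every $g\in G$, whence the corollary.
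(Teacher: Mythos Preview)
Your proposal is correct and follows essentially the same approach as the paper: apply the modified Reid--Tai criterion (Proposition~\ref{RSTmodified}) via Lemma~\ref{lem:cyclic}, dispose of the non-quasi-reflection case by Theorem~\ref{thm:RSTnonqref} (needing only $n\ge 6$), and then rerun the same representation-theoretic analysis for elements with a quasi-reflection power, where the loss of the $a_n$ contribution forces the extra dimension $n\ge 7$. The paper gives no more detail than you do; it simply remarks that ``one more dimension is needed to guarantee $\Sigma'(g)\ge 1$, because the $a_n$ term does not help us'' and refers to~\cite{GHSK3} for the case-by-case verification of the small $r$ values.
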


\subsection{Singularities at the cusps}\label{subsect:singcusps}

We now consider the boundary
$\Bar\cF_L(\Gamma)\setminus\cF_L(\Gamma)$. Cusps, or boundary
components in the Baily-Borel compactification, correspond to orbits
of totally isotropic subspaces $E\subset L_\QQ$. Since $L$ has
signature $(2,n)$, the dimension of $E$ is $1$ or $2$, corresponding
to dimension~$0$ and dimension~$1$ boundary components respectively.

A toroidal compactification over a cusp $F$ coming
from an isotropic subspace~$E$ corresponds to an
admissible fan $\Sigma$ in some cone $C(F)\subset U(F)$. We have, as
in~\cite{AMRT}
\begin{equation*}
\cD_L(F):=U(F)_\CC\cD_L\subset \check\cD_L
\end{equation*}
where $\check\cD_L$ is the compact dual of $\cD_L$ (see \cite[Chapter
II, \S2]{AMRT}).

The case $\dim E=1$, that is, isotropic vectors in~$L$, is the case
of $0$-dimensional cusps in the Baily-Borel compactification and leads
to a purely toric problem. In this case we have
\begin{equation*}
\cD_L(F)\cong F\times U(F)_\CC=U(F)_\CC.
\end{equation*}
Put $M(F)=U(F)_\ZZ$ and define the torus $\bdT(F)=U(F)_\CC/M(F)$. In
general $(\cD_L/M(F))_\Sigma$ is by definition the interior of the
closure of $\cD_L/M(F)$ in $\cD_L(F)/M(F)\cross_{\bdT(F)}
X_\Sigma(F)$, i.e.\ in $X_\Sigma(F)$ in this case, where $X_\Sigma(F)$
is the torus embedding corresponding to the torus~$\bdT(F)$ and the
fan~$\Sigma$. We may choose $\Sigma$ so that $X_\Sigma(F)$ is smooth
and $G(F):=N(F)_\ZZ/U(F)_\ZZ$ acts on $(\cD_L/M(F))_\Sigma$: this is
also implicit in \cite{AMRT} and explained in~\cite[p.173]{FC}. The
toroidal compactification is locally isomorphic to $X_\Sigma(F)/G(F)$.
Thus the problem of determining the singularities is reduced to a
question about toric varieties, which is answered by
Theorem~\ref{thm:cstoric}, below.

We take a lattice $M$ of dimension $n$ and denote its dual lattice by
$N$. A fan $\Sigma$ in $N\tensor\RR$ determines a toric variety
$X_\Sigma$ with torus $\bdT=\Hom(M,\CC^*)=N\tensor\CC^*$.

\begin{theorem}\label{thm:cstoric}
  Let $X_\Sigma$ be a smooth toric variety and suppose that a finite
  group $G<\Aut(\bdT)=\GL(M)$ of torus automorphisms acts
  on~$X_\Sigma$.  Then $X_\Sigma/G$ has canonical singularities.
\end{theorem}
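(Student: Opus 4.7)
The plan is to reduce to the Reid--Tai criterion after linearizing the action locally. The question is local on $X_\Sigma/G$, so fix a point $y\in X_\Sigma/G$ with preimage $x\in X_\Sigma$ and let $G_x\subset G$ denote its (finite) stabilizer. Since $X_\Sigma$ is smooth and $G_x$ finite, Cartan's linearization lemma identifies the analytic germ $(X_\Sigma/G,y)$ with $(\CC^n/G_x,0)$, where $G_x$ acts linearly on $T_xX_\Sigma\cong\CC^n$. Thus it suffices to show that $\CC^n/G_x$ has canonical singularities, and by Lemma~\ref{lem:cyclic} one may further reduce to showing $\CC^n/\group{g}$ has canonical singularities for each $g\in G_x$.

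The main step is to compute the eigenvalues of $g$ on $T_xX_\Sigma$. Let $\tau\in\Sigma$ be the unique cone with $x\in O_\tau$. Since $g$ is a torus automorphism fixing $x$, it preserves the orbit $O_\tau$, hence also $\tau$ and the sublattice $N_\tau=N\cap\RR\tau$. The tangent space decomposes canonically as
\[
T_xX_\Sigma \;=\; T_xO_\tau \oplus N_{O_\tau/X_\Sigma,x},
\]
where $T_xO_\tau\cong(N/N_\tau)\otimes\CC$ with $g$ acting via its induced integral action on $N/N_\tau$, and $N_{O_\tau/X_\Sigma,x}$ splits as a direct sum of lines indexed by the rays of $\tau$, on which $g$ acts by the permutation induced by its action on those rays. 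In particular, the characteristic polynomial of $g|_{T_xX_\Sigma}$ lies in $\ZZ[t]$, so since $g$ has finite order its eigenvalues are roots of unity arranged in complete Galois-conjugate orbits: for each $r\mid\ord(g)$, either all $\varphi(r)$ primitive $r$-th roots of unity occur with equal multiplicity, or none do.

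The Reid--Tai estimate is then essentially automatic. For $r\ge 3$ each complete Galois orbit of primitive $r$-th roots contributes $\sum_{0<k<r,\,\gcd(k,r)=1}k/r=\varphi(r)/2\ge 1$ to $\Sigma(g)$, so as soon as $g$ has any eigenvalue of order $\ge 3$ the unmodified criterion of Proposition~\ref{RSToriginal} applies and $\CC^n/\group{g}$ is canonical. The only case not covered this way is when $g$ is an involution with all eigenvalues in $\{\pm 1\}$: if the multiplicity of $-1$ is at least two then $\Sigma(g)\ge 1$ and Proposition~\ref{RSToriginal} still applies, while if it equals one then $g$ is a reflection and $\CC^n/\group{g}\cong\CC^n$ is smooth by Chevalley--Shephard--Todd. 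Applying the same dichotomy to every power of $g$ yields the canonical property.

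The main obstacle will be the careful treatment of quasi-reflections, since a power $g^{m/2}$ of a non-reflection $g\in G_x$ may itself be a reflection, so Proposition~\ref{RSToriginal} does not apply directly to $\group{g}$ in that case. This is exactly what the modified Reid--Tai sum $\Sigma'$ of Proposition~\ref{RSTmodified} controls; equivalently, one may first quotient $\CC^n$ by the (normal) subgroup of $\group{g}$ generated by its reflections, obtaining a smooth space by Chevalley--Shephard--Todd, and then apply the unmodified criterion to the residual action. The Galois-orbit structure of the eigenvalues is preserved by this procedure, because the residual action is still induced from an integral automorphism of a $\ZZ$-lattice, so the inequality $\Sigma\ge 1$ continues to hold and completes the argument.
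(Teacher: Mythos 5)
The paper does not actually reprove this statement; it cites \cite{GHSK3}, Theorem~2.17. Measured against that proof, your framework (linearise at a fixed point, reduce to cyclic groups via Lemma~\ref{lem:cyclic}, apply Reid--Tai) is the right one, but your key step fails. The claim that the characteristic polynomial of $dg_x$ on $T_xX_\Sigma$ lies in $\ZZ[t]$ --- equivalently, that the eigenvalues occur in complete Galois-conjugate orbits --- is true at the distinguished point $x_\tau$ of the orbit $O_\tau$, but false at a general fixed point $x\in O_\tau$. The reason is that $g$ acts on the conormal directions by $g^*u_i=u_{\sigma(i)}+w_i$ with $w_i\in\tau^\perp\cap M$, so on the normal space at $x$ it acts by a monomial matrix whose nonzero entries are the values $\chi^{w_i}(x)$: these are roots of unity, but they need not be $\pm1$ and need not complete any Galois orbit. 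Concretely, take $N=\ZZ^3$, $\Sigma=\{0,\RR_{\ge0}e_1\}$, so $X_\Sigma=\CC\times(\CC^*)^2$ is smooth, and let $g$ of order $3$ send $e_1\mapsto e_1$, $e_2\mapsto e_3$, $e_3\mapsto e_1-e_2-e_3$. In coordinates $g\colon(x,y,z)\mapsto(xz,z^{-1},yz^{-1})$, which fixes $(0,\omega^2,\omega)$ for $\omega$ a primitive cube root of unity; the differential there is $\mathrm{diag}(\omega)\oplus\bigl(\begin{smallmatrix}0&-\omega\\ \omega^2&-1\end{smallmatrix}\bigr)$, with eigenvalues $\omega,\omega,\omega^2$ and characteristic polynomial $(t-\omega)(t^2+t+1)\notin\ZZ[t]$. (Here $\Sigma(g)=4/3$, so the conclusion happens to hold, but not by your mechanism, and your argument gives no control over configurations where the normal directions carry the ``unpaired'' eigenvalues.) Note also that already in the order-two example $g\colon(x,y)\mapsto(xy,y^{-1})$ on $\CC\times\CC^*$, the fixed point $(0,-1)$ has $dg=-\id$, so the normal line to a ray fixed by $g$ carries eigenvalue $-1$, not the claimed permutation action.

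The same issue undermines your final paragraph: after dividing out the quasi-reflections in $\group{g}$, the residual action on $\CC^n/\group{h}\cong\CC^n$ raises the distinguished eigenvalue to the $s$-th power, and is no longer visibly induced by an integral lattice automorphism, so ``the Galois-orbit structure is preserved'' is unsupported. What a correct proof must do --- and what the proof of \cite{GHSK3}, Theorem~2.17 actually does --- is analyse the extra scalars $\chi^{w_i}(x)$ at a general fixed point and play them off against the eigenvalues of $g$ on $N/N_\tau$ (one has, for instance, $\sum_j(g^*)^jw_i=0$, which ties the order of $\chi^{w_i}(x)$ to the action on the quotient torus). That interplay is precisely the content your argument skips.
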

\begin{proof}
This is~\cite[Theorem~2.17]{GHSK3}.  The proof also shows (with a
little modification) that there are no branch divisors contained in
the boundary over $0$-dimensional cusps either.
\end{proof}

It remains to consider the dimension~$1$ cusps. We consider a rank~$2$
totally isotropic subspace $E_\QQ\subset L_\QQ$, corresponding to a
dimension~$1$ boundary component $F$ of $\cD_L$.  The idea is to
choose standard bases for $L_\QQ$ so as to be able to identify $U(F)$,
$U(F)_\ZZ$ and $N(F)_\ZZ$ explicitly, as is done in~\cite{Sc} for
maximal $\Kthree$ lattices, where $n=19$. Then, following
Kondo~\cite{Ko1} one can analyse the group action in coordinates,
using the Siegel domain realisation of $\cD$ associated with the given
cusp. Both in \cite{Ko1} and in \cite{Sc} there are special features
that allow one to work over $\ZZ$, but in general one must work over
$\QQ$. For details we refer to~\cite{GHSK3}.

\section{Modular forms and Kodaira dimension}\label{sect:modforms}

One of the main tools in the study of the geometry of the orthogonal
modular varieties $\cF_L(\Gamma)$ is the theory of modular forms with
respect to an orthogonal group of type $\Orth(2,n)$. One application
is to prove that $\cF_L(\Gamma)$ is often of general type.  The
methods described here were used in \cite{GHSK3} to prove the
following result.
\begin{theorem}\label{thm:K3gt}
  The moduli space $\cF_{2d}$ of $\Kthree$ surfaces with a
  polarisation of degree $2d$ is of general type for any $d>61$ and
  for $d=46$, $50$, $52$, $54$, $57$, $58$ and $60$.

  If $d\ge 40$ and $d\ne 41$, $44$, $45$ or $47$ then the Kodaira
  dimension of $\cF_{2d}$ is non-negative.
\end{theorem}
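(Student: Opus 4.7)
The plan is to reduce the problem of computing the Kodaira dimension of $\cF_{2d}$ to a question about the existence of orthogonal modular forms of controlled weight and vanishing behaviour, using the general principle that on a suitable desingularisation $\Bar{\cF_{2d}}$, pluricanonical forms correspond to modular forms of weight divisible by $n=\rk L_{2d}-2=19$. Concretely, a modular form $F$ of weight $19k$ for $\Tilde{\Orth}^+(L_{2d})$, vanishing of sufficient order along the ramification divisor (coming from $\pm$reflections, by the analysis in Section~\ref{subsect:singinterior}) and along the boundary, extends to a $k$-fold pluricanonical form on a smooth projective toroidal compactification $\Bar{\cF_{2d}}$. This extension is legitimate because Theorem~\ref{thm:csmain} applies (here $n=19\ge 9$) so the chosen toroidal model has canonical singularities and no branch divisors in the boundary. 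To prove $\kappa(\cF_{2d})=19$ it then suffices to exhibit, for $k\gg 0$, a space of such forms of dimension $\sim k^{19}$.

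The heart of the argument is the \emph{low weight cusp form trick}. If there exists a single cusp form $F_a$ of weight $a<19$ that vanishes along the entire ramification divisor of $\cD_{2d}\to\cF_{2d}$, then for any modular form $G$ of weight $19k-ak$ vanishing to sufficient order at the boundary, the product $F_a^k\cdot G$ is a weight-$19k$ cusp form whose pullback to a resolution extends to a pluricanonical form. Since the space of weight-$(19-a)k$ modular forms grows like $k^{19}$ (by a Hirzebruch-Mumford proportionality / dimension formula for modular forms on $\Orth(2,19)$), this gives a maximal growth rate of pluricanonical sections, hence $\cF_{2d}$ is of general type. For the weaker statement that $\kappa(\cF_{2d})\ge 0$ one only needs a \emph{weight~$19$} cusp form vanishing along the ramification divisor (so that the form itself extends and defines a nontrivial section of $K_{\Bar{\cF_{2d}}}$).

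The main obstacle, and the core technical content of \cite{GHSK3}, is the construction of a cusp form of weight $<19$ vanishing on the ramification divisor of $\cD_{2d}$ for each of the listed values of $d$. For this I would use Borcherds' automorphic products, starting from the reflective Borcherds form $\Phi_{12}$ on the $\mathrm{II}_{2,26}$ lattice, and performing \emph{quasi pull-back} along primitive embeddings $L_{2d}\hookrightarrow \mathrm{II}_{2,26}$. The quasi pull-back produces a cusp form on $\cD_{2d}$ whose weight is $12+N/2$, where $N$ is the number of $(-2)$-vectors in the orthogonal complement $L_{2d}^\perp\subset \mathrm{II}_{2,26}$, and whose divisor consists precisely of Heegner divisors associated to these $(-2)$-vectors — these are exactly the reflection divisors in $\cD_{2d}$. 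Hence one needs a primitive embedding of $L_{2d}$ into $\mathrm{II}_{2,26}$ with $N\le 12$ (for weight $\le 18$) to get $\kappa\ge 0$, and $N$ as small as possible for the general type result. Thus the computation for each individual $d$ reduces to an existence/counting question for primitive embeddings $L_{2d}\hookrightarrow \mathrm{II}_{2,26}$, i.e.\ for representations of the rank-$1$ negative definite lattice $\langle -2d\rangle$ (together with its genus-compatible discriminant form) into the Niemeier lattices that complement $L_{2d}$. This is where the arithmetic exceptions $d=46,50,52,54,57,58,60$ arise: for these sporadic values a suitable embedding with small $N$ exists although the asymptotic estimate fails. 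The Kodaira-dimension-$\ge 0$ range $d\ge 40,\ d\ne 41,44,45,47$ follows from the same framework with the weaker weight bound $a\le 19$.
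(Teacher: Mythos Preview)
Your overall strategy is correct and matches the paper: low-weight cusp form trick plus quasi pull-back of $\Phi_{12}$, with Theorem~\ref{thm:csmain} controlling the singularities. However, several concrete steps are misstated in ways that would block the argument.

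First, the lattice combinatorics are wrong. Since $L_{2d}=2U\oplus 2E_8(-1)\oplus\latt{-2d}$ and $II_{2,26}\cong 2U\oplus 3E_8(-1)$, a primitive embedding $L_{2d}\hookrightarrow II_{2,26}$ amounts to choosing a vector $l\in E_8$ with $l^2=2d$; the orthogonal complement is then $l^\perp_{E_8}$, a rank~$7$ positive definite lattice. Niemeier lattices play no role. The arithmetic problem is exactly~\eqref{orth2}: find $l\in E_8$ of square $2d$ orthogonal to between $2$ and $12$ roots of $E_8$. The paper solves this via Theorem~\ref{thm:mainineq}, comparing representation numbers $N_{E_7}(2d)$, $N_{E_6}(2d)$, $N_{D_6}(2d)$.

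Second, your numerology is inverted. The condition $2\le N\le 12$ (weight $12+N/2\le 18<19$) is what gives \emph{general type}; the condition $N=14$ (weight exactly $19$, canonical weight) gives $\kappa\ge 0$ via Remark~\ref{rem:canonicalweight}. The lower bound $N\ge 2$ is essential: without at least one pair of roots in $l^\perp_{E_8}$ the quasi pull-back is merely a pull-back and need not be a cusp form (Corollary~\ref{cor:qpb}).

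Third, and most seriously, your claim that the divisor of $\Phi|_{L_{2d}}$ ``consists precisely of \dots\ the reflection divisors in $\cD_{2d}$'' is false and skips a necessary step. The branch divisor $\Bdiv(\pi_\Gamma)$ has components coming from $(-2)$-vectors \emph{and} from $(-2d)$-vectors with $\div(r)=d$ or $2d$ (Corollary~\ref{cor:reflK3}), corresponding to $-\sigma_r\in\Tilde\Orth^+(L_{2d})$. The $\det$ character handles the $(-2)$-components automatically, but vanishing along the $(-2d)$-components requires the separate argument of Proposition~\ref{prop:qpbvanishing}: one checks that the rank~$8$ lattice $(L_r)^\perp_{II_{2,26}}$ must be one of $E_8$, $E_7\oplus A_1$, $D_8$, each containing at least $112$ roots, so $\Phi_{12}$ vanishes to high order along $\cD_r$ and the quasi pull-back still vanishes there.
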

Similar methods apply to irreducible symplectic manifolds and their
polarisations, discussed in Section~\ref{subsect:modulipolsymp}. For
deformations of length~$2$ Hilbert schemes of $\Kthree$ surfaces with
polarisation of split type (see Equation~\eqref{splitlattice}) there is
the following result, from~\cite{GHSsymp}.
\begin{theorem}\label{thm:splitgt}
  The variety $\cM^{[2],\text{split}}_{2d}$ is of general type if $d\ge
  12$.  Moreover its Kodaira dimension is non-negative if $d=9$ and
  $d=11$.
\end{theorem}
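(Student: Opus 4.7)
The plan is to adapt the \emph{low-weight cusp form trick} developed in~\cite{GHSK3} for $\Kthree$ moduli to the split $\Kthree^{[2]}$ setting, using that Kodaira dimension is a birational invariant.

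First I would reduce the problem to an orthogonal modular variety. By Theorem~\ref{thm:moduliimmersion} (and Theorem~\ref{thm:cover}, which pins down the monodromy subgroup in the $\Kthree^{[n]}$ case), the moduli space $\cM^{[2],\text{split}}_{2d}$ is realised as a Zariski open subvariety of $\Gamma\backslash\cD_{L_h}$, where
\begin{equation*}
L_h = 2U \oplus 2E_8(-1) \oplus \latt{-2} \oplus \latt{-2d}
\end{equation*}
has signature $(2,20)$ and $\Gamma = \Hat{\Orth}^+(L_{\Kthree,2},h)$. Since $\dim \cD_{L_h}=20\ge 9$, Theorem~\ref{thm:csmain} gives a projective toroidal compactification $\Bar{\cF}_{L_h}(\Gamma)$ with canonical singularities and no branch divisors in the boundary. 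Consequently, for any resolution $\Tilde{X}\to\Bar{\cF}_{L_h}(\Gamma)$, sections of $kK_{\Tilde X}$ are canonically identified with modular forms for $\Gamma$ of weight $20k$ with trivial character that vanish to order $k$ on the ramification divisors (arising from $\pm$reflections) and are cusp forms of order $k$ at the toroidal boundary.

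The core step is the construction of a non-zero cusp form $F_a$ of weight $a<20$ for $\Gamma$, vanishing along every ramification divisor. Given such an $F_a$, for every $G\in M_{(20-a)k}(\Gamma)$ the product $F_a^k\cdot G$ is a weight-$20k$ cusp form with the required vanishing, hence extends to a section of $kK_{\Tilde X}$. Since the graded ring of modular forms for $\Gamma$ has Krull dimension $21$, the space $M_{(20-a)k}(\Gamma)$ grows like $k^{20}$ in $k$, so the plurigenera grow maximally and $\cM^{[2],\text{split}}_{2d}$ is of general type. I would build $F_a$ as the quasi pull-back of Borcherds' automorphic product $\Phi_{12}$ on $II_{2,26}$: fix a primitive embedding $L_h\hookrightarrow II_{2,26}$ with orthogonal complement $N$, a negative-definite rank-$6$ lattice of discriminant $4d$. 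The quasi pull-back is a modular form on $\cD_{L_h}$ of weight
\begin{equation*}
a = 12 + \tfrac{1}{2}\bigl|\{n\in N : (n,n)=-2\}\bigr|,
\end{equation*}
which is a cusp form provided $N$ contains at least one $(-2)$-vector, and which vanishes along all reflection divisors because these correspond precisely to $(-2)$-vectors (and $(-2d)$-vectors of divisor $2d$) in $L_h^{\vee}$, which pull back from the $(-2)$-quadratic divisors of $\Phi_{12}$.

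The main obstacle is thus the lattice-theoretic one: finding, for each $d\ge 12$, a primitive embedding $L_h\hookrightarrow II_{2,26}$ whose orthogonal complement is \emph{almost rootless}, i.e.\ satisfies $|\{n\in N : n^2=-2\}|\le 14$ so that $a<20$. Equivalently, via Nikulin's theory of discriminant forms one must exhibit a negative-definite rank-$6$ lattice $N$ with discriminant form isomorphic to $-q_{L_h}$ and with at most $14$ roots. This is a careful gluing argument: one typically writes $N$ as a suitable overlattice of a root-free rank-$6$ lattice extended by a vector encoding the $-2d$ summand, and checks the gluing constraints on the discriminant form. For $d\ge 12$ the numerics leave enough room; for $d=9$ and $d=11$ one can only realise $a=20$, which produces a single canonical form and yields $\kappa\ge 0$ but not general type. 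Finally, for completeness one should also verify that the elementary obstructions beyond canonical singularities---the so-called ramification and cusp obstructions at the smaller-dimensional strata---do not bite, but since $n=20$ is well above the threshold $9$ of Theorem~\ref{thm:csmain} this reduces to a routine check in the style of \cite[Section~2]{GHSK3}.
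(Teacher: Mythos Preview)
Your overall strategy is exactly the paper's: reduce to the orthogonal modular variety for $L_{2,2d}$, apply the low-weight cusp form trick (Theorem~\ref{thm:gt}), and manufacture the required cusp form as a quasi pull-back of $\Phi_{12}$. Two steps, however, are not as automatic as you present them.

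First, the vanishing of $\Phi|_{L_h}$ along the whole ramification divisor is not simply because those divisors ``pull back from the $(-2)$-quadratic divisors of $\Phi_{12}$''. For components with $\sigma_r\in\Tilde\Orth^+(L_h)$ (i.e.\ $r^2=-2$) the vanishing follows from the $\det$ character. But for components coming from $-\sigma_r\in\Tilde\Orth^+(L_h)$ there is no such shortcut, and your list of reflective vectors is incomplete: besides $r^2=-2d$, $\div(r)=2d$, one also has $r^2=-2d$, $\div(r)=d$, and (for odd $d$) $r^2=-d$, $\div(r)=d$ (Proposition~\ref{prop:reflvectors}). The paper handles these via Proposition~\ref{prop:qpbvanishing}: one shows that $(r^\perp_{L_h})^\perp_{II_{2,26}}$ is a rank-$7$ lattice of small determinant, hence by the Conway--Sloane tables contains at least $60$ roots, and transitivity of quasi pull-back then forces $\Phi|_{L_h}$ to vanish along $\cD_r$ to high order. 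Your formulation does not supply this argument.

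Second, the embedding problem is not solved by abstract Nikulin gluing of ``a root-free rank-$6$ lattice'' but quite concretely: one embeds $\langle-2\rangle\oplus\langle-2d\rangle$ into one copy of $E_8(-1)$ by first taking a $(-2)$-vector (complement $E_7$) and then a vector $l\in E_7$ with $l^2=2d$; the orthogonal complement $N$ is then $l^\perp_{E_7}$. The question becomes: for which $d$ does $E_7$ contain a vector of square $2d$ orthogonal to at least $2$ and at most $14$ roots (resp.\ exactly $16$ roots for the $\kappa\ge 0$ cases)? This is settled by an inequality of the shape~\eqref{mineq} but with $E_7$ in place of $E_8$, and the genuinely hard part---which you skate over with ``the numerics leave enough room''---is estimating representation numbers of odd-rank root lattices, requiring an explicit form of Siegel's formula in terms of Zagier $L$-functions or Cohen numbers (see the end of Section~\ref{subsect:E8}).
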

For the ten-dimensional O'Grady case~\cite{OG1}, there are again split
and non-split polarisations, and a fairly complete general type result
in the split case was proved in~\cite{GHSdim21}.
\begin{theorem}\label{thm:21gt}
Let $d$ be a positive integer not equal to $2^n$ with $n\ge 0$.  Then
every component of the moduli space of ten-dimensional polarised
O'Grady varieties with split polarisation $h$ of Beauville degree
$h^2=2d\ne 2^{n+1}$ is of general type.
\end{theorem}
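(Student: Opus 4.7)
The plan is to combine the three main tools developed in the excerpt: the period covering of the moduli space from Theorem~\ref{thm:moduliimmersion}, the canonical singularities result Theorem~\ref{thm:csmain}, and the ``low weight cusp form trick'' built on quasi-pull-back of the Borcherds form $\Phi_{12}$.

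First, by Proposition~\ref{prop:Beauvillelattices}(iv), the Beauville lattice of a $10$-dimensional O'Grady manifold is $L = 3U \oplus 2E_8(-1) \oplus A_2(-1)$. For a primitive $h \in L$ with $h^2 = 2d$ and $\div(h) = 1$ (the split case), I would first verify, using the Eichler criterion cited in Lemma~\ref{lem:L2d}, that
\begin{equation*}
L_h = h^{\perp}_L \cong 2U \oplus 2E_8(-1) \oplus A_2(-1) \oplus \latt{-2d},
\end{equation*}
a lattice of signature $(2,21)$. Theorem~\ref{thm:moduliimmersion} then realises each component $\cM'$ of the moduli space as an open subvariety of $\Gamma \backslash \cD_{L_h}$ for a finite index subgroup $\Gamma \subset \Orth^+(L_h)$ coming from the monodromy. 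To establish general type of $\cM'$ it therefore suffices to prove general type for a projective compactification of $\Gamma \backslash \cD_{L_h}$. Since $\dim \cD_{L_h} = 21 \ge 9$, Theorem~\ref{thm:csmain} supplies a toroidal compactification $\Bar{\cF}_{L_h}(\Gamma)$ that has only canonical singularities and no branch divisors over the boundary, so that every $\Gamma$-invariant pluricanonical form on the smooth regular locus extends to a global pluricanonical form on a desingularisation.

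The next step is to identify pluricanonical forms with modular forms. Via the standard calculation, a $\Gamma$-modular form of weight $21k$ and character $\det^k$ corresponds to a section of $k K_{\Bar{\cF}_{L_h}(\Gamma)}$, provided it vanishes to order $\geq k$ along every branch divisor and is cuspidal of sufficient order along the toroidal boundary. The low weight cusp form trick then gives the following criterion: if one exhibits a single nonzero cusp form $F_0 \in S_{k_0}(\Gamma, \det)$ of weight $k_0 < 21 = \dim \cD_{L_h}$, vanishing along every reflection branch divisor (the ``ramification at infinity'' hypothesis is already secured by Theorem~\ref{thm:csmain}), then multiplying $F_0$ with arbitrary modular forms of appropriate weight gives asymptotically maximally many pluricanonical sections, and $\Bar{\cF}_{L_h}(\Gamma)$ is of general type.

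To produce such an $F_0$, I would use Theorem~\ref{thm:qpbcusp} on quasi-pull-backs of the Borcherds form $\Phi_{12}$ on $II_{2,26}$. Choose a primitive embedding $L_h \hookrightarrow II_{2,26} = 2U \oplus 3E_8(-1)$; equivalently, one must embed the negative definite complement $K = L_h^{\perp}_{II_{2,26}}$, which has signature $(0,5)$, into the orthogonal complement of $2U \oplus 2E_8(-1) \oplus A_2(-1) \oplus \latt{-2d}$, reducing to finding a norm $2d$ vector with the correct discriminant contribution inside $E_8(-1) \oplus A_2$. The quasi-pull-back of $\Phi_{12}$ is then a cusp form on $\cD_{L_h}$ of weight $12 + N_K/2$, where $N_K = \#\{r \in K : r^2 = -2\}$, with its divisor supported on the reflective hyperplanes of the $(-2)$-vectors involved. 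Hence $F_0$ satisfies the required vanishing automatically, and the criterion becomes the numerical condition $N_K \le 16$.

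The main obstacle is the arithmetic lattice-embedding step: for every $d$ that is not a power of $2$, I must exhibit a primitive embedding of $\latt{-2d}$ (compatibly with the fixed $2U \oplus 2E_8(-1) \oplus A_2(-1)$ block) into $II_{2,26}$ whose orthogonal complement has at most $16$ roots. This is a case analysis with Nikulin's gluing theory \cite{Nik2}: one sets up the possible discriminant-form gluings of $\latt{-2d}$ to the complement lattice and must in each residue class produce a suitable norm $2d$ vector in $E_8 \oplus A_2$ whose orthogonal complement in $E_8 \oplus A_2$ has few roots. The obstruction $d = 2^n$ is exactly the residue pattern for which every admissible embedding forces $N_K \geq 18$, so the quasi-pull-back has weight $\geq 21$ and the low weight trick breaks down; for all other $d$ an admissible embedding exists, the cusp form $F_0$ of weight $< 21$ is produced, and Theorem~\ref{thm:21gt} follows.
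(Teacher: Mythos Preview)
The paper explicitly does not prove this theorem here: immediately after the statement it says ``We do not attempt to prove Theorem~\ref{thm:21gt} here,'' deferring to \cite{GHSdim21}. Your overall architecture---period map (Theorem~\ref{thm:modulisymplectic}), canonical singularities (Theorem~\ref{thm:csmain}), low weight cusp form trick (Theorem~\ref{thm:gt}), quasi pull-back of $\Phi_{12}$---is indeed the method of \cite{GHSdim21} and is the right skeleton. But two steps in your outline are genuinely incomplete.

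First, your claim that ``$F_0$ satisfies the required vanishing automatically'' is not correct. The character $\det$ forces vanishing of $\Phi|_{L_h}$ along $\cD_r$ only when $\sigma_r\in\Gamma$; but the branch divisor $\Bdiv(\pi_\Gamma)$ in \eqref{br-div} also contains components coming from $-\sigma_r\in\Gamma$, and these are \emph{not} handled by the character. The paper addresses this for the $\Kthree$ and $\Kthree^{[2]}$ lattices in Proposition~\ref{prop:qpbvanishing} by a case analysis: one classifies the possible $(L_r)^\perp_{II_{2,26}}$ using Lemma~\ref{lem:hperp} and the Conway--Sloane tables, and shows each such lattice has so many roots that $\Phi|_{L_h}$ must vanish on $\cD_r$ with large multiplicity. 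The proof of Proposition~\ref{prop:qpbvanishing} explicitly notes that the same method covers the O'Grady case, citing \cite[Corollary~4.6]{GHSdim21}; you need that argument, not an appeal to $\det$.

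Second, the arithmetic step---producing, for every $d$ not a power of $2$, an embedding $L_h\hookrightarrow II_{2,26}$ whose orthogonal complement has at most $16$ roots---is the heart of the matter and cannot be waved through. Your description of the obstruction at $d=2^n$ (``every admissible embedding forces $N_K\ge 18$'') is an assertion, not an argument, and is not obviously the correct mechanism; in \cite{GHSdim21} the exclusion of powers of $2$ arises from the detailed root-counting in $E_8\oplus A_2$ and the structure of the relevant discriminant forms. Also, you invoke Theorem~\ref{thm:moduliimmersion}, which rests on knowing $\Mon^2(X)$; for O'Grady's ten-dimensional example this is not established in the paper, and Theorem~\ref{thm:modulisymplectic} (the finite dominant map to $\Orth^+(L,h)\backslash\cD_{L_h}$) is what is actually used---together with the observation (Remark~\ref{rem:bigmodulargroup}, \cite[Lemma~4.4]{GHSdim21}) that $\Phi|_{L_h}$ is modular for the full group $\Orth^+(L,h)$, not merely $\Tilde\Orth^+(L_h)$.
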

We do not attempt to prove Theorem~\ref{thm:21gt} here but the theory
we develop in the rest of this article will give proofs (though not
with full details) of Theorem~\ref{thm:K3gt} and Theorem~\ref{thm:splitgt}.

\subsection{Modular forms of orthogonal type}\label{subsect:orthmodfms}

In Definition~\ref{def:modfm} below we follow \cite{B1}. An ``affine''
definition similar to the one usually given for of $\SL(2)$ can be
found in \cite{G2}. The \emph{affine cone} over $\cD_L$ is
$\cD_L^\bullet=\{ y\in L\tensor \CC\mid x=\CC^*y\in \cD_L\}$.
\begin{definition}\label{def:modfm}
  Suppose that $L$ has signature $(2,n)$, with $n\ge 3$. Let $k\in
  \ZZ$ and let $\chi \colon \Gamma\to \CC^*$ be a character of a
  subgroup $\Gamma< \Orth^+(L)$ of finite index.  A holomorphic
  function $F\colon\cD_L^\bullet\to \CC$ is called a \emph{modular form} of
  \emph{weight} $k$ and character $\chi$ for the group $\Gamma$ if
\begin{equation*}
F(tZ)=t^{-k}F(Z)\quad \forall\,t\in \CC^*,
\end{equation*}
\begin{equation*}
  F(gZ)=\chi(g)F(Z)\quad  \forall\,g\in \Gamma.
\end{equation*}
A modular form is called a \emph{cusp form} if it vanishes at every
cusp.
\end{definition}
The weight as defined here is what is sometimes called
\emph{arithmetic weight}. Some authors prefer to use the geometric
weight, which is $k/n$, normally only in contexts where $n|k$. We
shall always use the arithmetic weight. One may choose a complex
volume form $dZ$ on $\cD_L$ such that if $F$ is a modular form of
weight $mn$ and character $\det$ for $\Gamma$ then
$F\,(dZ)^m$ is a $\Gamma$-invariant section of $mK_{\cD_L}$:
see~\cite{Bau} for a precise account.

If $n<3$ then one has to add to Definition~\ref{def:modfm} the
condition that $F$ is holomorphic at the boundary.  According to
Koecher's principle (see \cite{Ba}, \cite{F1}, \cite{P-S}) this
condition is automatically fulfilled if the dimension of a maximal
isotropic subspace of $L\otimes \QQ$ is smaller than $n$. In
particular, this is always true if $n\ge 3$.

We denote the linear spaces of modular and cusp forms of weight $k$
and character $\chi$ by $M_k(\Gamma,\chi)$ and $S_k(\Gamma,\chi)$
respectively.  If $M_k(\Gamma,\chi)$ is nonzero then one knows that
$k\ge (n-2)/2$ (see \cite{G2}).  The minimal weight $k=(n-2)/2$ is
called \emph{singular}. Modular forms of singular weight are very
special. The first example of such forms for orthogonal groups was
constructed in \cite{G1}.  Cusp forms are possible only if
$k>(n-2)/2$.  The weight $k=\dim(\cF_L(\Gamma))$ is called
\emph{canonical} because by a lemma of Freitag
\begin{equation*}
S_n(\Gamma, \det)\cong H^0\bigl(\Tilde{\cF}_L(\Gamma),
K_{\Tilde{\cF}_L(\Gamma)}\bigr),
\end{equation*}
where $\Tilde{\cF}_L(\Gamma)$ is a smooth compact model of the modular
variety $\cF_L(\Gamma)$ and $K_{\Tilde{\cF}_L(\Gamma)}$ is the sheaf
of canonical differential forms (see \cite[Hilfssatz 2.1,
  Kap. 3]{F1}). Therefore we have the following important formula for
the geometric genus of the modular variety:
\begin{equation}\label{geom-genus}
p_g(\Tilde{\cF}_L(\Gamma))=\dim S_n(\Gamma, \det).
\end{equation}

\begin{remark}\label{rem:holocondition}
Below (see Section~\ref{subsect:fourier}) we describe the property of
being holomorphic at the boundary (needed only if $n\le 2$) in terms
of the Fourier expansions.
\end{remark}

\begin{remark}\label{rem:propertyT}
In this article we usually assume that $n\ge 3$.  In this case the
order of any character $\chi$ in Definition~\ref{def:modfm} is
finite according to Kazhdan's property~(T) (see \cite{Ka}).
\end{remark}
\begin{remark}\label{rem:onlydet}
If the lattice $L$ contains two orthogonal copies of the hyperbolic
plane $U\cong \left(\begin{smallmatrix} 0&1\\1&0
\end{smallmatrix}\right)$
and if its reduction modulo $2$ (respectively $3$) is of rank at least
$6$ (respectively $5$) then $\Tilde{\Orth}^+(L)$ has only one
non-trivial character, namely $\det$ (see \cite{GHScomm}).  In
particular the modular group $\Tilde\Orth^+(L_{2d})$ related to the
polarised $\Kthree$ surfaces has only one non-trivial character.
\end{remark}
\begin{remark}\label{rem:paramodular}
If $L_{2t}^{(5)}=2U\oplus \latt{-2t}$, of signature $(2,3)$, then the
modular forms with respect to $\Tilde{\SO}^{+}(L_{2t}^{(5)})$ coincide
with Siegel modular forms with respect to the paramodular group
$\Gamma_t$ (see \cite{G3}, \cite{GH2}, \cite{GN1}, \cite{GN3}).  In
particular, if $t=1$ we obtain the Siegel modular forms with respect
to $\Sp_2(\ZZ)$.  In contrast to Remark~\ref{rem:onlydet} the group
$\Tilde{\SO}^{+}(L_{2t}^{(5)})$ has non-trivial characters. They were
described in \cite[Section 2]{GH3}.  One can construct important cusp
forms of the minimal possible weight $1$ with non-trivial character
for the full modular group $\Tilde{\SO}^{+}(L_{2t}^{(5)})\cong
\Gamma_t$ for some $t$ (see \cite{GN3}).
\end{remark}

\subsection{Rational quadratic divisors}\label{subsect:ratquaddiv}

For any $v\in L\otimes \QQ$ such that $v^2=(v,v)<0$ we define the
\emph{rational quadratic divisor}
\begin{equation}\label{div-Dv}
\cD_v=\cD_v(L)=\{[Z] \in \cD_L\mid (Z,v)=0\}\cong \cD_{v^\perp_L}
\end{equation}
where $v^\perp_L$ is an even integral lattice of signature $(2,n-1)$.
Therefore $\cD_v$ is also a homogeneous domain of type~IV.  We note
that $\cD_v(L)=\cD_{tv}(L)$ for any $t\ne 0$.  The theory of
automorphic Borcherds products (see \cite{B3}) gives a method of
constructing automorphic forms with rational quadratic divisors.
Special divisors of this type (the reflective divisors defined below)
play an important role in the theory of moduli spaces.

The reflection with respect to the hyperplane defined by a
non-isotropic vector $r$ is given by
\begin{equation}\label{sigma_r}
\sigma_r\colon l\Mapsto l-\frac{2(l,r)}{(r,r)}r.
\end{equation}
If $r$ is primitive in $L$ and the reflection $\sigma_r$ fixes $L$,
i.e.\ $\sigma_r\in \Orth(L)$, then we say that $r$ is a
\emph{reflective vector}, also known as a \emph{root}. If $(r,r)=d$ we
say that $r$ is a $d$-vector or (if it is a root) a $d$-root. A
$2$-vector or a $-2$-vector is always a root.

If $v\in L^\vee$ and $(v,v)<0$, the divisor $\cD_v(L)$ is called a
\emph{reflective divisor} if $\sigma_v\in \Orth(L)$.  It was proved in
\cite[Corollary 2.13]{GHSK3} that for $n\ge 6$ the branch divisor of
the modular projection
\begin{equation*}
\pi_\Gamma\colon  \cD_L\to \Gamma\setminus \cD_L
\end{equation*}
is the union of the reflective divisors with respect to $\Gamma$:
\begin{equation}\label{br-div}
\Bdiv(\pi_\Gamma)=\bigcup_{\ZZ{r}\subset L,\ \sigma_r\in \Gamma\cup -\Gamma} \cD_r(L).
\end{equation}
Note that here we have to allow $r$ such that $-\sigma_r\in \Gamma$ as
well as those with $\sigma_r\in\Gamma$: compare
Remark~\ref{rem:defreflective} below, concerning modular forms.

\subsection{Low weight cusp form trick}\label{subsect:lowwttrick}

The next theorem, proved in \cite[Theorem 1.1]{GHSK3}, is called the
\emph{low weight cusp form trick}.  It plays a crucial role in the
application of modular forms to moduli problems. If $F$ is a modular
form (of any weight or character) then the \emph{divisor} $\div F$ in
$\cD_L$ is given by the equation $F(Z)=0$: this is
well-defined in view of Definition~\ref{def:modfm}.
\begin{theorem}\label{thm:gt}
Let $L$ be an integral lattice of signature $(2,n)$, $n\ge 9$. The
modular variety $\cF_L(\Gamma)$ is of general type if there exists a
non-zero cusp form $F_a\in S_a(\Gamma,\chi)$ of small weight $a<n$
vanishing with order at least $1$ at infinity such that $\div F_a\ge
\Bdiv(\pi_\Gamma)$.
\end{theorem}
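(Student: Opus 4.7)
The plan is to realise pluricanonical forms on a good compactification of $\cF_L(\Gamma)$ as modular forms built from $F_a$, and then to count them by comparison with the growth of spaces of modular forms on the Baily-Borel compactification. First, because $n\ge 9$, Theorem~\ref{thm:csmain} supplies a projective toroidal compactification $\Bar\cF$ of $\cF_L(\Gamma)$ with only canonical singularities and no branch divisors in the boundary. Passing to any desingularisation $\Tilde\cF\to\Bar\cF$ does not change pluricanonical sections, by the definition of canonical singularities, so it suffices to exhibit many sections of $mK_{\Bar\cF}$ for large $m$.

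Second, I would set up the dictionary between modular forms and pluricanonical sections. A form $F\in M_{mn}(\Gamma,\det^m)$ produces a $\Gamma$-invariant section $F(dZ)^m$ of $K_{\cD_L}^{\otimes m}$, where $dZ$ is the volume form of weight $n$ and character $\det$. This section descends to a holomorphic pluricanonical form on $\Bar\cF$ provided two conditions hold. Over the interior, a local computation at a reflection $\sigma_r\in\Gamma\cup(-\Gamma)$ shows that passing to the quotient across its fixed hyperplane absorbs a factor $z_r^m$ from $(dZ)^m$, so extension requires $\div F\ge m\Bdiv(\pi_\Gamma)$, using the description~\eqref{br-div} of the branch divisor. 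At each $0$-dimensional cusp the Siegel coordinates of Section~\ref{subsect:tube} show that $(dZ)^m$ has a pole of order $m$ along each toric boundary divisor, so one needs $F$ to vanish to order $\ge m$ at infinity; the $1$-dimensional cusps cause no additional problem, since the cone $C(F)$ is one-dimensional there and the corresponding boundary pieces lie in the closure of the $0$-dimensional ones.

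Third, I produce many such $F$ from $F_a$. Let $\chi$ be the character of $F_a$, which has finite order by Remark~\ref{rem:propertyT}. Choose $m$ sufficiently divisible that $\det^m\chi^{-m}$ is an admissible character and that a non-zero form of that weight and character exists; for every $G\in M_{m(n-a)}(\Gamma,\det^m\chi^{-m})$ the product $F_a^m G$ then lies in $M_{mn}(\Gamma,\det^m)$, satisfies $\div(F_a^m G)\ge m\Bdiv(\pi_\Gamma)$ by the hypothesis on $F_a$, and vanishes to order $\ge m$ at each cusp because $F_a$ vanishes to order $\ge 1$. So it defines a section of $mK_{\Tilde\cF}$. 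The map $G\mapsto F_a^m G$ is injective because $F_a\not\equiv 0$, so
\begin{equation*}
h^0(\Tilde\cF, mK_{\Tilde\cF})\ge \dim M_{m(n-a)}(\Gamma,\det^m\chi^{-m}).
\end{equation*}
By Baily-Borel, modular forms of weight $k$ are sections of an ample $\QQ$-line bundle on the $n$-dimensional projective variety $\cF_L(\Gamma)^*$, so their dimension grows like $c\cdot k^n$ with $c>0$ for sufficiently divisible $k$. Applying this with $k=m(n-a)>0$ gives $h^0(\Tilde\cF, mK)\gtrsim m^n$, whence $\kappa(\Tilde\cF)=n=\dim\Tilde\cF$ and $\cF_L(\Gamma)$ is of general type.

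The main obstacle is the boundary part of the dictionary in the second step: one must verify, in the Siegel coordinates at each rational $0$-dimensional cusp and for the chosen (subdivided) fan $\Sigma(F)$ of Section~\ref{subsect:toroidal}, that $(dZ)^m$ acquires exactly a pole of order $m$ along each toric boundary divisor, so that vanishing of $F_a$ to first order at infinity is precisely what absorbs it. The hypothesis $n\ge 9$ enters here through Theorem~\ref{thm:csmain}, to ensure that no further discrepancy corrections come from boundary ramification and that the interior singularities impose no extra vanishing condition beyond $\div F\ge m\Bdiv(\pi_\Gamma)$.
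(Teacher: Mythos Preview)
Your proposal is correct and follows essentially the same route as the paper: choose the toroidal compactification from Theorem~\ref{thm:csmain}, translate pluricanonical sections into modular forms of weight $mn$ with character $\det^m$, handle the three obstructions (elliptic via canonical singularities, reflective via $\div F_a\ge\Bdiv(\pi_\Gamma)$, cusp via the vanishing of $F_a$ at infinity), and multiply $F_a^m$ by arbitrary forms of complementary weight.

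Two minor differences are worth noting. First, the paper makes the reflective and elliptic steps concrete by passing to a neat normal subgroup $\Gamma'\lhd\Gamma$ of finite index, so that $\Bar X=\Bar\cF_L(\Gamma')$ is smooth and the action of $G=\Gamma/\Gamma'$ has ramification of order exactly~$2$ along the branch divisor; your direct local computation is correct but the neat-subgroup device packages the extension argument cleanly and lets one invoke \cite[Chap.~IV, Th.~1]{AMRT} for the boundary extension rather than redoing the Siegel-domain pole computation. Second, for the growth of $\dim M_{m(n-a)}$ the paper cites Hirzebruch--Mumford proportionality, whereas you use ampleness of the Baily--Borel line bundle; both give the $k^n$ growth, but you should make sure the character is trivialised (take $m$ divisible by twice the order of $\chi$) so that you are genuinely counting sections of powers of a single ample line bundle on $\cF_L(\Gamma)^*$.
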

\begin{proof}
  We let $\Bar\cF_L(\Gamma)$ be a projective toroidal compactification
  of $\cF_L(\Gamma)$ with canonical singularities and no ramification
  divisors at infinity, which exists by Theorem~\ref{thm:csmain}. We
  take a smooth projective model $Y$ of $\cF_L(\Gamma)$ by taking
  a resolution of singularities of $\Bar\cF_L(\Gamma)$.  We want to
  show the existence of many pluricanonical forms on $Y$.

Suppose that $F_{nk}\in M_{nk}(\Gamma, \det^k)$. By choosing a
$0$-dimensional cusp we may realise $\cD_L$ as a tube domain (see
Section~\ref{subsect:tube} for details) and use this to select a
holomorphic volume element $dZ$.  Then the differential form
$\Omega(F_{nk})=F_{nk}\,(dZ)^k$ is $\Gamma$-invariant and therefore
determines a section of the pluricanonical bundle $kK=kK_{Y}$ away
from the branch locus of $\pi\colon\cD_L\to \cF_L(\Gamma)$ and the
cusps: see \cite[p. 292]{AMRT} (but note that weight $1$ in the sense
of \cite{AMRT} corresponds to weight $n$ in our definition).

In general $\Omega(F_{nk})$ will not extend to a global section of
$kK$.  We distinguish three kinds of obstruction to its doing so.
There are \emph{elliptic obstructions}, arising because of
singularities given by elliptic fixed points of the action of
$\Gamma$; \emph{reflective obstructions}, arising from the
ramification divisors in $\cD_L$; and \emph{cusp obstructions},
arising from divisors at infinity.

In order to deal with these obstructions we consider a neat normal
subgroup $\Gamma'$ of $\Gamma$ of finite index and set $G:= \Gamma /
\Gamma'$.  Let $X:=\cF_L(\Gamma')$ and let $\Bar X :=
\Bar\cF_L(\Gamma')$ be the toroidal compactification of
$\cF_L(\Gamma')$ given by the same choice of fan as for
$\Bar\cF_L(\Gamma)$.  Then $\Bar X$ is a smooth projective manifold
with $\Bar\cF_L(\Gamma)=\Bar X/G$.  Let $D:= \Bar X \setminus X$ be
the boundary divisor of $\Bar X$. For any element $g \in G$ we define
its fixed locus ${\Bar X}^g:= \{x \in \Bar X \mid g(x)=x\}$ and denote
its divisorial part by ${\Bar X}^g_{(1)}$.  Then $R:= \bigcup_{g \neq
  1}{\Bar X}^g_{(1)}$ is the ramification divisor of the map
$\pi\colon \Bar X \to {\Bar X} /G$.

The results of Section~\ref{subsect:modvarsings} (see
Theorem~\ref{thm:csmain} and Theorem~\ref{thm:cstoric} can be
summarised as follows:
\begin{itemize}
\item[(\rm{i})] $R$ does not contain a component of $D$;
\item[(\rm{ii})] the ramification index of $\pi\colon \Bar X \to \Bar
  X/G$ along $R$ is $2$;
\item[(\rm{iii})] $\Bar X/G$ has canonical singularities.
\end{itemize}
We will now apply the low-weight cusp form trick, used for example in
\cite{F1} (for Siegel modular forms), \cite{G2}, \cite{GH1} and
\cite{GS}.  The main point is to use special cusp forms.  For this let
the order of $\chi$ be $N$ and assume that $k$ is a multiple of
$2N$. Then we consider forms $F^0_{nk}\in
S_{nk}(\Gamma,\det^k)=S_{nk}(\Gamma,1)$ of the form
\begin{equation*}
F^0_{nk}=F_a^k F_{(n-a)k}
\end{equation*}
where $F_{(n-a)k}\in M_{(n-a)k}(\Gamma,1)$ is a modular form of weight
$(n-a)k\ge k$. We claim that the corresponding forms
$\Omega(F^0_{nk})$ give rise to pluricanonical forms on $Y$. To see
this, we deal with the three kinds of obstruction in turn.

\noindent{\bf Cusp obstructions.} By definition, $\Omega(F^0_{nk})$ is
a $G$-invariant holomorphic section of $kK_X$.  Since $F_a$ is a cusp
form of weight $a < n$, the form $F^0_{nk}$ has zeroes of order $k$
along the boundary $D$ and hence extends to a $G$-invariant
holomorphic section of $kK_{\Bar X}$ by \cite[Chap. IV, Th. 1]{AMRT}.

\noindent{\bf Reflective obstructions.} Since $R \subset \div(F_a)$ by
assumption, $\Omega(F^0_{nk})$ has zeroes of order $k$ on $R \setminus
D$. By $(\rm{i})$ above, $\Omega(F^0_{nk})$ actually has zeroes of
order $k$ along all of $R$. By $(\rm{ii})$ the form $\Omega(F^0_{nk})$
descends to a holomorphic section of $kK_{{(\Bar X /G)}_{\reg}}$ where
${(\Bar X/G)}_{\reg}$ is the regular part of ${\Bar X/G}$.

\noindent{\bf Elliptic obstructions.} By $(\rm{iii})$ the form
$\Omega(F^0_{nk})$ extends to a holomorphic section of $kK_Y$.

Therefore $F_a^k M_{(n-a)k}(\Gamma,1)$ is a subspace of
$H^0(Y,kK_Y)$. The theorem now follows because according to
Hirzebruch-Mumford proportionality (see \cite{Mum}), $\dim
M_{(n-a)k}(\Gamma,1)$ grows like $k^n$.
\end{proof}

\begin{remark}\label{rem:hm}
There is another way to deal with the reflective obstructions, which
works even if a cusp form with the right properties cannot be
found. Among forms of very high weight there must be some that vanish
along the reflective divisors, because $\dim M_k(\Gamma,1)$ grows
faster with $k$ than the space of obstructions, which are sections in
some bundles on the reflective divisors. In \cite{GHSprop} we estimate
these dimensions using Hirzebruch-Mumford proportionality. This method
can be used to produce general type results even in cases where
special forms constructed by quasi pull-back are not available, but
if the quasi pull-back method is available it normally produces much
stronger results.
\end{remark}

\subsection{Reflective modular forms}\label{subsect:reflectivemodfm}

For Theorem~\ref{thm:gt} we used cusp forms of low weight ($k<n$)
with large divisor ($\div F  \ge  \Bdiv(\pi_\Gamma)$).
We construct such modular forms for the moduli spaces of polarised
$\Kthree$ surfaces and other holomorphic symplectic varieties in
Section~\ref{sect:quasipb}.  Modular forms of high weight ($k\ge n$) with
small divisor ($\div F \le \Bdiv(\pi_\Gamma)$) also have applications
to the theory of moduli spaces such as Theorem~\ref{thm:kdim} below.

\begin{definition}\label{def:reflectivefm}
A modular form $F\in M_k(\Gamma,\chi)$ is called \emph{reflective} if
\begin{equation*}
\supp (\div F) \subset \bigcup_{\ZZ{r}\subset L,\ \sigma_r\in \Gamma\cup-\Gamma} \cD_r(L)
=\Bdiv(\pi_\Gamma).
\end{equation*}
We call $F$ \emph{strongly reflective} if the multiplicity of any
irreducible component of $\div F$ is equal to one.
\end{definition}

\begin{remark}\label{rem:defreflective}
In the definition of reflective modular forms given in \cite{GN2} only
the condition $\sigma_r\in \Gamma$ was considered.  The present
definition, allowing $-\sigma_r\in\Gamma$, is explained by
equation~\eqref{br-div}.
\end{remark}

\begin{example}\label{ex:phi12}
The most famous example of a strongly reflective modular form is the
\emph{Borcherds modular form} $\Phi_{12}\in M_{12}(\Orth^+(II_{2,26}),
\det)$ (see \cite{B1}).  This is the unique modular form of singular
weight $12$ with character $\det$ with respect to the orthogonal group
$\Orth^+(II_{2,26})$ of the even unimodular lattice $II_{2,26}\cong
2U\oplus 3E_8(-1)$ of signature $(2,26)$. The form $\Phi_{12}$ is the
Kac-Weyl-Borcherds denominator function of the Fake Monster Lie
algebra.  For any $(-2)$-vector $r\in II_{2,26}$ we have
\begin{equation*}
\Phi_{12}(\sigma_r(Z))=\det (\sigma_r) \Phi_{12}(Z)=-\Phi_{12}(Z).
\end{equation*}
Therefore $\Phi_{12}$ vanishes along $\cD_r(II_{2,26})$.  According to
\cite{B1} the order of vanishing is~$1$ and the full divisor of this
modular form is the union of the mirrors of such reflections:
\begin{equation*}
\div_{\cD(II_{2,26})} \Phi_{12} = \sum_{\substack{\pm r\in II_{2,26} \vspace{1\jot}\\
r^2=-2}} \cD_r(II_{2,26}).
\end{equation*}
\end{example}

According to Eichler's criterion (see Lemma~\ref{lem:eichler}) all
$(-2)$-vectors
of $II_{2,26}$ constitute one
$\Tilde{\Orth}^+(II_{2,26})$-orbit. In other words, the ramification
divisor of the $26$-dimensional modular variety
$\cF_{II_{2,26}}(\Tilde{\Orth}^+(II_{2,26}))$ is irreducible.

\begin{remark}\label{rem:canonicalweight}
 Modular forms of weight $n$ (known as canonical weight) have special properties.
Suppose $L$ has signature $(2,n)$ and $F\in M_n(\Gamma,\det)$.  If
$\sigma_r\in \Gamma$, then $F(\sigma_r(Z))=-F(Z)$. Hence $F$ vanishes
along $\cD_r(L)$.  If $-\sigma_r\in \Gamma$, then
\begin{equation*}
(-1)^nF(\sigma_r(Z))=F((-\sigma_r)(Z))=\det (-\sigma_r)F(Z)=(-1)^{n+1}F(Z)
\end{equation*}
and $F$ also vanishes along $\cD_r(L)$.  Therefore any
$\Gamma$-modular form of canonical weight with character $\det$
vanishes along $\Bdiv(\pi_\Gamma)$.

If $S_n(\Gamma,\det)\ne 0$ then the Kodaira dimension of
$\Bar{\cF}_L(\Gamma)$ is non-negative (with no restriction needed on
the dimension~$n$), because if $F_n\in S_n(\Gamma, \det)$ then by
Freitag's lemma $F_n(Z) dZ$ defines an element of $H^0(\Bar
\cF_L(\Gamma),K_{\Bar \cF_L(\Gamma)})$.  Therefore $p_g(\Bar
\cF_L(\Gamma))\ge 1$ and the plurigenera do not all vanish.
\end{remark}

The next theorem was proved in \cite{G4} and contrasts with
Theorem~\ref{thm:gt}.
\begin{theorem}\label{thm:kdim}
Suppose that $L$ has signature $(2,n)$, with $n\ge 3$. Let $F_k\in
M_k(\Gamma, \chi)$ be a strongly reflective modular form of weight $k$
and character $\chi$ for a subgroup $\Gamma<\Orth^+(L)$ of finite
index. Then
\begin{equation}\label{kappanegative}
\kappa(\cF_L(\Gamma))=-\infty
\end{equation}
if $k>n$, or if $k$ and $F_k$ is not a cusp form.  If $k$ and
$F_n$ is a cusp form whose order of zero at infinity is at least $1$
then
\begin{equation}\label{kappazero}
\kappa(\Gamma_\chi\backslash \cD_L)=0,
\end{equation}
where $\Gamma_\chi=\ker(\chi\cdot \det)$ is a subgroup of $\Gamma$.
\end{theorem}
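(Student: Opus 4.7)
The plan is to exploit the correspondence, implicit in the proof of Theorem~\ref{thm:gt}, between pluricanonical $m$-forms on a smooth model of $\cF_L(\Gamma)$ and modular forms $G_m\in M_{mn}(\Gamma,\det^m)$ satisfying certain vanishing conditions, and then to divide by powers of the strongly reflective form $F_k$.

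\textbf{Setup.} I would first fix a projective toroidal compactification $\Bar\cF_L(\Gamma)$ with canonical singularities (Theorem~\ref{thm:csmain}) and a desingularisation $Y\to\Bar\cF_L(\Gamma)$. Mimicking the proof of Theorem~\ref{thm:gt}, sections of $mK_Y$ correspond to holomorphic functions $G_m$ on $\cD_L^\bullet$ such that $G_m\in M_{mn}(\Gamma,\det^m)$, that $G_m$ vanishes to order $\ge m$ on every irreducible component of $\Bdiv(\pi_\Gamma)$ (because the ramification has index~$2$), and that $G_m$ vanishes to order $\ge m$ at the toroidal boundary (cusp extension condition). Since $F_k$ is strongly reflective, $\supp\div F_k \subset \Bdiv(\pi_\Gamma)$ with multiplicity~$1$ on each component, so $F_k^m$ divides $G_m$ and
\begin{equation*}
G_m = F_k^m\cdot H_m, \qquad H_m \in M_{m(n-k)}\bigl(\Gamma,\det^m\chi^{-m}\bigr).
\end{equation*}

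\textbf{Part (a).} If $k>n$ then $H_m$ has negative weight, hence $H_m\equiv 0$ because $M_j(\Gamma,\psi)=0$ for $j<0$ (Section~\ref{subsect:orthmodfms}); so $G_m=0$ for every $m\ge 1$ and $\kappa(\cF_L(\Gamma))=-\infty$. If $k=n$, then $H_m$ has weight~$0$ and character $\det^m\chi^{-m}$ of finite order $N$ (Remark~\ref{rem:propertyT}). The $N$-th power $H_m^N$ lies in $M_0(\Gamma,1)=\CC$, so by connectedness of $\cD_L$ the form $H_m$ itself is a constant $c_m$. If the character $\det^m\chi^{-m}$ is non-trivial then $c_m=0$; otherwise $G_m=c_m F_k^m$, and since $F_k$ is not a cusp form by hypothesis, the cusp condition on $G_m$ forces $c_m=0$ as well. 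In either sub-case $G_m=0$ for every $m$, and $\kappa(\cF_L(\Gamma))=-\infty$.

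\textbf{Part (b).} Now $k=n$ and $F_n$ is a cusp form vanishing to order $\ge 1$ at every cusp. For $\gamma\in\Gamma_\chi=\ker(\chi\det)$ the form $F_n^m(dZ)^m$ transforms by $(\chi\det)^m(\gamma)=1$ and so is $\Gamma_\chi$-invariant. For every reflection with $\pm\sigma_r\in\Gamma_\chi$, separating the cases $\sigma_r\in\Gamma$ and $-\sigma_r\in\Gamma$ and using $\det(\sigma_r)=-1$ together with the parity $k=n$, a short local computation on the tangent space at $\cD_r$ shows that $F_n$ must vanish simply on $\cD_r$; conversely, any component $\cD_r\subset\supp\div F_n$ (simple by strong reflectivity) gives $\chi(\pm\sigma_r)\det(\pm\sigma_r)=1$ for one choice of sign, hence $\pm\sigma_r\in\Gamma_\chi$. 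This yields the key divisor equality
\begin{equation*}
\div F_n = \Bdiv(\pi_{\Gamma_\chi}),
\end{equation*}
with multiplicity~$1$ on every component. Therefore $F_n^m(dZ)^m$ satisfies the descent conditions for $\Gamma_\chi$ (reflective obstructions by strong reflectivity, cusp obstructions by the boundary vanishing, elliptic obstructions by canonical singularities) and defines a non-zero pluricanonical $m$-form on a smooth model $Y_\chi$ of $\Gamma_\chi\backslash\cD_L$ for every $m\ge 1$, so $\kappa\ge 0$. Conversely, the same divisibility argument carried out for $\Gamma_\chi$ writes any candidate $G_m$ as $F_n^m H_m$, and now $H_m\in M_0(\Gamma_\chi,1)=\CC$ because $\chi=\det$ on $\Gamma_\chi$ (using $\det^2=1$); hence $h^0(Y_\chi,mK_{Y_\chi})=1$ for every $m$ and $\kappa(\Gamma_\chi\backslash\cD_L)=0$.

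\textbf{Main obstacle.} The delicate step is the divisor identity $\div F_n = \Bdiv(\pi_{\Gamma_\chi})$ in part~(b), on which the upper bound $\kappa\le 0$ depends. It requires a careful character-theoretic analysis at each reflective hyperplane, treating $\sigma_r\in\Gamma$ and $-\sigma_r\in\Gamma$ separately via the parity of the weight; without this equality one only obtains divisibility on a sub-collection of the branch divisors of $\Gamma_\chi$, and cannot conclude that the quotient $G_m/F_n^m$ has trivial character on $\Gamma_\chi$.
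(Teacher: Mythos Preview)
Your overall approach matches the paper's: convert pluricanonical sections into modular forms $G_m\in M_{mn}(\Gamma,\det^m)$, observe they must vanish to order $\ge m$ along $\Bdiv(\pi_\Gamma)$, divide by $F_k^m$, and apply Koecher's principle to the quotient of weight $m(n-k)\le 0$. Part~(a) is essentially identical to the paper's argument.

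There is one genuine gap in your setup. You begin by invoking Theorem~\ref{thm:csmain} to obtain a toroidal compactification with canonical singularities, and in part~(b) you use this (``elliptic obstructions by canonical singularities'') to show that $F_n^m(dZ)^m$ descends to a pluricanonical form on a smooth model of $\Gamma_\chi\backslash\cD_L$. But Theorem~\ref{thm:csmain} requires $n\ge 9$, whereas the present theorem is stated for $n\ge 3$. The paper avoids this by using \emph{Freitag's lemma} (see the discussion around~\eqref{geom-genus} and Remark~\ref{rem:canonicalweight}): for any $n$, a cusp form in $S_n(\Gamma_\chi,\det)$ already defines a global section of $K_{\Tilde\cF_L(\Gamma_\chi)}$ on every smooth compact model, with no appeal to the structure of the singularities. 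Since $\chi|_{\Gamma_\chi}=\det$, one has $F_n\in S_n(\Gamma_\chi,\det)$, and Freitag's lemma gives $p_g\ge 1$ and hence $\kappa\ge 0$ directly. For the upper bound $\kappa\le 0$ you only need the \emph{necessary} direction (a section of $mK$ forces vanishing of order $\ge m$ along the branch divisor), which holds regardless of the singularity analysis; then $G_m/F_n^m$ has weight $0$ and is constant by Koecher.

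On the other hand, your discussion under ``Main obstacle'' is more explicit than the paper's. The paper simply asserts that Koecher's principle yields $G_m=c\,F_n^m$, which tacitly uses that $F_n$ remains strongly reflective for $\Gamma_\chi$, i.e.\ $\supp\div F_n\subset\Bdiv(\pi_{\Gamma_\chi})$. Your local computation on a reflective hyperplane---exploiting that the multiplicity of $\cD_r$ in $\div F_n$ is exactly~$1$ to force $\chi(\pm\sigma_r)=-\det(\pm\sigma_r)$---is precisely what justifies this inclusion and is a useful clarification. Note, however, that you do not need the full equality $\div F_n=\Bdiv(\pi_{\Gamma_\chi})$: the inclusion $\supseteq$ is automatic for forms of canonical weight and character $\det$ (Remark~\ref{rem:canonicalweight}), and only $\subseteq$ is needed for the divisibility argument.
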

\begin{proof}
The requirement that the cusp form should have order of vanishing at
least~$1$ is almost always satisfied: see~\cite{GHScomm}.

  To prove \eqref{kappanegative} we have to show that there are no
  pluricanonical differential forms on $\Bar{\cF}_L(\Gamma)$.  Any
  such differential form can be obtained using a modular form (see the
  proof of Theorem~\ref{thm:gt}).  The differential form
  $F_{nm}\,(dZ)^m$ is $\Gamma$-invariant and it determines a section
  of the pluricanonical bundle $mK$ over a smooth open part of the
  modular variety away from the branch locus of $\pi\colon\cD_L\to
  \cF_L(\Gamma)$ and the cusps.  In the proof of Theorem~\ref{thm:gt}
  we indicated three kinds of obstruction to extending
  $F_{nm}\,(dZ)^m$ to a global section of $mK$. In the proof of this
  theorem we use the reflective obstruction, arising from the
  ramification divisor in $\cD_L$ by $\pm$ reflections in $\Gamma$
  (see Equation~\eqref{br-div}).  Therefore if $F_{nm}$ determines a
  global section then $F_{nm}$ has zeroes of order at least $m$ on
  $\Bdiv(\pi_\Gamma)$.  The modular form $F_k\in M_k(\Gamma, \chi)$ is
  strongly reflective of weight $k\ge n$. Hence $F_{nm}/F_k^m$ is a
  holomorphic modular form of weight $m(n-k)\le 0$.  According to
  Koecher's principle ($n\ge 3$) this function is constant.  Therefore
  $F_{nm}\equiv 0$ if $k>n$ or $F_{nm}=C\cdot F_n^m$ if $k$.  If the
  strongly reflective form $F_n$ is non-cuspidal of weight $n$, then
  $F_n^m\,(dZ)^{\otimes m}$ cannot be extended to the compact model
  because of cusp obstructions ($F_n^m$ should have zeroes of order at
  least $m$ along the boundary).  If $F_n$ is a cusp form of weight
  $k$ then we can consider $F_n$ as a cusp form with respect to the
  subgroup $\Gamma_\chi$.

  Then $F_{n}(Z)\,dZ$ is $\Gamma_\chi$-invariant and, according to
  Freitag's lemma, it can be extended to a global section of the
  canonical bundle $\Omega_{\Bar{\cF}_L(\Gamma_\chi)}$ for any smooth
  compact model $\Bar{\cF}_L(\Gamma_\chi)$ of ${\cF}_L(\Gamma_\chi)$.
  Moreover Koecher's principle shows that any $m$-pluricanonical form
  is equal, up to a constant, to $F_n^m(dZ)^{\otimes m}$,
  proving~\eqref{kappazero}.  The strongly reflective cusp form of
  canonical weight determines essentially the unique
  $m$-pluricanonical differential form on $\cF_L(\Gamma_\chi)$.
\end{proof}
We can apply Theorem~\ref{thm:kdim} to find examples of moduli spaces
of lattice-polarised $\Kthree$ surfaces having $\kappa=-\infty$ and
$\kappa=0$: see~\cite{G4}.

\section{Orthogonal groups and reflections}\label{sect:groups}

The material in this and subsequent sections is not so easily found in
the literature, so from here on we shall give slightly more detail.

For applications, the most important subgroups of $\Orth(L)$ are the
stable orthogonal groups $\Tilde\Orth(L)$, $\Tilde\Orth^+(L)$ and
$\Tilde{\SO}^+(L)$, as defined in Equations~\eqref{definestablegroup}
and~\eqref{defineOtilde+}.
The reason for using the word ``stable'' to describe $\Tilde\Orth(L)$
is the following property.
\begin{lemma}\label{lem:stable}
For any sublattice $S$ of a lattice $L$ the group  $\Tilde\Orth(S)$
can be considered as a subgroup of $\Tilde\Orth(L)$.
\end{lemma}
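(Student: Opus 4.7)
The plan is to use Nikulin's gluing description of overlattices and extend an element of $\Tilde\Orth(S)$ by the identity on the orthogonal complement, then verify that the result lies in $\Tilde\Orth(L)$. Without loss of generality we may assume $S$ is a primitive non-degenerate sublattice; otherwise we replace $S$ by its primitive closure in $L$, noting that $\Tilde\Orth(S)$ embeds into the stable orthogonal group of the closure by the same argument applied to the inclusion $S\hookrightarrow \Bar S$, and then compose.

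Set $T=S^\perp_L$. Then $S\oplus T$ is a finite-index sublattice of $L$, and the inclusion corresponds to an isotropic subgroup
\begin{equation*}
H=L/(S\oplus T)\subset D(S\oplus T)=D(S)\oplus D(T)
\end{equation*}
with respect to the discriminant form $q_S\oplus q_T$; moreover $L^\vee/(S\oplus T)=H^{\perp}$ and the natural map identifies $D(L)\cong H^{\perp}/H$ (see \cite[Proposition~1.4.1]{Nik2}). The key observation is that the projections $\pi_S\colon H\to D(S)$ and $\pi_T\colon H\to D(T)$ are \emph{injective}: this is the primitivity of $S$ (and $T$) in $L$.

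Given $g\in\Tilde\Orth(S)$, first consider $\Tilde g=g\oplus \id_T\in\Orth(S\oplus T)$. By definition $g$ acts trivially on $D(S)$, and the identity acts trivially on $D(T)$, so the induced action of $\Tilde g$ on $D(S)\oplus D(T)$ is trivial. In particular $\Tilde g$ fixes the isotropic subgroup $H$ pointwise, hence preserves the overlattice $L\subset S^\vee\oplus T^\vee$; so $\Tilde g$ restricts to an isometry of $L$, which we still denote $\Tilde g$. Since $\Tilde g$ acts trivially on all of $D(S\oplus T)$ it acts trivially on the subquotient $D(L)=H^{\perp}/H$, so $\Tilde g\in\Tilde\Orth(L)$. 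The map $g\mapsto \Tilde g$ is clearly a group homomorphism $\Tilde\Orth(S)\to\Tilde\Orth(L)$, and it is injective because $\Tilde g|_S=g$.

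The mildly subtle point is the injectivity of $\pi_S$ and $\pi_T$, which needs the primitivity of $S$ in $L$: if $x\in H$ lies in the kernel of $\pi_S$, then a representative $(s,t)\in S^\vee\oplus T^\vee$ of $x$ can be chosen with $s\in S$, whence $(s,t)\in L$ forces $t\in L\cap T^\vee$; but primitivity of $T$ gives $t\in T$, so $x=0$. This is the only place where any care is needed, and it is precisely why we reduced to the primitive case at the start.
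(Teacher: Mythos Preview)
Your proof is correct and follows essentially the same approach as the paper: extend $g\in\Tilde\Orth(S)$ by the identity on $S^\perp$ and check that the extension lies in $\Tilde\Orth(L)$. The paper's version is a bit more direct---it simply notes that since $g\oplus\id$ acts trivially on $D(S\oplus S^\perp)$, one has $g(l^\vee)\in l^\vee+(S\oplus S^\perp)\subset l^\vee+L$ for every $l^\vee\in L^\vee$, which simultaneously gives preservation of $L$ and triviality on $D(L)$; in particular no reduction to the primitive case and no appeal to Nikulin's identification $D(L)\cong H^\perp/H$ is needed, and your final paragraph on the injectivity of the projections $\pi_S,\pi_T$ is in fact never used in your own argument.
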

\begin{proof}
Let $S^\perp$ be the orthogonal complement of $S$ in $L$.
We have
\begin{equation}\label{diag-orthcoml}
S\oplus S^\perp \subset L\subset L^{\vee}\subset S^{\vee}\oplus (S^\perp)^{\vee}
\end{equation}
where $S\oplus S^\perp$ is a sublattice of finite index in $L$.
We can extend $g\in \Tilde\Orth(S)$ on $S\oplus S^\perp$
putting $g|_{S^\perp}\equiv \hbox{id}$. It is clear that
$g\in \Tilde\Orth(S\oplus S^\perp)$. We consider
$g\in \Tilde\Orth(S\oplus S^\perp)$ as an element of
$\Orth(S^{\vee}\oplus (S^\perp)^\vee)$.
For any $l^\vee\in L^{\vee}$ we have $g(l^\vee)\in l^\vee+(S\oplus S^\perp)$.
In particular,
$g(l)\in L$ for any $l\in L$ and $g\in \Tilde\Orth(L)$.
\end{proof}

Let $S$ be a primitive sublattice of $L$.
We define the groups
\[\Orth(L,S)=\{ g\in \Orth(L)\mid g|_S\in \Tilde\Orth(S)\}
\quad\text{and}\quad \Tilde\Orth(L,S)=\Orth(L,S)\cap \Tilde\Orth(L).
\]
Note that $\Orth(L,\ZZ h)=\Orth(L, h)$ if $h^2\ne \pm 2$.  The
technique of discriminant forms developed by Nikulin in \cite{Nik2}
is very useful here, and we describe the main ideas behind it below.
For simplicity we assume that all the lattices we consider are even.

Let $S^{\perp}$ be the orthogonal complement of a primitive
nondegenerate sublattice $S$ in $L$. As in the proof of
Lemma~\ref{lem:stable} we have the inclusions~\eqref{diag-orthcoml}.
The overlattice $L$ is defined by the finite subgroup
\begin{equation}\label{diag-H}
H=L/(S^{\perp}\oplus S)<(S^\perp)^\vee/S^\perp \oplus S^\vee/S=
D(S^\perp)\oplus D(S)
\end{equation}
which is an isotropic subgroup of $D(S^\perp)\oplus D(S)$.
Moreover   $L/(S\oplus S^\perp)\cong L^\vee/(S^\vee\oplus (S^\perp)^\vee)$.
We define  $\phi\colon L\to S^\vee$ by $\phi(l)(s)=(l,s)$.
Then $\ker(\phi)=S^\perp$.
Since $L/(S\oplus S^\perp)\cong \phi(L)/S$ we obtain
\begin{equation*}
|L/(S\oplus S^\perp)|= |\phi(L)/S|=|\det S|/[S^\vee:\phi(L)],
\end{equation*}
as $|\det S|=[S^\vee:S]$.  From the inclusions above
\begin{equation*}
|\det S|\cdot|\det S^\perp|=(|\det L|)[\phi(L):S]^2
=|\det L|\cdot|\det S|^2/[S^\vee:\phi(L)]^2.
\end{equation*}

In the particular case $S=\ZZ h$ and $L_h=h^\perp_L$
we have $[S^\vee:\phi(L)]=\divv(h)$, where
\begin{equation}\label{def:divv}
\divv(h)\ZZ=(h,L).
\end{equation}
The positive number $\divv(h)$ is called the \emph{divisor} of $h$ in
$L$. We have now proved the following lemma.

\begin{lemma}\label{lem:hperp}
Let $L$ be any nondegenerate even integral lattice and let $h\in L$
be a primitive vector with $h^2=2d\ne 0$. If $L_h$ is the orthogonal
complement of $h$ in $L$ then
\begin{equation*}
|\det L_h|=\frac {|(2d)\cdot \det L|}{\div (h)^2}.
\end{equation*}
\end{lemma}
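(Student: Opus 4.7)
The plan is to apply the general discriminant-index computation carried out just before the statement of the lemma, specialised to the rank-one sublattice $S=\ZZ h$. Since $h$ is primitive and $h^2=2d\ne 0$, the sublattice $S$ is primitive and nondegenerate in $L$, so the orthogonal complement $L_h=S^\perp$ fits into the chain
\begin{equation*}
S\oplus L_h\subset L\subset L^\vee\subset S^\vee\oplus L_h^\vee.
\end{equation*}
Thus the preceding general setup applies verbatim.

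First I would record the two pieces of data needed: $|\det S|=|h^2|=2d$ (taking absolute values throughout, since the sign depends on the signature), and, by the very definition~\eqref{def:divv} of the divisor, $[S^\vee:\phi(L)]=\div(h)$, where $\phi\colon L\to S^\vee$, $\phi(l)=(l,-)$, has kernel $L_h$. Next I would quote the identity already established in the excerpt,
\begin{equation*}
|\det S|\cdot |\det L_h|\;=\;|\det L|\cdot\frac{|\det S|^2}{[S^\vee:\phi(L)]^2},
\end{equation*}
which follows by comparing the indices in the two chains above (using $|\det L|=[L^\vee:L]$ and $L/(S\oplus L_h)\imic L^\vee/(S^\vee\oplus L_h^\vee)$).

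Substituting $|\det S|=2d$ and $[S^\vee:\phi(L)]=\div(h)$ and cancelling one factor of $2d$ yields
\begin{equation*}
|\det L_h|=\frac{|2d\cdot \det L|}{\div(h)^2},
\end{equation*}
which is the claim. There is really no obstacle here: the statement is a direct specialisation of the discriminant bookkeeping already performed for a general primitive sublattice. The only minor point deserving explicit mention in the write-up is that $\div(h)^2$ must indeed divide $2d\cdot|\det L|$, which is automatic since both sides of the formula are positive integers, and that absolute values are needed in the indefinite case.
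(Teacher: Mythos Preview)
Your proposal is correct and follows exactly the paper's own approach: the lemma is stated immediately after the general discriminant computation for a primitive sublattice $S\subset L$, and the paper's proof consists precisely of specialising that computation to $S=\ZZ h$, noting $[S^\vee:\phi(L)]=\div(h)$ and $|\det S|=|2d|$. There is nothing to add.
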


We come back to the inclusion~\eqref{diag-H}.
Following \cite{Nik2} we consider the projections
\begin{alignat*}{2}
p_S\colon H \to D(S),\qquad & p_{S^\perp}\colon H\to D(S^\perp).
\end{alignat*}
Using the definitions and the fact that the lattices $S$ and $S^\perp$
are primitive in $L$ one can show (see \cite[Prop. 1.5.1]{Nik2}) that
these projections are injective and moreover that if $d_S\in p_S(H)$
then there is a unique $d_{S^\perp}\in p_{S^\perp}(H)$ such that
$d_S+d_{S^\perp}\in H$.  Using these arguments one proves the next
lemma (see \cite[Lemma 3.2]{GHSsymp})

\begin{lemma}\label{lem:O(L,S)}
Let $S$ be a primitive sublattice of an even lattice $L$ and denote by
$\bar g$ the image in $\Orth(D(L))$ of $g\in \Orth(L)$.
\begin{itemize}
\item[{(i)}] $g\in \Orth(L,S)$ if and only if $g(S)=S$, $\bar
  g|_{D(S)}=\id$ and $\bar g|_{p_{S^\perp}(H)}=\id$.
\item[{(ii)}] $\alpha\in \Orth(S^\perp)$ can be extended to
  $\Orth(L,S)$ if and only if $\bar\alpha|_{p_{S^\perp}(H)}=\id$.
\item[{(iii)}] If $p_{S^\perp}(H)=D(S^\perp)$ then
  $\Orth(L,S)|_{S^\perp}\cong \Tilde\Orth(S^\perp)$.
\item[{(iv)}] Assume that the projection $\Orth(S^\perp)\to
  \Orth(D(S^\perp))$ is surjective. Then
\begin{equation*}
\Orth(L,S)|_{S^\perp}/\Tilde\Orth(S^\perp)\cong
\{\bar\gamma \in \Orth(D(S^{\perp}))\mid \bar\gamma|_{p_{S^\perp}(H)}=\id\}.
\end{equation*}
\end{itemize}
\end{lemma}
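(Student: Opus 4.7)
The plan is to exploit the overlattice description from the paragraph preceding the statement: $L$ sits between $S \oplus S^\perp$ and $S^\vee \oplus (S^\perp)^\vee$, and is completely determined by the isotropic subgroup $H = L/(S \oplus S^\perp) \subset D(S) \oplus D(S^\perp)$. The crucial structural fact, already recalled above, is that both projections $p_S$ and $p_{S^\perp}$ are injective, so $H$ is the graph of a bijection $\gamma \colon p_S(H) \isoto p_{S^\perp}(H)$. Any $g \in \Orth(L)$ preserving $S$ also preserves $S^\perp$ and induces the automorphism $(\bar g|_{D(S)}, \bar g|_{D(S^\perp)})$ on $D(S) \oplus D(S^\perp)$, which must send $H$ to $H$.

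First I would prove (i). If $g \in \Orth(L,S)$, then $g(S)=S$ and $\bar g|_{D(S)} = \id$ hold by the definition of $\Tilde\Orth(S)$. Since $g$ preserves $H$ and acts on $D(S) \oplus D(S^\perp)$ as $(\id, \bar g|_{D(S^\perp)})$, the graph property of $H$ forces $\bar g|_{p_{S^\perp}(H)} = \id$. Conversely, the three stated conditions imply $g|_S \in \Tilde\Orth(S)$ immediately; the preservation of $H$ (by reversing the same argument, using the graph description) together with $g(S \oplus S^\perp) = S \oplus S^\perp$ shows $g(L) = L$.

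For (ii), given $\alpha \in \Orth(S^\perp)$, I would define $\tilde\alpha := \id_S \oplus \alpha$ on the ambient $S^\vee \oplus (S^\perp)^\vee$. Its induced action on $D(S) \oplus D(S^\perp)$ is $(\id, \bar\alpha)$, and this preserves $H$ precisely when $\bar\alpha$ fixes every element of $p_{S^\perp}(H)$, again by the graph description. Thus $\tilde\alpha|_L$ lies in $\Orth(L,S)$ iff $\bar\alpha|_{p_{S^\perp}(H)} = \id$; conversely any extension of $\alpha$ to $\Orth(L,S)$ must satisfy this by (i) applied to the extension.

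Parts (iii) and (iv) then drop out of (ii). For (iii), the hypothesis $p_{S^\perp}(H) = D(S^\perp)$ turns the condition in (ii) into precisely $\bar\alpha = \id$ on $D(S^\perp)$, i.e.\ $\alpha \in \Tilde\Orth(S^\perp)$, and conversely any $g\in\Orth(L,S)$ restricts to such an $\alpha$ by (i). For (iv), surjectivity of $\Orth(S^\perp) \to \Orth(D(S^\perp))$ lifts each $\bar\gamma$ fixing $p_{S^\perp}(H)$ pointwise to some $\alpha \in \Orth(S^\perp)$ that extends to $\Orth(L,S)$ by (ii), while the kernel of the restriction $\Orth(L,S)|_{S^\perp} \to \Orth(D(S^\perp))$ is $\Tilde\Orth(S^\perp)$ by definition. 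The main obstacle is purely notational bookkeeping — once the graph description of $H$ is internalised, each part is a routine verification.
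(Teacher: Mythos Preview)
Your argument is correct and is exactly the approach the paper has in mind: it does not write out a proof but says ``Using these arguments one proves the next lemma (see \cite[Lemma 3.2]{GHSsymp})'', where ``these arguments'' are precisely the overlattice description and the graph property of $H$ (injectivity of $p_S$ and $p_{S^\perp}$) that you invoke. One small redundancy: in the converse direction of (i) you argue that preservation of $H$ yields $g(L)=L$, but $g\in\Orth(L)$ is already part of the hypothesis, so $g(L)=L$ holds automatically and only $g(S)=S$ together with $\bar g|_{D(S)}=\id$ is needed there; the third condition is a consequence rather than an input.
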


\begin{corollary}\label{cor:O(L,S)}
If $|H|=|\det S^\perp|$ then $\Orth(L,S)|_{S^\perp}\cong
  \Tilde\Orth(S^\perp)$.
\end{corollary}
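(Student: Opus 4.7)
The plan is to deduce this directly from part (iii) of Lemma~\ref{lem:O(L,S)}, so the entire task is to show that the hypothesis $|H|=|\det S^\perp|$ forces $p_{S^\perp}(H)=D(S^\perp)$.

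First I would recall the injectivity statement recorded in the paragraph preceding Lemma~\ref{lem:O(L,S)}: by Nikulin's result \cite[Prop.~1.5.1]{Nik2}, the projection $p_{S^\perp}\colon H\to D(S^\perp)$ is injective (this is the essential input, and it comes free from the primitivity of $S^\perp$ in $L$). Next I would simply compare orders: $|D(S^\perp)|=|\det S^\perp|$ by the very definition of the discriminant group, and the assumption gives $|H|=|\det S^\perp|$. Thus $p_{S^\perp}$ is an injection between finite sets of the same cardinality, hence a bijection, so $p_{S^\perp}(H)=D(S^\perp)$.

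Having established this, I would invoke Lemma~\ref{lem:O(L,S)}(iii) verbatim to conclude $\Orth(L,S)|_{S^\perp}\cong \Tilde\Orth(S^\perp)$. There is no genuine obstacle: the only nontrivial ingredient is the injectivity of $p_{S^\perp}$, which has already been stated, and the rest is a cardinality count. If anything needs a line of explanation, it is the step ``injection between finite sets of equal cardinality is a bijection,'' which I would include for completeness but not belabour.
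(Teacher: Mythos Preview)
Your proposal is correct and is exactly the argument the paper intends: the corollary is stated immediately after Lemma~\ref{lem:O(L,S)} without a separate proof, precisely because injectivity of $p_{S^\perp}$ together with $|H|=|D(S^\perp)|$ forces $p_{S^\perp}(H)=D(S^\perp)$, so part~(iii) applies directly.
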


For example, the condition of Corollary~\ref{cor:O(L,S)} is true if
$L$ is an even unimodular lattice and $S$ is any primitive sublattice
of $L$ (see Example~\ref{ex:K3} below).  We recall the following
result which we call \emph{Eichler's criterion} (see~\cite[Section
  10]{E}, \cite[Section 3]{G2} and \cite{GHScomm}).

\begin{lemma}\label{lem:eichler}
Let $L$ be a lattice containing two orthogonal isotropic planes. Then
the $\Tilde\SO(L)$-orbit of a primitive vector $l\in L$ is
determined by two invariants: by its length $l^2=(l,l)$ and its image
$l^*+L$, where $l^*=l/\divv(l)$, in the discriminant group $D(L)$.
\end{lemma}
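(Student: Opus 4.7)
The plan is to prove this via \emph{Eichler transvections}. For any isotropic $u\in L$ and any $v\in u^\perp_L$, the map
\[
E_{u,v}(x) = x + (x,u)v - (x,v)u - \tfrac{1}{2}(v,v)(x,u)u
\]
is an isometry of $L$ with determinant $+1$, and since $E_{u,v}(x)-x\in \ZZ u + \ZZ v\subset L$, it acts trivially on the discriminant group $D(L)$. Hence $E_{u,v}\in \Tilde\SO(L)$. The ``only if'' direction of the lemma is immediate: the length $l^2$ is preserved by any isometry, and by definition any element of $\Tilde\SO(L)$ acts trivially on $D(L)$, so it fixes $l^*+L=(l/\divv(l))+L$.

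For the ``if'' direction, I would decompose $L=U_1\oplus U_2\oplus K$ where $U_i$ are hyperbolic planes with bases $e_i,f_i$ (so $(e_i,f_i)=1$, $e_i^2=f_i^2=0$) and $K=(U_1\oplus U_2)^\perp$. Write a primitive $l\in L$ as $l=a_1e_1+b_1f_1+a_2e_2+b_2f_2+k$ with $k\in K$. The strategy is to show that Eichler transvections allow one to reduce $l$ to a canonical form
\[
l\;\sim\;e_1 + \tfrac{l^2}{2}f_1,\qquad \text{if } l^*+L=0,
\]
and more generally, in the case $\divv(l)=d>1$, to
\[
l\;\sim\;d\,e_1 + \tfrac{l^2}{2d}f_1 + d\,k_0,
\]
where $k_0\in K^\vee$ is a fixed lift of the class $l^*+L\in D(L)$ chosen so that $(dk_0)^2\equiv l^2\pmod{2d^2}$ and $dk_0\in K$. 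The two invariants $l^2$ and $l^*+L$ then uniquely determine this canonical form (up to a harmless choice of $k_0$, which is then absorbed by transvections acting on $K$ alone).

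To achieve this reduction I would proceed in steps. First, applying $E_{e_1,v}$ and $E_{f_1,v}$ for suitable $v\in U_2\oplus K$ one can kill the $U_2$-component: explicitly, if $a_2\neq 0$, the transvection $E_{f_1,e_2}$ alters $l$ by a multiple of $f_1$ and a multiple of $e_2$, and by iterating between transvections using $e_1,f_1$ on one side and $e_2,f_2$ on the other, one systematically clears $a_2,b_2$ (this is where having \emph{two} hyperbolic planes is essential). Second, once $l=a_1e_1+b_1f_1+k$ with $k\in K$, I would use $E_{e_1,k'}$ and $E_{f_1,k'}$ for various $k'\in K$ to adjust $a_1,b_1$ by arbitrary elements of $(k,K)\ZZ$; combined with the transvections in $U_1$ itself, this lets one reduce $(a_1,b_1)$ modulo $\divv_K(k)$ to any prescribed representative with the correct $\gcd$. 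Third, by transvections with $u=e_1$ or $f_1$ and $v\in K$ one can replace $k$ by any element of $k+\divv(l)\cdot K$, so $k$ is determined only modulo $\divv(l)\cdot K$, i.e.\ by its class in $D(K)$, which corresponds to $l^*+L$.

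The main obstacle is bookkeeping in the second and third steps: one must verify that the subgroup of $\Tilde\SO(L)$ generated by Eichler transvections acts transitively on primitive vectors with fixed $(l^2,l^*+L)$, which amounts to checking that the lattice operations achievable by composing transvections generate the correct sublattice of possible moves. Once this is established, every primitive $l$ is moved to the canonical form above, which depends only on $l^2$ and $l^*+L$, proving the two invariants determine the orbit. (A detailed version of this computation, in essentially this form, is carried out in~\cite{GHScomm} and goes back to Eichler~\cite{E}.)
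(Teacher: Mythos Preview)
The paper does not actually prove this lemma: it is stated as Eichler's criterion and immediately referred to the literature (\cite[Section~10]{E}, \cite[Section~3]{G2}, and \cite{GHScomm}). Your approach via Eichler transvections is precisely the method used in those references, as you yourself note at the end, so there is nothing to compare against in the paper itself.

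Your sketch is correct in outline. A couple of points where the writeup could be tightened: your canonical form in the case $\divv(l)=d>1$ is slightly off as written, since for $l=d\,e_1+\tfrac{l^2}{2d}f_1+dk_0$ to have divisor exactly $d$ you need $d\mid \tfrac{l^2}{2d}$ and $dk_0$ primitive in the appropriate sense, and the precise interaction between the $K$-component and the discriminant class requires care (one usually shows first that any primitive $l$ can be moved so that its $e_1$-coefficient equals $\divv(l)$, and then normalises the rest). But these are exactly the bookkeeping details you flag, and the structure of your argument---clear the second hyperbolic plane using transvections between $U_1$ and $U_2$, then adjust the $K$-part modulo $\divv(l)\cdot K$---is the standard one and is sound.
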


We note that $l^*$ is a primitive element of the dual lattice
$L^\vee$. Therefore $\divv(l)$ is a divisor of the exponent of the
discriminant group $D(L)$.  In particular, $\divv(l)$ divides
$\det(L)$.  Lemma~\ref{lem:eichler} can be used to classify all
possible vectors of fixed length in different lattices.

\begin{example}\label{ex:K3}
\emph{(The $\Kthree$ lattice.)}  $L(\Kthree)=3U\oplus 2E_8(-1)$ is the
even unimodular lattice of signature $(3,19)$: its discriminant group
is trivial and all the primitive vectors $h_{2d}\in L_{\Kthree}$ of
length $2d$ form a single $\SO(L_{\Kthree})$-orbit.  Therefore we can
take $h_{2d}$ in the first hyperbolic plane $U$, so
\begin{equation}\label{L2d}
(h_{2d})^\perp_{L_{\Kthree}}\cong L_{2d}=2U\oplus 2E_8(-1)\oplus \latt{-2d}.
\end{equation}
Then according to Corollary~\ref{cor:O(L,S)}
\begin{equation}\label{K3mod-gr}
\Orth^+(L_{\Kthree}, h_{2d}) \cong \Tilde\Orth^+(h_{2d}^{\perp})=
\Tilde\Orth^+(2U\oplus 2E_8(-1)\oplus \latt{-2d}).
\end{equation}
The case of $\Kthree$ surfaces is an exception, since the
$\Kthree$ lattice is unimodular.
\end{example}

\begin{example}\label{ex:K3n}
\emph{$\Kthree^{[2]}$-lattice, split and non-split polarisations.}  We
consider the Beauville lattice of a deformation $\Kthree^{[n]}$
manifold, which is isomorphic to $L_{\Kthree,2n-2}$ by
Proposition~\ref{prop:Beauvillelattices}(i).  In general there will be
several, but finitely many, orbits of primitive polarisation vectors
$h_{2d}$.  Let $h_{2d}\in L_{\Kthree,2}$ be a primitive vector of
length $2d>0$.  Then $\div(h_{2d})$ divides $|\det L_{\Kthree,2}|=2$.

All vectors with $\div(h_{2d})=1$ constitute a single
$\Tilde{\SO}^+(L_{\Kthree,2})$-orbit, by Lemma~\ref{lem:eichler}.
Therefore, as in Example~\ref{ex:K3}, we obtain
\begin{equation}\label{L2d-split}
(h_{2d})^\perp_{L_{\Kthree,2}}\cong L_{2,2d}=
2U\oplus 2E_8(-1)\oplus  \latt{-2}\oplus  \latt{-2d}.
\end{equation}
We call a polarisation determined by a primitive vector $h_{2d}$
with $\div(h_{2d})=1$ a \emph{split polarisation}.

If $h_{2d}\in L_{\Kthree,2}$ and $\div(h_{2d})=2$ then we can write
$h_{2d}$ as $h_{2d}=2v+cl_2$, where $v\in 3U\oplus 2E_8(-1)$ and $l_2$
is a generator of the orthogonal component $\latt{-2}$ in
$L_{\Kthree,2}$.  The coefficient $c$ is odd because $h_{2d}$ is
primitive.  Note that $2d=h_{2d}^2=4(v,v)-2c^2$, so $d\equiv -1 \mod
4$.  According to Eichler's criterion the
$\Tilde\SO(L_{\Kthree,2})$-orbit of $h_{2d}$ is uniquely determined by
the class $h_d^*\equiv l_2/2\mod L_{\Kthree,2}$.  Therefore as in the
case of $\div(h_{2d})=1$ all vectors with $\div(h_{2d})=2$ form only
one orbit.  We can take a representative in the form $h_{2d}=2v+cl_2
\in U\oplus \latt{-2}$.  The orthogonal complement of $h_{2d}$ in
$U\oplus \latt{-2}$ can be found by direct calculation. This is an
even rank~$2$ lattice $Q(d)$ of determinant $d$.  It follows that if
$d\equiv -1 \mod 4$ then there is only one orbit of vectors
$h_{2d}$ with $\div (h_{2d})=2$, and the orthogonal complement of
$h_{2d}$ is uniquely determined:
\begin{equation}\label{L2d-nonsplit}
(h_{2d})^\perp_{L_{\Kthree,2}}\cong L_{Q(d)}=
2U\oplus 2E_8(-1)\oplus  \left(
\begin{matrix}
-2&1\\1&-\frac{d+1}2
\end{matrix}\right).
\end{equation}
We call a polarisation of this kind a \emph{non-split
  polarisation}. We note that $|\det L_{Q(d)}|=d$ and the discriminant
group of $L_{Q(d)}$ is cyclic (see \cite[Remark 3.15]{GHSsymp}).
\end{example}

\begin{remark}\label{rem:split}
Taking the orthogonal complement of the $(-2)$-vector in $Q(d)$ we have a
split sublattice of index $2$
\begin{equation*}
\latt{-2}\oplus \latt{-2d} <  Q(d).
\end{equation*}
Therefore $L_{2,2d}<L_{Q(d)}$ is  also a sublattice of index $2$  and
according to Lemma~\ref{lem:stable}
\begin{equation}\label{2d-Q(d)}
\Tilde\Orth^+(L_{2,2d})< \Tilde\Orth^+(L_{Q(d)})
\end{equation}
is a subgroup of finite index.  It follows from this that the modular variety
$\cF_L(\Tilde\Orth^+(L_{2,2d}))$ is a finite covering of
$\cF_L(\Tilde\Orth^+(L_{Q(d)}))$ where $d\equiv -1$ mod~$4$.  One can
calculate this index using the explicit formula for the
Hirzebruch--Mumford volume (see \cite{GHSvol}).
\end{remark}

We gave a classification of all possible type of polarisations for the
symplectic varieties of $\Kthree^{[n]}$ type in \cite[Proposition
3.6]{GHSsymp}.  Results about the polarisation types of
$10$-dimensional O'Grady varieties can be found in \cite[Theorem
3.1]{GHSdim21}.

The branch divisor (Equation~\eqref{br-div}) of the modular varieties
is defined by $\pm$reflections in the modular groups.  Below we give
a description of the branch divisors in the cases of the polarised
$\Kthree$ surfaces and polarised holomorphic symplectic varieties of
type $\Kthree^{[2]}$.

Let $L$ be a nondegenerate integral lattice and $r\in L$ be a
primitive vector.  If the reflection is integral, i.e. $\sigma_r\in
\Orth(L)$ (see Equations~\eqref{sigma_r}) and \eqref{def:divv}), then
\begin{equation}\label{refldiv}
\divv (r)\mid r^2 \mid 2\divv (r).
\end{equation}
The following general result was proved in \cite{GHSK3}.

\begin{proposition}\label{prop:reflvectors}
(i) Let $L$ be a nondegenerate even integral lattice.  Let $r\in L$ be
primitive. Then $\sigma_r\in \Tilde \Orth(L)$ if and only if $r^2=\pm 2$.

If $-\sigma_r\in \Tilde \Orth(L)$, {i.e.} $\sigma_r|_{A_L}=-\id$, then
we also have
\begin{itemize}
\item[(ii)] $r^2=\pm 2a$ and $\divv(r)=a\equiv 1\bmod 2$, or $r^2=\pm a$
  and $\divv(r)=a$ or $a/2$; and
\item[(iii)] $A_L\cong (\ZZ/2\ZZ)^m\times (\ZZ/a\ZZ)$, for some $m\ge 0$.
\end{itemize}
If (iii) holds then
\begin{itemize}
\item[(iv)] If $r^2=\pm a$ and either $\divv(r)=a$ or
  $\divv(r)=a/2\equiv 1\bmod 2$, then $-\sigma_r\in \Tilde
  \Orth(L)$;
\item[(v)] If $r^2=\pm 2a$ and $\divv(r)=a\equiv 1\bmod 2$, then
  $-\sigma_r\in \Tilde \Orth(L)$.
\end{itemize}
\end{proposition}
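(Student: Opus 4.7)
Set $a:=\divv(r)$ and $r^*:=r/a\in L^\vee$; both are primitive, and the image $\overline{r^*}\in D(L)$ has order exactly $a$ since $\QQ r\cap L=\ZZ r$. The formula $\sigma_r(l)=l-\tfrac{2(l,r)}{r^2}r$ makes $\sigma_r$ preserve $L$ iff $r^2\mid 2a$, which together with the automatic relation $a\mid r^2$ leaves two possibilities: Case A, $r^2=\epsilon a$, or Case B, $r^2=2\epsilon a$, with $\epsilon=\pm 1$. For part (i), primitivity of $r$ supplies $v_0\in L^\vee$ with $(v_0,r)=1$, so the condition $\sigma_r(v)\equiv v\pmod L$ for every $v\in L^\vee$ already forces $\tfrac{2}{r^2}r\in L$; by primitivity of $r$ and evenness of $L$ this happens only for $r^2=\pm 2$, and in that case $\sigma_r(v)-v=-(v,r)r\in L$ for all $v$, so the condition is sufficient.

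\textbf{Action on $D(L)$.} On $\langle\overline{r^*}\rangle\cong\ZZ/a$ the reflection $\sigma_r$ acts as $-\id$ (since $\sigma_r(r^*)=-r^*$) and on $\overline B:=(L^\vee\cap r^\perp+L)/L$ it acts as $\id$ (since $r^\perp$ is fixed pointwise). A discriminant calculation through the chain $\ZZ r\oplus L_r\subset L\subset L^\vee\subset\ZZ r^*\oplus L_r^\vee$, where $L_r:=r^\perp_L$, shows that the index $[L:\ZZ r\oplus L_r]$ equals $1$ in Case A and $2$ in Case B, and in both cases $|\overline B|=|\det L|/a$. An elementary computation of $(v,r)\bmod a$ on coset representatives then gives $\overline B\cap\langle\overline{r^*}\rangle=0$ in Case A and in Case B with $a$ odd, whereas $\overline B\cap\langle\overline{r^*}\rangle=\langle(a/2)\overline{r^*}\rangle\cong\ZZ/2$ in Case B with $a$ even.

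\textbf{Parts (ii) and (iii).} Assume $-\sigma_r\in\Tilde\Orth(L)$, equivalently $\sigma_r=-\id$ on $D(L)$. Restriction to $\overline B$ forces $2\overline B=0$, so $\overline B$ is $2$-elementary. In Case A and Case B with $a$ odd the order count gives $D(L)=\overline B\oplus\langle\overline{r^*}\rangle\cong(\ZZ/2)^m\times\ZZ/a$, realising the alternatives $r^2=\pm a,\ \divv r=a$ and $r^2=\pm 2a,\ \divv r=a$ odd of (ii). In Case B with $a$ even, applying the identity $\sigma_r\overline{v_0}=-\overline{v_0}$ to a lift $\overline{v_0}\in D(L)$ of $1\in D(L)/\overline B\cong\ZZ/a$ (so $(v_0,r)=1$) yields $2\overline{v_0}=\epsilon\overline{r^*}$ in $D(L)$, whence $\overline{v_0}$ has order $2a$; splitting the $\ZZ/2=\overline B\cap\langle\overline{v_0}\rangle$ off the $2$-elementary $\overline B$ produces $D(L)=\overline B'\oplus\langle\overline{v_0}\rangle\cong(\ZZ/2)^{m}\times\ZZ/(2a)$. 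With $a'=2a$ this is the alternative $r^2=\pm a',\ \divv r=a'/2$ of (ii). Part (iii) can then be read off in each subcase.

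\textbf{Parts (iv), (v), and the main difficulty.} For the converses, in each listed case — (iv) first clause (Case A with $\divv r=a$), (iv) second clause (Case B with $\divv r=a/2$ odd, so $a=2\divv r$), and (v) (Case B with $\divv r=a$ odd) — the structural decomposition $D(L)=\overline B\oplus\langle\overline{r^*}\rangle$ from paragraph~2 combined with the hypothesis $D(L)\cong(\ZZ/2)^m\times\ZZ/a$ forces $\overline B$ to be $2$-elementary by Krull--Schmidt cancellation (in the second clause of (iv) one first rewrites $\ZZ/a=\ZZ/2\oplus\ZZ/(a/2)$, which is legal because $a/2$ is odd). Since $\sigma_r+\id$ is $\ZZ$-linear on $D(L)$, its vanishing on the generators $\overline{r^*}$ (using $\sigma_r\overline{r^*}=-\overline{r^*}$) and on $\overline B$ (using $2\overline B=0$) extends to all of $D(L)$, so $-\sigma_r\in\Tilde\Orth(L)$. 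The delicate point throughout is Case B with $\divv(r)$ even: the index relation $[L:\ZZ r\oplus L_r]=2$ causes $\overline B$ and $\langle\overline{r^*}\rangle$ to overlap there, which is both the source of the parity hypothesis $\divv(r)\equiv 1\pmod 2$ appearing in (v) and in the second clause of (iv), and the reason the corresponding even-$\divv$ subcase of Case B (covered by the third alternative of (ii)) requires the additional divisibility $\overline{r^*}\in 2D(L)$ — not implied by $\overline B$ being $2$-elementary alone — and is accordingly not addressed by the converses (iv) and (v).
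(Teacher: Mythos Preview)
Your argument is correct. Note, however, that the paper itself does not supply a proof of this proposition: it is stated with the attribution ``proved in \cite{GHSK3}'' and no further argument is given here. So there is no in-paper proof to compare against; what you have written is a self-contained verification.

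A few remarks on your write-up. Your decision to set $a:=\divv(r)$ conflicts with the paper's use of $a$ (which in the third alternative of (ii) equals $2\divv(r)$), and you handle this by relabelling to $a'$ in Case~B with even divisor. This works, but a reader coming from the paper will find it momentarily confusing; it would be cleaner to keep the paper's $a$ throughout or to flag the change of convention at the outset. In part~(i) you could note explicitly that the condition $\sigma_r(v)\equiv v\pmod L$ for all $v\in L^\vee$ already forces $\sigma_r(L)\subset L$ (since $L\subset L^\vee$), so integrality need not be checked separately. Finally, for the converses (iv)--(v) you invoke cancellation for finite abelian groups (``Krull--Schmidt''); this is of course standard, but since it carries the whole weight of showing $\overline B$ is $2$-elementary you might state it as the structure theorem for finite abelian groups rather than leave it implicit.
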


With polarised $\Kthree$ surfaces in mind, we consider in more detail
the lattice $L_{2d}$ (see Equation~\eqref{L2d}). In this case the
ramification divisor has three irreducible components.
\begin{corollary}\label{cor:reflK3}
Let $\sigma_r$ be a reflection in $\Orth^+(L_{2d})$ defined by a
primitive vector $r\in L_{2d}$.  The reflection $\sigma_r$ induces
$\pm\id$ on the discriminant group $L_{2d}^\vee/L_{2d}$ if and only if
$r^2=-2$ or $r^2=- 2d$ and $\divv(r)=d$ or $2d$.  If $r^2=-2$ then
\begin{equation*}
r^\perp_{L_{2d}}\cong 2U\oplus E_8(-1)\oplus E_7(-1)\oplus \latt{-2d}.
\end{equation*}
If $\divv(r)=d$ then either
\begin{equation*}
r^\perp_{L_{2d}}\cong U\oplus 2E_8(-1)\oplus \latt{2} \oplus \latt{-2}
\end{equation*}
or
\begin{equation*}
r^\perp_{L_{2d}}\cong U\oplus 2E_8(-1)\oplus U(2).
\end{equation*}
\end{corollary}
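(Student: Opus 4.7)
My plan is to apply Proposition~\ref{prop:reflvectors} to $L=L_{2d}$ (whose discriminant group is cyclic of order $2d$), and then to use Eichler's criterion (Lemma~\ref{lem:eichler}) to exhibit an explicit representative for each admissible $(r^2,\divv(r))$-orbit and compute its orthogonal complement directly.

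First I classify the admissible pairs. Proposition~\ref{prop:reflvectors}(i) gives $\sigma_r\in\Tilde\Orth(L_{2d})$ if and only if $r^2=\pm 2$; the requirement $\sigma_r\in\Orth^+(L_{2d})$, equivalent to $\sigma_r$ preserving the connected component $\cD_{L_{2d}}$, forces $r$ to be negative definite, so $r^2=-2$. For the case $-\sigma_r\in\Tilde\Orth(L_{2d})$, parts (ii)--(iv) of Proposition~\ref{prop:reflvectors} applied with $D(L_{2d})\cong\ZZ/2d\ZZ$ (written as $(\ZZ/2\ZZ)^0\times\ZZ/2d\ZZ$) leave only $r^2=-2d$ with $\divv(r)\in\{d,\,2d\}$; the alternative from (v) would require $2d$ to be odd and is excluded. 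A short direct computation of the action of $\sigma_r$ on a generator of $D(L_{2d})$ confirms that both $\divv(r)=d$ and $\divv(r)=2d$ actually occur.

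For each admissible pair I choose a convenient representative; by Eichler's criterion all representatives of a single $\Tilde\Orth^+(L_{2d})$-orbit yield the same orthogonal complement. For $r^2=-2$, any root of one copy of $E_8(-1)$ works, and since the orthogonal complement of a root in $E_8(-1)$ is isomorphic to $E_7(-1)$, I obtain $r^\perp_{L_{2d}}\cong 2U\oplus E_8(-1)\oplus E_7(-1)\oplus\latt{-2d}$. For $r^2=-2d$, $\divv(r)=d$, I write $L_{2d}=\latt{e_1,f_1}\oplus U\oplus 2E_8(-1)\oplus\latt{e}$ with $(e_1,f_1)=1$, $e_1^2=f_1^2=0$, $e^2=-2d$, and take $r=d\,e_1+e$; a direct pairing check gives $r^2=-2d$ and $\divv(r)=d$. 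The orthogonal complement of $r$ inside $\latt{e_1,f_1,e}$ is spanned over $\ZZ$ by $e_1$ and $2f_1+e$, with Gram matrix $M_d=\bigl(\begin{smallmatrix}0&2\\2&-2d\end{smallmatrix}\bigr)$, and the remaining summand $U\oplus 2E_8(-1)$ is orthogonal to $r$, giving $r^\perp_{L_{2d}}\cong U\oplus 2E_8(-1)\oplus M_d$.

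The crux of the argument is then identifying the rank-two lattice $M_d$. Its discriminant group is $(\ZZ/2\ZZ)^2$, and a direct inversion of the Gram matrix yields discriminant quadratic form values $\{0,\,-d/2,\,1-d/2\}\bmod 2\ZZ$ on the three non-zero classes. These coincide with the discriminant-form values of $\latt{2}\oplus\latt{-2}$ when $d$ is odd and with those of $U(2)$ when $d$ is even. By Nikulin's theory \cite{Nik2}, an even indefinite rank-two lattice in the genus at hand is determined by its signature and discriminant form, so $M_d\cong\latt{2}\oplus\latt{-2}$ for $d$ odd and $M_d\cong U(2)$ for $d$ even, giving the stated dichotomy. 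I expect this final parity analysis to be the main obstacle, since the two target lattices share rank, signature, and discriminant group, and are distinguished only by the finer $\bmod\,2$ structure of the discriminant quadratic form.
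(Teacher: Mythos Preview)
The paper itself gives no proof, merely citing \cite[Corollary~3.4 and Proposition~3.6]{GHSK3}, so I compare your argument against what such a proof must contain.

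Your overall strategy is correct and your computations for the chosen representatives are accurate, but there is a genuine gap in the case $r^2=-2d$, $\divv(r)=d$. Eichler's criterion (Lemma~\ref{lem:eichler}) says the $\Tilde\SO^+(L_{2d})$-orbit of a primitive $r$ is determined by the pair $(r^2,\,r^*+L_{2d})$ with $r^*=r/\divv(r)$, not by $(r^2,\divv(r))$ alone. Here $r^*=r/d$ has order exactly $d$ in $D(L_{2d})\cong\ZZ/2d\ZZ$ and square $-2/d$, so the relevant classes correspond to $j\in(\ZZ/d\ZZ)^\times$ with $j^2\equiv 1\bmod d$, taken up to sign. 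When $d$ has more than one odd prime factor (or $8\mid d$) there are several such $j$, hence several orbits; your representative $r=de_1+e$ is only the orbit with $j=1$. You therefore have not shown that \emph{every} $r$ with $\divv(r)=d$ has complement isomorphic to one of the two stated lattices. The clean fix is to argue at the level of the full complement rather than the rank-two piece: $r^\perp$ is even of signature $(2,18)$ with $|\det r^\perp|=4$ by Lemma~\ref{lem:hperp}; since $\rk r^\perp=20$ is large, Nikulin's uniqueness theorem applies and $r^\perp$ is determined by its discriminant form. A group of order~$4$ supports only two finite quadratic forms of signature ${\equiv}\,0\bmod 8$, namely those of $U(2)$ and of $\latt{2}\oplus\latt{-2}$ (the $\ZZ/4\ZZ$ forms have odd signature, and $v_1^{(2)}$ has signature~$4$), so $r^\perp$ must be one of the two listed lattices. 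Your glue-group computation then shows which one occurs for at least one orbit, and in fact determines the parity dichotomy in general, since the isotropic glue vector in $D(r^\perp)\oplus D(\ZZ r)$ always has $D(r^\perp)$-component of square $d/2\bmod 2\ZZ$.

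A second, minor point: your appeal to ``Nikulin's theory'' for the \emph{rank-two} lattice $M_d$ is not justified as stated, because indefinite binary forms need not be unique in their genus. It is both safer and easier to exhibit an explicit $\GL_2(\ZZ)$-base change: for $d$ even, replacing $2f_1+e$ by $2f_1+e+\tfrac{d}{2}e_1$ gives Gram matrix $\bigl(\begin{smallmatrix}0&2\\2&0\end{smallmatrix}\bigr)=U(2)$; for $d$ odd, replacing it by $2f_1+e+\tfrac{d-1}{2}e_1$ and then passing to $(e_1+v,\,v)$ yields $\bigl(\begin{smallmatrix}2&0\\0&-2\end{smallmatrix}\bigr)$.
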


See the proof in \cite[Corollary 3.4 and Proposition 3.6]{GHSK3}.
Geometrically the three cases in the last proposition correspond to
the N\'eron-Severi group being (generically) $U$, $U(2)$ or
$\latt{2}\oplus\latt{-2}$ respectively. The $\Kthree$ surfaces
(without polarisation) themselves are, respectively, a double cover of
the Hirzebruch surface $F_4$, a double cover of a quadric, and the
desingularisation of a double cover of $\PP^2$ branched along a nodal
sextic.

We note that in the case of polarised deformation $\Kthree^{[2]}$
manifolds the branch divisor has one main (i.e.\ $\divv(r)=1$)
component, with $r^2=-2$, and $6$ (respectively, $1$) additional
components ($\divv(r)>1$) for split (respectively, non-split) type
(see the proof of Proposition~\ref{prop:qpbvanishing} below).

\section{The quasi pull-back of modular forms}\label{sect:quasipb}

The main aim of this section is to show how we can construct cusp
forms of small weight, for example on the moduli spaces of polarised
$\Kthree$ surfaces.  We use the method of quasi pull-back of the
Borcherds form $\Phi_{12}$ which was proposed in \cite[pp.
200-201]{B1}.  This method was successfully applied to the theory of
moduli spaces in \cite{BKPS}, \cite{Ko2}, \cite{GHSK3}, \cite{GHSsymp}
and \cite{GHSdim21}.  In this section we review this method and prove
a new result (Theorem~\ref{thm:qpbcusp} below) showing that non-trivial quasi
pull-backs are cusp forms.

\subsection{Quasi pull-back}\label{subsect:qpb}

First we give a general property of rational quadratic divisors.  Let
$M$ be a lattice of signature $(2,m)$ and $L$ be a primitive
nondegenerate sublattice of signature $(2,n)$ where $n<m$.  Then
$L^\perp_M$ is negative definite and we have as usual $ L\oplus
L^\perp_M < M<M^{\vee} <L^\vee\oplus (L^\perp)^\vee $.  For $v\in M$
we write
\begin{equation}\label{proj}
v=\alpha+\beta,\qquad \alpha=\pr_{L^\vee}(v)\in L^\vee, \ \beta\in (L^\perp)^\vee.
\end{equation}
\begin{lemma}\label{lem:LdivDL} Let $L$ and $M$  be as above.
Then for any  $v\in M$ with  $v^2<0$ we have
\begin{equation*}
\cD_L\cap\cD_v(M)=
\begin{cases}
\cD_\alpha(L),&\text{ if }\  \alpha^2<0, \\
\emptyset,&\text{ if }\  \alpha^2\ge 0,\ \alpha\ne 0,\\
\cD_L,&\text{ if }\  \alpha=0, \text{ i.e.\ } v\in L^\perp.
\end{cases}
\end{equation*}
\end{lemma}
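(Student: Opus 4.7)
The plan is to reduce everything to a direct orthogonality and signature calculation. First I would use that any $[Z] \in \cD_L$ is represented by $Z \in L \tensor \CC$, and that $\beta \in (L^\perp_M)^{\vee}$ lies in $L^\perp_M \tensor \QQ$, which is $\QQ$-orthogonal (hence $\CC$-orthogonal) to $L$. Therefore $(Z,\beta)=0$ and
\[
(Z,v) = (Z,\alpha)+(Z,\beta) = (Z,\alpha),
\]
so $\cD_L \cap \cD_v(M) = \{[Z]\in\cD_L \mid (Z,\alpha)=0\}$. This single identity absorbs the hypothesis that $L$ is a primitive sublattice and does all the work of eliminating $\beta$.

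Next I would split into three cases according to $\alpha$. If $\alpha=0$, i.e.\ $v\in L^\perp_M$, the condition is vacuous and the intersection is all of $\cD_L$. If $\alpha\ne 0$ and $\alpha^2<0$, then $\alpha\in L^\vee$ is a negative-norm vector, so by definition~\eqref{div-Dv} applied inside $L$ we have
\[
\{[Z]\in\cD_L \mid (Z,\alpha)=0\} \;=\; \cD_\alpha(L) \;\cong\; \cD_{\alpha^\perp_L},
\]
where $\alpha^\perp_L$ is of signature $(2,n-1)$.

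The only case that really needs an argument is $\alpha\ne 0$ with $\alpha^2\ge 0$, where I claim the intersection is empty; this is what I would regard as the main point. The argument is a signature count in $L\tensor\RR$, which has signature $(2,n)$. If $(Z,\alpha)=0$ for some $[Z]\in\cD_L$, then the positive-definite real $2$-plane $\Pi=\RR\Re(Z)\oplus\RR\Im(Z)$ is contained in $\alpha^\perp\subset L\tensor\RR$. But when $\alpha^2>0$ the form on $\alpha^\perp$ has signature $(1,n)$, and when $\alpha^2=0$ the form on $\alpha^\perp$ has a one-dimensional radical $\RR\alpha$ with $\alpha^\perp/\RR\alpha$ of signature $(1,n-1)$; in either case a positive-definite subspace of $\alpha^\perp$ has dimension at most $1$, contradicting $\dim\Pi=2$. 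This forces $\cD_L\cap\cD_v(M)=\emptyset$ and completes the proof. I do not expect any genuine obstacle; the only thing to track carefully is that $\beta\in (L^\perp_M)^\vee$ really is orthogonal to all of $L\tensor\CC$, which follows directly from the primitivity of the embedding $L\hookrightarrow M$.
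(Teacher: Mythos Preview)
Your argument is correct and matches the paper's: both reduce $(Z,v)=0$ to $(Z,\alpha)=0$ and then run a signature count on the positive-definite real $2$-plane $\langle\Re Z,\Im Z\rangle$ inside $L\otimes\RR$. Two minor remarks: the orthogonality $(Z,\beta)=0$ needs only that $\beta\in L^\perp_M\otimes\QQ$, not primitivity of $L$; and the paper separately justifies the ``i.e.'' in the third case (that $\alpha=0$ forces $v\in L^\perp_M$), which you took for granted but which is immediate since $L^\perp_M$ is saturated in $M$.
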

\begin{proof}
We have $\cD_L \subset \cD_M$ because $L$ is a sublattice of $M$.
For any $Z=X+iY\in \cD_L^\bullet$ with $X,Y\in L\otimes\RR$ we have
$(X,Y)=0$ and $(X,X)=(Y,Y)>0$. Therefore the quadratic space
$\latt{X,Y}_\RR$ is of signature $(2,0)$.  Note that $(Z,v)=0$ is
equivalent to $(Z,\alpha)=0$.  The signature of $L$ is equal to
$(2,n)$. Analysing the signature of $\latt{X,Y}_\RR\oplus
\latt{\alpha}_\RR$ we get that $\cD_L\cap\cD_v(M)$ is non-empty if
and only if $\alpha=\pr_{L^\vee}(v)$ belongs to the negative definite
quadratic space $\latt{X,Y}^\perp_{M\otimes \RR}$. This proves the
first two cases of the lemma.

The finite group $H=M/(L\oplus L^\perp)$ is a subgroup of the
orthogonal sum of the discriminant groups $D(L)\oplus D(L^\perp)$
where $D(L)=L^\vee/L$.  The decomposition~\eqref{proj} defines a
projection $\pr\colon H\to D(L)\oplus D(L^\perp)$. For a primitive
sublattice $L$ the class $(\alpha+L)\in D(L)$ is uniquely determined
by the class $\beta+L^\perp$ in $D(L^\perp)$ (see \cite[Proposition
  1.5.1]{Nik2}).  Therefore $\alpha\in L$ if and only if $\beta\in
L^\perp$. In particular $v$ is orthogonal to $L$ if and only if
$\alpha=0$.  This proves the last assertion of the lemma.
\end{proof}

In the next theorem we explain the main idea of the method of quasi
pull-back applied to the strongly reflective modular form $\Phi_{12}$
(see \cite[pp.~200--201]{B1} and \cite{BKPS}).

\begin{theorem}\label{thm:qpb}
Let $L\emb II_{2,26}$ be a primitive nondegenerate sublattice of
signature $(2,n)$, $n\ge 3$, and let $\cD_L\emb\cD_{II_{2,26}}$ be the
corresponding embedding of the homogeneous domains.  The set of
$(-2)$-roots
\begin{equation*}
R_{-2}(L^\perp)=\{r\in II_{2,26}\mid r^2=-2,\ (r, L)=0\}
\end{equation*}
in the orthogonal complement is finite.
We put $N(L^\perp)=\# R_{-2}(L^\perp)/2$. Then the function
\begin{equation}\label{qpb}
  \left. \Phi|_L=
    \frac{\Phi_{12}(Z)}{
      \prod_{r\in R_{-2}(L^\perp)/{\pm 1}} (Z, r)}
    \ \right\vert_{\cD_L}
  \in M_{12+N(L^\perp)}(\Tilde{\Orth}(L),\, \det),
\end{equation}
where in the product over $r$ we fix a finite system of
representatives in $R_{-2}(L^\perp)/{\pm 1}$.  The modular form
$\Phi|_L$ vanishes only on rational quadratic divisors of type
$\cD_v(L)$ where $v\in L^\vee$ is the orthogonal projection of a
$(-2)$-root $r\in II_{2,26}$ on $L^\vee$.
\end{theorem}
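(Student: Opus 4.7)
The plan is to verify four assertions in order: finiteness of $R_{-2}(L^\perp)$, holomorphicity and non-triviality of $\Phi|_L$ on $\cD_L$, its modular transformation with the stated weight and character, and the description of its zero divisor. Finiteness is immediate: as the orthogonal complement of a signature-$(2,n)$ primitive sublattice in the signature-$(2,26)$ lattice $II_{2,26}$, the lattice $L^\perp$ is negative definite of rank $26-n$, and any definite lattice has only finitely many vectors of fixed norm.

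For holomorphicity, I would combine Example~\ref{ex:phi12}, which states that $\Phi_{12}$ has a simple zero along each $\cD_r(II_{2,26})$ as $\pm r$ runs over the $(-2)$-roots of $II_{2,26}$ and no others, with Lemma~\ref{lem:LdivDL}, which partitions the possible intersections $\cD_r(II_{2,26})\cap\cD_L$ into three cases. Roots $r\in R_{-2}(L^\perp)$ give $\cD_r(II_{2,26})\supseteq\cD_L$ and are precisely those forcing $\Phi_{12}$ to vanish identically on $\cD_L$, so removing this vanishing requires dividing by the associated linear forms $(Z,r)$. Different primitive roots up to sign give genuinely distinct hyperplanes, so in the local ring at any point of $\cD_L$ the linear forms $(Z,r)$ for $r\in R_{-2}(L^\perp)/{\pm 1}$ are coprime; each divides $\Phi_{12}$ to first order, hence their product divides $\Phi_{12}$, and the quotient is holomorphic in a neighbourhood of $\cD_L^\bullet$ inside $\cD_{II_{2,26}}^\bullet$. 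The restriction to $\cD_L^\bullet$ is not identically zero: at a generic point of $\cD_L$ no $(-2)$-root divisor outside $L^\perp$ passes through, so after cancellation the quotient is nonzero there.

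For the modular transformation property and weight, I would use Lemma~\ref{lem:stable}: every $g\in\Tilde\Orth(L)$ extends to $\tilde g\in\Tilde\Orth(II_{2,26})$ with $\tilde g|_{L^\perp}=\id$. Then $\Phi_{12}(\tilde g Z)=\det(\tilde g)\Phi_{12}(Z)=\det(g)\Phi_{12}(Z)$, while each denominator factor is invariant because $\tilde g$ fixes every $r\in L^\perp$ pointwise, giving $(\tilde g Z, r)=(Z,\tilde g^{-1}r)=(Z,r)$. Hence $\Phi|_L(gZ)=\det(g)\Phi|_L(Z)$. The weight count under $Z\mapsto tZ$ combines weight $12$ from $\Phi_{12}$ with weight $-1$ from each of the $N(L^\perp)$ linear forms in the denominator, yielding total weight $12+N(L^\perp)$. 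Since $n\ge 3$, Koecher's principle removes any need to verify holomorphicity at the cusps.

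Finally, the description of the divisor follows directly from Lemma~\ref{lem:LdivDL}: the zeros of $\Phi|_L$ on $\cD_L^\bullet$ that survive the cancellation come from $(-2)$-roots $r\in II_{2,26}\setminus L^\perp$ whose projection $\alpha=\pr_{L^\vee}(r)$ satisfies $\alpha^2<0$, and these contribute precisely the divisors $\cD_\alpha(L)$. I expect the principal obstacle to be the combined divisibility and non-vanishing step in the second paragraph: one must correctly align the simple-zero statement of Example~\ref{ex:phi12} with the distinctness of the divisors $\cD_r$ for $r\in R_{-2}(L^\perp)/{\pm 1}$, so that coprimality in the local ring delivers both that the product of linear forms divides $\Phi_{12}$ with multiplicity exactly one and that nothing further is cancelled, keeping the quotient nonzero on $\cD_L$.
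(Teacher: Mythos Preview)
Your proposal is correct and follows essentially the same approach as the paper: both use Lemma~\ref{lem:stable} to extend elements of $\Tilde\Orth^+(L)$ trivially on $L^\perp$, Lemma~\ref{lem:LdivDL} together with Example~\ref{ex:phi12} to count the order of vanishing of $\Phi_{12}$ along $\cD_L$ as $N(L^\perp)$, the same invariance argument for the denominator factors, and Koecher's principle to conclude. Your coprimality discussion in the local ring is a slightly more explicit rendering of what the paper states in one line (``the order of zero of $\Phi_{12}$ along $\cD_L$ is equal to $N(L^\perp)$''), but the substance is identical.
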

We say that the modular form $\Phi|_L$ is a \emph{quasi pull-back} of
$\Phi_{12}$ if the set of roots $R_{-2}(L^\perp)$ is non-empty.

\begin{proof}
We introduce coordinates $Z=(Z_1,Z_2)\in \cD_{II_{2,26}}$ related to
the embedding $L\emb II_{2,26}$ and the splitting~\eqref{proj}, namely
$Z_1\in L\otimes \CC$ and $Z_2\in (L\otimes \CC)^\perp$.  We have
$\Tilde{\Orth}^+(L)< {\Orth}^+(II_{2,26})$ (see
Lemma~\ref{lem:stable}) and we denote by $\tilde g\in{\Orth}^+(II_{2,26})$ the extension of
$g\in \Tilde{\Orth}^+(L)$ by $\tilde g|_{L^\perp}=\id$.

If the root system $R_{-2}(L^\perp)$ is empty, then $\Phi|_L$
is the usual pull-back of $\Phi_{12}$ on $\cD_L$.
Then $\Phi|_L\not\equiv 0$ and  we have
\begin{equation*}
\det (g)\Phi_{12}(Z_1,Z_2)=\Phi_{12}(\tilde g( Z_1,
Z_2))=\Phi_{12}(g\cdot Z_1, Z_2).
\end{equation*}
Therefore the pull-back of $\Phi_{12}$ on $\cD_L$ is a modular form
of weight $12$.  We note that even in this simple case one obtains
interesting reflective modular forms (see \cite[Section 4.2]{GN3}).

If there are roots in $R_{-2}(L^\perp)$ then the pull-back of
$\Phi_{12}$ on $\cD_L$ vanishes identically, and one has to divide by
the equations of the rational quadratic divisors, as in
Equation~\eqref{qpb}.

According to Lemma~\ref{lem:LdivDL} the order of zero of $\Phi_{12}$
along $\cD_L$ is equal to $N(L^\perp)=\#R_{-2}(L^\perp)/2$. Therefore
the non-zero function $\Phi|_L$ is holomorphic on $\cD_L^\bullet$.
Moreover it is homogeneous of degree $12+N(L^\perp)$.  Any $g\in
\Tilde{\Orth}^+(L)$ acts trivially on $L^\perp$ and
$(Z,r)=(Z_1+Z_2,r)=(Z_2,r)$.  Therefore
\begin{equation*}
\left.\frac{\Phi_{12}(g Z_1,Z_2)}
{\prod_{r} (Z_2, r)}\ \right\vert_{\cD_L}=
\left.\frac{\Phi_{12}(\tilde g\cdot (Z_1,Z_2))}{
\prod_{r}(\tilde g\cdot (Z_1,Z_2), r)} \ \right\vert_{\cD_L}
=\left.\det(g)\frac{\Phi_{12}(Z)}{
\prod_{r}(Z, r)}\ \right\vert_{\cD_L}.
\end{equation*}
It follows that $\Phi|_L$ is modular with respect to
$\Tilde\Orth^+(L)$ with character $\det$. We finish the proof using
Koecher's principle.

The zeros of $\Phi|_L$ can be determined using Lemma~\ref{lem:LdivDL}
and the fact that $\Phi_{12}$ vanishes along $\cD_r(II_{2,26})$ with
$r^2=-2$.
\end{proof}

\begin{remark}\label{rem:smalln}
Theorem~\ref{thm:qpb} is still true for $n\le 2$. We can show this by
computing the Fourier expansions of the quasi pull-back. Moreover we
prove in Theorem~\ref{thm:qpbcusp} that the quasi pull-back is always
a cusp form.
\end{remark}
\begin{remark}\label{rem:bigmodulargroup}
The modular group of $\Phi|_L$ might be larger than $\Tilde\Orth^+(L)$
(see, for example, \cite[Lemma 4.4]{GHSdim21}).
\end{remark}
\begin{remark}\label{rem:notonlyBform}
For the applications to the theory of moduli spaces we use the quasi
pull-back of $\Phi_{12}$. It is easy to prove an analogue of
Theorem~\ref{thm:qpb} for an arbitrary modular form whose divisor
consists only of rational quadratic divisors (in the style of
Theorem~\ref{thm:qpbcusp} below).
\end{remark}

In \cite{GHSK3} we showed that some quasi pull-backs for lattices
related to the moduli spaces of polarised $\Kthree$ surfaces are cusp
forms.  In this paper we prove a new, more general result: that the
quasi pull-back construction always gives a cusp form. The main idea of
the proof is to consider the quasi pull-back as a differential
operator (see \cite[Section 6]{GHSK3}).

\subsection{Tube domain realisation}\label{subsect:tube}
We define a Fourier expansion of a modular form $F$ at a
$0$-dimensional cusp.  The Fourier expansion depends on the choice of
affine coordinates at a cusp.  We consider the general case, following
the approach used in \cite[Theorem 5.2 and page 542]{B2}: for more
details see \cite[Section 2.3]{GN3} and \cite{F2}.

A $0$-dimensional cusp of $\cD_L$ is defined by an isotropic
sublattice of rank $1$ or, equivalently, by a primitive isotropic
vector $c\in L$ (up to sign: $c$ and $-c$ define the same cusp). The
choice of $c$ identifies $\cD_L$ with an affine quadric:
\begin{equation*}
\cD_{L,c}=\{Z\in \cD_L^\bullet \mid (Z,c)=1\}\cong \cD_L.
\end{equation*}
The lattice
\begin{equation}
L_c:=c^\perp/c= c^\perp_L/\ZZ c
\end{equation}
is an integral lattice of signature $(1,n-1)$.  We fix an element
$b\in L^\vee$ such that $(c,b)=1$. A choice of $b$ gives a realisation
of the hyperbolic lattice $L_c$ as a sublattice in $L$
\begin{equation}
L_{c}\cong L_{c,b}=L\cap c^\perp\cap b^\perp.
\end{equation}
We have
\begin{equation*}
L\otimes \QQ=L_{c,b}\otimes \QQ\oplus (\QQ b+\QQ c).
\end{equation*}
Using the hyperbolic lattice $L_c\otimes \RR$ we define a positive cone
\begin{equation*}
C(L_c)=\{x\in L_c\otimes \RR\mid (x,x)>0\}.
\end{equation*}

We may choose $C^+(L_c)$, one of the two connected components of
$C(L_c)$ so that corresponding tube domain, which is  the  complexification
of $C^+(L_c)$
\begin{equation*}
\cH_c=L_c\otimes \RR+i\, C^+(L_c)
\end{equation*}
has an isomorphism $\cH_c\to \cD_{L,c}\cong \cD_L$ by
\begin{equation}\label{affine-z}
z\mapsto [z]= z\oplus\big(b-\frac {(z,z)+(b,b)}{2}c\big)\quad
(z\in \cH_c,\ [z]\in \cD_{L,c}).
\end{equation}
Using the coordinate $z\in \cH_c$ defined by the choice of $c$ and $b$
we can identify an arbitrary modular form $F$ of weight $k$ with a modular form
$F_{c,b}$ (or simply $F_c$) on the tube domain $\cH_c$:
\begin{equation*}
F(\lambda [z])=\lambda^{-k} F_{c,b}(z).
\end{equation*}

\subsection{Fourier expansion at $0$-dimensional cusps}\label{subsect:fourier}
In order to define the Fourier expansion at the cusp $c$ we consider
an unipotent subgroup of the stabiliser $\Tilde \Orth^+(L)$, the
subgroup of the Eichler transvections. For any $a\in c^\perp_L$ the
map
\begin{equation*}
t'(c,a)\colon v \Mapsto v-(a,v)c \qquad (v\in c_L^\perp)
\end{equation*}
belongs to the orthogonal group $\Orth(c_L^\perp)$. It has the unique
orthogonal extension on $L$ which  is given by the map
\begin{equation}\label{t2}
t(c,a)\colon v\Mapsto v-(a,v)c+(c,v)a-\frac{1}{2}(a,a)(c,v)c.
\end{equation}
This element is called an \emph{Eichler transvection}: see
\cite[Section 3]{E} and \cite{GHScomm}.  We note that $t(c,a)\in
\Tilde\SO^+(L)$ for any $a\in c^\perp_L$, that $t(c,a)(c)=c$, and that
for $a,\ a'\in c_L^\perp$
\begin{equation*}
t(c,a)t(c,a')=t(c,a+a')\quad\text{and}\quad
t(c,a)^{-1}=t(c,-a).
\end{equation*}
We can identify the lattice $L_{c,b}$ with the corresponding group of
transvections $E_c(L)=\latt{t(c,a)\mid a\in L_c }$. The group $E_c(L)$
is the unipotent radical of the parabolic subgroup associated to $c$.
A direct calculation shows that $t(c, a)$ acts as linear translation
in the affine coordinates~\eqref{affine-z}:
\begin{equation*}
t(c,a)([z])=[z+a].
\end{equation*}
Let $F\in M_k(\Tilde\SO^+(L))$.  Then $F_c(z+a)=F_c(z)$ for all $a\in
L_{c,b}$ and we obtain the Fourier expansion of $F$ at the cusp $c$:
\begin{equation}\label{F-exp}
F_c(z)=\sum_{l\in L_{c,b}^\vee} f(l)\exp{(2\pi i \,(l,z))}.
\end{equation}
The function $F_c(z)$ is holomorphic at the cusp $c$ if the Fourier
coefficient $f(l)$ can be different from $0$ only if its index $l\in
L_{c,b}^\vee$ belongs to the closure of the positive cone $C^+(L_c)$.
Another formulation of this fact is $(l,Y)>0$ for any $Y$ in the
positive cone $C^+(L_c)$.

\begin{remark}\label{rem:changeb}
In general the Fourier expansion~\eqref{F-exp} depends on the choice
of $b$.  For another $b'$ the Fourier coefficients will be different by
a factor $\exp(2\pi i (l,b-b'))$ which is a root of unity (see details
in \cite[\S{2.3}]{GN3}). In particular the Fourier coefficient $f_0$
(the value of $F$ at the cusp $c$) is well-defined.
\end{remark}

\begin{remark}\label{rem:cuspstabiliser}
The stabiliser of the cusp $c$ in $\Tilde\Orth^+(L)$ is isomorphic
to the semi-direct product of $\Tilde\Orth^+(L_c)$ and $E_c(L)$.
\end{remark}

\begin{remark}\label{rem:simplestcusp}
Let $\div_L (c)=N$. Then $\det L=N^2\det L_c$. If $\div_L(c)=1$ then
$c$ can be completed to a hyperbolic plane in $L$ and $L=U\oplus
L_c$. This cusp is called the \emph{simplest cusp}.  If $\div(c)>1$ then
$c/\div(c)+L$ is an isotropic element of the discriminant group $D(L)$
and $|D(L)|=|\det L|$ is divisible by $\div(c)^2$.  If $D(L)$ contains
no non-trivial isotropic elements (in this case the lattice is
maximal) then all $0$-dimensional cusps are equivalent to the simplest cusp.
\end{remark}

\begin{remark}
If $L$ contains two hyperbolic planes then one can use the Eichler
criterion (see Lemma~\ref{lem:eichler}) in order to classify the
$0$-dimensional cusps with respect to the action of
$\Tilde\SO^+(L)$. The orbit of $c$, in this case, is uniquely
determined by the isotropic class $c/\div (c)+L$ in the discriminant
group $D(L)$.

If $D(L)$ does not contain isotropic elements (in particular if
$\det(L)$ is square free) then the modular variety
$\Tilde\Orth^+(L)\setminus \cD_L$ has only one $0$-dimensional cusp.
\end{remark}

\begin{remark}\label{rem:sublattice}
If $[\Orth(L):\Gamma]<\infty$ and $F\in M_k(\Gamma, \chi)$ then we can define
the Fourier expansion using a sublattice $L_c(\Gamma)$ of finite index in $L_c$
corresponding to the group $E_c(L)\cap \ker \chi$.
\end{remark}

\subsection{Properties of quasi pull-back}\label{subsect:qpbproperties}

We may consider the quasi pull-back of other modular forms, not only
$\Phi_{12}$. If $L$ is of signature $(2,n)$ and $r\in L$ is primitive
with $(r,r)<0$ then $\cD_r(L)=\cD_{tr}(L)$ for any $t\in \QQ^\times$,
and we write $r^\perp=r_L^\perp$ if there is no ambiguity.

\begin{theorem}\label{thm:qpbcusp}
Let $L$ be an integral lattice of signature $(2,n)$.  Suppose that the
modular form $F\in M_k(\Tilde{\SO}^+(L))$ vanishes with order $m>0$ on
the rational quadratic divisor $\cD_r(L)$ where $r$ is a primitive
vector in $L$ with $(r,r)<0$.  We define the quasi pull-back of $F$ on
the domain $\cD_{r^\perp}$ of complex dimension $n-1$ by
\begin{equation}\label{quasi-r}
F|_{r^\perp}=\left.{\frac{F(Z)}{(Z,r)^m}} \right\vert_{\cD_{r^\perp}}.
\end{equation}
Then the quasi pull-back is a cusp form of weight $k+m$
\begin{equation*}
F|_{r^\perp}\in S_{k+m}(\Tilde{\SO}^+(r^\perp)).
\end{equation*}
\end{theorem}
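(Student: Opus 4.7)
The plan is to combine three ingredients: an elementary check of weight and holomorphicity, an equivariance argument using the extension $\Tilde\SO^+(r^\perp_L)\hookrightarrow\Tilde\SO^+(L)$ provided by Lemma~7.1, and a Fourier expansion calculation at the $0$-dimensional cusps, which is the substantive step.

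First, the analytic part is routine. The hypothesis that $F$ vanishes to order $m$ along $\cD_r(L)=\cD_{r^\perp}$ means that in a neighbourhood of $\cD_{r^\perp}$ inside $\cD_L^\bullet$ we may write $F(Z)=(Z,r)^m G(Z)$ with $G$ holomorphic, so $F|_{r^\perp}:=G|_{\cD_{r^\perp}^\bullet}$ is a holomorphic function, homogeneous of degree $-(k+m)$ and hence formally of weight $k+m$. For modularity, Lemma~7.1 gives for each $g\in\Tilde\SO^+(r^\perp_L)$ an extension $\tilde g\in\Tilde\SO^+(L)$ that acts as $\id$ on $\ZZ r$; in particular $\tilde g(r)=r$ and $(\tilde gZ,r)=(Z,r)$. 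Combined with $F(\tilde gZ)=F(Z)$ this yields $F|_{r^\perp}(gZ')=F|_{r^\perp}(Z')$ on $\cD_{r^\perp}$, with trivial character.

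The heart of the proof is cuspidality. Fix a primitive isotropic $c\in r^\perp_L$ defining a $0$-dimensional cusp of $\cD_{r^\perp}$, and (replacing $c$ by its primitive lift in $L$ if needed) regard it as a cusp of $\cD_L$ too. Choose $b\in L^\vee$ with $(b,c)=1$; replacing $b$ by $b-\frac{(b,r)}{(r,r)}r$ we may assume $b\perp r$, so $r\in L_{c,b}$ and the orthogonal decomposition
$$L_{c,b}\otimes\QQ=(r^\perp_L)_{c,b}\otimes\QQ\oplus\QQ r$$
lets us write each $z\in L_{c,b}\otimes\CC$ as $z=z'+\lambda r$. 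A direct computation with the tube-domain embedding $z\mapsto[z]$ gives $([z],r)=\lambda(r,r)$, so
$$F|_{r^\perp}(z')=\frac{1}{m!\,(r,r)^m}\frac{\partial^m}{\partial\lambda^m}F([z'+\lambda r])\bigg|_{\lambda=0},$$
i.e.\ the quasi pull-back is, up to a constant, the $m$-th directional derivative along $r$. Plugging in the Fourier expansion $F([z])=\sum_l f(l)e^{2\pi i(l,z)}$ and differentiating term-by-term produces a Fourier expansion of $F|_{r^\perp}$ whose coefficient at $l'\in (r^\perp_L)_{c,b}^\vee$ equals
$$f_{r^\perp}(l')=\frac{(2\pi i)^m}{m!}\sum_{\alpha}f(l'+\alpha r)\,\alpha^m,$$
where $\alpha\in\QQ$ runs over those values with $l'+\alpha r\in L_{c,b}^\vee$.

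To conclude cuspidality it suffices to check $f_{r^\perp}(l')=0$ whenever $l'$ lies on the boundary of the positive cone $\overline{C^+((r^\perp_L)_c)}$, i.e.\ $(l',l')=0$. Holomorphicity of $F$ at $c$ restricts the support of $f$ to $l\in\overline{C^+(L_c)}$, so $(l'+\alpha r,l'+\alpha r)\ge 0$; combined with $r\perp l'$ this reads $(l',l')+\alpha^2(r,r)\ge 0$, and since $(r,r)<0$ we get $\alpha^2\le (l',l')/|(r,r)|$. When $(l',l')=0$ the only solution is $\alpha=0$, but then $\alpha^m=0$ for $m\ge 1$, so $f_{r^\perp}(l')=0$. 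The same argument at a rank-$2$ isotropic sublattice of $r^\perp_L$ handles the $1$-dimensional cusps via the compatible Fourier--Jacobi expansion. The main obstacle I anticipate is the bookkeeping: arranging compatible bases $b$ at each cusp, ensuring the decomposition of $L_{c,b}$ is clean after passing to $\QQ$, and handling the case when the primitive isotropic vector in $r^\perp_L$ is not primitive in $L$; the final Lorentzian cone estimate itself is the short geometric heart of the argument.
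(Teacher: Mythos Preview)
Your proposal is correct and follows essentially the same route as the paper: extend $\Tilde\SO^+(r^\perp)$ into $\Tilde\SO^+(L)$ via Lemma~\ref{lem:stable}, identify the quasi pull-back with the $m$-th Taylor coefficient in the direction of $r$ at a $0$-dimensional cusp $c\in r^\perp$, and then use the cone condition $(l,l)\ge 0$ together with $(r,r)<0$ to force the surviving Fourier indices $l'$ to satisfy $(l',l')>0$; the $1$-dimensional cusps are handled by reducing to the Fourier expansion at an adjacent $0$-dimensional cusp. The only cosmetic difference is that the paper chooses $b\in (r^\perp)^\vee$ from the start rather than adjusting a $b\in L^\vee$, which sidesteps the bookkeeping you anticipate.
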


\begin{proof}
There are two parts to the assertion: that $F|_{r^\perp}$ is a modular
form, and that it vanishes at every cusp.

For the first part, we have the inclusions
\begin{equation*}
r^\perp\oplus \ZZ r\subset L\subset L^\vee\subset
(r^\perp)^\vee \oplus \ZZ \frac{r}{(r,r)},
\end{equation*}
and $r^\perp$ has signature $(2,n-1)$.  We consider the two embeddings
$\cD_{r^\perp}\emb \cD_L$ and $\Tilde{\SO}^+(r^\perp)\emb
\Tilde{\SO}^+(L)$.  We note that any element $g\in
\Tilde{\SO}^+(r^\perp)$ extends to $\tilde g\in \Tilde{\SO}^+(L)$ by
acting trivially on $r$ (see Lemma~\ref{lem:stable}). Therefore any
$\tilde g$ preserves $\cD_{r^\perp}$ and $\tilde g\cdot r=r$.  The
function $F|_{r^\perp}$ is a holomorphic function on $\cD_{r^\perp}$
and it is homogeneous of degree $k+m$.  For any $g\in
\Tilde{\SO}^+(r^\perp)$ we have
\begin{equation*}
\left.\frac{F(\tilde g Z)}{(\tilde gZ,r)^m}\right\vert_{\cD_{r^\perp}}
= \left.\frac{F(Z)}{(Z,r)^m}\right\vert_{\cD_{r^\perp}}.
\end{equation*}
Therefore the quasi pull-back $F|_{r^\perp}$ is
$\Tilde{\SO}^+(r^\perp)$-invariant. If instead $F$ has character $\chi$ then
$F_{r^\perp}$ tranforms according to the induced character
$\chi|_{\Tilde{\SO}^+(r^\perp)}$.
If $n>3$ then using Koecher's principle we conclude that
\begin{equation*}
F|_{r^\perp}\in M_{k+m}(\Tilde{\SO}(r^\perp)).
\end{equation*}
If $n\le 3$ we cannot apply Koecher's principle and instead we must
use the Fourier expansion to check that the quasi pull-back is a
modular form as well as to show the vanishing at the cusps.

We calculate the Fourier expansion of the quasi pull-back at an
arbitrary $0$-dimensional cusp of $r^\perp$.  Let $c\in r^\perp$ be a
primitive isotropic vector in $r^\perp$ (if there are any: if not,
there is nothing more to prove).  We fix a vector $b\in
(r^\perp)^\vee$. Then we can define the two homogeneous domains
$\cH_{c}(r^\perp)$ and $\cH_c(L)$ because the vector $c$ defines also
a $0$-dimensional cusp of $L$.  We write $Z\in \cH_c(L)$ in the form
$Z=Z_1+zr$ where
\begin{equation*}
Z_1\in r^\perp\otimes \CC,\ z=x+iy\in \CC, \ \ (\Im Z_1,\, \Im Z_1)+(r,r)y^2>0.
\end{equation*}
If $[Z]$ is the image of $Z$ in $\cD_c(L)$ (see \eqref{affine-z}) then
$([Z], r)=(r,r)z$.  Therefore the equation of the divisor $\cD_r(L)$
in the affine coordinates $Z=Z_1\oplus zr\in \cH_c$ is $z=0$. The
quasi pull-back $F|_{r^\perp}$ is equal, up to a constant, to the
first non-zero coefficient in the Taylor expansion of $F_{c,b}(Z)$ in
$z$.

Consider the Fourier expansion
\begin{equation*}
F_{c,b}(Z)=\sum_{l\in L_{c,b}^\vee} f(l)\exp{(2\pi i \,(l,Z))}.
\end{equation*}
The modular form $F_{c,b}(Z)$ is holomorphic at the boundary.
Therefore $l$ belongs to the closure of the dual cone, in particular,
$(l,l)\ge 0$.  The vectors $c$ and $b$ are orthogonal to $r$ and the
lattice $(r^\perp)_{c,b}\oplus \ZZ r$ is a sublattice of finite index
in the lattice of translations $L_{c,b}$.  For any $Z_1\in
\cH_{c}(r^\perp)\subset r^\perp\otimes \CC$ we consider $z=x+iy$ such
that $Z=Z_1\oplus zr\in \cH_c(L)$.  (If $y$ is small enough then $(\Im
Z_1, \Im Z_1)>-(r,r)y^2>0$.)  Therefore we can rewrite the Fourier
expansion using this parametrisation
\begin{equation}\label{fouriertube}
F_{c,b}(Z_1+zr)=\sum_{l_1\in (r^\perp)_{c,b}^\vee,\ l_2\in \ZZ r/(r,r)}
f(l_1+l_2)\exp{(2\pi i \,(l_1,Z_1)+ z(l_2, r))}
\end{equation}
where $(l,l)=(l_1+l_2, l_1+l_2)=(l_1,l_1)+(l_2,l_2)\ge 0$.
The Fourier coefficients  of $(F|_{r^\perp})_{c,b}$ are proportional
to the Fourier coefficients of the $m$-th Taylor coefficient
\begin{equation*}
\left.\frac{\partial^m F_{c,b}(Z_1+zr)}{(\partial z)^m}\right \vert_{z=0}.
\end{equation*}
The derivatives of the terms in the Fourier
expansion~\eqref{fouriertube} vanish if $l_2=0$. If $(l_2,l_2)<0$ then
$(l_1,l_1)>0$, so nonzero Fourier coefficients occur only when the
index $l_1$ has positive square.  We have proved this, which is much
stronger than just the vanishing of the zeroth coefficient, for an arbitrary
$0$-dimensional cusp of $r^\perp$, so we have shown that
$F|_{r^\perp}$ is holomorphic at the boundary even for $n=1$ or $n=2$.
Moreover the value of $F|_{r^\perp}$ at an arbitrary $0$-dimensional
cusp is zero.

The Baily-Borel compactification of $\Tilde \SO^+(r^\perp)\setminus
\cD_{r^\perp}$ contains only boundary components of dimension $0$ and
$1$ (the latter only if $r^\perp$ contains a totally isotropic
sublattice of rank two). The Fourier expansion at a $1$-dimensional
cusp $E$ is called Fourier--Jacobi expansion (see \cite{Ba},
\cite{P-S}, \cite{G2}).  The value of a modular form $G$ on the
boundary component $E$ is given by the Siegel operator $\Phi_E(G)$
(see \cite{BB}). This is the zeroth coefficient of the Fourier--Jacobi
expansion which is a modular form with respect to a subgroup of
$\SL(2,\ZZ)$.  

The boundary of a $1$-dimensional cusp $E$ is a union
of some $0$-dimensional cusps.  We consider the Fourier expansion of
$\Phi_E(F|_{r^\perp})$ at a $0$-dimensional cusp $c$ associated to $E$
as a part of the Fourier expansion of $(F|_{r^\perp})_{c,h}$ (see
\cite[Section 5.2]{Ko1} for the Fourier expansion of Fourier--Jacobi
coefficients of modular forms).  The indices of the Fourier
coefficients of $\Phi_E(F|_{r^\perp})$ are of hyperbolic norm
$0$. Therefore $\Phi_E(F|_{r^\perp})\equiv 0$ because, as shown above, all such
coefficients in $(F|_{r^\perp})_{c,h}$ are equal to zero.  Therefore
the quasi pull-back $F|_{r^\perp}$ is a cusp form.
\end{proof}

Using Theorem~\ref{thm:qpbcusp} we prove that the quasi pull-back
defined in Theorem~\ref{thm:qpb} is a cusp form.
\begin{corollary}\label{cor:qpb}
Let $L\emb II_{2,26}$ be a nondegenerate sublattice of signature
$(2,n)$, $n\ge 1$. We assume that the set $R_{-2}(L^\perp)$ of
$(-2)$-roots in $L^\perp$ is non-empty. Then the quasi pull-back
$\Phi|_L\in S_{12+N(L^\perp)}(\Tilde{\Orth}(L),\, \det)$ of the
Borcherds form $\Phi_{12}$ is a cusp form.
\end{corollary}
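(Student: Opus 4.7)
By Theorem~\ref{thm:qpb} we already know $\Phi|_L\in M_{12+N(L^\perp)}(\Tilde{\Orth}(L),\det)$, so only cuspidality is at issue. My strategy is to realise $\Phi|_L$ as the output of an iterated sequence of single-vector quasi pull-backs, each step of which is an instance of Theorem~\ref{thm:qpbcusp} and hence preserves cuspidality.

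First I would pick any $r\in R_{-2}(L^\perp)$. Since $\Phi_{12}$ vanishes to order one along $\cD_r(II_{2,26})$, Theorem~\ref{thm:qpbcusp} applied directly to $F=\Phi_{12}$ with the primitive negative vector $r$ produces a cusp form
\[
F_1:=\left.\frac{\Phi_{12}(Z)}{(Z,r)}\right|_{\cD_{r^\perp}}\in S_{13}\bigl(\Tilde{\SO}^+(r^\perp),\det\bigr).
\]
Now $L\subset r^\perp$ is primitive, and for $Z\in \cD_L\subset\cD_{r^\perp}$ every remaining linear form $(Z,r')$ with $r'\in R_{-2}(L^\perp)\setminus\{\pm r\}$ equals $(Z,\pi(r'))$, where $\pi(r')\in (r^\perp)^\vee$ is the orthogonal projection (note $\pi(r')\neq 0$ as $r'\neq\pm r$). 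Hence $\Phi|_L$ is obtained from $F_1$ by a further quasi pull-back along the (not necessarily distinct) projected negative vectors $\pi(r')$. By Theorem~\ref{thm:qpb} applied to the embedding $r^\perp\hookrightarrow II_{2,26}$, the divisor of $F_1$ is the union $\bigcup\cD_{\pi(r')}(r^\perp)$, with multiplicity at each $\cD_{\pi(r')}$ equal to the number of $\pm$-pairs of $(-2)$-roots of $II_{2,26}$ projecting to the same ray; consequently every successive step is a legitimate instance of Theorem~\ref{thm:qpbcusp} with the appropriate vanishing order.

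Iterating, at each stage I would choose a primitive representative in the current sublattice of the projection of an as-yet-unused root, apply Theorem~\ref{thm:qpbcusp} with the correct vanishing order, and obtain a cusp form on a codimension-one smaller primitively embedded sublattice still containing $L$. The procedure strictly decreases either the number of remaining roots or the ambient dimension and so terminates on $\cD_M$ for some primitive intermediate sublattice $M\supseteq L$ of $II_{2,26}$. If $M=L$ we are done; otherwise the ordinary pullback from $\cD_M$ to $\cD_L$ yields $\Phi|_L$, and this last step preserves cuspidality by the Fourier-expansion argument from the proof of Theorem~\ref{thm:qpbcusp}, since every zero- or one-dimensional cusp of $\cD_L$ sits inside a corresponding cusp of $\cD_M$ where the ambient cusp form already vanishes.

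\textbf{Main obstacle.} The principal difficulty is the multiplicity bookkeeping: several distinct $(-2)$-roots of $L^\perp$ can project to the same direction in an intermediate lattice, forcing one to invoke Theorem~\ref{thm:qpbcusp} with vanishing order $m>1$ and to verify that these orders add up across the iteration to $N(L^\perp)$ so that the final weight matches $12+N(L^\perp)$ as predicted by Theorem~\ref{thm:qpb}. A secondary complication arises when the $(-2)$-roots of $L^\perp$ do not span $L^\perp\otimes\QQ$: the iteration then halts on a strictly larger primitive sublattice $M\supsetneq L$ and one must justify, via Fourier expansions at the $0$- and $1$-dimensional cusps exactly as in the proof of Theorem~\ref{thm:qpbcusp}, that the final ordinary restriction to $\cD_L$ preserves cuspidality.
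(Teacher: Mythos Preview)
Your proposal is correct and follows essentially the same route as the paper: iterate single-vector quasi pull-backs via Theorem~\ref{thm:qpbcusp}, descending through a chain $II_{2,26}\supset M_1\supset M_2\supset\cdots$ until either you reach $L$ or the current cusp form no longer vanishes identically on $\cD_L$, in which case an ordinary pull-back finishes the job.

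Two minor remarks. First, your ``main obstacle'' about multiplicity bookkeeping and the final weight is not actually an obstacle: Theorem~\ref{thm:qpb} already identifies $\Phi|_L$ and its weight $12+N(L^\perp)$, so the iteration need only establish cuspidality, not recompute the weight. Second, for the terminal ordinary pull-back the paper simply invokes the standard fact that the pull-back of a cusp form is a cusp form, rather than re-running the Fourier-expansion argument; your version works too but is more than is needed.
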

\begin{proof}
To prove the corollary we divide the procedure of the quasi pull-back
of Theorem~\ref{thm:qpb} into finitely many steps.

First, we take a root $r_1\in R_{-2}(L^\perp)\ne \emptyset$ and define
$M_1=(r_1)^\perp_{II_{2,26}}$. The form $\Phi_{12}$ has a zero of order
$1$ on $\cD_{r_1}(II_{2,26})$.  According to Theorem~\ref{thm:qpbcusp}
we have
\begin{equation*}
\Phi|_{M_1}=
\left.\frac{\Phi_{12}(Z)}{(r_1,Z)}\right\vert_{M_1}\in S_{13}(\Tilde{\Orth}^+(M_1),\det).
\end{equation*}
We note that the cusp form $\Phi|_{M_1}$ might have divisors different
from the $(-2)$-divisors of $\Phi_{12}$. If $s\in R_{-2}(L^\perp)$
such that $(r_1,s)\ne 0$ then $s_1=\pr_{M_1^\vee}(s)$ has negative
norm $-2<(s_1,s_1)<0$ and $\Phi|_{M_1}$ vanishes along
$\cD_{s_1}(M_1)$ according to Lemma~\ref{lem:LdivDL}.  Therefore the
divisors of $\Phi|_{M_1}$ are rational quadratic divisors defined by
some vectors in $v\in M_1^\vee$.  But $\cD_v(M_1)=\cD_{tv}(M_1)$ and
we can fix $t$ in order to have a primitive vector $tv\in M_1$.

Second, we consider the lattice $L^\perp\cap M_1$. Suppose first that
there is no vector $v\in L^\perp\cap M_1$ such that $\Phi|_{M_1}$
vanishes on $\cD_v(M_1)$.  In this case $\Phi|_L$ is equal to the
pull-back of $\Phi|_{M_1}$ on $\cD_L$.  The pull-back of a cusp form
is a cusp form and the proof is finished.

Otherwise, if $r_2 \in L^\perp\cap M_1$ is a vector such that
$\Phi|_{M_1}$ vanishes on $\cD_{r_2}(M_1)$, then we define
$M_2=(r_2)^\perp_{M_1}=\latt{r_1,r_2}^\perp_{II_{2,26}}$.  As in
Theorem~\ref{thm:qpbcusp} we have
\begin{equation*}
\Phi|_{M_2}=
\left.\frac{\Phi|_{M_1}}{(r_2,Z)^m}\right\vert_{M_2}\in S_{13+m}(\Tilde{\Orth}^+(M_2),\det)
\end{equation*}
where $m\ge 1$ is the degree of zero of $\Phi|_{M_1}$ on
$\cD_{r_2}(M_1)$.

The function $\Phi|_{M_2}$ is a modular form vanishing along some
rational quadratic divisors.  We can repeat the procedure described
above for $M_2$.  After a finite number of steps we get the cusp form
$\Phi|_L$.
\end{proof}

\begin{proposition}\label{prop:qpbvanishing}
Let $L$ be one of the lattices $L_{2d}$, $L_{2,2d}$ or $L_{Q(d)}$
defined in \eqref{L2d}--\eqref{L2d-nonsplit}. We assume that there
exists an embedding $L\emb II_{2,26}$ such that the weight of the
quasi pull-back $\Phi|_L$ is smaller than the dimension of
$\cD_L$. Then $\Phi|_L$ vanishes along the ramification divisor of
the modular projection
\begin{equation*}
\pi\colon  \cD_L \to \Tilde\Orth^+(L)\setminus \cD_L.
\end{equation*}
\end{proposition}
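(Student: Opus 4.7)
The plan is to verify component by component that every irreducible piece of the branch divisor $\Bdiv(\pi_\Gamma)=\bigcup \cD_r(L)$, where $r$ ranges over primitive vectors of $L$ with $\pm\sigma_r\in\Tilde\Orth^+(L)$, is contained in the divisor of the cusp form $\Phi|_L$. Using Proposition~\ref{prop:reflvectors} together with Corollary~\ref{cor:reflK3} (and its direct analogues for $L_{2,2d}$ and $L_{Q(d)}$, deduced from the discriminant form computations in Example~\ref{ex:K3n}), the reflective vectors split into the $(-2)$-vectors, for which $\sigma_r\in\Tilde\Orth^+(L)$, and a small number of further orbits of vectors of larger norm for which only $-\sigma_r\in\Tilde\Orth^+(L)$.

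First I treat the $(-2)$-vectors. Since the embedding $L\emb II_{2,26}$ is an embedding of integral lattices, a primitive $r\in L$ with $r^2=-2$ is automatically a primitive $(-2)$-root of $II_{2,26}$ with $\pr_{L^\vee}(r)=r$ and trivial component in $(L^\perp)^\vee$. By Lemma~\ref{lem:LdivDL} we have $\cD_r(L)=\cD_r(II_{2,26})\cap\cD_L$, so the vanishing of $\Phi_{12}$ along $\cD_r(II_{2,26})$ transfers directly to $\Phi|_L$ via the quasi pull-back construction of Theorem~\ref{thm:qpb}. (Equivalently, one may observe that $\Phi|_L$ carries the character $\det$ and that $\sigma_r$ fixes $\cD_r$ pointwise, forcing $\Phi|_L=-\Phi|_L$ there.)

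Next I treat each remaining reflective orbit. The strategy is to exhibit, for every such $r$, a $(-2)$-root $\rho\in II_{2,26}$ whose orthogonal projection onto $L^\vee$ is a non-zero scalar multiple of $r$, so that $\cD_r(L)$ appears among the zero divisors of $\Phi|_L$. I write $\rho=r/\divv(r)+\beta$ with $\beta\in(L^\perp)^\vee$; the class of $\beta$ in $D(L^\perp)$ is forced to match that of $r/\divv(r)$ in $D(L)$ under the gluing isotropic subgroup $H\subset D(L)\oplus D(L^\perp)$ defining $II_{2,26}$ as an overlattice of $L\oplus L^\perp$, by \cite[Proposition~1.5.1]{Nik2}, and the equation $\rho^2=-2$ then prescribes $\beta^2=-2-(r/\divv(r))^2$. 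Once such a $\beta$ is produced, Lemma~\ref{lem:LdivDL} identifies $\cD_\rho(II_{2,26})\cap\cD_L$ with $\cD_r(L)$, and the vanishing of $\Phi_{12}$ on $\cD_\rho(II_{2,26})$ yields the required vanishing of $\Phi|_L$ on $\cD_r(L)$.

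The principal difficulty is establishing the existence of $\beta$ for every non-$(-2)$ reflective vector, and this is where the hypothesis $12+N(L^\perp)<n$ enters. That inequality bounds the number of $(-2)$-roots of the negative-definite rank $26-n$ lattice $L^\perp$ from above; combined with the constraints on $L^\perp$ coming from its rank and from the discriminant-form identity $q_{L^\perp}\simeq -q_L$ modulo $H$, this restricts the isomorphism type of $L^\perp$ to a short finite list. On that finite list, the required $\beta$ of prescribed discriminant class and norm can be exhibited by direct computation in each case: for the $\Kthree$ lattice $L_{2d}$ this is carried out in detail in \cite{GHSK3}, and the arguments for the split and non-split $\Kthree^{[2]}$ lattices $L_{2,2d}$ and $L_{Q(d)}$ follow the same pattern, using the explicit quadratic form descriptions of~\eqref{L2d-split} and~\eqref{L2d-nonsplit}.
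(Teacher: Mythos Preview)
Your treatment of the $(-2)$-components is fine and matches the paper: the character $\det$ forces $\Phi|_L$ to vanish on every $\cD_r(L)$ with $\sigma_r\in\Tilde\Orth^+(L)$.

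For the remaining components (those with $-\sigma_r\in\Tilde\Orth^+(L)$) your strategy has a genuine gap. You propose to pin down $L^\perp$ to a finite list using the root bound $N(L^\perp)<n-12$ together with the discriminant form, and then to exhibit the auxiliary vector $\beta\in(L^\perp)^\vee$ case by case. But the list is not finite independently of $d$: since $II_{2,26}$ is unimodular, $|\det L^\perp|=|\det L|$, and this equals $2d$, $4d$, or $d$ in the three cases under consideration. So as $d$ varies you get infinitely many non-isomorphic candidates for $L^\perp$, and there is no uniform case check to perform. The reference to \cite{GHSK3} does not rescue this, because that paper does not construct $\beta$ explicitly; it uses the argument described below.

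The paper's route avoids the problem by passing from $L^\perp$ to $(L_r)^\perp_{II_{2,26}}$, via the transitivity $(\Phi|_L)|_{L_r}=\Phi|_{L_r}$. The point is that for the specific reflective vectors $r$ in question, Lemma~\ref{lem:hperp} together with the known values of $r^2$ and $\divv(r)$ forces $|\det (L_r)^\perp_{II_{2,26}}|$ to be bounded by $8$, \emph{independently of $d$}. Now the Conway--Sloane tables give a genuinely finite list of negative-definite lattices of the relevant rank (7 or 8) and determinant at most $8$, and each of them carries at least $60$ roots. Comparing with the hypothesis, which gives $|R_{-2}(L^\perp)|\le 14$, one concludes that $\Phi|_L$ vanishes along $\cD_r(L)$ to order at least $23$ (and in fact much more in the $\Kthree$ case). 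This not only closes the gap but yields a quantitative vanishing order rather than mere order~$\ge 1$.
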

\begin{proof}
We give here a proof which works for all cases including the moduli
spaces of polarised holomorphic symplectic O'Grady varieties (see
\cite[Corollary 4.6]{GHSdim21}).

The components of the branch divisor of $\pi$ are $\cD_r(L)$ where
$r\in L$ and $\sigma_r$ or $-\sigma_r$ is in $\Tilde\Orth^+(L)$ (see
\cite[Corollary 2.13]{GHSK3} and Equation~\eqref{br-div} above).  If
$\sigma_r\in \Tilde\Orth^+(L)$, then $\Phi|_L$ vanishes along
$\cD_r(L)$ because $\Phi|_L$ is modular with character $\det$.  We
have to prove that $\Phi|_L$ vanishes also on $\cD_r(L)$ with
$-\sigma_r\in \Tilde\Orth^+(L)$.  To prove this we use the
transitivity of the quasi pull-back construction.  Let $r\in L\emb
II_{2,26}$ and $L_r=r^\perp_L$. Then
\begin{equation*}
(\Phi|_L)|_{L_r}=\Phi|_{L_r}.
\end{equation*}
We have to consider three cases.
\smallskip

1) Let $r\in L=L_{2d}$. Then $\rank L_r=18$.  According to
Corollary~\ref{cor:reflK3} $\det |(L_r)^{\perp}_{II_{2,26}}|=1$ or
$4$.  In \cite[Table I]{CS} one can find all classes of the
indecomposable lattices of small rank and determinant. Analysing that
table we find three classes of lattices of rank $8$ of determinant
$1$, $2$ or $4$:
\begin{equation*}
E_8\ \  (|R(E_8)|=240), \quad E_7\oplus A_1\ \  (|R(E_7\oplus
A_1)|=128),\quad D_8\ \ (|R(D_8)|=112).
\end{equation*}
Therefore $\Phi_{12}$ has a zero of order at least $56$ along
$\cD_r(L)$.
  The modular form $\Phi|_L$ is of weight $k<19$. Therefore
$R(L^\perp_{II_{2,26}})$ has at most $12$ roots. Therefore $\Phi|_L$
vanishes on $\cD_r(L_{2d})$ with order at least $50$.
\smallskip

2) Let $r\in L=L_{2,2d}$. Then $\rank L_r=19$.  We described
reflective vectors $r$ with $-\sigma_r\in \Tilde\Orth^+(L)$ in
Proposition~\ref{prop:reflvectors}.  There are three possible cases:
\begin{equation*}
r^2=2d, \ \div(r)=d\  {\rm or}\  2d
\quad{\rm and }\quad r^2=d=\div(r)\ (d\ {\rm  is\  odd}).
\end{equation*}
According to Lemma~\ref{lem:hperp}, $\det L_r=2$, $4$ or $8$.
Therefore the rank $7$ lattice $(L_r)^{\perp}_{II_{2,26}}$ has the
same determinant.  According to \cite[Table I]{CS} there are six
possible classes of such lattices:
\begin{equation*}
E_7,\ D_7,\ D_6\oplus A_1,\ A_7,\  [D_6\oplus \latt{8}]_2,\
[E_6\oplus \latt{24}]_3,
\end{equation*}
where $[M]_n$ denotes an overlattice of order $n$ of $M$.  Any of
these lattices contains at least $60$ roots ($|R(D_6)|=60$).  The
modular form $\Phi|_L$ is of weight $k<20$. Therefore
$R(L^\perp_{II_{2,26}})$ has at most $14$ roots and $\Phi|_L$ vanishes
on $\cD_r(L_{2, 2d})$ with order at least $23$.
\smallskip

3) Let $r\in L=L_{Q(d)}$. In this case $d\equiv 3\mod 4$, and the
discriminant group $D(L_{Q(d)})$ is cyclic of order $d$.  Using
Proposition~\ref{prop:reflvectors} and Lemma~\ref{lem:hperp} we obtain
that only one class for $(L_r)^{\perp}_{II_{2,26}}$ is possible.  This
is the lattice $E_7$.  We finish the proof as above.
\end{proof}

\section{Arithmetic of root systems}\label{sect:rootsystems}

In this section we finish the proof of Theorem~\ref{thm:K3gt}. To prove
it we use the low weight cusp form trick (Theorem~\ref{thm:gt}) by
using the quasi pull-back of the Borcherds modular form $\Phi_{12}$ to
construct cusp forms of small weight with large divisor.

According to Theorem~\ref{thm:qpb}, Theorem~\ref{thm:qpbcusp} and
Proposition~\ref{prop:qpbvanishing} the main point for us is the
following.  We want to know for which $2d>0$ there exists a vector
\begin{equation}\label{orth2}
  l\in E_8,\ l^2=2d,\ l\ \text{ is orthogonal to at least $2$ and at
    most $12$ roots.}
\end{equation}
To solve this problem we use the combinatorial geometry of the root
system $E_8$ together with the theory of quadratic forms.  First we
give some properties of the lattice $E_8$ and we show how one can
construct the first polarisations of general type in
Theorem~\ref{thm:K3gt}.  After that we outline the answer to the
question in~\eqref{orth2}.

\subsection{Vectors in $E_8$ and $E_7$}\label{subsect:E8}

By definition, the lattice $D_8$ is an even sublattice of the
Euclidean lattice~$\ZZ^8$
\begin{equation*}
D_8=\{l=(x_1,\dots,x_8)\in \ZZ^8\,|\, x_1+\dots+x_8\in 2\ZZ\}.
\end{equation*}
The determinant of $D_8$ is equal to $4$.
We denote by $e_1,\dots,e_8$ the Euclidean basis of $\ZZ^8$
($(e_i, e_j)=\delta_{ij}$).
The lattice $E_8$ is the double  extension of $D_8$:
\begin{equation*}
E_8=D_8\cup (\frac{e_1+\dots+e_8}{2}+D_8).
\end{equation*}
We consider the Coxeter basis of simple roots in $E_8$ (see
\cite{Bou})

\noindent\hskip-.5cm\begin{picture}(300,10)(55,10)
\put(100,0){\circle*{5}}
\put(95,10){$\alpha_1$}
\put(100,0){\vector(1,0){42}}
\put(140,0){\circle*{5}}
\put(135,10){$\alpha_3$}
\put(140,0){\vector(1,0){42}}
\put(180,0){\circle*{5}}
\put(175,10){$\alpha_4$}
\put(180,1){\vector(0,-1){43}}
\put(180,-40){\circle*{5}}
\put(175,-50){$\alpha_2$}
\put(180,0){\vector(1,0){42}}
\put(220,0){\circle*{5}}
\put(215,10){$\alpha_5$}
\put(220,0){\vector(1,0){42}}
\put(260,0){\circle*{5}}
\put(255,10){$\alpha_6$}
\put(260,0){\vector(1,0){42}}
\put(300,0){\circle*{5}}
\put(295,10){$\alpha_7$}
\put(300,0){\vector(1,0){42}}
\put(340,0){\circle*{5}}
\put(335,10){$\alpha_8$}
\end{picture}
\vskip2cm
\noindent
where
\begin{gather*}
\alpha_1=\frac 1{2}(e_1+e_8)-\frac 1{2}(e_2+e_3+e_4+e_5+e_6+e_7),\\
\alpha_2=e_1+e_2,\quad \alpha_k=e_{k-1}-e_{k-2}\ \ (3\le k\le 8)
\end{gather*}
and $E_8=\latt{\alpha_1,\dots \alpha_8}_\ZZ$.  The lattice $E_8$
contains $240$ roots.  We recall that any root is a sum of simple
roots with integral coefficients of the same sign.  The fundamental
weights $\omega_j$ of $E_8$ form the dual basis, so
$(\alpha_i,\omega_j)=\delta_{ij}$. The formulae for the weights are
given in \cite[Tabl. VII]{Bou}. We shall use the Cartan matrix of the
dual basis
\begin{equation}\label{Cartan-w}
((\omega_i,\omega_j))=\left(
\begin{matrix}
4&5&7&10&8&6&4&2\\
5&8&10&15&12&9&6&3\\
7&10&14&20&16&12&8&4\\
10&15&20&30&24&18&12&6\\
8&12&16&24&20&15&10&5\\
6&9&12&18&15&12&8&4\\
4&6&8&12&10&8&6&3\\
2&3&4&6&5&4&3&2
\end{matrix}
\right).
\end{equation}
Let us assume that $l_{2d}\in E_8$ and $(l_{2d})_{E_8}^\perp$ contains
exactly $12$ roots. We consider two cases
\begin{equation*}
R((l_{2d})_{E_8}^\perp)=A_2\oplus 3A_1\quad{\rm or }\quad
R((l_{2d})_{E_8}^\perp)= A_2\oplus A_2.
\end{equation*}
There are four possible choices of the subsystem $A_2\oplus 3A_1$
inside the Dynkin diagram of $E_8$. There are four choices for a copy
of $A_2$:
\begin{equation*}
A_2=\latt{\alpha_1, \alpha_3},\ \
\latt{\alpha_2, \alpha_4},\ \
\latt{\alpha_5, \alpha_6},\ \
\latt{\alpha_7, \alpha_8}.
\end{equation*}
Then the three pairwise orthogonal copies of $A_1$ are defined
automatically.  For example, if $A_2^{(1)}=\latt{\alpha_5, \alpha_6}$
then $3A_1^{(1)}=\latt{\alpha_2}\oplus \latt{\alpha_3}\oplus
\latt{\alpha_8}$.  The sum $A_2^{(1)}\oplus 3A_1^{(1)}$ is the root
system of the orthogonal complement of the vector
$l_{5,6}=\omega_1+\omega_4+\omega_7\in E_8$.  In fact, if
$r=\sum_{i=1}^8 x_i \alpha_i$ is a positive root ($x_i\ge 0$) then
$(r,l_{5,6})=x_1+x_4+x_7=0$. Therefore $x_1=x_4=x_7=0$ and $r$ belongs
to $A_2^{(1)}\oplus 3A_1^{(1)}$ (see the Dynkin diagram of $E_8$
above). Using the matrix~\eqref{Cartan-w} we find $l_{5,6}^2=46$.
Doing similar calculations with
\begin{equation*}
l_{1,3}=\omega_4+\omega_6+\omega_8,\quad
l_{2,4}=\omega_3+\omega_5+\omega_7,\quad
l_{7,8}=\omega_1+\omega_4+\omega_6
\end{equation*}
we obtain polarisations for $d=50$, $54$ and $57$
\begin{equation*}
l_{1,3}^2=2\cdot 50, \qquad l_{2,4}^2=2\cdot 54, \qquad l_{7,8}^2=2\cdot 57.
\end{equation*}
To get a good vector for $d=52$ we consider the lattice $M=A_2\oplus
A_2=\latt{\alpha_3, \alpha_4}\oplus \latt{\alpha_6, \alpha_7}$ in
$E_8$. Then $M$ is the root system of the orthogonal complement of
$l_{104}=\omega_1+\omega_2+\omega_5+\omega_8$ and $l_{104}^2=2\cdot
52$.

In this way we construct the first five polarisations of general
type from the list of Theorem~\ref{thm:K3gt}.  These elementary
arguments, similar to the arguments of Kondo in \cite{Ko2}, do not
give the whole list but only a few early cases.  A sufficient
condition for existence of vectors satisfying~\eqref{orth2} is
given in the theorem below.

\begin{theorem}\label{thm:mainineq}
A vector $l$ satisfying~\eqref{orth2} does exist if the inequality
\begin{equation}\label{mineq}
4N_{E_7}(2d)>28N_{E_6}(2d)+63N_{D_6}(2d)
\end{equation}
is valid, where $N_L(2d)$ denotes the number of representations of
$2d$ by the lattice $L$.
\end{theorem}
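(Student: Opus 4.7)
The plan is to find the required $l$ by a double-counting argument. For $l\in E_8$ with $l^2=2d$, set $R(l)=\{r\in\Delta(E_8)\mid (l,r)=0\}$. I would compare the total incidence count $\sum_l|R(l)|$ against an upper bound for the contribution of those $l$ for which $|R(l)|\ge 14$, expressed through $N_{E_6}(2d)$ and $N_{D_6}(2d)$.

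First I would establish three counting identities by fibring pairs $(l,\cdot)$ with $l$ orthogonal to the second entry: $\sum_l|R(l)|=240\,N_{E_7}(2d)$, $\sum_l\#\{A_2\subset R(l)\}=1120\,N_{E_6}(2d)$, and $\sum_l\#\{(A_1\oplus A_1)\subset R(l)\}=3780\,N_{D_6}(2d)$. These use that $E_8$ has $240$ roots (each with orthogonal complement $E_7$), contains exactly $1120$ sub-root systems of type $A_2$ (each with orthogonal complement $E_6$), and contains exactly $3780$ sub-root systems of type $A_1\oplus A_1$ (each with orthogonal complement $D_6$). Multiplying the hypothesis by $60$ yields $240\,N_{E_7}(2d)>1680\,N_{E_6}(2d)+3780\,N_{D_6}(2d)$, which by these identities becomes
\[
\sum_l|R(l)|\;>\;\sum_l\Bigl(\tfrac{3}{2}\#A_2(R(l))+\#(A_1\oplus A_1)(R(l))\Bigr).
\]
Hence some $l$ with $l^2=2d$ satisfies $|R(l)|>\tfrac{3}{2}\#A_2(R(l))+\#(A_1\oplus A_1)(R(l))$; this already excludes $|R(l)|=0$.

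The argument is then completed by the combinatorial lemma: for every sub-root system $R\subset\Delta(E_8)$ with $|R|\ge 14$,
\[
|R|\;\le\;\tfrac{3}{2}\,\#A_2(R)+\#(A_1\oplus A_1)(R).
\]
Granting this, the $l$ above must satisfy $|R(l)|\le 12$, producing the vector required by~\eqref{orth2}.

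The hard part will be the lemma, which I would verify case by case using the Borel--de Siebenthal classification of sub-root systems of $E_8$ (all of which are direct sums of $ADE$-type irreducibles, since $E_8$ is simply laced). The two counts $\#A_2(R)$ and $\#(A_1\oplus A_1)(R)$ decompose additively over components, together with cross contributions to $\#(A_1\oplus A_1)$ from pairs of orthogonal $A_1$'s lying in different components. The tightest cases are the four sub-root systems of $\Delta(E_8)$ with exactly $14$ roots, namely $A_3\oplus A_1$, $A_2\oplus A_2\oplus A_1$, $A_2\oplus 4A_1$ and $7A_1$, for which direct computation yields $(\#A_2,\#(A_1\oplus A_1))=(4,9),(2,15),(1,18),(0,21)$ respectively, so the right-hand side takes values $15$, $18$, $\tfrac{39}{2}$, $21$, each at least $14$. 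For all larger $R$ in the finite list the margin widens, since $\#(A_1\oplus A_1)\ge\binom{k}{2}$ as soon as $kA_1\subset R$, and additional $A_2$ or $A_3$ components further increase $\#A_2(R)$.
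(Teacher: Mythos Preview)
Your argument is correct and, as far as one can tell from the brief sketch given here and from the detailed proof in \cite[Section~7]{GHSK3}, it is organised differently from the paper's. The paper fixes a root $\alpha\in E_8$ and searches for $l$ inside the copy of $E_7=\alpha^\perp$; the bouquet decomposition $\Delta(E_8)=\Delta(E_7)\sqcup\{\pm\alpha\}\sqcup\bigl(28\text{ petals of }4\text{ roots each}\bigr)$ then yields, for each such $l$, the exact formula $|R(l)|=2+4a(l)+2b(l)$, where $a(l)$ counts petals orthogonal to $l$ (each forcing $l\in E_6$) and $b(l)$ counts $A_1$'s in $E_7$ orthogonal to $l$ (each forcing $l\in D_6$). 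This is the source of the constants $28$ and $63$ in~\eqref{mineq}, and one reaches a good $l$ by bounding the total weighted badness. Your approach instead works globally over $E_8$: three clean double-counting identities (with the constants $240$, $1120$, $3780$ arising from the numbers of $A_1$-, $A_2$- and $A_1\oplus A_1$-subsystems of $E_8$) plus an averaging step reduce everything to a purely combinatorial lemma about root systems with at least $14$ roots.

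What each approach buys: the paper's method gives structural control, since one sees exactly where the extra orthogonal roots come from, and the coefficients in~\eqref{mineq} acquire a transparent meaning. Your method is more uniform and the counting part is essentially trivial; the price is the case analysis in the lemma. That analysis is genuinely finite and your identification of the tight boundary cases ($A_3\oplus A_1$, $2A_2\oplus A_1$, $A_2\oplus 4A_1$, $7A_1$) is correct---indeed, setting $f(R_i)=\tfrac{3}{2}\#A_2(R_i)+\#(A_1\oplus A_1)(R_i)-|R_i|$ for irreducible components, one finds $f<0$ only for $A_1,A_2,A_3$, and the cross-terms $n_in_j$ in $\#(A_1\oplus A_1)$ compensate in every admissible combination of rank~$\le 7$ with $|R|\ge 14$, with the smallest margin (equal to~$1$) occurring for $A_3\oplus A_1$. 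One small comment: you need the lemma only for root systems of the form $R(l)$ with $l\in E_8$, hence of rank~$\le 7$, so invoking Borel--de Siebenthal is more than necessary; a direct enumeration of simply-laced $ADE$ sums of rank~$\le 7$ suffices.
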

\begin{proof}
We use bouquets of copies of $A_2$ in $E_8\setminus E_7$.  The root
system $R(E_8)$ is a disjoint union of $126$ roots of $E_7$ and the
bouquet of $28$ copies of $A_2$ centred in $A_1$.  This fact explains
the coefficients $28$ and $63$ in the right hand side
of~\eqref{mineq}. See \cite[Section 7]{GHSK3} for more details.
\end{proof}
In \cite{GHSK3} we found explicit formulae for the numbers of
representations and we proved that
\begin{equation*}
N_{E_7}(2m)>\frac{24\pi^3}{5\zeta(3)}, \qquad
21m^2>N_{D_6}(2d),\qquad 103.69m^2> N_{E_6}(2d).
\end{equation*}
These inequalities give a finite set of $d$ for which \eqref{mineq} is
not valid. Analysing the corresponding theta series we found the set
of all such $d$, containing $131$ numbers.  The five tables in
\cite[Section 7]{GHSK3} and the argument above for $d=52$ give
the list of polarisations of general type of Theorem~\ref{thm:K3gt}.

The geometric genus of $\cF_{2d}$ is positive if there exists a cusp
form of canonical weight $19$.  For each $d$ not in the general type
list but satisfying $d\ge 40$ and $d\ne 41$, $44$, $45$ or $47$, there
exists a vector $h_{2d}\in E_8$ of length $2d$ orthogonal exactly to
$14$ roots.  The corresponding quasi pull-back is a cusp forms of
canonical weight.  For $d=42$, $43$, $51$, $53$ and $55$ such cusp
forms were constructed by Kondo in \cite{Ko2}. For other $d$
see~\cite{GHSK3}.

A similar arithmetic method applied to $E_7$ (see \cite[Section
  4]{GHSsymp}) gives the proof of Theorem~\ref{thm:splitgt}. There is
a significant technical difficulty in the case of $E_7$.  The proof
involves estimating the number of ways of representing certain
integers by various root lattices of odd rank.  In \cite[Section
  5]{GHSsymp}) we gave a new, clear, explicit version of Siegel's
formula for this number in the odd rank case. It may be expressed
either in terms of Zagier $L$-functions or in terms of the H.~Cohen
numbers. For example we obtained a new, very short formula for the
number of representations by $5$ squares (see \cite[Section
  4]{GHSsymp} for the details).

\subsection{Binary quadratic forms in $E_8$}\label{subsect:QE8}

We saw in Example~\ref{ex:debarrevoisin} that the results of
Debarre and Voisin \cite{DV} imply that the moduli space
$\cM^{[2],\text{non-split}}_{2\cdot 11}$ of Beauville degree $d=11$ is
unirational. We note that $\cM^{[2],\text{split}}_{2\cdot 11}$, which
is a finite covering of $\cM^{[2],\text{non-split}}_{2\cdot 11}$ by
\eqref{2d-Q(d)}, has non-negative Kodaira dimension.
Theorem~\ref{thm:splitgt} shows that there can be at most $11$
exceptional split polarisations of non general type.
Proposition~\ref{prop:nonsplitgt} hints that one can expect a theorem
for the non-split case, which we hope to prove in the future, similar
to Theorem~\ref{thm:splitgt}.

We recall that the Beauville degree $d\equiv -1 \mod 4$ if the
polarisation $h_{2d}$ is of non-split type.  For small $d$ we can
calculate the class of the orthogonal complement of $Q(d)$.  According
to \cite[Table $0$]{CS} the rank $6$ lattice $Q(d)^\perp_{E_8}$ of
determinant $d$ contains at least $24$ roots for $d=3$, $7$, $11$ and
$15$.  One can continue this analysis but we propose below a simple
algorithm which gives us the first ``good" embeddings of $Q(d)$ in
$E_8$.

\begin{proposition}\label{prop:nonsplitgt}
The moduli spaces $\cM^{[2],\text{non-split}}_{2\cdot 39}$
and $\cM^{[2],\text{non-split}}_{2\cdot 47}$  are of general type.
\end{proposition}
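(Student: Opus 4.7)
The plan is to apply the low weight cusp form trick (Theorem~\ref{thm:gt}) to the modular variety $\cF_{L_{Q(d)}}(\Tilde\Orth^+(L_{Q(d)}))$ associated with non-split polarisations of $\Kthree^{[2]}$-type, by constructing, via quasi pull-back of the Borcherds form $\Phi_{12}$, a cusp form of small weight vanishing on the branch divisor. Recall from Equation~\eqref{L2d-nonsplit} that $L_{Q(d)}=2U\oplus 2E_8(-1)\oplus Q(d)$ has signature $(2,20)$, so in Theorem~\ref{thm:gt} we have $n=20\ge 9$ and we need a non-zero cusp form of weight $a<20$ with the required vanishing. For this we fix an embedding $L_{Q(d)}\emb II_{2,26}=2U\oplus 3E_8(-1)$ that sends the first two summands $2U\oplus 2E_8(-1)$ identically, and embeds the rank-two lattice $Q(d)$ into the third copy of $E_8(-1)$. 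Then $L_{Q(d)}^\perp$ in $II_{2,26}$ is a negative-definite rank $6$ lattice, namely the orthogonal complement in $E_8(-1)$ of the image of $Q(d)$, and by Theorem~\ref{thm:qpb} together with Corollary~\ref{cor:qpb} the quasi pull-back $\Phi|_{L_{Q(d)}}$ is a cusp form of weight $12+N(L_{Q(d)}^\perp)$ with character $\det$, provided $N(L_{Q(d)}^\perp)=\#R_{-2}(L_{Q(d)}^\perp)/2\ge 1$.

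The central arithmetic task is therefore to exhibit, for $d=39$ and $d=47$, a primitive embedding of $Q(d)$ into $E_8(-1)$ whose orthogonal complement contains at least $2$ and at most $14$ roots (equivalently $1\le N\le 7$), so that the resulting cusp form has weight $13\le k\le 19<n=20$. Equivalently, working with the positive-definite form $\Tilde Q(d)=\left(\begin{smallmatrix}2&-1\\-1&(d+1)/2\end{smallmatrix}\right)$, I will look for pairs $(v_1,v_2)\in E_8\times E_8$ with $v_1^2=2$, $(v_1,v_2)=-1$ and $v_2^2=(d+1)/2$, i.e.\ $20$ for $d=39$ and $24$ for $d=47$. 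Since $v_1$ is a root, I may take $v_1$ to be a fixed simple root of $E_8$ and write $v_2=-v_1/2+w$ with $w$ in the glue coset of $E_7\emb v_1^\perp$ of the correct parity; the constraint $v_2^2=(d+1)/2$ becomes $w^2=d/2$ in $E_7$ (or its appropriate rescaling), and the orthogonal complement $\latt{v_1,v_2}^\perp_{E_8}$ is the orthogonal complement of $w$ inside $E_7$. Thus the problem reduces to choosing a suitable vector of a prescribed norm in $E_7$ whose orthogonal complement (a rank $6$ lattice) has few roots. I will produce such a $w$ by decomposing $E_7$ along standard sub-root-systems of type $D_6$, $A_6$ or $E_6$ and selecting $w$ to meet each root subsystem transversally, exactly as in the $E_8$-bouquet argument of Theorem~\ref{thm:mainineq} and Theorem~\ref{thm:K3gt}; for the small values $d=39,47$ this can be checked by an explicit finite enumeration.

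Once such embeddings are in hand, the proof is completed by concatenating the general machinery: Corollary~\ref{cor:qpb} gives cuspidality; Proposition~\ref{prop:qpbvanishing} (whose case~3 handles precisely the lattice $L_{Q(d)}$ with $d\equiv 3 \bmod 4$, which covers both $d=39$ and $d=47$) shows that $\Phi|_{L_{Q(d)}}$ vanishes on the full branch divisor of the modular projection $\cD_{L_{Q(d)}}\to \Tilde\Orth^+(L_{Q(d)})\backslash \cD_{L_{Q(d)}}$; and Theorem~\ref{thm:gt} then yields that $\cF_{L_{Q(d)}}(\Tilde\Orth^+(L_{Q(d)}))$ is of general type. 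Finally, the moduli space $\cM^{[2],\mathrm{non\text{-}split}}_{2d}$ maps to $\cF_{L_{Q(d)}}(\Tilde\Orth^+(L_{Q(d)}))$ as an open subvariety of a finite cover through Theorem~\ref{thm:moduliimmersion}, and the (finite) monodromy subgroup $\Gamma\subset \Orth^+(L_{Q(d)},h)$ only increases the Kodaira dimension, so the general type property pulls back to every component of the moduli space.

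The main obstacle will be step two: guaranteeing, for these two specific $d$, the existence of an embedding $Q(d)\emb E_8(-1)$ whose orthogonal complement has root count in the narrow window $[2,14]$. The standard embeddings tabulated in \cite{CS} produce too many roots (at least $24$ for $d=3,7,11,15$), so one must avoid the orbits in which $Q(d)$ lies inside a large root subsystem. The plan is to run a short case analysis on the embedding of the $(-2)$-vector $v_1$ (a single $\Orth(E_8)$-orbit by Eichler's criterion, Lemma~\ref{lem:eichler}) and then on $v_2\in E_7$ of the prescribed norm, using the explicit Gram matrix~\eqref{Cartan-w} to tabulate root counts; the existence for $d=39$ and $d=47$ will follow by direct verification once we restrict to $v_2$ not lying in any sub-root-system of rank $\ge 5$ in $E_7$.
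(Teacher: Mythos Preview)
Your overall architecture is correct and matches the paper's: embed $L_{Q(d)}\hookrightarrow II_{2,26}$ via an embedding $Q(d)\hookrightarrow E_8$, apply the quasi pull-back of $\Phi_{12}$, invoke Corollary~\ref{cor:qpb} for cuspidality and Proposition~\ref{prop:qpbvanishing}(3) for vanishing on the branch divisor, then finish with Theorem~\ref{thm:gt}. The target root-count window $2\le |R|\le 14$ (giving weight $\le 19<20$) is also right.

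Where you diverge is in the arithmetic step, and there you leave a genuine gap. You reduce the problem to finding a vector $w$ of norm $d/2$ in the nontrivial coset of $E_7^\vee/E_7$ whose orthogonal complement in $E_7$ has at most $14$ roots, and then defer to ``an explicit finite enumeration'' and ``direct verification'' without performing either. But this is the entire content of the proposition: everything else is the general machinery already in place. You have not exhibited any embedding of $Q(39)$ or $Q(47)$ into $E_8$, nor any argument that one with the required root count exists. Appealing to the bouquet inequality of Theorem~\ref{thm:mainineq} does not help here, since that inequality is designed for large $d$ and for single vectors, not for rank-two sublattices; for these two specific small values one really must produce the vectors.

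The paper closes this gap with a concrete construction it calls the $\frac{1}{2}(++)$-algorithm. Working in the $D_8$-coordinates of $E_8$, it takes $Q(d)=\langle e_2-e_1,\ v_{2c}\rangle$ with $v_{2c}=\frac{1}{2}(e_1-e_2+x_3e_3+\dots+x_8e_8)$, and gives an elementary combinatorial rule for counting orthogonal roots: integral roots $\pm(e_i\pm e_j)$ ($i,j>2$) are orthogonal iff $x_i=\mp x_j$, and half-integral roots are orthogonal iff $(x_3,\dots,x_8)$ admits a balanced even-even partition. With this rule one writes down $v_{20}=\frac{1}{2}(1,1;5,5,3,3,3,1)$ for $d=39$ and $v_{24}=\frac{1}{2}(1,1;7,5,3,3,1,1)$ for $d=47$ and reads off exactly $14$ orthogonal roots in each case. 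Your $E_7$-reduction is a legitimate alternative route, but to make it a proof you must actually exhibit the vector $w$ and count the roots of $w^\perp_{E_7}$, which you have not done.
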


\begin{proof}
Let $Q(d)(-1)= \left(\begin{matrix}2&-1\\-1&2c\end{matrix}\right)$,
  where $c=(d+1)/4$, be the binary quadratic form associated to a
  non-split polarisation of degree $2d$ (see \eqref{L2d-nonsplit}). To
  make the notation simpler we denote this binary form by $Q(d)$.  We
  have to embed $Q(d)$ in $E_8$ so as to satisfy
\begin{equation*}
2\le |R(Q(d)^\perp_{E_8})|\le 14.
\end{equation*}
We describe below a method (we call it $\frac{1}2(++)$-algorithm)
which gives many such embeddings.  We take the following realisation
of the binary quadratic form of determinant $d$ in $E_8$:
\begin{equation*}
Q(d)=\latt{e_2-e_1,\,v_{2c}}=
\latt{e_2-e_1,\ \frac{1}{2}\,(e_1-e_2+x_3e_3+\dots+x_8e_8)},
\end{equation*}
where  $x_i\ge 0$ and
$2c=(2+x_3^2+\dots+x_8^2)/4$.
The second generator $v_{2c}$ of the binary quadratic form
is a half-integral element of $E_8$.
According to the definition  of $E_8$ given above
\begin{equation*}
v_{2c}+\frac 1{2}(e_1+\dots+e_8)=e_1+\frac{x_3+1}2\, e_3+\dots
+\frac{x_8+1}2\, e_8\in D_8.
\end{equation*}
Therefore
$x_3\equiv \dots \equiv x_8\mod 2$ and  $x_3+\dots+x_8\equiv 0\mod 4$.
Now we can find all roots orthogonal to $Q(d)$.

First, we consider the integral roots of $E_8$.  The roots
$\pm(e_1+e_2)$ are orthogonal to $Q(d)$ therefore
$R(Q(d)^\perp_{E_8})$ is not empty.  Then the integral roots
$\pm(e_i\pm e_j)$ ($i,j>2$) are orthogonal to $Q(d)$ if and only if
$x_i=\mp x_j$.

Second, a half-integral root orthogonal to $Q(d)$ has the form
\begin{equation*}
\pm \frac1{2}(e_1+e_2+\sum_{i=3}^{8}(-1)^{\varepsilon_i}e_i)
\end{equation*}
where the number of $-$ signs in the sum is even.  Therefore $v_{2c}$
is orthogonal to a half-integral root if and only if we can divide the
vector of coefficients $(x_3,\dots,x_8)$ in two parts containing even
number of terms and with the same sum.  If this is possible there are
two pairs of roots orthogonal to $v_{2c}$.  For example, if there is a
root of type $\pm (++;++++--)$ then we also have $\pm(++;----++)$.  The
rules described above give the number (it is always positive) of roots
orthogonal to $Q(d)$.

To finish the proof of Proposition~\ref{prop:nonsplitgt} we consider
two vectors
\begin{equation*}
v_{20}=\frac{1}2(1,1;5,5,3,3,3,1)\quad{\rm and}\quad
v_{24}=\frac{1}2(1,1;7,5,3,3,1,1).
\end{equation*}
The quadratic forms $Q(39)$ and $Q(47)$ are orthogonal to exactly $14$
roots in $E_8$.  As above, using Theorem~\ref{thm:qpb},
Theorem~\ref{thm:qpbcusp} and Proposition~\ref{prop:qpbvanishing} we
see that the quasi pull-back of $\Phi_{12}$ on the modular variety
defined by the lattice $L_{Q(d)}$ is a cusp form of weight $19$
vanishing on the ramification divisor. We finish the proof using
Theorem~\ref{thm:gt}.
\end{proof}

\bigskip
\noindent
V.~Gritsenko\\
Universit\'e Lille 1\\
Laboratoire Paul Painlev\'e\\
F-59655 Villeneuve d'Ascq, Cedex\\
France\\
{\tt valery.gritsenko@math.univ-lille1.fr}
\bigskip

\noindent
K.~Hulek\\
Institut f\"ur Algebraische Geometrie\\
Leibniz Universit\"at Hannover\\
D-30060 Hannover\\
Germany\\
{\tt hulek@math.uni-hannover.de}
\bigskip

\noindent
G.K.~Sankaran\\
Department of Mathematical Sciences\\
University of Bath\\
Bath BA2 7AY\\
England\\
{\tt g.k.sankaran@bath.ac.uk}

\begin{thebibliography}{AMRT}

\bibitem[Ap]{Ap} A.~Apostolov, {\it Forthcoming thesis, Leibniz
  Universit\"at Hannover.}

\bibitem[AMRT]{AMRT} A.~Ash, D.~Mumford, M.~Rapoport, Y.~Tai, {\it
  Smooth compactification of locally symmetric varieties.} Lie
  Groups: History, Frontiers and Applications, Vol. IV. Math. Sci.
  Press, Brookline, Mass., 1975.

\bibitem[Ar]{Ar} M.~Artin, {\it Supersingular K3 surfaces.}
  Ann. Scient. \'Ec. Norm. Sup. (4) {\bf 7} (1974), 543--568.

\bibitem[Ba]{Ba} W.L. Baily, {\it Fourier-Jacobi series}, in Algebraic
  Groups and Discontinuous Subgroups, Proc. Symp. Pure Math. Vol. IX,
  eds. A. Borel, G. D. Mostow, Amer. Math. Soc., Providence, Rhode
  Island, 1966, 296--300.

\bibitem[BB]{BB} W.L.~Baily Jr., A.~Borel, {\it Compactification of
  arithmetic quotients of bounded symmetric domains.} Ann. of Math.
  (2) {\bf 84} (1966), 442--528.

\bibitem[BHPV]{BHPV} W.~Barth, K.~Hulek. C.~Peters, A.~Van de Ven,
  {\it Compact complex surfaces.} Second Enlarged Edition, Springer Verlag 2004.

\bibitem[Bau]{Bau} J.~Bauermann, {\it Lokale Tensorfelder auf
  arithmetischen Quotienten symmetrischer beschr\"ankter Gebiete.}
  J. reine angew. Math. {\bf 428} (1992), 111--138.

\bibitem[Be]{Be} A.~Beauville, {\it Vari\'et\'es K\"ahleriennes dont la
  premi\`ere classe de Chern est nulle.} J. Differential Geometry {\bf
  18} (1983), 755--782.

\bibitem[BJ]{BJ} A.~Borel, L.~Ji, {\it Compactifications of symmetric
  and locally symmetric spaces.} Mathematics: Theory \&
  Applications. Birkh\"auser, Boston 2006.

\bibitem[Bog1]{Bog} F.~Bogomolov, {\it Hamiltonian K\"ahlerian
  manifolds.} (Russian) Dokl. Akad. Nauk SSSR {\bf 243} (1978),
  1101--1104.

\bibitem[Bog2]{Bog2} F.~Bogomolov, {\it The decomposition of K\"ahler
  manifolds with a trivial canonical class.}  Mat. Sb. (N.S.) {\bf
  93(135)} (1974), 573--575, 630.

\bibitem[B1]{B1} R.E.~Borcherds, {\it Automorphic forms on
  $O_{s+2,2}(R)$ and infinite products.}  Invent. Math. {\bf 120}
  (1995), 161--213.

\bibitem[B2]{B2} R.E.~Borcherds, {\it The moduli space of Enriques
  surfaces and the fake monster Lie superalgebra.} Topology {\bf 35}
  (1996), 699--710.

\bibitem[B3]{B3} R.E.~Borcherds, {\it Automorphic forms with
  singularities on Grassmannians.}  Invent. Math. {\bf 132} (1998),
  491--562.

\bibitem[BKPS]{BKPS} R.E.~Borcherds, L.~Katzarkov, T.~Pantev,
  N.I.~Shepherd-Barron, {\it Families of K3 surfaces.} J.
  Algebraic Geom. {\bf 7} (1998), 183--193.

\bibitem[Bl]{Bl2} A.~Borel, {\it Some metric properties of arithmetic
  quotients of symmetric spaces and an extension theorem.} J.
  Differential Geometry {\bf 6} (1972), 543--560.

\bibitem[Bou]{Bou} N.~Bourbaki, {\it Groupes et Alg\`ebres de Lie,
  Chapitres 4, 5 et 6.} Hermann, 1968.

\bibitem[Br]{Br} R.~Brussee, {\it The canonical class and the
  $C^\infty$ properties of K\"ahler surfaces.} New York J. Math. {\bf
  2} (1996), 103--146.

\bibitem[Bu]{Bu} N.~Buchdahl, {\it On compact K\"ahler surfaces.}
  Ann. Inst. Fourier (Grenoble)  {\bf 49}  (1999),  287--302.

\bibitem[BR]{BR} D.~Burns, M.~Rapoport, {\it On the Torelli problem
  for K\"ahlerian K3 surfaces.}, Ann. Sc. ENS. {\bf 8} (1975), 235--274.

\bibitem[Ca]{Ca} H.~Cartan, {\it Quotient d'un espace analytique par
  un groupe d'auto\-morphismes.} In: A symposium in honor of
  S. Lefschetz, Algebraic geometry and topology, 90--102. Princeton
  University Press, Princeton 1957.

\bibitem[Ch]{Che} C.~Chevalley, {\it Invariants of finite groups
  generated by reflections.}  Amer. J. Math. {\bf 77} (1955),
  778--782.

\bibitem[CS]{CS} J.H.~Conway, N.J.A.~Sloane, \emph{Low-dimensional
  lattices I: quadratic forms of small determinant.}
  Proc. Roy. Soc. London Ser. A {\bf 418} (1988), no. 1854, 17--41.

\bibitem[Deb]{Deb} O.~Debarre, {\it Un contre-exemple au th\'eor\`eme
  de Torelli pour les vari\'et\'es symplectiques irr\'eductibles.}
  C. R. Acad. Sci. Paris S\'er. I Math. {\bf 299} (1984), 681--684.

\bibitem[DV]{DV} O.~Debarre, C.~Voisin, {\it Hyper-K\"ahler fourfolds
  and Grassmann geometry.} {\tt arXiv:0904.3974}, 27 pp.

\bibitem[Do]{Dol} I.~Dolgachev, {\it Mirror symmetry for lattice
  polarized K3 surfaces.} J. Math. Sci. {\bf 81} (1996),
  2599--2630.

\bibitem[DGK]{DGK} I.~Dolgachev, B.~van Geemen, S.~Kondo, {\it A
  complex ball uniformization of the moduli space of cubic surfaces
  via periods of K3 surfaces.} J. Reine Angew. Math. {\bf 588} (2005),
  99--148.

\bibitem[DK]{DK} I.~Dolgachev, S.~Kondo, {\it Moduli of K3 surfaces
and complex ball quotients.} Arithmetic and geometry around
hypergeometric functions, 43--100, Progr. Math. {\bf 260}, Birkh\"auser,
Basel, 2007.

\bibitem[D\"u]{Du} M.~D\"urr, {\it Seiberg-Witten theory and the
  $C^{\infty}$-classification of complex surfaces.} Ph.D.\ thesis,
  Z\"urich 2002.

\bibitem[E]{E} M.~Eichler, {\it Quadratische Formen und orthogonale
  Gruppen.} Grund\-lehren der mathematischen Wissenschaften {\bf
  63}. Springer-Verlag, Berlin-New York, 1974.

\bibitem[FC]{FC} G.~Faltings, C.-L.~Chai, {\it Degeneration of abelian
  varieties.} Ergebnisse der Mathematik und ihrer Grenzgebiete (3),
  {\bf 22} Springer-Verlag, Berlin, 1990.

\bibitem[F1]{F1} E.~Freitag, {\it Siegelsche Modulfunktionen.}
  Grund\-lehren der mathematischen Wissenschaften {\bf 254}.
  Springer-Verlag, Berlin--G\"ottingen--Heidelberg, 1983.

\bibitem[F2]{F2} E.~Freitag, {\it Some modular forms related to cubic
  surfaces.}  Kyungpook Math. J.  {\bf 43} (2003), 433--462.

\bibitem[Fr]{Fr} R.~Friedman {\it A new proof of the global Torelli
  theorem for K3 surfaces.}  Ann. of Math. (2) {\bf 120}
  (1984), 237--269.

\bibitem[FM]{FM} R.~Friedman, J.W.~Morgan, {\it Algebraic surfaces and
  Seiberg-Witten invariants.}  J. Algebraic Geom. {\bf 6} (1997),
  445--479.

\bibitem[Fuj]{Fuj} A.~Fujiki, {\it On the de Rham cohomology group of
  a compact K\"ahler symplectic manifold.}  In {\it Algebraic
  geometry, Sendai, 1985.} Adv. Stud. Pure Math. {\bf
  10}, 105--165. North-Holland, Amsterdam 1987.

\bibitem[GaS]{GaS} A.~Garbagnati, A.~Sarti, {\it Elliptic fibrations and
  symplectic automorphisms on K3 surfaces.}  Comm. Algebra {\bf 37}
  (2009), 3601--3631.

\bibitem[G1]{G1} V.~Gritsenko, {\it Jacobi functions of n-variables.}
  Zap. Nauk. Sem. LOMI {\bf 168} (1988), 32--45 English transl. in
  J. Soviet Math\. {\bf 53} (1991), 243--252.

\bibitem[G2]{G2} V.~Gritsenko, {\it Modular forms and moduli spaces of
  Abelian and K3 surfaces.} Algebra i Analiz {\bf 6} (1994), 65--102;
  English translation in St. Petersburg Math. J. {\bf 6} (1995),
  1179--1208.

\bibitem[G3]{G3} V.~Gritsenko, {\it Irrationality of the moduli spaces
  of polarized Abelian surfaces.}  Internat. Math. Res. Notices {\bf
  6} (1994), 235--243.

\bibitem[G4]{G4} V.~Gritsenko, \emph{Reflective modular forms in
  algebraic geometry.}  {\tt arXiv:1005.3753}, 28 pp.

\bibitem[GH1]{GH1} V.~Gritsenko, K.~Hulek, {\it Appendix to the paper
  ``Irrationality of the moduli spaces of polarized abelian
  surfaces''.}  Abelian varieties. Proceedings of the international
  conference held in Egloffstein, 83--84. Walter de Gruyter Berlin,
  1995.

\bibitem[GH2]{GH2} V.~Gritsenko, K.~Hulek, {\it Minimal Siegel modular
  threefolds.}  Math. Proc. Cambridge Philos. Soc. {\bf 123} (1998),
  461--485.

\bibitem[GH3]{GH3} V.~Gritsenko, K.~Hulek, \emph{Commutator coverings
  of Siegel threefolds.}  Duke Math. J. {\bf 94} (1998), 509--542.

\bibitem[GHS1]{GHSK3} V.~Gritsenko, K.~Hulek, G.K.~Sankaran, {\it The
  Kodaira dimension of the moduli of K3 surfaces.} Invent. Math.
  {\bf 169} (2007), 519--567.

\bibitem[GHS2]{GHSvol} V.~Gritsenko, K.~Hulek, G.K.~Sankaran, {\it The
  Hirzebruch-Mum\-ford volume for the orthogonal group and
  applications.} Documenta Math. {\bf 12} (2007), 215--241.

\bibitem[GHS3]{GHSprop} V.~Gritsenko, K.~Hulek, G.K.~Sankaran,
  \emph{Hirzebruch-Mumford proportionality and locally symmetric
    varieties of orthogonal type.} Documenta Mathematica {\bf 13}
  (2008), 1-19.

\bibitem[GHS4]{GHScomm} V.~Gritsenko, K.~Hulek, G.K.~Sankaran,
  \emph{Abelianisation of orthogonal groups and the fundamental group
    of modular varieties.} J. of Algebra {\bf 322} (2009), 463--478.

\bibitem[GHS5]{GHSsymp} V.~Gritsenko, K.~Hulek, G.K.~Sankaran,
  \emph{Moduli spaces of irreducible symplectic manifolds.}
  Compos. Math. {\bf 146} (2010), 404--434.

\bibitem[GHS6]{GHSdim21} V.~Gritsenko, K.~Hulek, G.K.~Sankaran,
  \emph{Moduli spaces of polarised symplectic O'Grady varieties and
    Borcherds products.}  J. Diff. Geom., to appear.

\bibitem[GN1]{GN1} V.~Gritsenko, V.~Nikulin, \emph{Siegel automorphic form
  correction of some Lorentzi\-an Kac--Moody Lie algebras.}
  Amer. J. Math. {\bf 119} (1997), 181--224.

\bibitem[GN2]{GN2} V.~Gritsenko, V.~Nikulin, {\it Automorphic forms
  and Lorentzian Kac-Moody algebras.  I.} International J. Math. {\bf
  9} (2) (1998), 153--200.

\bibitem[GN3]{GN3} V.~Gritsenko, V.~Nikulin, {\it Automorphic forms
  and Lorentzian Kac-Moody algebras.  II.} International J. Math. {\bf
  9} (2) (1998), 201--275.

\bibitem[GN4]{GN4} V.~Gritsenko, V.~Nikulin, {\it K3 surfaces,
  Lorentzian Kac--Moody algebras and mirror symmetry.}
  Math. Res. Lett. {\bf 3} (1996), 211--229.

\bibitem[GN5]{GN5} V.~Gritsenko, V.~Nikulin, {\it The arithmetic
  mirror symmetry and Calabi--Yau manifolds.}  Comm. Math. Phys. {\bf
  200} (2000), 1--11.

\bibitem[GS]{GS} V.~Gritsenko, G.K.~Sankaran, {\it Moduli of abelian
  surfaces with a $(1,p^2)$ polarisation.} Izv. Ross. Akad. Nauk
  Ser. Mat. {\bf 60} (1996), 19--26; reprinted in Izv. Math.  {\bf 60}
  (1996), 893--900.

\bibitem[HT1]{HT1} B.~Hassett, Y.~Tschinkel, {\it Moving and ample
  cones of holomorphic symplectic fourfolds.} Geometric and Functional
  Analysis {\bf 19} (2009), 1065--1080.

\bibitem[HT2]{HT2} B.~Hassett, Y.~Tschinkel, {\it Flops on holomorphic
  symplectic fourfolds and determinantal cubic hypersurfaces.}
  J. Inst. Math. Jussieu {\bf 9} (2010), 125--153.

\bibitem[Hel]{Hel} S.~Helgason, {\it Differential geometry, Lie
  groups, and symmetric spaces.} Graduate Studies in Mathematics {\bf
  34}, AMS, Providence 2001.

\bibitem[Hor1]{Hor} E.~Horikawa, {\it Surjectivity of the period map of
  K3 surfaces of degree $2$.}  Math. Ann. {\bf 228} (1977),
  113--146.

\bibitem[Hor2]{HorEnr} E.~Horikawa, {\it On the periods of Enriques surfaces I.}
Proc. Japan Acad. {\bf 53} (1977), 124--127. 

\bibitem[Huy1]{Huy1} D.~Huybrechts, {\it Compact hyper-K\"ahler
  manifolds: basic results.} Invent. Math. {\bf 135} (1999),
  63--113.

\bibitem[Huy2]{Huy2} D.~Huybrechts, {\it A global Torelli theorem for
hyperk\"ahler manifolds [after Verbitsky]}. S\'eminaire Bourbaki 2011.

\bibitem[IR1]{IR1} A.~Iliev, K.~Ranestad, {\it K3 surfaces of
  genus $8$ and varieties of sums of powers of cubic fourfolds.}
  Trans. Amer. Math. Soc. {\bf 353} (2001), 1455--1468.

\bibitem[IR2]{IR2} A.~Iliev, K.~Ranestad, {\it Addendum to K3
  surfaces of genus $8$ and varieties of sums of powers of cubic
  fourfolds.} C. R. Acad. Bulgare Sci. {\bf 60} (2007), 1265--1270.

\bibitem[KLS]{KLS} D.~Kaledin, M.~Lehn, C.~Sorger, {\it Singular
  symplectic moduli spaces.}  Invent. Math. {\bf 164} (2006),
  591--614.

\bibitem[Ka]{Ka} D.~Kazhdan, {\it On the connection of the dual space
  of a group with the structure of its closed subgroups.}
  Funkcional. Anal. i Pril.  {\bf 1} (1967), 71--74.

\bibitem[Kaw]{Kaw} Y.~Kawamata, {\it Unobstructed deformations. A
  remark on a paper of Z. Ran.}  J. Algebraic Geom. {\bf 1} (1992),
  183--190.

\bibitem[Kn]{Kn} M.~Kneser, {\it Erzeugung ganzzahliger orthogonaler
  Gruppen durch Spiegelungen.} Math. Ann. {\bf 255} (1981),
  453--462.

\bibitem[Kod]{Kod} K.~Kodaira, {\it On the structure of compact complex
  analytic surfaces. I.}  Amer. J. Math.  {\bf 86} (1964), 751--789.

\bibitem[KS]{KS} K.~Kodaira, D.~C.~Spencer, {\it On deformations of
  complex analytic structures. III. Stability theorems for complex
  structures.}  Ann. of Math. (2) {\bf 71} (1960), 43--76.

\bibitem[KM]{KM} J.~Koll\'ar, T.~Matsusaka, {\it Riemann-Roch type
  inequalities.}  Amer. J. Math. {\bf 105} (1983), 229--252.

\bibitem[Ko1]{Ko1} S.~Kondo, {\it On the Kodaira dimension of the
    moduli space of K3 surfaces.} Compositio Math. {\bf 89}
  (1993), 251--299.

\bibitem[Ko2]{KoEnr} S.~Kondo, {\it The rationality of the moduli
  space of Enriques surfaces.}  Compositio Math. {\bf 91} (1994),
  159--173.

\bibitem[Ko3]{Ko2} S.~Kondo, {\it On the Kodaira dimension of the
  moduli space of K3 surfaces. II.} Compositio Math. {\bf
  116} (1999), 111--117.

\bibitem[Kul]{Kul} V.~Kulikov {\it Degenerations of K3 surfaces and
  Enriques surfaces.} Izv. Akad. Nauk SSSR Ser. Mat.  {\bf 41} (1977),
  1008--1042, 1199.

\bibitem[La]{La} A.~Lamari, {\it Courants k\"ahl\'eriens et surfaces
  compactes.}  Ann. Inst. Fourier (Grenoble) {\bf 49} (1999),
  263--285.

\bibitem[LM]{LM} D.~Lieberman, D.~Mumford, {\it Matsusaka's big
  theorem.} In {\it Algebraic geometry, Arcata,
  1974.} Proc. Sympos. Pure Math. {\bf 29}, 513--530. AMS, Providence
  1975.

\bibitem[Lo]{Lo} E.~Looijenga, {\it A Torelli theorem for
  K\"ahler-Einstein K3 surfaces.}, Lecture Notes in Math. {\bf 894},
  107--112, Springer, Berlin-New York, 1981.

\bibitem[LP]{LP} E.~Looijenga, C.~Peters, {\it Torelli theorems for
  K\"ahler K3 surfaces.}  Compos. Math. {\bf 42} (1981), 145--186.

\bibitem[Ma]{MY} S.~Ma, {\it The unirationality of the moduli spaces
  of $2$-elementary K3 surfaces.}  With an appendix by
  K.~Yoshikawa. {\tt arXiv:1011.1963}

\bibitem[Mar1]{Mar1} E.~Markman, {\it On the monodromy of moduli
  spaces of sheaves on K3 surfaces.} J. Algebraic Geom.  {\bf 17}
  (2008), 29--99.

\bibitem[Mar2]{Mar2} E.~Markman, {\it On the monodromy of moduli
  spaces of sheaves on K3 surfaces II.} {\tt math.AG/0305043}, 76 pp.

\bibitem[Mar3]{Mar3} E.~Markman, {\it Integral constraints on the
  monodromy group of the hyperkahler resolution of a symmetric product
  of a K3 surface.} Internat. J. Math.  {\bf 21}  (2010), 169--223.

\bibitem[Mar4]{Mar4} E.~Markman, {\it A survey of Torelli and
  monodromy results for holomorphic-symplectic varieties.}  To appear:
  {\it Complex and Differential Geometry}, Conference held at Leibniz
  Universit\"at Hannover, September 14 - 18, 2009. {\tt
    arXiv:1101.4606}

\bibitem[Mat]{Mat} T.~Matsusaka, {\it Polarized varieties with a
  given Hilbert polynomial.}  Amer. J. Math. {\bf 94} (1972),
  1027--1077.

\bibitem[Mo]{Mo} D.~Morrison, {\it Some remarks on the moduli of K3
  surfaces.}  Classification of algebraic and analytic manifolds
  (Katata, 1982), Progr. Math., {\bf 39}, Birkh\"auser Boston, Boston,
  MA, 1983, 303--332.

\bibitem[Mu1]{Mu1} S.~Mukai, {\it Curves, K3 surfaces and Fano
  $3$-folds of genus $\leq 10$.}  Algebraic geometry and commutative
  algebra, Vol. I, 357--377, Kinokuniya, Tokyo, 1988.

\bibitem[Mu2]{Mu2} S.~Mukai, {\it Polarized K3 surfaces of genus $18$
  and $20$.} Complex projective geometry (Trieste, 1989/Bergen, 1989),
  264--276, London Math. Soc. Lecture Note Ser., {\bf 179}, Cambridge
  Univ. Press, Cambridge, 1992.

\bibitem[Mu3]{Mu3} S.~Mukai, {\it Curves and K3 surfaces of genus
  eleven.} Moduli of vector bundles (Sanda, 1994; Kyoto, 1994),
  189--197, Lecture Notes in Pure and Appl. Math., {\bf 179}, Dekker,
  New York, 1996.

\bibitem[Mu4]{Mu4} S.~Mukai, {\it Polarized K3 surfaces of genus
  thirteen.}  Moduli spaces and arithmetic geometry (Kyoto 2004),
  315--326, Adv. Stud. Pure Math., {\bf 45}, Math. Soc. Japan, Tokyo,
  2006.

\bibitem[Mu5]{Mu5} S.~Mukai, {\it Polarized K3 surfaces of genus
  sixteen.}  Oberwolfach Report 02/2010, DOI: 10.4171/OWR/2010/02.

\bibitem[Mum]{Mum} D.~Mumford, {\it Hirzebruch's proportionality
  theorem in the noncompact case.} Invent. Math. {\bf 42} (1977),
  239--272.

\bibitem[Nam]{Nam} Y.~Namikawa, {\it Counter-example to global Torelli
  problem for irreducible symplectic manifolds.}  Math. Ann. {\bf 324}
  (2002), 841--845.

\bibitem[Nik1]{Nik1} V.V.~Nikulin, {\it Finite automorphism groups of
   K\"ahler K3 surfaces.} Trudy Moskov. Mat. Obshch. {\bf 38}
  (1979), 75--137. English translation in Trans. Mosc. Math. Soc. {\bf
    2}, 71-135 (1980).

\bibitem[Nik2]{Nik2} V.V.~Nikulin, {\it Integral symmetric bilinear
    forms and some of their applications.}  Izv. Akad. Nauk SSSR Ser.
  Mat. {\bf 43} (1979), 111--177.  English translation in Math. USSR,
  Izvestiia {\bf 14} (1980), 103--167.

\bibitem[Nyg]{Ny} N.~O.~Nygaard, {\it The Torelli theorem for ordinary
  K3 surfaces over finite fields.}  In: Arithmetic and geometry,
  Progr.~Math.~{\bf 35}, Vol.~I, 267--276, Birkh\"auser Boston,
  Boston, MA, 1983.

\bibitem[OG1]{OG1} K.~O'Grady, {\it Desingularized moduli spaces of
  sheaves on a K3.} J. Reine Angew. Math. {\bf 512} (1999),
  49--117.

\bibitem[OG2]{OG2} K.~O'Grady, {\it A new six-dimensional irreducible
  symplectic variety.} J. Algebraic Geom. {\bf 12} (2003), 435--505.

\bibitem[OG4]{OG4} K.~O'Grady, {\it Irreducible symplectic $4$-folds
  and Eisenbud-Popescu- Walter sextics.} Duke Math. J. {\bf 134}
  (2006), 99--137.

\bibitem[Ogu]{Og} A.~Ogus, {\it A crystalline Torelli theorem for
  supersingular K3 surfaces.}  In: Arithmetic and geometry,
  Progr.~Math.~{\bf 36}, Vol.~II, 361--394, Birkh\"auser, Boston,
  Boston, MA, 1983.

\bibitem[OV]{OV} C.~Okonek, A.~Van de Ven, {\it Stable bundles,
  instantons and $C^\infty$-structures on algebraic surfaces.} Several
complex variables VI, Encyclopaedia Math. Sci. {\bf 69}, 197--249,
Springer, Berlin, 1990.

\bibitem[Ol]{Ol} M.~Olsson, {\it Semistable degenerations and period
  spaces for polarized K3 surfaces.}  Duke Math. J. {\bf 125} (2004),
  121--203.

\bibitem[PP]{PP} U.~Persson, H.~Pinkham, {\it Degeneration of surfaces
  with trivial canonical bundle.}  Ann. of Math. (2) {\bf 113} (1981),
  45--66.

\bibitem[PeSa]{PeSa} A.~Peterson, G.K.~Sankaran, {\it On some lattice
  computations related to moduli problems.} With an appendix by
  V.~Gritsenko. Rend. Semin. Mat. Univ. Politec. Torino {\bf 68}
  (2010), 289--304.

\bibitem[P-S]{P-S} I.I.~Piatetskii-Shapiro {\it Automorphic functions
  and the geometry of classical domains.}  Gordon and Breach, New
  York, 1969.

\bibitem[P-SS]{P-SS} I.~Piatetskii-Shapiro, I.~Shafarevich, {\it A
  Torelli theorem for algebraic surfaces of type K3.}  Izv.
  Akad. Nauk SSSR, Ser. Mat. {\bf 35}, 530--572 (1971).  English
  translation in Math. USSR, Izv. {\bf 5} (1971), 547--588 (1972).

\bibitem[Ran]{Ran} Z.~Ran, {\it Deformations of manifolds with torsion
  or negative canonical bundle.}  J. Algebraic Geom. {\bf 1} (1992),
  279--291.

\bibitem[Rap1]{Rap1} A.~Rapagnetta, {\it On the Beauville form of the
  known irreducible symplectic varieties.}  Math. Ann.  {\bf 340}
  (2008), 77--95.

\bibitem[Rap2]{Rap2} A.~Rapagnetta, {\it Topological invariants of
  O'Grady's six dimensional irreducible symplectic variety.}
  Math. Z. {\bf 256} (2007), 1--34.
 
\bibitem[Re]{YPG} M.~Reid, {\it Young person's guide to canonical
  singularities.} In: Algebraic geometry, Bowdoin, 1985 (Brunswick, Maine,
  1985), 345--414, Proc. Sympos. Pure Math., 46, Part 1,
  Amer. Math. Soc., Providence, RI, 1987.

\bibitem[S-D]{S-D} B.~Saint-Donat, {\it Projective models of K3
  surfaces.}  Amer. J. Math.  {\bf 96} (1974), 602--639.

\bibitem[Sat]{Sat} I.~Satake, {\it Algebraic structures of symmetric
  domains.} Kano Memorial Lectures {\bf 4}. Iwanami Shoten, Tokyo;
  Princeton University Press, Princeton, N.J., 1980.

\bibitem[Sc]{Sc} F.~Scattone, {\it On the compactification of moduli
  spaces of algebraic K3 surfaces.}, Mem. Amer. Math. Soc.  {\bf 70},
  no. 374, (1987).

\bibitem[Sch]{Sch} M.~Sch\"utt, {\it Arithmetic of K3 surfaces.}
  Jahresber. Deutsch. Math.-Verein. {\bf 111} (2009), 23--41.

\bibitem[Sha1]{Sha1} J.~Shah, {\it Surjectivity of the period map in
  the case of quartic surfaces and sextic double planes.}
  Bull. Amer. Math. Soc. {\bf 82} (1976), 716--718.

\bibitem[Sha2]{Sha2} J.~Shah, {\it A complete moduli space for K3
  surfaces of degree $2$.}  Ann. of Math. (2) {\bf 112} (1980),
  485--510.

\bibitem[Sha3]{Sha3} J.~Shah, {\it Degenerations of K3 surfaces of
  degree $4$.}  Trans. Amer. Math. Soc. {\bf 263} (1981), 271--308.

\bibitem[S-B1]{S-B1} N.I.~Shepherd-Barron, {\it The rationality of
  some moduli spaces of plane curves.} Compos. Math. {\bf 67} (1988),
  51--88.

\bibitem[S-B2]{S-B2} N.I.~Shepherd-Barron, {\it Apolarity and its
  applications.}  Invent. Math. {\bf 97} (1989), 433--444.

\bibitem[St]{St} H.~Sterk, {\it Lattices and $K3$ surfaces of
  degree~$6$.} Linear Algebra Appl. {\bf 226--228} (1995), 297--309.

\bibitem[Siu]{Siu} Y.~T.~Siu, {\it Every $K3$ surface is K\"ahler.}
  Invent. Math. {\bf 73} (1983), 139--150.

\bibitem[Ti]{Ti} G.~Tian, {\it Smoothness of the universal deformation
  space of compact Calabi-Yau manifolds and its Petersson-Weil
  metric.} In: Mathematical aspects of string theory (S.-T. Yau,
  Ed.). Advanced Series in Mathematical Physics {\bf 1},
  629--646. World Scientific, Singapore 1987.

\bibitem[Tod]{Tod} A.N.~Todorov, {\it The Weil-Petersson geometry of
  the moduli space of $\SU(n\geq 3)$ (Calabi-Yau) manifolds I.}
  Comm. Math. Phys. {\bf 126} (1989), 325--346.

\bibitem[Ta]{Ta} Y.~Tai, {\it On the Kodaira dimension of the moduli
    space of abelian varieties.} Invent. Math. {\bf 68} (1982),
  425--439.

\bibitem[Ver]{Ver} M.~Verbitsky, {\it A global Torelli theorem for
  hyperkahler manifolds.}  {\tt arXiv:0908.4121}, 47 pp.

\bibitem[Vi]{Vi} E.~Viehweg, {\it Quasi-projective moduli for
  polarized manifolds.} Er\-geb\-nisse der Mathematik und ihrer
  Grenzgebiete (3), {\bf 30}. Springer-Verlag, Berlin, 1995.

\bibitem[Vo1]{Vo1} C.~Voisin, {\it Th\'eor\`eme de Torelli pour les
  cubiques de $\PP^5$.}  Invent. Math.  {\bf 86} (1986), 577--601.

\bibitem[Vo2]{Vo2} C.~Voisin, {\it G\'eom\'etrie des espaces de
  modules de courbes et de surfaces K3 [d'apr\`es
    Gri\-tsen\-ko-Hu\-lek-San\-ka\-ran, Farkas\--Popa, Mukai,
    Verra\dots].} S\'eminaire BOURBAKI 59\`eme ann\'ee, 2006--2007,
  n~981.

\bibitem[Wei]{Wei} A.~Weil, {\it Oeuvres scientifiques -- Collected
  papers}, Volumes I, II, III, Springer Verlag 1979.

\bibitem[Zo]{Zo} M.~Zowislok {\it On moduli spaces of sheaves on K3 or
  abelian surfaces.} {\tt arXiv:1104.5629}, 24 pp.
\end{thebibliography}
\end{document}